%
%

\documentclass[10pt,a4paper]{article}
\title{On the GL(2n) eigenvariety: branching laws, Shalika families and $p$-adic $L$-functions}
\author{Daniel Barrera Salazar, Mladen Dimitrov, Andrew Graham, \\Andrei Jorza and Chris Williams}
\date{}

%
%

\usepackage[margin=1.27in]{geometry}

\newcommand{\s}{\setlength{\itemsep}{0pt}}

\usepackage{multicol}
\setlength{\columnsep}{25pt}
\usepackage{amssymb}

\usepackage{fancyhdr,xcolor}
\pagestyle{fancy}
\lhead{\emph{On the $\GL_{2n}$ eigenvariety}}
\rhead{\emph{Barrera Salazar, Dimitrov, Graham, Jorza and Williams}}
\chead{}

\newcommand{\UPS}{\theta} 
\newcommand{\fsigma}{f^\sigma}

%
%
\makeatletter 
\def\input@path{{../}} 
\makeatother
\usepackage{preamble}
\usepackage{comment}

\newcommand{\sar}[2]{\ar@{}[#1]|-*[@]{#2}}

\newcommand{\jw}{{(-j,\sw+j)}}

\usepackage{tikz}
\usetikzlibrary{positioning}
\usepackage{mathtools}
\usepackage{environ}
\NewEnviron{myquote}{\vspace{1ex}\par
	\hfill\llap{($\dagger$)}\hfill\parbox{\textwidth-2cm}%
	{\emph{\BODY}}%
	\vspace{1ex}\par}

\usepackage{tocloft}

\setlength{\cftbeforesecskip}{1pt}
\setlength{\cftsecindent}{60pt}

\setlength{\cftbeforepartskip}{10pt}
\setlength{\cftpartindent}{60pt}

\cftpagenumbersoff{part}

\newcommand{\tbyt}[4]{\left( \begin{array}{cc} #1 & #2 \\ #3 & #4 \end{array} \right)}

\newcommand{\tr}{\operatorname{tr}}

\newcommand{\vol}{\operatorname{vol}}

\titleformat{\part}
{\large\scshape\centering}
{Part \thepart.}
{0.5em}
{}
[]

\usepackage{titlesec}
\titleformat{\subsubsection}[runin]
{\normalfont\bfseries}
{\thesubsubsection.}
{0.5em}
{}
[.\hspace*{6pt}]
\titlespacing*{\subsubsection}{0pt}{5pt}{0pt}

\titleformat{\subsection}
{\normalsize\bfseries\centering}
{\thesubsection.}
{0.5em}
{}
[]

\titleformat{\section}
{\large\bfseries\centering}
{\thesection.}
{0.5em}
{}
[]
\titlespacing*{\section}
{0pt}{8ex plus 1ex minus .2ex}{10pt}

\begin{document}

	\maketitle

	\renewcommand{\thefootnote}{\fnsymbol{footnote}} 
	\footnotetext{\today. \emph{2020 MSC:} Primary 11F33,  11F67; Secondary 11R23
	}     
	\renewcommand{\thefootnote}{\arabic{footnote}}

	\begin{abstract}
		In this paper, we prove that a $\mathrm{GL}(2n)$-eigenvariety is \'etale over the (pure) weight space at non-critical Shalika points, and construct multi-variable $p$-adic $L$-functions varying over the resulting Shalika components. Our constructions hold in tame level 1 and Iwahori level at $p$, and give $p$-adic variation of $L$-values (of regular algebraic cuspidal automorphic representations of $\mathrm{GL}(2n)$ admitting Shalika models) over the whole pure weight space. In the case of $\mathrm{GL}(4)$, these results have been used by Loeffler and Zerbes to prove cases of the Bloch--Kato conjecture for $\mathrm{GSp}(4)$.
		
		Our main innovations are: (a) the introduction and systematic study of `Shalika refinements' of local representations of $\mathrm{GL}(2n)$, and evaluation of their attached local twisted zeta integrals; and (b) the $p$-adic interpolation of representation-theoretic branching laws for $\mathrm{GL}(n) \times \mathrm{GL}(n)$ inside $\mathrm{GL}(2n)$. Using (b), we give a construction of multi-variable $p$-adic functionals on the overconvergent cohomology groups for $\mathrm{GL}(2n)$, interpolating the zeta integrals of (a). We exploit the resulting non-vanishing of these functionals to prove our main arithmetic applications.
	\end{abstract}

	\setcounter{tocdepth}{1}
	\footnotesize
	\tableofcontents
	\normalsize

	\section{Introduction}
	
	\subsection{Motivation} The \emph{Bloch--Kato conjectures} are amongst the most important open problems in modern algebraic number theory, and predict a deep link between arithmetic and analysis. Through decades of research, a fruitful approach to Bloch--Kato has been to find and prove $p$-adic reinterpretations; for every special case of the Bloch--Kato conjecture, there should be an analogous $p$-adic \emph{Iwasawa Main Conjecture} relating $p$-adic arithmetic data to a $p$-adic $L$-function. These $p$-adic reinterpretations are usually more tractable than the original conjectures -- for example, the Iwasawa Main Conjecture for elliptic curves has been proved in many cases (for example in \cite{Kat04,SU14}, but also in many other works). Moreover, understanding the $p$-adic picture can lead to proofs of special cases of Bloch--Kato.
	
	Crucial to proofs of Bloch--Kato/Iwasawa Main Conjectures is a good understanding of $p$-adic $L$-functions, eigenvarieties, and $p$-adic $L$-functions over eigenvarieties. In this paper, we prove new results about these objects for $\GL_{2n}/\Q$. In particular, let $\pi$ be a regular algebraic symplectic cuspidal automorphic representation (RASCAR)\label{RASCAR} of $\GL_{2n}(\A)$ that is everywhere spherical; here \emph{symplectic} is the condition that $\pi$ admits a Shalika model (i.e.\ is a functorial transfer from $\mathrm{GSpin}_{2n+1}(\A)$). This ensures  there is an integer $\sw$ such that $\pi \cong \pi^\vee \otimes |\cdot|^{\sw}$, i.e.\ $\pi$ is essentially self-dual. 
	Let $\tilde\pi$ be an Iwahoric $p$-refinement which is spin and of   non-critical slope (see Definitions~\ref{def:spin-refinement} and \ref{def:non-critical slope}). As explained in \S\ref{sec:parahoric-vs-iwahoric}, by \cite[Thm.\ A]{BDW20} there exists 
	a $p$-adic $L$-function $L_p(\tilde\pi)$ attached to $\tilde\pi$, that is, a locally analytic distribution on $\Zp^\times$ of controlled growth that interpolates its Deligne-critical $L$-values. In this paper:
	\begin{itemize}\setlength{\itemsep}{0pt}
		\item[(A)] we prove that the Iwahoric $\GL_{2n}$-eigenvariety is \'etale over the ($n+1$-dimensional) pure weight space at $\tilde\pi$, and that the unique connected component $\sC$ through $\tilde\pi$ contains a very Zariski-dense set $\sC^{\mathrm{Sha}}$ of classical Shalika points; and
		\item[(B)] we construct an $(n+2)$-variable $p$-adic $L$-function $\mathcal{L}_p^{\sC}$ interpolating $L_p(\tilde\pi_y)$ for $y \in \sC^{\mathrm{Sha}}$.
	\end{itemize} 
	
We state these results in full in Theorems A and B respectively in \S\ref{sec:main constructions} of the introduction.
	
	We describe an application. In the special case of $\GL_4$, part (B) fulfils the forward compatibility required by Loeffler and Zerbes in their recent tour-de-force work \cite{LZ20} proving new cases of the Bloch--Kato conjecture for $\mathrm{GSp}_4$; in particular, the present paper is the `forthcoming work' mentioned in \S17.5 \emph{op.\ cit}., where this special case was first announced.

	\subsection{Set-up and previous work}
	Let $\pi$ be as above, and let $\lambda = (\lambda_1,...,\lambda_{2n})$ be its weight, a dominant algebraic character of the diagonal torus $T \subset G \defeq \GL_{2n}$. Then $\sw = \lambda_{n} + \lambda_{n+1}$ is the purity weight of $\lambda$ (see \S\ref{sec:algebraic weights}). Let $L(\pi,s)$ be the standard $L$-function of $\pi$, normalised so that for $j \in \Z$, the value $L(\pi,j+\tfrac{1}{2})$ is Deligne-critical if and only if 
\begin{equation}\label{eq:deligne critical}
	j \in \mathrm{Crit}(\lambda) \defeq \{j \in \Z: -\lambda_{n+1} \geq j \geq -\lambda_n\}.
\end{equation}
 Let $K = \Iw \prod_{\ell \neq p}G(\Z_\ell) \subset G(\widehat{\Z})$, where $\Iw$ is the Iwahori subgroup at $p$. Let $S_K$ be the locally symmetric space for $G$ of level $K$. As $\pi$ is regular algebraic, it contributes to the compactly supported cohomology of $S_K$ with coefficients in $\sV_\lambda^\vee$ in degrees $n^2,n^2+1,...,n^2+n-1$. Here $V_\lambda$ is the algebraic representation of $G$ of highest weight $\lambda$, and $\sV_\lambda^\vee$ is the local system on $S_K$ attached to its dual. Let $t = n^2+n-1$ (the top degree).
	
	Our work builds on ideas of Grobner--Rahuram \cite{GR2}, of Dimitrov--Januszewski--Raghuram \cite{DJR18}, and particularly of Barrera--Dimitrov--Williams \cite{BDW20}, all of which worked in the $Q$-parahoric setting, for $Q$ the $(n,n)$-parabolic subgroup of $G$. As \emph{op.\ cit}., our methods are built upon the existence of \emph{evaluation maps}, functionals on Betti cohomology groups. In particular:
	\begin{itemize}\setlength{\itemsep}{0pt}
		\item[--] In \cite{GR2}, the authors constructed $\C$-valued evaluation maps 
\[
	\mathrm{Ev}_j^{\lambda, \mathrm{GR}} : \hc{t}(S_K,\sV_\lambda^\vee(\C)) \to \C
\]
and used them to prove algebraicity for the Deligne-critical $L$-values $L(\pi,j+\tfrac{1}{2})$.
		
		\item[--] In \cite{DJR18}, for $\chi$ finite order of conductor $p^\beta$, the authors used $p$-adic analogues
\[
	\mathrm{Ev}_{\chi,j}^{\lambda, \mathrm{DJR}} : \hc{t}(S_K,\sV_\lambda^\vee(\overline{\Q}_p)) \to \overline{\Q}_p
\]
 to construct $p$-adic $L$-functions for   ordinary `Shalika' $Q$-parahoric $p$-refinements $\tilde\pi^Q$ of $\pi$.
		\item[--] In \cite{BDW20}, the authors constructed (parahoric) overconvergent evaluations 
\[
	\hc{t}(S_K,\sD_\Omega^Q) \to \cD(\Zp^\times,\cO_\Omega),
\]
 where $\Omega$ is a (2-dimensional) parahoric $p$-adic family of weights, $\sD_\Omega^Q$ is a local system attached to a space of parahoric distributions, and $\cD(\Zp^\times,R)$ is the space of $R$-valued locally analytic distributions on $\Zp^\times$; so $\cD(\zp^\times,\cO_\Omega)$ is a space of 3-variable distributions. These interpolated the $\mathrm{Ev}_{\chi,j}^{\lambda,\mathrm{DJR}}$ for varying $\lambda, \chi$ and $j$. They were used to construct $p$-adic $L$-functions $L_p(\tilde\pi^Q)$ attached to finite slope $Q$-parahoric  Shalika $p$-refinements $\tilde\pi^Q$, to construct 2-dimensional (parahoric) $p$-adic families through $\tilde\pi^Q$, and to vary $L_p(\tilde\pi^Q)$ over these families.
	\end{itemize}
	These papers work with $\GL_{2n}$ over totally real fields and do not require $\pi$ everywhere spherical; for a detailed summary of these works, we refer the reader to \cite[Intro.]{BDW20}. However, the $Q$-parahoric setting considered \emph{op.\ cit}.\ cannot see variation in more than 2 weight variables, meaning our present results are necessary for the application to Bloch--Kato in \cite{LZ20}.

	\subsection{New input}
	In our generalisation to Iwahoric families, substantial new ideas are required in two particular places: one automorphic (the computation of local zeta integrals at $p$), and one $p$-adic (the $p$-adic interpolation of classical representation-theoretic branching laws).

	\subsubsection{Shalika $p$-refinements and local zeta integrals}\label{sec:intro shalika}
	
	To study local zeta integrals attached to the (unramified) representation $\pi_p$, we introduce \emph{Shalika $p$-refinements} of $\pi_p$. In this paper, we consider Iwahoric $p$-refinements, rather than the $Q$-parahoric $p$-refinements of \cite{DJR18,BDW20}. 
	
	Let $\cH_p$ be the Hecke algebra at $p$ (see Definition~\ref{def:hecke algebra p}). An (Iwahoric) \emph{$p$-refinement} of $\pi_p$ is a 
	pair $\tilde\pi_p = (\pi_p,\alpha)$, where  $\alpha : \cH_p \longrightarrow \overline{\Q}$ is a 
	system of Hecke eigenvalues  appearing in $\pi_p^{\Iw}$.  If $\pi_p$ has regular Satake parameter, there are $(2n)!$ such $p$-refinements, indexed by elements of the Weyl group $\cW_G = \mathrm{S}_{2n}$,   all regular in the sense of Definition~\ref{def:refinement}. Attached to any regular $p$-refinement $\tilde\pi_p$ is a certain family of twisted local zeta integrals at $p$. We call $\tilde\pi_p$ a \emph{Shalika $p$-refinement} if one of these local zeta integrals is non-vanishing. 

In \cite{DJR18,BDW20}, it is implicitly predicted that a $p$-refinement should be Shalika if and only if it lies in a certain class of `spin' $p$-refinements, i.e.\ those that interact well with the Shalika model. In the parahoric case, an ad-hoc definition of a spin refinement -- there called a `$Q$-regular $Q$-refinement' -- is given in \cite[\S3.3]{DJR18}, inspired by \cite{AG94}; and the relevant zeta integrals are shown to be non-vanishing.
	
	In \S\ref{sec:spin refinements}, we give a much more conceptual definition of spin $p$-refinements, generalising and justifying \cite[\S3.3]{DJR18}. Since $\pi_p$ admits a Shalika model, it is the functorial transfer of a representation $\Pi_p$ of $\cG(\Q_p)$, where $\cG = \mathrm{GSpin}_{2n+1}$. Via a careful study of the root systems of $G$ and $\cG$, we construct a map $\jmath^\vee : \cH_p \to \cH_p^{\cG}$ of Hecke algebras at $p$. We then say $\alpha$ is a \emph{spin $p$-refinement} if there exists an eigensystem $\alpha^{\cG}$ for $\cG$ such that $\alpha$ factors as
\[
	\alpha : \cH_p \xrightarrow{\ \jmath^\vee\ } \cH_p^{\cG} \xrightarrow{\ \alpha^{\cG}\ } \overline{\Q},
\]
and show the system $\alpha^{\cG}$ then appears in $\Pi_p^{\Iwahori_{\cG}}$, where  $\Iwahori_{\cG} \subset \cG(\Zp)$ denotes the Iwahori subgroup for $\cG(\Zp)$.  When $\pi_p$ (hence $\Pi_p$) is regular, there are $2^n n!$ such spin $p$-refinements. 
	
	We prove that the family of local zeta integrals attached to a spin $p$-refinement is non-vanishing, and hence that spin $p$-refinements are Shalika $p$-refinements. We study the converse (that Shalika refinements are spin refinements) in a sequel paper \cite{classical-locus}; there we prove it in many cases, and conjecture that it always holds.

We actually compute the local zeta integral in two different ways, with different benefits. 
\begin{itemize}\s
\item In \S\ref {sec:local zeta p}, we compute the local zeta integral at Iwahoric level, with the restriction that our method works only for ramified characters. 
\item The unramified case at Iwahoric level appears to be very difficult. We get around this in \S\ref{sec:local zeta unramified} by instead computing the unramified integral \emph{at parahoric} level, via a totally different method.  
\end{itemize}
The latter result strengthens the results of \cite{DJR18,BDW20} by proving that the $p$-adic $L$-functions they construct satisfy the expected interpolation property at the trivial character, a fundamental case these works omitted.

In later sections, we use interpolation at ramified characters to prove that the $p$-adic $L$-functions at Iwahori level and parahoric level agree; and thus we obtain interpolation at the trivial character for the $p$-adic $L$-functions of the present paper.

 All of these local results are proved in \S\ref{sec:local zeta p}--\S\ref{sec:local zeta unramified}.

	\subsubsection{$p$-adic interpolation of branching laws}
	
	Over $\Q$, for any $n$ the evaluation maps of \cite{BDW20} are valued in a space of distributions in only 3 variables. In the present paper we construct evaluation maps in the full expected $n+2$ variables. 

Our key input is a $p$-adic interpolation, in $(n+2)$-variables, of classical representation-theoretic branching laws. More precisely: in \cite{GR2,DJR18,BDW20} the subgroup $H = \GL_n \times \GL_n$ (embedded diagonally in $\GL_{2n}$) plays a distinguished role. 
For $j_1,j_2 \in \Z$, let $V_{(j_1,j_2)}^H$ denote the $H$-representation $\det_1^{j_1}\det_2^{j_2}$, the algebraic representation of highest weight $(j_1,...,j_1,j_2,...,j_2)$. Recall $\sw$ is the purity weight of $\lambda$. Then we have the following reinterpretation of the Deligne-critical $L$-values \eqref{eq:deligne critical}: \\[-4pt]
	
	\begin{myquote}\label{eq:branching}
		\text{\textbf{\emph{Branching law:}} $j \in \mathrm{Crit}(\lambda) \iff V_{(-j,\sw+j)}^{H} \subset V_\lambda\big|_{H}$ with multiplicity one.}
	\end{myquote}

	\begin{example}
		Let $G = \GL_2, H = \GL_1\times\GL_1$, and let $\pi$ be a RACAR of weight $(k,0)$, corresponding to a classical newform $f$ of weight $k+2$. Then (in our normalisations) $\pi$ has Deligne-critical $L$-values $L(\pi,j+\tfrac{1}{2})$ with $-k \leq j \leq 0$. Here $\sw = k$ and $V_\lambda = \mathrm{Sym}^k(\C^2),$ the space of homogeneous polynomials in two variables $X,Y$ of degree $k$. We have $V_\lambda|_{H} = \oplus_{j=-k}^0 [\C \cdot X^{-j}Y^{k+j}]$. The summand at $j$ is the character $\smallmatrd{s}{}{}{t} \mapsto s^{-j}t^{k+j}$ of $H$, and corresponds to the Deligne-critical $L$-value $L(\pi,j+\tfrac{1}{2})$. 
	\end{example}

Attached to $H$ is a family of \emph{automorphic cycles} $\{X_\beta\}_{\beta \geq 1}$, which are modified locally symmetric spaces for $H$ whose  dimension is crucially $t$. For $j \in \mathrm{Crit}(\lambda)$, the evaluation maps $\mathrm{Ev}_{\chi,j}^{\lambda,\mathrm{DJR}}$ of \cite{DJR18} were constructed as a composition
\[
\hc{t}\big(S_K,\sV_\lambda\big) \to \hc{t}\Big(X_\beta,\sV_\lambda|_H\Big) \to \hc{t}\Big(X_\beta, \sV^H_{(-j,\sw+j)}\Big) \to \bigoplus_{(\Z/p^\beta)^{\times}}\overline{\Q}_p \to \overline{\Q}_p, \text{where:} 
\]
\begin{itemize}\setlength{\itemsep}{0pt}
\item the first map is a twisted pullback under the natural inclusion $X_\beta \to S_K$, 
\item the second is projection of the coefficients via the branching law $(\dagger)$, 
\item the third is integration of scalar-valued classes (of degree $t$) over the connected components of $X_\beta$ (which have dimension $t$), 
\item and the last map is `evaluation at $\chi$', where $\chi$ has conductor $p^\beta$.
\end{itemize}	
	
	To interpolate these maps, in \cite{BDW20} the authors gave a $p$-adic interpolation of ($\dagger$). There were two aspects to this: the interpolation for a fixed $\lambda$ as $j$ varies, proved in \S5.2 \emph{op.\ cit}., used to prove existence of $L_p(\tilde\pi)$; and the interpolation as $\lambda$ varies in a (2-dimensional) $Q$-parabolic weight family $\sW^Q$ in \S6.2, used to construct parabolic families of RASCARs and 3-variable $p$-adic $L$-functions for these families.

	The full pure weight space $\sW_0^G$ for $G$ has dimension $n+1$, whilst the families of \cite{BDW20} are only 2-dimensional. The trade-off made \emph{op.\ cit}.\ was that variation in lower dimension allowed weaker assumptions on $\tilde\pi$, giving an `optimal' notion of non-criticality for $\tilde\pi$. However, if one assumes stronger conditions on $\tilde\pi$, then one expects to be able to vary the $p$-adic $L$-function over the full $(n+1)$-dimensional pure weight space; and it is this, higher-dimensional, variation that is required for the application to the Bloch--Kato conjecture of \cite{LZ20}.
	
	To extend the results of \cite{BDW20} to get full variation, one needs to interpolate the branching law $(\dagger)$ as $\lambda$ varies over $\sW_0^G$. The approach of \cite{BDW20} has the parahoric, hence 2-dimensional, setting baked into it, so to interpolate in higher dimension requires new ideas.

In Proposition \ref{prop:distribution commute}, we give a full interpolation of ($\dagger$) over $\sW_0^G$ (in the language of $p$-adic distributions). This result occupies the entirety of \S\ref{sec:branching laws}. 

Our approach exploits properties of \emph{spherical varieties}: in particular, we use crucially the existence of an `open orbit element' $u \in G(\Zp)$ such that $\overline{B}(\Zp) u^{-1}H(\Zp)$ is Zariski-open in $G(\Zp)$ (for $\overline{B} \subset G$ the opposite Borel of lower triangular matrices). The approach should apply to much more general spherical pairs $(G,H)$; this has now been studied, for example, in \cite{rockwood-spherical}.
	
	In \S\ref{sec:distribution valued evaluations}, via \S\ref{sec:abstract evaluation maps}, we use our $p$-adic branching laws to construct $(n+2)$-variabled evaluation maps
	\begin{equation}\label{eq:intro evaluation}
		\mathrm{Ev}_{\beta}^{\Omega} : \hc{t}(S_K,\sD_\Omega) \longrightarrow \cD(\Zp^\times,\cO_\Omega),
	\end{equation}
	for an $(n+1)$-dimensional affinoid $\Omega \subset \sW_0^G$. These maps interpolate (Iwahoric analogues of) $\mathrm{Ev}_{\chi,j}^{\lambda,\mathrm{DJR}}$ as $\lambda$ varies in $\Omega$, $j$ varies over $\mathrm{Crit}(\lambda)$, and $\chi$ varies over finite order Hecke characters of conductor $p^\beta$.

	\subsection{Main constructions}\label{sec:main constructions}
	
	We give two main applications of these results. Using the Shalika refinements of Definition~\ref{def:shalika refinement}, we give simple automorphic criteria for the non-vanishing of the evaluation maps \eqref{eq:intro evaluation}. This non-vanishing puts tight restrictions on the structure of $\hc{t}(S_K,\sD_\Omega)$, and thus -- via \cite{Han17} -- on the structure of the $G$-eigenvariety $\sE^{G}$. Exploiting ideas developed in \cite{BDW20}, in Theorem \ref{thm:shalika family} we prove the following. Recall that a RASCAR is a regular algebraic cuspidal automorphic representation that is symplectic (i.e.\ it admits a Shalika model).

	\begin{theorem-intro}\label{thm:intro A}
		 Let $\pi$ be a RASCAR of $G(\A)$ of weight $\lambda = (\lambda_1,\dots,\lambda_{2n})$
that is everywhere spherical.  Suppose that $\pi_p$ is regular and that 
			\[
				\lambda_n > \lambda_{n+1}. 
			\]
		Let $\tilde\pi = (\pi, \alpha)$ be spin $p$-refinement having  non-critical slope (see Definitions~\ref{def:spin-refinement} and \ref{def:non-critical slope}). 
			
			Then the $G$-eigenvariety $\sE^{G}$ is \'etale over $\sW_0^G$ at $\tilde\pi$. There exists a neighbourhood of $\tilde\pi$ in $\sE^{G}$ containing a very Zariski-dense set of classical points corresponding to RASCARs.
	\end{theorem-intro}
	
	In other words, there exists an $(n+1)$-dimensional  affinoid $\Omega \subset \sW_0^G$ containing $\lambda$ such that:
	\begin{itemize}\setlength{\itemsep}{0pt}
		\item[(a)] $\tilde\pi$ varies in a unique $p$-adic family $\sC \subset \sE^{G}$ over $\Omega$,
		\item[(b)] $\sC$ contains a very Zariski-dense set $\sC^{\mathrm{Sha}}$ of classical points corresponding to RASCARs,
		\item[(c)] and the weight map $w: \sC \isorightarrow \Omega$ is an isomorphism.
	\end{itemize}
	To prove this result, we observe that the weight condition $\lambda_n > \lambda_{n+1}$ implies existence of a non-vanishing Deligne-critical $L$-value. Since $\mathrm{Ev}_\beta^\Omega$ interpolates these $L$-values, it is therefore non-vanishing. We use non-vanishing of $\mathrm{Ev}_\beta^\Omega$ twice: once to produce existence of an $(n+1)$-dimensional family, and again to prove existence of a very Zariski-dense set of classical points attached to RASCARs.

	\medskip
	
	Our second main result, under the same hypotheses, is the construction of an $(n+2)$-variable $p$-adic $L$-function over $\sC$. We show that $\mathrm{Ev}_{\beta+1}^\Omega = \mathrm{Ev}_{\beta}^\Omega \circ U_p^\circ$, where $U_p^\circ$ is the full (normalised) Iwahori Hecke operator at $p$. We thus use \eqref{eq:intro evaluation} to attach a well-defined distribution $\mu^\Omega(\Phi) \defeq (\alpha_p^\circ)^{-\beta}\mathrm{Ev}^{\Omega}_\beta(\Phi)$ to any finite-slope eigenclass $\Phi \in \hc{t}(S_K,\sD_\Omega)$ with $U_p^\circ \Phi = \alpha_p^\circ \Phi$. Note this is independent of $\beta$. We show existence of a distinguished eigenclass $\Phi_{\sC} \in \hc{t}(S_K,\sD_\Omega)$ attached to the family $\sC$, and then define $\cL_p^{\sC} \defeq \mu^\Omega(\Phi_{\sC}) \in \cD(\Zp^\times,\cO_\Omega)$. Under the Amice transform, we view $\cL_p^{\sC}$ as a rigid analytic function $\cL_p^{\sC}$ on $\sC\times \sX(\Zp^\times)$, 	where $\sX(\Zp^\times)$ is the $\Q_p$-rigid space of characters on $\Zp^\times$. In Theorem \ref{thm:family p-adic L-functions} we show:

	\begin{theorem-intro}\label{thm:intro family}
		Let $\tilde\pi$ satisfy the hypotheses of Theorem \ref{thm:intro A}, and let $\sC \subset \sE^{G}$ be the (unique) corresponding $p$-adic family through $\tilde\pi$. There exists a rigid analytic function
			\[
				\cL_p^{\sC} : \sC \times \sX(\Zp^\times) \longrightarrow \overline{\Q}_p
			\]	
		satisfying the following interpolation property: for all $y \in \sC^{\mathrm{Sha}} \subset \sC$,  there exist $c_y^\pm \in \overline{\Q}_p^\times$ such that for all characters $\xi \in \sX(\Zp^\times)$ with $\xi(-1) = \pm 1$, we have
		\begin{align}\label{eq:int in C}
			\cL_p^{\sC}(y,\xi) &= c_y^{\pm} \cdot L_p(\tilde\pi_y, \xi).
		\end{align}
	Here $L_p(\tilde\pi_y,-)$ is the (1-variable) $p$-adic $L$-function from \cite{BDW20}. 
	\end{theorem-intro}
	
	 One could restate this interpolation property in terms of critical $L$-values as follows: for all $y \in \sC^{\mathrm{Sha}}$ of weight $\lambda_y = w(y)$, all integers $j$ with $-\lambda_{y,n+1} \geq j \geq -\lambda_{y,n}$, and all finite order characters $\chi$ of $\Q^\times\backslash \A^\times$ of $p$-power conductor (including trivial conductor), we have
		\[
			\cL_p^{\sC}(y, \chi\chi_{\cyc}^j)	= c_y^{\pm} \cdot \gamma_{(pm)} \cdot e_p(\tilde\pi_y,\chi,j) \cdot e_\infty(\pi_y,\chi,j) \cdot \frac{L^{(p)}\big(\pi_y\otimes\chi, j+\tfrac{1}{2}\big)}{\Omega_{\pi_y}^{\pm}}.
		\]
		Here $\chi_{\cyc}$ is the cyclotomic character, the sign is determined by $\chi(-1)(-1)^j = \pm 1$,  $\gamma_{(pm)}$ is a volume constant, $e_p$ and $e_\infty$ are the Coates--Perrin-Riou factors at $p$ and $\infty$, $L^{(p)}$ is the $L$-function with the local factor at $p$ removed, and $\Omega_{\pi_y}^\pm$ are periods. All of this is explained in Theorem \ref{thm:non-ordinary}.

	\subsection{Application: Bloch--Kato for $\mathrm{GSp}(4)$}
	
	In \cite{LZ20}, Loeffler and Zerbes prove new cases of the Bloch--Kato conjecture for Galois representations attached to Siegel modular forms of genus 2 (i.e.\ for automorphic representations of $\mathrm{GSp}_4$). More precisely, if $\cF_{\mathrm{new}}$ is a Siegel modular form of level 1 and sufficiently high weight, and $\cF$ is an ordinary $p$-stabilisation, they prove the Bloch--Kato conjecture holds for the  $4$-dimensional spin Galois representation attached to $\cF$ in analytic rank 0. This has also led to new understanding of the Bloch--Kato conjecture for symmetric cube modular forms \cite{LZ-cube} and of Iwasawa theory for quadratic Hilbert modular forms \cite{LZ-HilbertQuadratic}.
	
	In \cite{LZ20}, the authors built on previous joint works with Skinner and Pilloni \cite{LSZ17,LPSZ19} constructing Euler systems and $p$-adic $L$-functions for $\mathrm{GSp}_4$. For applications to Bloch--Kato in analytic rank 0, one wants to show the Euler system of \cite{LSZ17} is non-trivial. The main new input in \cite{LZ20} was an explicit reciprocity law relating the Euler system of \cite{LSZ17} to a specific value of the $p$-adic $L$-function of \cite{LPSZ19}. If this $p$-adic $L$-value does not vanish, then the Euler system is non-trivial and can be used to bound a Selmer group. 

This non-vanishing is delicate, since the $p$-adic $L$-value seen by the explicit reciprocity law is \emph{outside} the region of interpolation (so it does not directly relate to a Deligne-critical  $L$-value). In \cite[\S17]{LZ20}, Loeffler--Zerbes deform this into the region of interpolation -- and thus prove the Bloch--Kato conjecture -- conditional on the existence of a family of $p$-adic $L$-functions on $\GL_4$, stated as Theorem 17.6.2 \emph{op.\ cit}. This theorem, whose proof was deferred to `forthcoming work' of the present authors, is a special case of Theorem \ref{thm:intro family}.

	\subsection{Remarks on assumptions}
	
	In the above, and the main body of the paper, we restrict to base field $\Q$ and $\pi$ of tame level 1, a setting where all of our key new ideas are already present. These assumptions drastically simplify the notation and reduce technicality, allowing for a shorter, more conceptual article, whilst still including the results required by \cite{LZ20}. We indicate which of our various assumptions could be relaxed.
	
	Firstly, all of these results can be modified in a conceptually straightforward, but notationally intricate, way to work for $\GL_{2n}$ over an arbitrary totally real field $F$. This was the setting treated in \cite{BDW20}; the reader could consult that paper for the extra details occurring in this case.
		
\medskip

A second minor assumption is the weight condition that $\lambda_n > \lambda_{n+1}$. This rules out, for example, weight 2 modular forms in the case of $\GL_2$. This condition is used in exactly one place: to guarantee existence of a non-central, and hence non-vanishing, Deligne-critical $L$-values. It could be replaced by instead assuming the existence of finite order Hecke characters $\chi^\pm$ of $p$-power conductor, with $\chi^\pm_\infty(-1)(-1)^{\lambda_n} = \pm 1$, such that $L\left(\pi\otimes\chi^\pm,\tfrac{1}{2}-\lambda_n\right) \neq 0$ (see \cite[Def.\ 7.3]{BDW20}). One would expect this to always be true, and indeed it is known for weight 2 modular forms thanks to Rohrlich \cite{Roh89}.

\medskip

	Our most serious assumption is that $\pi$ has tame level 1 (is everywhere spherical).  We impose this for our global applications (Theorems \ref{thm:intro A} and \ref{thm:intro family}), as there is a  subtlety in choosing local test vectors in Shalika models when  $\pi_\ell$ is ramified. More precisely:
\begin{itemize} 
\item One knows there exists a test vector in $\pi_\ell$ whose Friedberg--Jacquet local zeta integral computes exactly the local $L$-factor for $\pi_\ell$. This test vector is smooth for  \emph{some} open compact subgroup $K_\ell \subset \GL_{2n}(\Q_\ell)$; but $K_\ell$ is inexplicit for ramified $\pi_\ell$. To relate our evaluation maps to $L$-values (in order to show their non-vanishing) requires working at level $K = \mathrm{Iw}_G \prod_{\ell \neq p} K_\ell$. 

\item Thanks to work of \cite{JPSS}, variation in families, however, works best  at \emph{new} level 
	\[
		K_1(\tilde\pi) = \mathrm{Iw}_G \prod_{\ell \neq p} K_{1,\ell}(m(\pi_\ell)), 
	\]
where  $K_{1,\ell}(m) \subset \mathrm{GL}_{2n}(\Z_\ell)$ is the open compact subgroup of all matrices whose bottom row is congruent to $(0,\dots,0,1) \newmod{\ell^m}$, and  $m(\pi_\ell)$ is the minimal $m \in \Z_{\geq 0}$ with $\pi_\ell^{K_{1,\ell}(m)} \neq \{0\}$. Crucially, \cite{JPSS} showed that $\pi_\ell^{K_{1,\ell}(m(\pi_\ell))}$ is a line.
\end{itemize}
This leads to tensions between working at level $K$, where we see $L$-values, and level $K_1(\tilde\pi)$, suitable for proving strong results about families. In tame level 1, we have  $K=K_1(\tilde\pi)$ and there is no problem. For simplicity -- and since this issue is somewhat perpendicular to the main novelties of the present paper -- we have presented all our proofs for tame level 1.
	
	\medskip
	We briefly indicate how to drop this assumption in proving Theorem \ref{thm:intro A}, following the strategy employed in 
	 \cite[\S7]{BDW20}, in the parahoric setting. There it was first shown that there exists a $p$-adic family of the correct dimension at level $K$, containing a very Zariski-dense set of classical points corresponding to RASCARs. Then, by studying conductors and the Local Langlands Correspondence, a delicate level-switching operation was carried out that transferred this family from level $K$ to level $K_1(\tilde\pi)$, where strong results can be deduced about its geometry. Since nothing about this argument used the parahoric setting, it applies equally well to our Iwahoric case, and the same methods prove the following generalisation of Theorem \ref{thm:intro A} (the base field still being $\Q$):

		\begin{theorem-intro-2}\label{thm:intro A prime}
			Let $\pi$ be a RASCAR of $G(\A)$ that is spherical and regular at $p$. Suppose that $\lambda_n > \lambda_{n+1}$, where $\pi$ has weight $(\lambda_1,\dots,\lambda_{2n})$. Let $\tilde\pi = (\pi, \alpha)$ be a non-critical slope spin $p$-refinement of $\pi$. Then the level $K_1(\tilde\pi)$ eigenvariety $\sE^{G}(K_1(\tilde\pi))$ for $G$ is \'etale over $\sW_0^G$ at $\tilde\pi$. There exists a neighbourhood of $\tilde\pi$ in $\sE^{G}(K_1(\tilde\pi))$ containing a very Zariski-dense set of classical points corresponding to RASCARs.
		\end{theorem-intro-2}

	However, to prove Theorem \ref{thm:intro family} requires much more control: we need not only that systems of Hecke eigenvalues vary $p$-adically, but also that we can vary all the local test vectors over the family. This is unconditionally possible in tame level 1. This could be relaxed to the assumption that $\pi$ admits certain parahoric-fixed vectors at every finite place using the forthcoming work \cite{DJ-parahoric}. For a discussion on possible generalisations beyond this, see \cite[\S8]{BDW20}.

	\subsection{Structure of the paper}
	
	This paper falls into three parts. 
	
	In Part I (\S\ref{sec:preliminaries}--\ref{sec:abstract evaluation maps}), we fix notation and recall relevant automorphic results. In \S\ref{sec:abstract evaluation maps}, we generalise the abstract construction of evaluation maps from \cite[\S4]{BDW20}, showing that these evaluation maps compute classical $L$-values of RASCARs. 
	
	In Part II (\S\ref{sec:local zeta p}--\S\ref{sec:local zeta unramified}), we develop the theory of spin and Shalika refinements, and compute local zeta integrals (in two different ways). In these sections we prove all the results described in \S\ref{sec:intro shalika}. Our Iwahoric results are summarised in \S\ref{sec:running assumptions}.
	
	In Part III (\S\ref{sec:overconvergent cohomology}--\S\ref{sec:shalika families}), we build our $p$-adic machine on this automorphic foundation, reinterpreting the above in the context of overconvergent cohomology. The heart of  is \S\ref{sec:branching laws}-\ref{sec:distribution valued evaluations}, where we give our main technical results on $p$-adic interpolation of branching laws. In \S\ref{sec:shalika families}, we obtain our main arithmetic applications, following strategies developed in \cite{BDW20}.
	
	\subsection*{Acknowledgements}
	This paper owes a great debt to the observations of David Loeffler, and we thank him profusely. We thank Mahdi Asgari for patiently answering our questions about his work on $\mathrm{GSpin}_{2n+1}$. We also thank the three anonymous referees for their valuable comments and corrections.
	
	DBS was supported by ANID FONDECYT grant 11201025 and ANID PAI grant 77180007. MD was supported by Agence	Nationale de la Recherche grant ANR-18-CE40-0029. AG was supported in part by ERC-2018-COG-818856-HiCoShiVa. AJ is grateful for support from CEMPI, Lille, the Center for Mathematics at Notre Dame, and the NSF. CW was supported by EPSRC Postdoctoral Fellowship EP/T001615/1. Finally we thank the CIRM in Luminy and the CMND program on Eigenvarieties in Notre Dame, where parts of this work were carried out.


	\part{Automorphic Results}
	
	\section{Set-up and notation}\label{sec:preliminaries}

	\subsection{Notation}\label{sec:notation}
We will use the following notation for almost all of this paper. In Sections 5, 6, 7 and 9,  however, we will use purely local analogues of this notation, as fixed in the introduction to Part II.

	Let $n\geq 1$ and let $G \defeq \mathrm{GL}_{2n}$. We write $B $ for the Borel subgroup of upper triangular matrices, $\overline{B} $ for the opposite Borel of lower triangular matrices and $T$ for the maximal split torus of diagonal matrices. We have decompositions $B= TN$ and $\overline{B} = \overline{N}T$ where $N = N_{2n}$ and $\overline{N} = \overline{N}_{2n}$ are the unipotent radicals of $B$ and $\overline{B}$. We also let $G_n = \GL_n$, with $B_n, T_n, N_n$ etc.\ the analogous subgroups. Let $H \defeq \GL_n \times \GL_n$, with an embedding $\iota : H \hookrightarrow G$, $\iota(h_1,h_2) = \smallmatrd{h_1}{0}{0}{h_2}$. 
	
	Let $\cW_G = \mathrm{S}_{2n}$ (resp.\ $\cW_{n} = \mathrm{S}_n$) be the Weyl group of $G$ (resp.\ $G_n$), identified with the permutation subgroup of $G(\Z)$ (resp.\ $G_n(\Z)$). We write $w_{2n}$ and $w_n$ for the longest Weyl elements (i.e. the antidiagonal matrices with 1s on the antidiagonal). 
	
	Let $K_\infty=C_\infty Z_\infty$, where  $Z_\infty$ is the center and $C_\infty$ is the maximal compact subgroup $O_{2n}(\R) \leq G(\R)$. 
	If $A$ is a reductive real Lie group, then $A^\circ$ denotes the connected component of the identity.

	Fix a rational prime $p$ and an embedding $i_p : \overline{\Q} \hookrightarrow \overline{\Q}_p$. We fix a (non-canonical) extension of $i_p$ to an isomorphism $i_p : \C \isorightarrow \overline{\Q}_p$. Let $\Q^{p\infty}$ be the maximal abelian extension of $\Q$ unramified outside $p\infty$, and let $\Galp \defeq  \mathrm{Gal}(\Q^{p\infty}/ \Q) \cong \Zp^\times$ be its Galois group.
	
		If $\pi$ is a regular algebraic cuspidal automorphic representation (RACAR) of $G(\A)$, then we write $L(\pi,s)$ for its standard $L$-function.

	Throughout, we work in `tame level 1', that is, with the open compact level subgroup 
	\begin{equation} \label{eq:K}
		K = \Iw \cdot \prod_{\ell\neq p} G(\Z_\ell)\  \subset G(\A_f),
	\end{equation}
	so away from $p$ we take maximal hyperspecial level and at $p$ we take Iwahori level
	\begin{equation}\label{eq:Iw G}
		\Iw \defeq \{g \in G(\Zp) : g \newmod{p} \in B(\F_p)\} \subset G(\Zp).
	\end{equation}
	Let $\delta_B : T(\A) \to \C^\times$ be the standard modulus character 
	\begin{equation}\label{eq:delta_B}
		t = (t_1,...,t_{2n}) \mapsto |t_1|^{2n-1}|t_2|^{2n-3}\cdots |t_{2n-1}|^{3-2n}|t_{2n}|^{1-2n}.
	\end{equation}
	We repeatedly use that if $\pi_\ell$ is the local component at $\ell$ of a RACAR $\pi$, and $\pi_\ell^{G(\Z_\ell)} \neq 0$ (i.e. $\pi_\ell$ is spherical), 
 then $\pi_\ell$ is a generic unramified principal series representation. By this, we mean there exists an unramified character $\UPS : T(\Ql) \to \C^\times$ such that
	\begin{equation}\label{eq:unramified induced}
		\pi_\ell = \Ind_B^G \UPS
	\end{equation}
is the normalised parabolic induction\footnote{To see this, note by the Bernstein--Zelevinsky classification that any generic irreducible $\pi_\ell$ is isomorphic to an induction of a tensor product of unlinked segments (see \cite[Thm.\ 9.7(b)]{Zel80}, stated in this form in \cite[Thm.\ 8.4.4]{GetzHahn}). If $\pi_\ell$ is unramified, these segments are all unramified, and hence all are characters by \cite[Cor.\ 1.3]{Mat13}. But this says exactly that $\pi_\ell \cong \Ind_B^G \UPS$ for some unramified character $\UPS$ of $T(\Q_\ell)$.} of $\theta$ to $G$ (that is, the true induction of $\delta_B^{1/2}\UPS$).

	All our group actions will be on the left. If $M$ is a $R$-module, with a left action of a group $\Gamma$, then we write $M^\vee = \mathrm{Hom}_R(M,R)$, with associated left dual action $(\gamma \cdot\mu)(m) = \mu(\gamma^{-1} \cdot m)$.
	
	In later sections we work extensively with affinoid rigid spaces. For such a space $X$, we write $\cO_X$ for the ring of rigid functions on $X$, so $X = \mathrm{Sp}(\cO_X)$.
	
	All Hecke characters in this paper are understood to be idelic, i.e.\ characters of $\Q^\times\backslash \A^\times$. Finite order Hecke characters correspond bijectively with Dirichlet characters.

	\subsection{Algebraic weights}\label{sec:algebraic weights}
	Let $X^{\ast}(T)$ be the set of algebraic characters of $T$. Each element of $X^{\ast}(T)$ corresponds to an integral weight  $\lambda= (\lambda_1,...,\lambda_{2n}) \in \Z^{2n}$. If $\lambda_1 \geq \lambda_2 \geq \cdots \geq \lambda_{2n}$, we say $\lambda$ is \emph{dominant}, and write $X^{\ast}_+(T) \subset X^{\ast}(T)$ for the subset of dominant weights. We say that $\lambda$ is \emph{pure} if there exists $\sw  \in \Z$, the \emph{purity weight} of $\lambda$, such that $\lambda_{i}+ \lambda_{2n- i+ 1}= \sw $ for all $1 \leq i \leq n$; we write $\mathrm{pure}(\lambda) \defeq \sw$. We write $X^{\ast}_0(T) \subset X^{\ast}_+(T)$ for the subset of pure $B$-dominant  integral weights, which are exactly those supporting   cuspidal cohomology \cite[Lem.~4.9]{Clo90}.  
	
	For $\lambda \in X^{\ast}_{+}(T)$, let $V_{\lambda}$ be the algebraic irreducible representation of $G$ of highest weight $\lambda$, and let $V_{\lambda}^\vee$ denote its linear dual, with its (left) dual action. We have an isomorphism $V_{{\lambda}}^{\vee} \cong V_{\lambda^\vee}$ where $\lambda^{\vee}= (-\lambda_{2n}, \dots , -\lambda_{1}).$ Given a pure dominant algebraic weight $\lambda \in X_0^*(T)$, let
	\begin{equation}\label{eqn:crit lambda}
		\mathrm{Crit}(\lambda) \defeq \{j \in \Z: -\lambda_{n} \leqslant j \leqslant -\lambda_{n+1}\}.
	\end{equation}
	If $\pi$ is a RACAR for $G(\A)$ of weight $\lambda$ (which we take to mean cohomological with respect to $V_\lambda^\vee$), then $j \in \mathrm{Crit}(\lambda)$ if and only if the $L$-value  $L(\pi,j+\tfrac{1}{2})$ is  Deligne-critical (see \cite[\S6.1]{GR2}).

	\subsection{Shalika models}\label{sec:shalika models}
	Our main results come in the setting of RACARs that admit \emph{Shalika models}. We recall relevant definitions and properties (see e.g.\ \cite[\S1,\S3.1]{GR2}, \cite[\S2.6]{BDW20}).
	
	\subsubsection{Definition of Shalika models}\label{sec:shalika basic properties}
	Let $\cS = \{s = \smallmatrd{h}{}{}{h}\cdot \smallmatrd{1_n}{X}{}{1_n}: h \in \GL_n, X \in \mathrm{M}_n\}$ be the Shalika subgroup of $\GL_{2n}$. Let $\psi$ be the standard non-trivial additive character of $\Q\backslash \A$ from \cite[\S4.1]{DJR18}, and let $\eta$ be a character of $\Q^\times\backslash\A^\times$.  Let $\eta\otimes\psi$ be the character of $\cS(\A)$ given by $(\eta\otimes\psi)(s) = \eta(\det(h))\psi(\mathrm{tr}(X))$. 
	
	A cuspidal automorphic representation $\pi$ of $G(\A)$ is said to have an $(\eta,\psi)$-\emph{Shalika model} if there exist $\varphi \in \pi$ and $g \in G(\A)$ such that
	\begin{equation}\label{eq:shalika integral}
		\cS_{\psi}^\eta(\varphi)(g) \defeq \int_{Z_G(\A)\cS(\Q)\backslash\cS(\A)} \varphi(sg) \ (\eta \otimes \psi)^{-1}(s)ds \neq 0.
	\end{equation}
	If this non-vanishing holds, then:
	\begin{itemize}\setlength{\itemsep}{0pt}
		\item $\eta^n$ is the central character of $\pi$, 
		\item $\eta$ has the form $\eta_0|\cdot|^{\sw}$, where $\eta_0$ has finite order and $\sw = \mathrm{pure}(\lambda)$ is the purity weight of the weight $\lambda$ of $\pi$,
		\item and $\cS_\psi^\eta$ defines an intertwining $\pi \hookrightarrow \mathrm{Ind}_{\cS(\A)}^{G(\A)}(\eta \otimes \psi)$.
\end{itemize}
	
	If $\pi$ has an $(\eta,\psi)$-Shalika model, then for each prime $\ell$ the local component $\pi_\ell$ has a local $(\eta_\ell,\psi_\ell)$-Shalika model \cite[\S3.2]{GR2}, that is, we have (non-canonical) intertwinings 
	\begin{equation}\label{eq:shalika integral local}
		\cS_{\psi_\ell}^{\eta_\ell} : \pi_\ell \hookrightarrow \mathrm{Ind}_{\cS(\Q_\ell)}^{\GL_{2n}(\Q_\ell)}(\eta_\ell\otimes\psi_\ell).
	\end{equation}
	We fix a choice of intertwining $\cS_{\psi_f}^{\eta_f}$ of $\pi_f$ (or equivalently, via \eqref{eq:shalika integral}, an intertwining $\cS_{\psi_\infty}^{\eta_\infty}$ of $\pi_\infty$).
	
	By \cite[Prop.~1.3]{AG94}, if $\pi_\ell$ is spherical then it admits a $(\eta_\ell,\psi_\ell)$-Shalika model if and only if $\pi_\ell^\vee =\pi_\ell\otimes \eta_\ell^{-1}$, in which case $\eta_\ell$ is unramified. 
	
	We recall another characterisation of Shalika models:
	
	\begin{theorem}[Jacquet--Shalika, Asgari--Shahidi]\label{thm:transfer}
		Let $\pi$ be a cuspidal automorphic representation of $G(\A)$. Then $\pi$ admits a $(\eta,\psi)$-Shalika model for some character $\eta$ if and only if $\pi$ is the Langlands functorial transfer of a globally generic cuspidal automorphic representation $\Pi$ of $\mathrm{GSpin}_{2n+1}(\A)$.
	\end{theorem}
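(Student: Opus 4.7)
The plan is to package this as a combination of two known deep results in the Langlands program: the Jacquet--Shalika characterisation of Shalika models by poles of the exterior square $L$-function, and the Asgari--Shahidi characterisation of the image of the functorial transfer from $\mathrm{GSpin}_{2n+1}$ to $\mathrm{GL}_{2n}$.

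First I would establish the ``pole criterion'' for admitting a Shalika model. By Jacquet--Shalika, a cuspidal automorphic representation $\pi$ of $G(\A)$ admits an $(\eta,\psi)$-Shalika model if and only if the partial twisted exterior square $L$-function $L^S(s,\pi,\wedge^2\otimes \eta^{-1})$ has a pole at $s=1$ (equivalently, the Asai/exterior square theta integral attached to $\pi$ does not vanish). The forward implication is essentially an unfolding computation: expanding the Shalika period against an appropriate Eisenstein series produces an integral representation of $L^S(s,\pi,\wedge^2\otimes\eta^{-1})$, and non-vanishing of the Shalika period forces the pole. The converse uses a residue analysis of the same global integral, and also forces the central character constraint $\pi^{\vee}\cong \pi\otimes \eta^{-1}$ (so that $\eta=\eta_0|\cdot|^{\sw}$) already recorded in~\S\ref{sec:shalika basic properties}.

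Next I would invoke Asgari--Shahidi's generic transfer for $\mathrm{GSpin}_{2n+1}$. Their converse-theorem construction, combined with Shahidi's analysis of local coefficients, produces a weak functorial lift $\Pi\mapsto \pi$ from the set of globally generic cuspidal automorphic representations of $\mathrm{GSpin}_{2n+1}(\A)$ to automorphic representations of $\mathrm{GL}_{2n}(\A)$, compatible with the natural embedding ${}^L\mathrm{GSpin}_{2n+1}=\mathrm{GSp}_{2n}(\C)\hookrightarrow \mathrm{GL}_{2n}(\C)={}^L\mathrm{GL}_{2n}$. Characterising the image is the crucial step: a cuspidal $\pi$ lies in the image if and only if its exterior square $L$-function (twisted by the appropriate similitude character) has a pole at $s=1$. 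The ``only if'' part follows by factoring $L(s,\pi,\wedge^2\otimes \eta^{-1})$ through the representation-theoretic decomposition $\wedge^2\mathrm{Std}_{\mathrm{GL}_{2n}}=\mathrm{Std}_{\mathrm{GSpin}_{2n+1}}\oplus(\text{trivial})$ restricted to $\mathrm{GSp}_{2n}(\C)$, which yields a simple pole coming from the trivial factor. The ``if'' direction, which is the heart of Asgari--Shahidi, uses a descent construction (of Ginzburg--Rallis--Soudry type) together with automorphic descent / converse theorem arguments to build the preimage $\Pi$ and verify its globally generic cuspidality.

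Combining these two steps gives the equivalence in both directions: the Shalika condition $\iff$ the pole of $L^S(s,\pi,\wedge^2\otimes \eta^{-1})$ at $s=1$ $\iff$ $\pi$ is a weak transfer from a globally generic cuspidal $\Pi$ on $\mathrm{GSpin}_{2n+1}(\A)$. The main obstacle is not really logical but expository: the statement is a packaging of two long and technical theorems, and the Asgari--Shahidi descent is by far the hardest ingredient. In practice the proof here will amount to a careful citation of \cite{JS} for the first equivalence and of the relevant Asgari--Shahidi papers for the second, together with a remark reconciling the choice of similitude character $\eta$ with the $\mathrm{GSpin}_{2n+1}$ central character, so that the Shalika character $\eta$ matches the character parametrising the similitude factor on the dual-group side.
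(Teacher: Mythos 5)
Your proposal follows exactly the route the paper takes: reduce both sides to the pole of the (partial) twisted exterior square $L$-function at $s=1$, citing Jacquet--Shalika for the Shalika-model equivalence and Asgari--Shahidi for the characterisation of the image of the $\mathrm{GSpin}_{2n+1}$ transfer. The paper's proof is a two-sentence citation of \cite{JS90} and \cite{AS06,AS14} (pointing to \cite[Prop.\ 3.1.4]{GR2} for details), so your expanded exposition of the intermediate steps is consistent with it.
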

	\begin{proof}
		By \cite{JS90}, having a global Shalika model is equivalent to a (partial) exterior square $L$-function having a pole at $s=1$. But in \cite{AS06,AS14} this is shown to be equivalent to being such a functorial transfer. (For further details see \cite[Prop.\ 3.1.4]{GR2}).
	\end{proof}

	\subsubsection{Friedberg--Jacquet integrals}\label{sec:friedberg-jacquet}
	Let $\pi$ be a cuspidal automorphic representation of $G(\A)$, and $\chi$ a finite order Hecke character for $F$. For $W \in \cS_{\psi}^{\eta}(\pi)$ consider the \emph{Friedberg--Jacquet} zeta integral
	\[
	\zeta(s,W,\chi) \defeq \int_{\GL_n(\A)} W\left[\matrd{h}{}{}{I_n}\right] \ \chi(\det(h)) \ |\det(h)|^{s-\tfrac{1}{2}} \ dh,
	\]
	converging absolutely in a right-half plane and extending to a meromorphic function in $s \in \C$. When $W = \bigotimes_{\ell\leq\infty} W_\ell$ for $W_\ell \in \cS_{\psi_\ell}^{\eta_\ell}(\pi_\ell)$, this decomposes into a product of local zeta integrals $\zeta_\ell(s,W_\ell,\chi_\ell)$. 
	Suppose $\pi$ is a RACAR admitting a $(\eta,\psi)$-Shalika model. If $\pi_\ell$ is spherical, then $\pi_\ell^{G(\Z_\ell)}$ is a line; let $W_{\ell}^{\circ} \in \cS_{\psi_\ell}^{\eta_\ell}(\pi_\ell^{G(\Z_\ell)})$ be the spherical test vector normalised so that $W_{\ell}^{\circ}(1_{2n}) = 1$. Then by \cite[Props.~3.1, 3.2]{FJ93} $W_\ell^\circ$ is a \emph{Friedberg--Jacquet test vector}, i.e.\ for all unramified quasi-characters $\chi_\ell : \Q_\ell^\times \to \C^\times$ we have
	\begin{equation}\label{eq:jacquet-friedberg test vector}
		\zeta_\ell\left(s+\tfrac{1}{2}, W^{\circ}_\ell, \chi_\ell\right) =  L\left(\pi_\ell \otimes \chi_\ell,s+\tfrac{1}{2}\right).
	\end{equation}
	We apply this to choose local test vectors at all finite $\ell \neq p$.

	\subsection{The Hecke algebra and $p$-refinements}\label{sec:hecke algebra}
	
	Recall we took $K = \Iw \cdot \prod_{\ell\neq p} G(\Z_\ell)$.

	\subsubsection{Away from $p$}\label{sec:hecke away from p} If $\nu \in X_*^+(T)$ is a dominant cocharacter, and $\ell \neq p$ a prime, define $T_{\nu,\ell} \defeq [G(\Z_\ell) \cdot \nu(\ell)\cdot G(\Z_\ell)]$, a double coset operator. 
	
	\begin{definition}
		The \emph{spherical Hecke algebra} is the commutative algebra $\cH' \defeq \Z[T_{\nu,\ell} : \nu \in X_*^+(T), \ell \neq p]$ generated by all such operators.
	\end{definition}
	
	If $\pi$ is a RACAR with $\pi^K \neq 0$, then $\pi_\ell$ is spherical, and $\pi_\ell^{G(\Z_\ell)}$ is a line preserved by each $T_{\nu,\ell}$. Let $E$ be a number field containing the Hecke field of $\pi_f$ (which exists by \cite[Thm.\ 3.13]{Clo90}). Attached to $\pi$ there is a homomorphism $\psi_\pi : \cH' \otimes E \to E$ sending $T_{\nu,\ell}$ to its eigenvalue on $\pi_\ell^{G(\Z_\ell)}$. We define $\m_\pi \defeq \ker(\psi_\pi)$, a maximal ideal in $\cH'\otimes E$. If $M$ is a module on which $\cH'\otimes E$ acts, we write $M_{\pi}$ for its localisation at $\m_{\pi}$.

	\subsubsection{The Hecke algebra at $p$}\label{sec:hecke p 1}
	For $i = 1,...,2n-1$ define matrices $t_{p,r} \in T(\Qp)$ by
	\begin{equation}\label{eq:t_p i}
		t_{p,1} = \mathrm{diag}(p,1,...,1),\ \  t_{p,2} = \mathrm{diag}(p,p,1,...,1),\  ..., \ t_{p,2n-1} = \mathrm{diag}(p,...,p,1),
	\end{equation}
	and let
	\begin{equation}\label{eq:t_p}
		t_p = t_{p,1}\cdots t_{p,2n-1} = \mathrm{diag}(p^{2n-1},p^{2n-2},..., p,1) \in T(\Qp).
	\end{equation}
	Then define operators $U_{p,r} = [\Iw \cdot t_{p,r} \cdot \Iw]$ and $U_p = [\Iw \cdot t_p \cdot \Iw] = U_{p,1}\cdots U_{p,2n-1}$. 
	
	\begin{definition}\label{def:hecke algebra p}
		The \emph{Hecke algebra at $p$} is $\cH_p \defeq \Z[U_{p,r} : r = 1,...,2n-1]$, and the \emph{full Hecke algebra} is $\cH = \cH' \otimes \cH_p$. 
	\end{definition}

	\subsubsection{$p$-refinements}\label{sec:hecke p}
	Suppose $\pi_p$ is spherical. In particular $\pi_p^{G(\Zp)} \neq 0$, hence $\pi_p^{\Iw} \neq 0$. 

\begin{definition}\label{def:refinement}
A \emph{$p$-refinement} of $\pi_p$ is a pair $\tilde\pi_p = (\pi_p, \alpha)$, where $\alpha : \cH_p \to \overline{\Q}$ is a system of $\cH_p$-eigenvalues appearing in $\pi_p^{\Iw}$; i.e.\ if we set $\alpha_{p,r} = \alpha(U_{p,r})$, then there is an eigenvector $\varphi_p \in \pi_p^{\Iw}$ with $U_{p,r}\varphi_p = \alpha_{p,r}\varphi_p$ for each $r$. Such a $\tilde\pi_p$ is \emph{regular} if the attached generalised eigenspace
	\begin{equation}\label{eq:U_p refined line}
		  \pi_p^{\Iw}[\![U_{p,r} - \alpha_{p,r} : r = 1,...,2n-1]\!] \defeq \left\{ \varphi_p \in \pi_p^{\Iw} : \begin{array}{c} \exists \; m \geq 1 \text{ such that } \\ \left(U_{p,r} - \alpha_{p,r}\right)^m \varphi_p = 0 \\ \text{ for all } r=1, \dots, 2n-1 \end{array} \right\}
	\end{equation}
is one-dimensional over $\C$. We will write $\varphi_p \in \tilde\pi_p$ as shorthand for $\varphi_p \in \pi_p^{\Iw}[\![U_{p,r} - \alpha_{p,r} : r = 1,...,2n-1]\!]$.
\end{definition}

	Let $\tilde\pi$ be a $p$-refinement. After possibly extending $E$, we extend $\psi_\pi$ to a homomorphism
	\begin{equation}\label{eq:psi tilde pi}
		\psi_{\tilde\pi} : \cH \otimes E \longrightarrow E, \hspace{12pt} U_{p,r} \mapsto \alpha_{p,r}.
	\end{equation}
	We let $\m_{\tilde\pi} \defeq \ker(\psi_{\tilde\pi})$ be the corresponding maximal ideal of $\cH \otimes E$. If $M$ is a module upon which $\cH\otimes E$ acts, we write $M_{\tilde\pi}$ for  the localisation of $M$ at $\m_{\tilde\pi}$. Note that if $L$ is a field containing $E$ and $M$ is a finite-dimensional $L$-vector space, then $M_{\tilde\pi}$ is the generalised eigenspace for  $\cH\otimes L$ corresponding to $\psi_{\tilde\pi}$. 
	
	 Let $\pi_p = \Ind_B^G \UPS$ be a generic unramified principal series representation (see \eqref{eq:unramified induced}). We recall the standard classification of $p$-refinements for $\pi_p$. Recall $\cW_G = \mathrm{S}_{2n}$ and $\delta_B$ from \S\ref{sec:notation}.
	
	\begin{proposition}\emph{\cite[Lem.\ 4.8.4]{Che04}.} \label{prop:p-refinement}
		\begin{enumerate}[(i)]
			\item 
		 The semisimplification of $\pi_p^{\Iw}$ as a $\cH_p$-module is isomorphic to $\bigoplus_{\sigma\in \cW_G}(\delta_B^{1/2}\UPS^\sigma) \circ \mathrm{ev}_p$, where $\mathrm{ev}_p : \cH_p \to T(\Qp)$ is the map sending $U_{p,r} \mapsto t_{p,r}$. 
		Thus if $\tilde\pi_p = (\pi_p, \alpha)$ is a $p$-refinement, then there exists $\sigma \in \cW_G$ such that for each $r$,
		\[
		\alpha_{p,r} = \alpha(U_{p,r}) = \left[\delta_B^{1/2}\UPS^\sigma\right]^{w_{2n}}(t_{p,r}) = \prod_{j=1}^r p^{\tfrac{2n-2j+1}{2}}\UPS_{\sigma(2n+1-j)}(p).
		\]
	\item There are equivalences
	\begin{align*}
		\text{$\pi_p$ admits a regular $p$-refinement} &\iff \text{its Satake parameter is regular}\\
		&\iff\text{every $p$-refinement of $\pi_p$ is regular}.
	\end{align*} 
	In this case, the choice of $\sigma$ in (i) is unique, and via this correspondence there are exactly $(2n)!$ $p$-refinements of $\pi_p$, all of which are regular.
\end{enumerate}
	\end{proposition}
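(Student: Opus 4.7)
The plan is to recover the classical analysis of Iwahori-fixed vectors in an unramified principal series, which is essentially the content of the cited \cite[Lem.~4.8.4]{Che04}. First, the Iwahori--Bruhat decomposition $G(\Qp) = \bigsqcup_{\sigma \in \cW_G} B(\Qp) \sigma \Iw$ yields a natural basis $\{f_\sigma\}_{\sigma \in \cW_G}$ of $\pi_p^{\Iw} = (\Ind_B^G \UPS)^{\Iw}$, where $f_\sigma$ is the unique Iwahori-fixed vector supported on $B(\Qp)\sigma\Iw$ and normalised by $f_\sigma(\sigma) = 1$; in particular $\dim_\C \pi_p^{\Iw} = |\cW_G| = (2n)!$.

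The main computational step is to describe the action of each $U_{p,r} = [\Iw \cdot t_{p,r} \cdot \Iw]$ in this basis. Splitting the double coset into left $\Iw$-cosets and carefully tracking the support of $U_{p,r} f_\sigma$, one establishes that with respect to any total ordering refining the Bruhat order on $\cW_G$, the matrix of $U_{p,r}$ is upper-triangular with diagonal entry on $f_\sigma$ given by $[\delta_B^{1/2}\UPS^\sigma]^{w_{2n}}(t_{p,r})$. Here the $\delta_B^{1/2}$ factor reflects the normalisation of parabolic induction, while the $w_{2n}$-twist reflects the fact that the cocharacters $t_{p,r}$ are $B$-dominant whereas the Bruhat stratification naturally pairs with $\overline{B}$. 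Given triangularity, the semisimplification claim in (i) is immediate. For the explicit formula I would just compute: $w_{2n} t_{p,r} w_{2n}^{-1} = \mathrm{diag}(1,\dots,1,p,\dots,p)$ with $r$ copies of $p$ in positions $2n{-}r{+}1,\dots,2n$, so from \eqref{eq:delta_B} one reads off $\delta_B^{1/2}(w_{2n}t_{p,r}w_{2n}^{-1}) = \prod_{j=1}^r p^{(2n-2j+1)/2}$, and the definition $\UPS^\sigma(\mathrm{diag}(t_i)) = \prod_i \UPS_{\sigma(i)}(t_i)$ contributes $\prod_{j=1}^r \UPS_{\sigma(2n+1-j)}(p)$ after the substitution $i = 2n+1-j$; multiplying gives the stated formula.

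For (ii), recall that the Satake parameter of $\pi_p$ is the semisimple conjugacy class in $\GL_{2n}(\C)$ with eigenvalues $\{\UPS_i(p)\}_{i=1}^{2n}$, which is regular precisely when these are pairwise distinct. Let $\alpha_\sigma$ denote the character of $\cH_p$ appearing at $\sigma$ in part (i). From the explicit formula, $\alpha_\sigma(U_{p,1}) = p^{(2n-1)/2}\UPS_{\sigma(2n)}(p)$ and for $r \geq 2$ the ratio $\alpha_\sigma(U_{p,r})/\alpha_\sigma(U_{p,r-1}) = p^{(2n-2r+1)/2}\UPS_{\sigma(2n+1-r)}(p)$; hence the ordered tuple $(\UPS_{\sigma(2n)}(p), \dots, \UPS_{\sigma(2)}(p))$, and therefore also $\UPS_{\sigma(1)}(p)$ (since $\sigma$ is a bijection), is determined by $\alpha_\sigma$. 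When the $\UPS_i(p)$ are distinct, this forces $\sigma \mapsto \alpha_\sigma$ to be injective, so the upper-triangular matrix of each $U_{p,r}$ has pairwise distinct diagonal entries and is therefore diagonalisable. Consequently each generalised eigenspace \eqref{eq:U_p refined line} is one-dimensional, the index $\sigma$ is unique, and there are exactly $(2n)!$ regular $p$-refinements. Conversely, if $\UPS_i(p) = \UPS_j(p)$ for some $i \neq j$, then $\alpha_{\sigma_1} = \alpha_{\sigma_2}$ for at least two distinct Weyl elements, and from the triangular structure one extracts a generalised eigenspace of dimension $\geq 2$, producing a non-regular refinement and completing the equivalence.

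The main obstacle is the triangularity computation underlying Step 2: the coset bookkeeping for $U_{p,r} f_\sigma$ is genuinely lengthy if done by hand. Fortunately this is exactly what is packaged in the Bernstein-type calculation of \cite[Lem.~4.8.4]{Che04}, which I would invoke rather than reprove.
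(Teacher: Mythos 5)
Your proposal is correct and reproduces the argument underlying the cited reference: the paper itself gives no proof, deferring entirely to \cite[Lem.\ 4.8.4]{Che04}, which is precisely the Iwahori--Bruhat basis / upper-triangularity computation you outline. Your explicit evaluation of $[\delta_B^{1/2}\UPS^\sigma]^{w_{2n}}(t_{p,r})$ and your regularity dichotomy for (ii) both check out, and you correctly identify the one genuinely nontrivial input (triangularity of the $U_{p,r}$ in the $\{f_\sigma\}$ basis) as the thing one would invoke from Chenevier rather than rederive.
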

	
	\begin{remark}\label{rem:conjugate}
		\begin{itemize}\setlength{\itemsep}{0pt}
			\item[(i)] This normalisation, where the character $\delta_B^{1/2}\UPS^\sigma$ is conjugated by the Weyl group element $w_{2n}$, matches \cite{DJR18} but might appear strange. Chenevier uses antidominant cocharacters, and switching to dominant characters is equivalent to conjugating by $w_{2n}$. This normalisation will be convenient in \S\ref{sec:shalika refinements}.
			
			\item[(ii)] When the Satake parameter is regular, there is a bijection 
\begin{equation}\label{eq:delta ups}
\Delta_\UPS : \{p\text{-refinements of }\pi_p\}  \longrightarrow \cW_G
\end{equation}
 induced by the above. This is \emph{not} canonical, depending on the choice of character $\UPS$ from which we induce. For $\tau \in \cW_G$, replacing $\UPS$ by $\UPS^\tau$ conjugates the image of $\Delta_\UPS$ by $\tau$. 

		\end{itemize}
	\end{remark}
	
	 When $\pi_p$ is the local component of a RACAR, the eigenvalues $\alpha_{p,r}$ are algebraic but not necessarily $p$-integral. To account for this, we make the following definition.
	
	\begin{definition}\label{def:integral}
		Let $\tilde\pi_p = (\pi_p,\alpha)$ be a $p$-refinement. Define \emph{integral normalisations} 
\[
U_{p,r}^\circ \defeq \lambda(t_{p,r})U_{p,r}, \qquad \alpha_{p,r}^\circ \defeq \lambda(t_{p,r}) \alpha_{p,r} = p^{\lambda_{1} + \cdots + \lambda_{r}} \alpha_{p,r}.
\]
 The $\alpha_{p,r}^\circ$ are $p$-integral (see Remark \ref{rem:integral normalisation} or \cite[Rem.\ 3.23]{BW20}).
	\end{definition}


	\section{Automorphic cohomology classes}\label{sec:automorphic cohomology}
	
	For $K \subset G(\A_f)$ an open compact subgroup, the \emph{locally symmetric space of level $K$} is 
\[
S_K = G(\Q)\backslash G(\A)/K K_\infty^\circ.
\]
 It is a $(2n-1)(n+1)$-dimensional real orbifold. We will now recall how to realise RACARs in the compactly supported Betti cohomology of $S_K$.

	\subsection{Local systems}\label{sec:local systems} We recall standard facts about local systems on $S_K$ (e.g.\ \cite[\S1]{Urb11}, \cite[\S2.3]{BDW20}). If $M$ is a left $G(\Q)$-module such that  $Z_G(\Q)\cap KK_\infty^\circ$ acts trivially, let $\cM$ be the local system on $S_K$ given by locally constant sections of $G(\Q)\backslash [G(\A) \times M]/KK_\infty^\circ$, with action $\gamma(g,m)kz = (\gamma g kz, \gamma \cdot m)$. We denote such local systems with calligraphic letters.
	
	If $M$ is a left $K$-module, let $\sM$ (with a script letter) be the local system on $S_K$ given by locally constant sections of $G(\Q)\backslash [G(\A) \times M]/KK_\infty^\circ$ with action $\gamma(g,m)kz = (\gamma gkz, k^{-1}\cdot m)$.
	
	If $M$ is a left $G(\A)$-module, then it has actions of the subgroups $G(\Q)$ and $K$, and there is an isomorphism $\cM\isorightarrow\sM$ of associated local systems given by $(g,m)  \mapsto (g,g_f^{-1} \cdot m)$. The key example of such $M$ for this paper is $M = V_\lambda^\vee$, whence $\cV_{\lambda}^\vee \isorightarrow \sV_{\lambda}^\vee$.

	\subsection{Hecke operators}
	
	Let $\gamma \in G(\A_f)$ and $M$ be a left $G(\Q)$-module (resp.\ $K$-module). We suppose $\gamma$ acts on $M$. We have a natural projection map $p_{K,\gamma} : S_{\gamma K \gamma^{-1}\cap K} \to S_K$, and a double coset operator $[K\gamma K]$ on $\hc{\bullet}(S_K,\cM)$ (resp.\ $\hc{\bullet}(S_K,\sM)$) defined as the composition 
	\begin{equation}\label{eq:Hecke composition}
		[K\gamma K] \defeq \mathrm{tr}(p_{K,\gamma}) \circ [\gamma] \circ p_{K,\gamma^{-1}}^*,
	\end{equation}
	where $\mathrm{tr}$ is the trace and $[\gamma] : \hc{\bullet}(S_{K\cap\gamma^{-1}K\gamma},\cM) \to \hc{\bullet}(S_{\gamma K \gamma^{-1}\cap K}, \cM)$ is given on local systems by $(g,m) \mapsto (g\gamma^{-1}, \gamma\cdot m)$ (and similarly for $\sM$).
	
	\subsubsection{Localisation at RACARs} \label{sec:localisation}
	Recall $K = \Iw \cdot \prod_{\ell\neq p} G(\Z_\ell)$ and $\cH$ from \S\ref{sec:hecke algebra}. For appropriate $M$ (e.g.\ $M = V_\lambda^\vee$), this acts on $\hc{\bullet}(S_K,\cM)$ and $\hc{\bullet}(S_K,\sM)$ via the process above. If $\pi$ is a RACAR with $\pi^K \neq 0$, it therefore makes sense to localise $\hc{\bullet}(S_K,\cM)$ at $\m_\pi$ as in \S\ref{sec:hecke away from p}. We denote the localisation by $\hc{\bullet}(S_K,-)_\pi$.

	\subsubsection{The action at infinity}
	
	We have $K_\infty/K_\infty^\circ = \{\pm 1\}$. This group has two characters $\epsilon^\pm : K_\infty/K_\infty^\circ \to \{\pm 1\}$, where $\epsilon^\pm$ sends $-1$ to $\pm 1$. If $M$ is a module on which $K_\infty/K_\infty^\circ$ acts and 2 acts invertibly -- for example, the cohomology of $S_K$ over a field of characteristic not 2 -- then we have $M = M^+ \oplus M^-$, where $M^\pm$ are the eigenspaces where $K_\infty/K_\infty^\circ$ acts via $\epsilon^\pm$. We obtain a (Hecke-equivariant) decomposition of the cohomology groups $\hc{\bullet}(S_K,-)$ into $\pm$-submodules (as the action of $K_\infty/K_\infty^\circ$ commutes with the $G(\A_f)$-action).

	\subsubsection{Integral normalisations}\label{sec:integral normalisations cohomology}
	
	The module $V_\lambda^\vee$ comes equipped with the natural (algebraic) action of $\GL_{2n}$, which we have been denoting with a $\cdot$. As we have already remarked, the resulting Hecke operators $U_{p,r} = U_{p,r}^\cdot = [K_p t_{p,r} K_p]$ on the cohomology of $\sV_{\lambda}^\vee$ are not integrally normalised. 
	
	In \S\ref{sec:slope-decomp}, we will equip $V_\lambda^\vee(\overline{\Q}_p)$ with another natural action of $\GL_{2n}(\Zp)$ and $t_{p,r}$, denoted $*$. Concretely, we will have $t_{p,r} * \mu = \lambda(t_{p,r})(t_{p,r} \cdot \mu)$. In light of Definition \ref{def:integral}, if we let $U_{p,r}^*$ be the Hecke operator defined via \eqref{eq:Hecke composition} with the $*$-action instead of the $\cdot$-action, then $U_{p,r}^* = U_{p,r}^\circ$ is integrally normalised. This is all explained in detail in the remark at the end of \cite[\S3.3]{BDW20}.

	\subsection{Cohomology classes attached to RACARs} \label{sec:cohomology for RACARs}
	
	Let $t = n^2 + n -1$, which is the top degree of cohomology to which RACARs for $G(\A)$ contribute. In particular, let $\pi$ be a RACAR of $G(\A)$; then we recall that there exists a Hecke-equivariant isomorphism
	\begin{equation}\label{eq:pi to cohomology}
		\pi_f^K \isorightarrow \hc{t}(S_K,\sV_\lambda^\vee(\overline{\Q}_p))^\pm_{\pi},
	\end{equation}
	for a unique $\lambda \in X_0^*(T)$. The isomorphism \eqref{eq:pi to cohomology} is non-canonical, depending on our fixed choice of $i_p : \C \isorightarrow \overline{\Q}_p$ and a choice of basis $\Xi_{\infty}^\pm$ of the 1-dimensional $\C$-vector space $\h^t(\fg_\infty, K_\infty^\circ; \pi_\infty\otimes V_\lambda^\vee(\C))^\pm$, where $\fg_\infty \defeq \mathrm{Lie}(G_\infty)$. This is all standard, explained e.g. in \cite[\S2.5]{BDW20}.
	
	Suppose $\pi$ admits an $(\eta,\psi)$-Shalika model, and recall we chose an intertwining $\cS_{\psi_f}^{\eta_f} : \pi_f \to \cS_{\psi_f}^{\eta_f}(\pi_f)$. Combining with \eqref{eq:pi to cohomology}, we get a (non-canonical) Hecke-equivariant isomorphism
	\begin{equation}\label{eq:Theta}
		\Theta^{\pm} : \cS_{\psi_f}^{\eta_f}(\pi_f^K) \isorightarrow \hc{t}(S_K,\sV_\lambda^\vee(\overline{\Q}_p))^\pm_{\pi}.
	\end{equation}
	
	Possibly enlarging the number field $E$, there is a natural $E$-rational subspace $\cS_{\psi_f}^{\eta_f}(\pi_f^K, E) \subset \cS_{\psi_f}^{\eta_f}(\pi_f^K)$. As in \cite[Prop.\ 4.2.1]{GR2} (cf.\ \cite[\S2.10]{BDW20}), there exist $\Omega_\pi^\pm \in \C^\times$ (canonical up to $E^\times$-multiple) and finite $L/\Qp$ such that $\Theta^\pm/i_p(\Omega_\pi^\pm)$ maps $\cS_{\psi_f}^{\eta_f}(\pi_f^K,E)$ into $\hc{t}(S_K,\sV_\lambda^\vee(L))^\pm_{\pi}$. Moreover, for $\ell \neq p$ the spherical test vector $W_\ell^\circ$ is $E$-rational.

	
	\section{Evaluation maps}\label{sec:abstract evaluation maps}

	\emph{Evaluation maps} were crucial to the methods of \cite{GR2,DJR18,BDW20}. We give constructions of abstract evaluation maps, generalising \cite{BDW20} and \cite{DJR18}.

	\subsection{Automorphic cycles and abstract evaluation maps}\label{sec:auto cycles}
	In this section we generalise the abstract theory in \cite[\S4]{BDW20}, where the evaluation maps were defined with respect to the parabolic $Q$ with Levi $H$. These `parahoric' evaluation maps can be interpolated over $2$-dimensional parabolic subsets of weight space, but are not suitable for our goal of interpolation in $(n+1)$-weight variables. We now construct evaluation maps defined with respect to any standard parabolic $P \subset Q$. For the Iwahoric case, we are most interested in $P = B$. The proofs of \cite[\S4]{BDW20} go through almost identically with these modifications, so we are terse with details here. 
	
	\begin{remark}
		Since the notation is heavy, we sketch the differences between our new definitions and those of \cite{BDW20}. Firstly, to better suit the more general theory, we replace the twisting operator $\xi = \smallmatrd{1_n}{w_n}{0}{w_n}$ of \cite[Def.\ 4.2]{BDW20} with $u^{-1}$, where 
		\begin{equation}\label{eq:u}
			u = \smallmatrd{1_n}{w_n}{0}{1_n} \in G(\Zp).
		\end{equation}
		 Unlike $\xi$, the element $u^{-1}$ lies in $\Iw$. We will show that the definitions/results of \cite{BDW20} are essentially unchanged with this switch.
		
		In  \cite{DJR18,BDW20}, the evaluation maps for $Q$ used the matrix $t_{p,n} = \mathrm{diag}(p,...,p,1,...,1) \defeqrev t_Q$ and operator $U_{p,n}^\circ = [K_p t_{p,n} K_p] \defeqrev U_Q^\circ$, a $Q$-controlling operator (in the sense of \cite[\S2.5]{BW20}). For a general parabolic $P$, we instead use a different matrix $t_P \in \GL_{2n}(\Qp)$ (see Definition \ref{def:parabolic}), giving a Hecke operator $U_P^\circ$ attached to $P$.
	\end{remark}
	
	\subsubsection{Automorphic cycles}\label{sec:auto cycles subsec}
	Automorphic cycles are coverings of locally symmetric spaces for $H$ that have real dimension equal to $t$, the top degree of cohomology to which RACARs for $G$ contribute. This `magical numerology' was exploited in \cite{GR2,DJR18} to define classical evaluation maps and to give a cohomological interpretation of the Deligne-critical $L$-values of RASCARs. 
	
	\begin{definition}\label{def:parabolic}
		Let $P\subset Q \subset \GL_{2n}$ be  a standard parabolic with Levi $\GL_{m_1} \times \cdots \times \GL_{m_r}$. Define a block diagonal matrix 
		\[
			t_P = \mathrm{diag}(p^{r-1}\mathrm{I}_{m_1}, p^{r-2}\mathrm{I}_{m_2}, \cdots, p\mathrm{I}_{m_{r-1}}, \mathrm{I}_{m_r}),
		\]
		where for an integer $m \geq 1$, we let $\mathrm{I}_m$ denote the $(m \times m)$ identity matrix. Note $t_P \in T_P^{++}$ as in \cite[\S2.5]{BW20}; and since $P \subset Q$, the first $n$ diagonal entries are all a positive power of $p$.
	\end{definition}

	For example, we have $t_Q = t_{p,n}$ (from \eqref{eq:t_p i}), and 
	\[
	t_B= \mathrm{diag}(p^{2n-1},p^{2n-2},...,p,1).
	\]
	The most significant change in passing from the $Q$-evaluation maps (from \cite{BDW20}) to those here is the use of the matrix $t_P$ rather than $t_Q$.

	Define $J_P \subset \GL_{2n}(\Zp)$ to be the parahoric subgroup for $P$.

	\begin{definition}\label{def:auto cycle}
		Fix $m \in \Z_{\geq 0}$ prime to $p$, and let $K = K_pK^p \subset G(\A_f)$ be an open compact subgroup. We assume $N(\Zp) \subset K_p \subset J_P$. For $\beta \in \Z_{\geq 0}$, define an open compact subgroup $L_\beta^P = L_p^{P,\beta} L^p \subset H(\A_f)$ by setting:
		\begin{itemize}\setlength{\itemsep}{0pt}
			\item[(i)]  $L_p^{P,\beta} \defeq H(\Zp) \cap K_p \cap (u^{-1} t_P^{\beta}) K_p (u^{-1} t_P^\beta)^{-1},$ and
			\item[(ii)]$L^{p} \defeq \{h \in H(\widehat{\Z}^{(p)}) : h \equiv 1 \newmod{m}\}$, the principal congruence subgroup of level $m$.
		\end{itemize}
		The \emph{automorphic cycle of level $L_\beta^P$} is
		\[
		X_{\beta}^P \defeq H(\Q)\backslash H(\A)/L_\beta L_\infty^\circ,
		\]
		where $L_\infty = H_\infty\cap K_\infty$ for $H_\infty = H(\R)$. This is a real orbifold of dimension $t$ \cite[(23)]{DJR18}.
	\end{definition}
	
	We will always take $m$ to be the smallest positive integer such that $L^p \subset K^p \cap H(\A_f)$ and $H(\Q) \cap hL_\beta^P L^\circ_\infty h^{-1} = Z_G(\Q) \cap L_\beta^P L_\infty^\circ$ for all $h \in H(\A)$ and for both $P = B,Q$ (compare \cite[(4.1),(4.2)]{BDW20}). This means $X_\beta^P$ is a real manifold \cite[(21)]{DJR18}. The impact of changing $m$ is discussed in \cite[\S4.1]{BDW20}.
	
	\begin{lemma}\label{lem:volume}
		We have 
\[
	\vol(L_p^{P,\beta}) = \delta_B(t_P^\beta) \cdot A_P,
\]
where $A_P = \delta_B(t_P^{-1})\vol(L_p^{P,1})$ is a constant independent of $\beta$.
	\end{lemma}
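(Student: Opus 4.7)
The plan is to compute the volume directly using Iwahori factorisations, reducing to the inductive claim $\vol(L_p^{P,\beta+1}) = \delta_B(t_P)\cdot\vol(L_p^{P,\beta})$ for $\beta \geq 1$.

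First, since $u = \smallmatrd{1_n}{w_n}{0}{1_n}\in N(\Zp) \subset \Iw$, conjugation by $u$ preserves $\Iw$ and Haar measure on $G(\Qp)$. Hence
\[
L_p^{P,\beta} = H(\Zp) \cap u^{-1}\!\left(\Iw \cap t_P^\beta \Iw t_P^{-\beta}\right)\!u.
\]
Using the Iwahori factorisation $\Iw = \overline{N}(p\Zp)T(\Zp)N(\Zp)$, conjugation by $t_P^\beta$ scales each positive root subgroup $U_\alpha$ (for $\alpha = e_i - e_j$, with $i,j$ in blocks $a_\alpha \leq b_\alpha$ of $P$) by $\alpha(t_P^\beta) = p^{\beta(b_\alpha - a_\alpha)}$. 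So
\[
\Iw \cap t_P^\beta \Iw t_P^{-\beta} = \overline{N}(p\Zp)\cdot T(\Zp)\cdot N^{(\beta)},
\]
where $N^{(\beta)} \subset N(\Zp)$ is cut out by requiring the $U_\alpha$-coordinate to lie in $p^{\beta(b_\alpha - a_\alpha)}\Zp$.

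Second, I would intersect with $uH(\Zp)u^{-1}$. Explicitly, for $h = \smallmatrd{h_1}{0}{0}{h_2} \in H(\Zp)$, one has $uhu^{-1} = \smallmatrd{h_1}{w_n h_2 - h_1 w_n}{0}{h_2}$. So the condition $h \in L_p^{P,\beta}$ translates into: $h_1, h_2 \in \Iw_n$, together with (a) congruences on the strictly lower-triangular entries of $h_1$ and $h_2$ coming from $N^{(\beta)}$ applied to the diagonal blocks, and (b) congruences on the entries of $w_n h_2 - h_1 w_n$ coming from the off-diagonal block. The depth of each congruence grows linearly in $\beta$.

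Finally, the Haar measure on $H(\Qp)$ factorises over root subgroups (and $T(\Zp)$), and each linear-in-$\beta$ congruence contributes a power of $p$. Thus $L_p^{P,\beta+1} \subset L_p^{P,\beta}$ with finite index equal to $\prod_{\alpha>0}|\alpha(t_P)|^{-1} = \delta_B(t_P)^{-1}$, proving the inductive step. The main obstacle is the combinatorial bookkeeping: the $u$-twist couples $h_1$ and $h_2$ via condition (b), so one must verify that every positive root of $G$ contributes exactly once -- either through a root of $H$ (in condition (a)) or through an off-diagonal entry (in condition (b)) -- yielding precisely the factor $\delta_B(t_P)$ per step.
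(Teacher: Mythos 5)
Your overall strategy --- reducing to the index claim $[L_p^{P,\beta}:L_p^{P,\beta+1}]=\delta_B(t_P^{-1})$ for $\beta\geq 1$ --- is exactly the paper's. The paper settles this index by citing \cite[Lem.\ 4.4.1]{loeffler-parabolics}, which gives $[L_p^{P,\beta}:L_p^{P,\beta+1}] = [N^\beta:N^{\beta+1}]$ for $N^\beta = t_P^\beta N(\Z_p)t_P^{-\beta}$; the essential input to that lemma is that $u^{-1}$ represents the open $H$-orbit on $G/\overline{P}$. You instead try to re-derive the index identity by a direct congruence count, and that is where the gap lies. The root-count does work out numerically: the $n^2-n$ strictly upper-triangular entries of $h_1,h_2$ (positive roots of $H$) plus the $n^2$ entries of the off-block $w_n h_2 - h_1 w_n$ (positive roots of $G$ outside $H$) total $2n^2-n$, the number of positive roots of $G$. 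But to turn this into a proof you must show the coupled off-block conditions are \emph{independent} of the block-diagonal ones in the relevant lattice-index sense --- the off-block entry $(w_nh_2-h_1w_n)_{ij}=(h_2)_{n+1-i,j}-(h_1)_{i,n+1-j}$ is a difference of two already-constrained coordinates, so one needs a change of variables (pairing each entry of $h_2$ with the mirrored entry of $h_1$) and a case analysis to verify the index multiplies by exactly $\delta_B(t_P^{-1})$ at each step. You explicitly defer this (``the main obstacle is the combinatorial bookkeeping''), but it is the whole content of the lemma; it is precisely what the open-orbit condition supplies in Loeffler's proof.

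Two smaller points. In your condition (a), the $\beta$-dependent congruences coming from $N^{(\beta)}$ fall on the strictly \emph{upper}-triangular entries of $h_1$ and $h_2$ (since $N^{(\beta)}\subset N(\Z_p)$ is upper-unipotent), not the lower-triangular ones, which carry only the $\beta$-independent condition of lying in $p\Z_p$ from $\overline{N}(p\Z_p)$. And the identity $L_p^{P,\beta}=H(\Z_p)\cap u^{-1}\!\left(\Iw\cap t_P^\beta\Iw t_P^{-\beta}\right)\!u$ follows simply from $u\in\Iw$, whence $u^{-1}\Iw u=\Iw$; the remark that conjugation by $u$ preserves Haar measure on $G(\Q_p)$ is not what is used, since the volume in question is computed in $H(\Q_p)$, not $G(\Q_p)$.
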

\begin{proof}
Let $N \subset G$ be the upper unipotent subgroup, and let $N^\beta \defeq t_P^\beta N(\Zp) t_P^{-\beta} \subset N(\Zp)$. By \cite[Lem.\ 4.4.1]{loeffler-parabolics}, for $\beta \geq 1$ we have
\[
[ L_p^{P,\beta} : L_p^{P,\beta+1}] = [N^\beta : N^{\beta+1}] = \delta_B(t_P^{-1}),
\]
the second equality being by definition of the modulus character $\delta_B$. It follows that $\vol(L_p^{P,\beta}) = \delta_B(t_P^{\beta-1}) \vol(L_p^{P,1})$, from which the result follows.
\end{proof}
	
	\begin{lemma}\label{lem:det 1 mod beta}
		If $(\ell_1,\ell_2) \in L_p^{P,\beta}$, then $\ell_2 \equiv w_n\ell_1w_n \newmod{p^\beta}$. Hence there is an isomorphism 
		\[
		\det(L_p^{P,\beta}) \isorightarrow (1+p^\beta\Zp) \times \Zp^\times, \hspace{12pt} (x,y) \mapsto (xy^{-1},y).
		\]
	\end{lemma}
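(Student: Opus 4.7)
The plan is to unfold the definition of $L_p^{P,\beta}$ and perform the $(n,n)$-block matrix computation directly.

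\smallskip

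\noindent\emph{Congruence.} Write $t_P = \mathrm{diag}(T_1, T_2)$ for its $(n, n)$-block decomposition. The condition $\iota(\ell_1, \ell_2) \in (u^{-1} t_P^\beta)\,\Iw\,(u^{-1} t_P^\beta)^{-1}$ is equivalent to $g \defeq t_P^{-\beta}\, u\, \iota(\ell_1, \ell_2)\, u^{-1}\, t_P^\beta \in \Iw$. A direct block computation with $u = \left(\begin{smallmatrix}1_n & w_n \\ 0 & 1_n\end{smallmatrix}\right)$ yields
$$u\, \iota(\ell_1, \ell_2)\, u^{-1} = \begin{pmatrix} \ell_1 & w_n \ell_2 - \ell_1 w_n \\ 0 & \ell_2 \end{pmatrix},$$
so the upper-right $n \times n$ block of $g$ equals $T_1^{-\beta}(w_n \ell_2 - \ell_1 w_n)T_2^\beta$. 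Membership $g \in \Iw$ requires this block to have entries in $\Zp$, which imposes the desired vanishing on $w_n \ell_2 - \ell_1 w_n$.

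\smallskip

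The crucial input is that the hypothesis $P \subset Q$ enforces a uniform valuation gap between $T_1$ and $T_2$: if the Levi of $P$ is $\GL_{m_1} \times \cdots \times \GL_{m_r}$ and $s$ is the unique index with $m_1 + \cdots + m_s = n$, then the minimum $p$-adic valuation among diagonal entries of $T_1$ equals $r-s$, exactly one more than the maximum $r-s-1$ among entries of $T_2$. Hence $v_p(T_{1,i}) - v_p(T_{2,j}) \geq 1$ for all $i, j$, and integrality of the $(i, j)$-entry of the upper-right block forces
$$v_p\bigl((w_n \ell_2 - \ell_1 w_n)_{ij}\bigr) \;\geq\; \beta\bigl(v_p(T_{1,i}) - v_p(T_{2,j})\bigr) \;\geq\; \beta.$$
This gives $w_n \ell_2 \equiv \ell_1 w_n \pmod{p^\beta}$; left-multiplying by $w_n$ and using $w_n^2 = 1_n$ yields the first claim.

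\smallskip

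\noindent\emph{Isomorphism.} Since $(x, y) \mapsto (xy^{-1}, y)$ is an automorphism of $\Zp^\times \times \Zp^\times$, it suffices to identify the image of $\det(L_p^{P,\beta})$. Taking determinants in the congruence $\ell_2 \equiv w_n \ell_1 w_n \pmod{p^\beta}$ gives $\det(\ell_1)\det(\ell_2)^{-1} \in 1 + p^\beta\Zp$, so the map sends $\det(L_p^{P,\beta})$ into $(1+p^\beta\Zp) \times \Zp^\times$. For surjectivity, given $c \in 1 + p^\beta\Zp$ and $b \in \Zp^\times$, I would exhibit the explicit diagonal pair $\ell_1 = \mathrm{diag}(b, 1, \ldots, 1, c)$, $\ell_2 = \mathrm{diag}(1, \ldots, 1, b)$ in $\GL_n(\Zp)^2$. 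Each $\ell_i$ being diagonal, the conjugations $T_j^{-\beta}\ell_i T_j^\beta$ equal $\ell_i$ and are Iwahori; meanwhile $w_n \ell_2 - \ell_1 w_n$ has a single non-zero entry $1-c$ at position $(n, 1)$, and the scaling factor $p^{\beta(v_p(T_{2,1}) - v_p(T_{1,n}))} = p^{-\beta}$ in the upper-right block is absorbed by $v_p(1-c) \geq \beta$. A quick check confirms all Iwahori congruences, placing $(\ell_1, \ell_2)$ in $L_p^{P,\beta}$ with determinants $(bc, b)$, which maps to $(c, b)$.

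\smallskip

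The main delicacy is keeping track of the block-matrix computation and correctly extracting the valuation gap forced by $P \subset Q$; once these are pinned down, both assertions follow by direct verification.
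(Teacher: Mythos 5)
Your proof is correct and follows essentially the same route as the paper: conjugate $\iota(\ell_1,\ell_2)$ by $u$ to obtain the upper-triangular form $\left(\begin{smallmatrix}\ell_1 & w_n\ell_2 - \ell_1 w_n\\ 0 & \ell_2\end{smallmatrix}\right)$, then conjugate by $t_P^\beta$ and read off the integrality constraint on the upper-right block, and finish surjectivity by exhibiting explicit diagonal matrices.

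One small point worth flagging in your favour: you track the conjugation more carefully than the paper does. The paper's stated reason is only that the first $n$ diagonal entries of $t_P^\beta$ are divisible by $p^\beta$ (i.e. that $T_1^{-\beta}$ scales down by at least $p^{-\beta}$), but this alone does not suffice once $P \subsetneq Q$, because $T_2^\beta$ can also contribute positive powers of $p$. Your observation that $P \subset Q$ forces the \emph{gap} $v_p(T_{1,i}) - v_p(T_{2,j}) \geq 1$ (minimum of $T_1$ strictly exceeds maximum of $T_2$) is what actually closes the argument, and it is the right way to make the paper's terse reference to \cite[Lem.\ 2.1]{DJR18} (which only treats $P = Q$, where $T_2 = I_n$) rigorous in general. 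Your surjectivity witness $\ell_1 = \mathrm{diag}(b,1,\dots,1,c)$, $\ell_2 = \mathrm{diag}(1,\dots,1,b)$ is a trivial cosmetic variant of the paper's $\ell_1 = \mathrm{diag}(1_{n-1},ab)$, $\ell_2 = \mathrm{diag}(b,1_{n-1})$; both produce a single non-zero entry at position $(n,1)$ of $w_n\ell_2 - \ell_1 w_n$, which is exactly the position where the conjugation scaling is $p^{-\beta}$. (Your formula as written is ambiguous at $n=1$, but that degenerate case is immediate.)
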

	\begin{proof}
		Similar to \cite[Lem.\ 2.1]{DJR18}. First, compute that for $\smallmatrd{\ell_1}{}{}{\ell_2} \in L_\beta^{P,\beta}$, we need
		\[
		t_P^{-\beta}u  \smallmatrd{\ell_1}{}{}{\ell_2} u^{-1}t_P^\beta = t_P^{-\beta}  \smallmatrd{\ell_1}{w_n(\ell_2 - w_n\ell_1w_n)}{}{\ell_2} t_P^\beta \in K_p.
		\] 
		Since $P \subset Q$, each of the first $n$ diagonal entries of $t_P^\beta$ is congruent to $0 \newmod{p^\beta}$. In particular, after expanding we see $p^{-\beta}(\ell_2 - w_n\ell_1w_n) \in \GL_n(\Zp)$, so  $\ell_2 \equiv w_n\ell_1w_n \newmod{p^\beta}$, giving the first statement. We then have $\det(\ell_2) \equiv \det(\ell_1) \newmod{p^\beta}$, so to see the isomorphism, it suffices to prove surjectivity. But given $(a,b) \in (1+p^\beta\Zp) \times \Zp^\times$, we see $\ell_1 = \smallmatrd{1_{n-1}}{}{}{ab}, \ell_2 = \smallmatrd{b}{}{}{1_{n-1}}$ works (for any $P$).
	\end{proof}
	
	\begin{corollary}\label{cor:B inside Q}
 We have $L_p^{P,\beta} \subset L_p^{Q,\beta}$.
	\end{corollary}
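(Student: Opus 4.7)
The plan is to show directly that any $h \in L_p^{P,\beta}$ satisfies the defining condition of $L_p^{Q,\beta}$. Since both groups are cut out by intersecting $H(\Zp) \cap K_p$ with a conjugate of $K_p = \Iw$, and since $h \in H(\Zp) \cap K_p$ is automatic, the only non-trivial requirement is to verify that $t_Q^{-\beta} u h u^{-1} t_Q^\beta \in \Iw$.

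The first step is to reuse the computation from the proof of Lemma \ref{lem:det 1 mod beta}: writing $h = \smallmatrd{\ell_1}{}{}{\ell_2}$, one obtains
\[
u h u^{-1} = \begin{pmatrix} \ell_1 & w_n(\ell_2 - w_n \ell_1 w_n) \\ 0 & \ell_2 \end{pmatrix}.
\]
Because $t_Q^\beta = \mathrm{diag}(p^\beta \mathrm{I}_n, \mathrm{I}_n)$ is a scalar on each of the two $n$-blocks, conjugation by $t_Q^\beta$ leaves the diagonal blocks $\ell_1$, $\ell_2$ unchanged and simply multiplies the upper-right block by $p^{-\beta}$, giving
\[
t_Q^{-\beta} u h u^{-1} t_Q^\beta = \begin{pmatrix} \ell_1 & p^{-\beta} w_n(\ell_2 - w_n \ell_1 w_n) \\ 0 & \ell_2 \end{pmatrix}.
\]

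To conclude that this lies in $\Iw$, two observations suffice. First, $\ell_1, \ell_2 \in \Iw_n$ since $h \in \Iw$. Second, the upper-right block has entries in $\Zp$ precisely when $\ell_2 \equiv w_n \ell_1 w_n \pmod{p^\beta}$, and this congruence is the content of Lemma \ref{lem:det 1 mod beta}, whose proof used only the hypothesis $P \subset Q$ (to ensure divisibility of the first $n$ diagonal entries of $t_P^\beta$ by $p^\beta$). There is accordingly no real obstacle here: the corollary is essentially a repackaging of the lemma, reflecting the fact that among all $t_P$ with $P \subset Q$, the matrix $t_Q$ gives the weakest condition on the upper-right block after conjugation.
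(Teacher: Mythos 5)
Your proof is correct and takes essentially the same route as the paper: both reduce to the congruence $\ell_2 \equiv w_n\ell_1 w_n \pmod{p^\beta}$ established in Lemma \ref{lem:det 1 mod beta}, and check that this (together with $\ell_1,\ell_2 \in \Iw_n$, automatic from $h \in \Iw$) suffices for $t_Q^{-\beta} u h u^{-1} t_Q^\beta \in \Iw$. The paper phrases this as an explicit characterisation of $L_p^{Q,\beta}$ while you spell out the block computation directly; the content is identical.
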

	\begin{proof}
		By the proof of Lemma \ref{lem:det 1 mod beta} (or \cite[Lem.\ 2.1]{DJR18}), we deduce $L_p^{Q,\beta} = \{(\ell_1,\ell_2) \in K_p : \ell_2 \equiv w_n\ell_1w_n \newmod{p^\beta}\}$. But by Lemma \ref{lem:det 1 mod beta}, any element of $L_p^{P,\beta}$ satisfies this.
	\end{proof}
	
	By Lemma \ref{lem:det 1 mod beta} and strong approximation for $H$, via the map 
	\[
		(h_1,h_2) \mapsto (\det(h_1)/\det(h_2), \det(h_2))
	\]
	 the cycle $X_\beta^P$ decomposes into connected components indexed by 
	\begin{equation}\label{eq:component group}
		\pi_0(X_\beta^P) \defeq \Cl(p^\beta m) \times \Cl( m) \cong (\Z/p^\beta m\Z)^\times \times (\Z/m\Z)^\times
	\end{equation}
	(cf.\ \cite[(22)]{DJR18}). Here for an ideal $I \subset \Z$, we let $\sU(I) \defeq \big\{x \in \widehat{\Z}^\times: x \equiv 1 \newmod{I}\big\} \subset \widehat{\Z}^\times$ and let
	\begin{equation}\label{eq:ray class group}
		\Cl(I) = \Q^\times\backslash \A^\times/\sU(I)\R_{>0} \cong (\Z/I)^\times \hspace{12pt}
	\end{equation}
	be the narrow ray class group of conductor $I$. For $\delta \in H(\A_f)$, we write $[\delta]$ for its associated class in $\pi_0(X_\beta^P)$ and denote the corresponding connected component
	\[
	X_\beta^P[\delta] \defeq H(\Q)\backslash H(\Q)\delta L_\beta^P H_\infty^\circ/L_\beta^P L_\infty^\circ.
	\]

	As $L^p \subset K^p\cap H(\A_f)$, by definition of $L_p^{P,\beta}$ there is a proper map (see \cite[Lemma 2.7]{Ash80})
	\begin{equation}\label{eq:iota beta}
		\iota_\beta^P : X_\beta^P \longrightarrow S_K,\ \ \ \ \
		[h] \longmapsto [\iota(h)u^{-1} t_P^\beta].
	\end{equation}

	\subsubsection{Abstract evaluation maps} \label{sec:abstract evaluations}
	Define $\Delta_P \subset \GL_{2n}(\Qp)$ to be the semigroup generated by the parahoric subgroup $J_P \subset \GL_{2n}(\Zp) $ and the matrices
	\[
		t_{p,m_1},\ \  t_{p,m_1+m_2}, \ \ ...,\ \  t_{p,m_1 + \cdots + m_{r-1}},
	\]
	 where $P$ has Levi $\GL_{m_1} \times \cdots \times \GL_{m_r}$ (and recalling $t_{p,r}$ from \eqref{eq:t_p i}). For example:
	\begin{itemize}\setlength{\itemsep}{0pt}
		\item[--] $J_B = \Iw$, and $\Delta_B$ is generated by $\Iw$ and $t_{p,1}, t_{p,2}, ..., t_{p,2n-1}$.
		\item[--] $J_Q = \{g \in \GL_{2n}(\Zp): g \newmod{p} \in Q(\F_p)\}$ and $\Delta_Q$ is generated by $J_Q$ and $t_{p,n}$.
	\end{itemize}

	Let $K \subset G(\A_f)$ be an open compact subgroup such that $N_Q(\Zp) \subset K_{p} \subset J_P$. Let $M$ be a left $\Delta_P$-module, with action denoted $*$. Then $K$ acts on $M$ via its projection to $K_p \subset \Delta_P$, giving a local system $\sM$ on $S_K$ via \S\ref{sec:local systems}. The notation is suggestive: as in \S\ref{sec:integral normalisations cohomology}, using this $*$-action in \eqref{eq:Hecke composition} we get `integrally normalised' Hecke operators $U_{p,r}^\circ$ on the cohomology $\hc{t}(S_K,\sM)$.
	
	The constructions here are almost identical to those of \cite[\S4.2]{BDW20} where they are motivated and explained in great detail; thus we give only the briefest description here.
	
	For $\beta \in \Z_{\geq 0}$ and $\delta \in H(\A_f)$, define a congruence subgroup
	\begin{equation}\label{eq:gamma beta delta}
		\Gamma_{\beta,\delta}^P \defeq H(\Q) \cap \delta L_\beta^P H_\infty^\circ \delta^{-1}.
	\end{equation} 
	This acts on $M$ via 
	\begin{equation}\label{eq:gamma action}
		\gamma *_{\Gamma_{\beta,\delta}^P}m \defeq (\delta^{-1}\gamma\delta)_f* m.
	\end{equation}
	Let $M_{\Gamma_{\beta,\delta}^P} \defeq M/\{m - \gamma *_{\Gamma_{\beta,\delta}^P} m : m \in M, \gamma \in \Gamma_{\beta,\delta}^P\}$ be the coinvariants of $M$ by $\Gamma_{\beta,\delta}^P$. 
	
	\begin{definition}\label{def:evaluation map}
		The \emph{evaluation map for $M$ and $P$ of level $p^\beta$ at $\delta$} is the composition
		\begin{align}\label{eq:evaluation definition}
			\mathrm{Ev}_{P, \beta,\delta}^{M}   : \htc(S_K,\sM)  \xrightarrow{\ \tau_{\beta}^{P,\circ} \circ (\iota_\beta^P)^*\ } \hc{t}(X_\beta^P,&\iota^*\sM) \xrightarrow{c_\delta^*} \hc{t}(\Gamma_{\beta,\delta}^P\backslash \cX_H, c_\delta^*\iota^*\sM)\\
			& \xrightarrow{\coinv_{\beta,\delta}^P}  \hc{t}(\Gamma_{\beta,\delta}^P\backslash\cX_H, \Z) \otimes M_{\Gamma_{\beta,\delta}^P} \labelisorightarrow{ - \cap \theta_\delta^P} M_{\Gamma_{\beta,\delta}^P}, \notag
		\end{align}
		where:
		\begin{itemize}\setlength{\itemsep}{0pt}
			\item $\iota_\beta^P$ is the map from \eqref{eq:iota beta}, and $\tau_\beta^{P,\circ}$ is the map $(\iota_\beta^{P})^*\sM \to \iota^*\sM$ of local systems on $X_\beta^P$ induced by $(h,m) \mapsto (h,u^{-1} t_P^\beta * m)$;
			
			\item $\Gamma_{\beta,\delta}^P$  acts on $\cX_H \defeq  H_\infty^\circ/L_\infty^\circ$ by left translation, and there is an isomorphism
			\[
			c_\delta : \Gamma_{\beta,\delta}^P\backslash \cX_H \isorightarrow X_\beta^P[\delta] \subset X_\beta, \ \ \ [h_\infty]_\delta \mapsto [\delta h_\infty],
			\]
			where if $[h_\infty] \in \cX_H$, we write $[h_\infty]_\delta$ for its image in $\Gamma_{\beta,\delta}^P\backslash \cX_H$;
			
			\item $\mathrm{coinv}_{\beta,\delta}^P$ is the quotient map $M \twoheadrightarrow M_{\Gamma_{\beta,\delta}^P}$, which induces a map on cohomology with image in the cohomology of the trivial local system on $\Gamma_{\beta,\delta}^P\backslash \cX_H$ attached to $M_{\Gamma_{\beta,\delta}^P}$;
			
			\item and $(-\cap \theta_\delta^P)$ is induced from cap product  $(-\cap\theta_\delta^P) : \hc{t}(\Gamma_{\beta,\delta}^P\backslash \cX_H, \Z) \isorightarrow \Z$, for $\theta_\delta^P$ a fundamental class in the Borel--Moore homology $\h^{\mathrm{BM}}_t(\Gamma_{\beta,\delta}^P\backslash\cX_H,\Z) \cong \Z$.
		\end{itemize}
	\end{definition}
	
	We choose the classes $\theta_\delta^P$ compatibly in $\delta$ and $P$. Let $\theta_\delta^Q$ be exactly as in \cite[\S4.2.3]{BDW20}. We have a natural map $\mathrm{pr}_Q^B : \Gamma_{\beta,\delta}^B\backslash\cX_H \to \Gamma_{\beta,\delta}^Q\backslash\cX_H$; we let $\theta_\delta^B = (\mathrm{pr}_Q^B)^*\theta_\delta^Q$.

	Exactly as in \cite[\S4.3]{BDW20}, we can track dependence of these maps as we allow $M$, $\beta$ and $\delta$ to vary. Each of the following results is proved exactly as their given counterpart \emph{op.\ cit}.:
	
	\begin{lemma}(Variation in $M$) \label{lem:pushforward}
		Let $\kappa : M \rightarrow N$ be a $\Delta_P$-module map. There is a commutative diagram
		\[
		\xymatrix@C=18mm@R=6mm{
			\hc{t}(S_K, \sM) \ar[r]^-{\mathrm{Ev}_{P,\beta,\delta}^M} \ar[d]^-{\kappa_*} & M_{\Gamma_{\beta,\delta}^P}\ar[d]^-{\kappa}\\
			\hc{t}(S_K,\sN) \ar[r]^-{\mathrm{Ev}_{P,\beta,\delta}^N} & N_{\Gamma_{\beta,\delta}^P}.
		}
		\]
	\end{lemma}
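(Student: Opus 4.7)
The plan is to decompose the evaluation map along the composition given in Definition~\ref{def:evaluation map} and check that $\kappa$ induces a natural transformation at every stage. Since $\kappa : M \to N$ is a $\Delta_P$-module map, it is in particular equivariant for the $*$-action of $K_p \subset J_P$, and thus induces a morphism of local systems $\kappa : \sM \to \sN$ on $S_K$ (and the analogous morphism on pullbacks to $X_\beta^P$). This already gives naturality of the first pullback $(\iota_\beta^P)^*$ and of the subsequent pullback $c_\delta^*$.

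Next I would check compatibility with the twisting map $\tau_\beta^{P,\circ}$. By definition, on local systems this is induced by $(h,m) \mapsto (h, u^{-1}t_P^\beta * m)$; since $u^{-1}t_P^\beta \in \Delta_P$ and $\kappa$ is a $\Delta_P$-module map, we have $\kappa(u^{-1}t_P^\beta * m) = u^{-1}t_P^\beta * \kappa(m)$, so the square
\[
\xymatrix@C=16mm@R=6mm{
(\iota_\beta^P)^*\sM \ar[r]^-{\tau_\beta^{P,\circ}} \ar[d]^-{\kappa} & \iota^*\sM \ar[d]^-{\kappa}\\
(\iota_\beta^P)^*\sN \ar[r]^-{\tau_\beta^{P,\circ}} & \iota^*\sN
}
\]
commutes on stalks, hence globally.

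For the coinvariants step, note that $\Gamma_{\beta,\delta}^P$ acts on $M$ and $N$ via \eqref{eq:gamma action} through the $\Delta_P$-action. Thus $\kappa$ descends to a map $\kappa : M_{\Gamma_{\beta,\delta}^P} \to N_{\Gamma_{\beta,\delta}^P}$, and the square involving $\mathrm{coinv}_{\beta,\delta}^P$ commutes by the universal property of coinvariants. Finally, the cap product with the fundamental class $\theta_\delta^P$ is defined on $\hc{t}(\Gamma_{\beta,\delta}^P\backslash\cX_H, \Z) \otimes (-)$, and tensoring with the identity on $\hc{t}(\Gamma_{\beta,\delta}^P\backslash\cX_H, \Z)$ is obviously natural in the coefficient module. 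Stacking these commutative squares yields the claim.

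The only nontrivial point is the compatibility at the twisting step, and even there it reduces to the defining property that $\kappa$ intertwines the $*$-actions of the element $u^{-1}t_P^\beta \in \Delta_P$; the rest is formal functoriality of pullback, coinvariants, and cap product.
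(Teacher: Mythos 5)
Your proof is correct and takes essentially the approach the paper intends: the paper defers this lemma entirely to \cite[Lem.\ 4.6]{BDW20}, whose proof is exactly such a step-by-step naturality check across the composition in Definition~\ref{def:evaluation map}. You have correctly identified that the only non-formal step is compatibility at the twist $\tau_\beta^{P,\circ}$, which reduces to $\kappa$ intertwining the $*$-action of $u^{-1}t_P^\beta \in \Delta_P$ (valid since $u^{-1}\in\Iw\subset J_P$ and $t_P = t_{p,m_1}\cdots t_{p,m_1+\cdots+m_{r-1}}$, so both lie in $\Delta_P$), and that $\Delta_P$-equivariance suffices for the coinvariants step because the $\Gamma^P_{\beta,\delta}$-action factors through $J_P\subset\Delta_P$.
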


	\begin{proposition}(Variation in $\delta$) \label{prop:ind of delta}
		Let $N$ be a left $H(\A)$-module, with action denoted $*$, such that $H(\Q)$ and $H_\infty^\circ$ act trivially. Let $\kappa : M\to N$ be a map of $L_\beta^P$-modules (with $N$ an $L_\beta^P$-module by restriction). Then
		\[
		\mathrm{Ev}_{P,\beta,[\delta]}^{M,\kappa} \defeq \delta * \left[\kappa \circ \mathrm{Ev}_{P,\beta,\delta}^M\right] : \hc{t}(S_K,\sM) \longrightarrow N
		\]
		is well-defined and independent of the representative $\delta$ of $[\delta]$.
	\end{proposition}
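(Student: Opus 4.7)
The plan is to adapt the strategy used for the analogous $Q$-parahoric statement \cite[Prop.~4.9]{BDW20} to the setting of a general standard parabolic $P \subset Q$. Two things must be verified: that the twist $\delta *$ descends from $N$ to the coinvariant quotient $N_{\Gamma_{\beta,\delta}^P}$ (so the composition $\delta * \circ \kappa \circ \mathrm{Ev}_{P,\beta,\delta}^M$ makes sense at all), and that the resulting composition does not change when $\delta$ is replaced by $\gamma \delta \ell h_\infty$ with $\gamma \in H(\Q)$, $\ell \in L_\beta^P$, and $h_\infty \in H_\infty^\circ$.

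For well-definedness, fix $\gamma \in \Gamma_{\beta,\delta}^P$ and use the defining inclusion \eqref{eq:gamma beta delta} to write $\gamma = \delta \ell h_\infty \delta^{-1}$ for some $\ell \in L_\beta^P$ and $h_\infty \in H_\infty^\circ$. Since $\ell \in H(\A_f)$ and $h_\infty \in H_\infty^\circ$, one computes $(\delta^{-1}\gamma\delta)_f = \ell$, whence the identity $\delta \cdot (\delta^{-1}\gamma\delta)_f = \delta \ell = \gamma \delta h_\infty^{-1}$ in $H(\A)$. Combined with the triviality of the $H(\Q)$- and $H_\infty^\circ$-actions on $N$, this yields $\delta * ((\delta^{-1}\gamma\delta)_f * n) = (\gamma \delta h_\infty^{-1}) * n = \delta * n$ for every $n \in N$, so $\delta *$ annihilates the defining relations of $N_{\Gamma_{\beta,\delta}^P}$.

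For independence, decompose a change of representative into the three elementary cases $\delta \mapsto \delta\ell$, $\delta \mapsto \delta h_\infty$, and $\delta \mapsto \gamma\delta$. The first two leave $X_\beta^P[\delta]$, $\Gamma_{\beta,\delta}^P$, and the fundamental class $\theta_\delta^P$ unchanged (using that $\ell$ normalises $L_\beta^P$ and that $H(\A_f)$ and $H_\infty^\circ$ commute inside $H(\A)$); the only effect is an extra factor of $\ell *$ or $h_\infty *$ when applying $(\delta\ell)*$ or $(\delta h_\infty)*$ instead of $\delta *$, which is absorbed by exactly the same trick as in the well-definedness step. The case $\delta \mapsto \gamma\delta$ replaces $\Gamma_{\beta,\delta}^P$ by the conjugate $\gamma \Gamma_{\beta,\delta}^P \gamma^{-1}$, inducing a canonical isomorphism of coinvariants; the map $c_{\gamma\delta}$ is obtained from $c_\delta$ by left translation by $\gamma$, and the fundamental classes (chosen compatibly for all $P$ via the pullback $\mathrm{pr}_Q^B$) are identified. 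Composing produces an outer factor of $\gamma *$ on $N$, which is trivial since $\gamma \in H(\Q)$. The main obstacle is bookkeeping: carefully tracking how each of the four $\delta$-dependent ingredients of Definition \ref{def:evaluation map} (the twist $\tau_\beta^{P,\circ}$, the identification $c_\delta$, the coinvariant projection $\mathrm{coinv}_{\beta,\delta}^P$, and the cap product with $\theta_\delta^P$) transforms in each of the three cases, and checking that all resulting diagrams commute. The arguments themselves are formal once the well-definedness step is in place, and transfer verbatim from \cite[\S4.3]{BDW20} after replacing $\xi$ by $u^{-1}$ and $t_Q$ by $t_P$.
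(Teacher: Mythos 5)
Your proof is correct and takes the same route the paper takes, namely deferring to the argument of \cite[Prop.\ 4.9]{BDW20} after the substitutions $\xi \rightsquigarrow u^{-1}$ and $t_Q \rightsquigarrow t_P$; the paper states this explicitly and gives no further detail, so your reconstruction fills in exactly the expected verifications. The well-definedness computation (using $\delta\ell = \gamma\delta h_\infty^{-1}$ together with triviality of the $H(\Q)$- and $H_\infty^\circ$-actions) and the three-case decomposition of a change of representative are both correct; the only slip is listing $\tau_\beta^{P,\circ}$ among the $\delta$-dependent ingredients, which it is not, but this has no bearing on the argument.
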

	
	To vary $\beta$, we have a natural projection $\mathrm{pr}_{\beta} : X_{\beta+1}^P \longrightarrow X_\beta^P,$ inducing a projection $\mathrm{pr}_{\beta} : \pi_0(X_{\beta+1}^P) \rightarrow \pi_0(X_\beta^P)$. The action of $t_P$ on $M$ yields an action of $U_{P}^\circ$ on $\hc{t}(S_K,\sM)$, where $U_B^\circ = U_p^\circ$ and $U_Q^\circ = U_{p,n}^\circ$.
	
	For compatibility in $\beta$, we need to assume additionally that $K_p = J_P$ is the parahoric for $P$.
	
	\begin{proposition}(Variation in $\beta$) \label{prop:evaluations changing beta} Let $N$ and $\kappa$ be as in Proposition~\ref{prop:ind of delta}. If $\beta > 0$, then as maps $\hc{t}(S_K,\sM) \to N$ we have
		\[
		\sum_{[\eta] \in \mathrm{pr}_{\beta}^{-1}([\delta])} \mathrm{Ev}_{P,\beta+1,[\eta]}^{M,\kappa}   =
		\mathrm{Ev}_{P,\beta,[\delta]}^{M,\kappa} \circ U_P^\circ.
		\]
	\end{proposition}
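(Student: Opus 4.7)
The plan is to unwind both sides of the claimed equality to a level at which they can be compared coset by coset. On the right-hand side, $U_P^\circ = [J_P t_P J_P]$ acts via the $*$-action through its coset decomposition $J_P t_P J_P = \bigsqcup_i t_P u_i^{-1} J_P$, where the $u_i$ can be chosen as representatives of $N_P^-(\Zp)/(t_P N_P^-(\Zp) t_P^{-1})$ (here $N_P^-$ denotes the unipotent radical opposite to $P$); this uses crucially the hypothesis $K_p = J_P$, so that $J_P$ is an Iwahori-type subgroup compatible with $t_P$. The action of $U_P^\circ$ on $\hc{t}(S_K,\sM)$ is then realised as a sum over $i$ of composites $\mathrm{tr}(p_{K,t_P u_i^{-1}}) \circ [t_P u_i^{-1}] \circ p^*$ in the style of \eqref{eq:Hecke composition}.

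Next, I would analyse the fibres of $\mathrm{pr}_\beta : X_{\beta+1}^P \to X_\beta^P$. Using $L_p^{P,\beta} = H(\Zp) \cap K_p \cap (u^{-1}t_P^\beta) K_p (u^{-1}t_P^\beta)^{-1}$, one checks (as in Lemma \ref{lem:det 1 mod beta}) that the quotient $L_p^{P,\beta}/L_p^{P,\beta+1}$ is naturally parametrised by the same set of $u_i$'s appearing in the coset decomposition of $U_P^\circ$; concretely, $L_p^{P,\beta}$ controls congruences modulo $p^\beta$ of the $N^-$-part after conjugation by $u^{-1}t_P^\beta$, and passing from $\beta$ to $\beta+1$ strengthens the congruence by exactly one power of $t_P$. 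A choice of lifts $\eta_i \in H(\A_f)$ of the $u_i$ then gives representatives for $\mathrm{pr}_\beta^{-1}([\delta])$, of the form $[\delta\eta_i]$.

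Step three is to match the twists. The key identity is $u^{-1} t_P^{\beta+1} = (u^{-1}t_P^\beta) \cdot t_P$, which says that the composite $\tau_{\beta+1}^{P,\circ} \circ (\iota_{\beta+1}^P)^*$ in Definition \ref{def:evaluation map} factors (up to the coset bookkeeping above) as $\tau_\beta^{P,\circ} \circ (\iota_\beta^P)^*$ composed with the $*$-action of $t_P$. Combined with the fact that, at the parahoric level $K_p = J_P$, the commutation of pullback/trace along the projection $S_{K \cap u_i K u_i^{-1}} \to S_K$ with the inclusion $\iota_\beta^P$ exchanges the sum over $[\eta] \in \mathrm{pr}_\beta^{-1}([\delta])$ on the left-hand side with the sum over the cosets $t_P u_i^{-1}$ defining $U_P^\circ$ on the right. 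Compatibility of the fundamental classes $\theta_\delta^P$ along $\mathrm{pr}_\beta$, which follows from our choice $\theta_\delta^B = (\mathrm{pr}_Q^B)^*\theta_\delta^Q$ and the analogous statement for $Q$ already proved in \cite[Prop.\ 4.10]{BDW20}, ensures that the final cap-product step intertwines correctly. Finally, the auxiliary map $\kappa$ and the twist by $\delta$ (giving the quotient $\mathrm{Ev}_{P,\beta,[\delta]}^{M,\kappa}$ of Proposition \ref{prop:ind of delta}) are applied at the very end of both sides and pose no difficulty, since they are $H(\A)$-equivariant.

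The main obstacle will be the fine bookkeeping in step two: namely, exhibiting an explicit bijection between representatives of $\mathrm{pr}_\beta^{-1}([\delta])$ inside $H$ and representatives of the coset space $J_P t_P J_P / J_P$ inside $G$, compatibly with the twist $u^{-1}t_P^\beta$. This is precisely where the hypothesis $K_p = J_P$ is needed, since otherwise the $p$-part of $L_\beta^P$ does not match the unipotent side of the Hecke coset decomposition. Once this bijection is in place, the rest of the argument is a careful diagram chase identical in spirit to the $Q$-parahoric version \cite[Prop.\ 4.10]{BDW20}.
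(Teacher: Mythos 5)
The proposal correctly identifies that the key is to compare the Hecke coset decomposition of $U_P^\circ$ with the fibres of $\mathrm{pr}_\beta : X_{\beta+1}^P \to X_\beta^P$, but the step you designate as the ``main obstacle'' is precisely where your argument breaks down, and you have not supplied the ingredient the paper uses to overcome it.

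First, a notational error: your decomposition $J_P t_P J_P = \bigsqcup_i t_P u_i^{-1} J_P$ with $u_i$ ranging over $N_P^-(\Zp)/(t_P N_P^-(\Zp) t_P^{-1})$ is not well posed, since conjugation by $t_P$ \emph{expands} $N_P^-$, so $t_P N_P^-(\Zp) t_P^{-1} \supsetneq N_P^-(\Zp)$ and this is not a quotient of $N_P^-(\Zp)$. The standard single-coset decomposition is $J_P t_P J_P = \bigsqcup_{u \in N_P(\Zp)/t_P N_P(\Zp) t_P^{-1}} u\, t_P J_P$, using the \emph{upper} unipotent radical $N_P$, which $t_P$ contracts.

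More seriously, you then propose to ``choose lifts $\eta_i \in H(\A_f)$ of the $u_i$'' in order to parametrise $\mathrm{pr}_\beta^{-1}([\delta])$. This cannot work as stated: the $u_i$ are elements of $N_P \subset G$ (unipotent), not elements of $H$, and there is no lift of $N_P(\Zp)/t_P N_P(\Zp) t_P^{-1}$ to $H(\A_f)$ in general. The fibre $\mathrm{pr}_\beta^{-1}([\delta])$ is indexed by the genuinely different group $L_p^{P,\beta}/L_p^{P,\beta+1} \subset H(\Zp)$. What one has is an \emph{injection} $L_p^{P,\beta}/L_p^{P,\beta+1} \hookrightarrow K_p/K_P^0(p)$ (given by $\ell \mapsto (u^{-1}t_P^\beta)^{-1}\iota(\ell)(u^{-1}t_P^\beta)$), and the bijection you need is established not by a lift but by showing both sides have equal cardinality, so that the square
\[
\xymatrix{
	S_K & S_{K^0_P(p)} \ar[l] \\
	X_\beta^P \ar[u] & X_{\beta+1}^P \ar[l] \ar[u]
}
\]
is Cartesian. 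The paper computes $[K_p : K_P^0(p)] = [N_P(\Zp) : t_PN_P(\Zp)t_P^{-1}]$ via Iwahori decomposition, and computes $[L_p^{P,\beta}:L_p^{P,\beta+1}]$ by appealing to \cite[Lem.~4.4.1]{loeffler-parabolics}, using crucially that $u^{-1}$ represents the open $H$-orbit in $G/\overline{P}$. This open-orbit / spherical-variety input (also used in the proof of Lemma~\ref{lem:volume}) is the genuinely new ingredient over the $P=Q$ case from \cite{BDW20}, and it is entirely absent from your proposal; without it, the claimed bijection ``one checks as in Lemma~\ref{lem:det 1 mod beta}'' is unproven and, as formulated (via lifting), cannot be made to work. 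The remainder of your argument (the factorisation $u^{-1}t_P^{\beta+1} = (u^{-1}t_P^\beta)\cdot t_P$, compatibility of fundamental classes, and the final application of $\kappa$ and the twist by $\delta$) is sound and consistent with the paper.
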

	
	\begin{proof}
		The proof follows almost exactly as in \cite{BDW20}. There is a unique point at which more detail is required. The left-hand square of diagram (4.14) \emph{op.\ cit}.\ generalises to
		\[
			\xymatrix{
				S_K & S_{K^0_P(p)} \ar[l] \\
				X_\beta^P \ar[u]^{\iota_\beta^P} & X_{\beta+1}^P \ar[l] \ar[u]^{\iota_\beta^{P,0}}
			},
		\] 
		where $K^0_P(p) = K\cap t_PKt_P^{-1}$, the map $\iota_\beta^{P,0}$ is induced by the map $[h] \mapsto [\iota(h)u^{-1}t_P^\beta],$ and where the horizontal maps are the natural projections. We need to show that this square is Cartesian in this generality. For this, since the vertical maps are embeddings, it is enough to show that the horizontal maps have the same degree; i.e., that $[K : K^0_P(p)] = [L_p^{P,\beta} : L_p^{P,\beta+1}]$ for any $\beta \geq 1$. Let $N_P$ denote the unipotent radical of $P$ and set $N_{\beta} = t_P^\beta N_P(\Z_p) t_P^{-\beta}$. By the Iwahori decomposition for $K_p$, we easily find that $[K : K^0_P(p)] = [N_P(\Z_p) : N_1] = [N_{\beta} : N_{\beta+1}]$ for any $\beta \geq 1$. On the other hand, the element $u^{-1}$ is a representative of the (unique) Zariski dense $H$-orbit in the flag variety $G/\overline{P}$, where $\overline{P}$ denotes the opposite of $P$. Therefore the proof of \cite[Lem.\ 4.4.1]{loeffler-parabolics} implies that $[L_p^{P,\beta} : L_p^{P,\beta + 1}] = [N_{\beta} : N_{\beta+1}]$. Hence we have 
	\[
	[K : K^0_P(p)] = [L_p^{P,\beta} : L_p^{P,\beta + 1}]
	\]
	as required.
	\end{proof}

	\subsection{Classical evaluation maps, Shalika models and $L$-values}\label{sec:classical evaluations}
	The classical evaluation maps $\cE_{\beta,\delta}^{j,\sw}$ of \cite{DJR18} were rephrased in the abstract language of Definition \ref{def:evaluation map} in \cite[\S5]{BDW20}.  We recap the construction, whilst again generalising it to parahoric level for a general parabolic $P$. When $P = Q$ this recovers  \cite{DJR18,BDW20}; in this paper we are primarily interested in $P = B$ (which is new). We relate the values of these evaluation maps to critical $L$-values. Throughout, we assume $K_p = J_P$.
	
	The definition of $\cE_{\beta,\delta}^{j,\sw}$ fundamentally used the following branching law from \cite[Prop.\ 6.3.1]{GR2} and \cite[Lem.\ 5.2]{BDW20}. Let $\lambda \in X_0^*(T)$ be a pure algebraic weight, with purity weight $\sw$. For integers $j_1,j_2$, let $V_{(j_1,j_2)}^H$ denote the 1-dimensional $H(\Zp)$-representation given by the character
	\[
	H(\Zp) \longrightarrow \Zp^\times,\ \ \ \ \ \ (h_1,h_2) \longmapsto \det(h_1)^{j_1}\det(h_2)^{j_2}.
	\]
	\begin{lemma}\label{lem:branching law 2}
		Let $j \in \Z$. Then $j \in \mathrm{Crit}(\lambda)$ if and only if $\mathrm{dim}\  \mathrm{Hom}_{H(\Zp)}\big(V_\lambda^\vee, V^H_{(j,-\sw-j)}\big) = 1$.
	\end{lemma}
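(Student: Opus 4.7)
The plan is to reduce the statement to a classical branching rule for the Levi $H = \GL_n \times \GL_n \subset G = \GL_{2n}$. First, since $V^H_{(j,-\sw-j)}$ is one-dimensional, Hom-duality together with complete reducibility of $H(\Zp)$-representations gives
\[
\dim \mathrm{Hom}_{H(\Zp)}\bigl(V_\lambda^\vee, V^H_{(j,-\sw-j)}\bigr) \;=\; \dim \mathrm{Hom}_{H(\Zp)}\bigl(V^H_{(-j,\sw+j)}, V_\lambda\bigr),
\]
which equals the multiplicity of the character $\det_1^{-j}\det_2^{\sw+j}$ in $V_\lambda|_H$. Purity of $\lambda$, i.e.\ the relation $\lambda_i + \lambda_{2n+1-i} = \sw$, then ensures the degree constraint $n(-j) + n(\sw+j) = |\lambda|$ is automatically satisfied. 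Thus the task reduces to showing that this multiplicity is exactly one if and only if $-\lambda_n \leq j \leq -\lambda_{n+1}$.

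Next, by the Littlewood--Richardson rule for Levi branching, this multiplicity equals the LR coefficient $c^\lambda_{(a^n),(b^n)}$ with $(a,b) = (-j,\sw+j)$, where $(a^n)$ denotes the rectangular partition with $n$ rows of length $a$ (after twisting by a suitable power of the determinant to reduce to polynomial representations). Concretely, this is the coefficient of $s_{(a^n)}(\mathbf{x}) s_{(b^n)}(\mathbf{y})$ in the Schur expansion
\[
s_\lambda(x_1,\ldots,x_n,y_1,\ldots,y_n) \;=\; \sum_{\mu,\nu} c^\lambda_{\mu\nu}\, s_\mu(\mathbf{x})\, s_\nu(\mathbf{y}).
\]

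For rectangular partitions of height $n$ one has $c^\lambda_{(a^n),(b^n)} \in \{0,1\}$, with value $1$ precisely when $\lambda_{n+1} \leq a, b \leq \lambda_n$; this one verifies by exhibiting the unique LR skew tableau of shape $\lambda/(a^n)$ with content $(b^n)$ in the allowed range. Substituting $a = -j$, the inequality $\lambda_{n+1} \leq -j \leq \lambda_n$ rearranges to $-\lambda_n \leq j \leq -\lambda_{n+1}$, which is exactly the defining condition $j \in \mathrm{Crit}(\lambda)$; the condition on $b$ follows automatically from $a+b = \sw$ and purity. The main difficulty is the multiplicity-one statement for rectangular LR coefficients; as this is a classical fact already recorded in the cited references \cite[Prop.~6.3.1]{GR2} and \cite[Lem.~5.2]{BDW20}, one may appeal directly to these rather than re-deriving the combinatorial identity.
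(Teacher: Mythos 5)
Your proposal is correct and takes essentially the same approach as the paper, which itself offers no proof beyond the citation to \cite[Prop.~6.3.1]{GR2} and \cite[Lem.~5.2]{BDW20} (those references do indeed argue via Levi branching and Littlewood--Richardson coefficients, just as you sketch). The duality reduction, the identification of the multiplicity with $c^\lambda_{(a^n),(b^n)}$, and your observation that purity ($a+b=\sw=\lambda_n+\lambda_{n+1}$) makes the constraints on $a$ and $b$ equivalent, are all correct; and since you also ultimately defer the rectangular multiplicity-one fact to the same references, the two proofs coincide in substance. One small remark: the ``only if'' direction (vanishing of $c^\lambda_{(a^n),(b^n)}$ outside the range $\lambda_{n+1}\le a\le\lambda_n$) has a quick direct argument that avoids quoting the full classification --- $a>\lambda_n$ forces $(a^n)\not\subset\lambda$, while $a<\lambda_{n+1}$ produces a column of the skew shape $\lambda/(a^n)$ with more than $n$ boxes, which cannot be filled strictly increasingly by entries in $\{1,\dots,n\}$ --- and the existence of the unique tableau when the condition holds is recovered in the paper by the explicit construction of $v_{\lambda,j}$ in Proposition~\ref{prop:product for v lambda j}.
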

	
	For each $j \in \mathrm{Crit}(\lambda)$, fix some choice of non-trivial $H(\Zp)$-map $\kappa_{\lambda,j} : V_\lambda^\vee(L) \to V_{(j,-\sw-j)}^H \cong L$. We will make more precise choices in \S\ref{sec:branching laws}, but for now they can be arbitrary.
	
	The \emph{$p$-adic cyclotomic character} is
	\begin{equation}\label{eq:cyc}
		\chi_{\cyc} : \Q^\times\backslash \A^\times \longrightarrow \Zp^\times, \hspace{12pt}	y \mapsto \mathrm{sgn}(y_\infty) \cdot |y_f| \cdot y_p.
	\end{equation}	
	It is the $p$-adic character associated to the adelic norm \cite[\S2.2.2]{BW_CJM}. It is trivial on $\R_{>0}$.
	
	It is simple to see that the $H(\Zp)$-representation $V_{(j_1,j_2)}^H$ extends to $H(\A)$ via the character
	\[
	H(\A) \longrightarrow \Zp^\times, \hspace{12pt} (h_1,h_2) \mapsto \chi_{\cyc}\big[\det(h_1)^{j_1}\det(h_2)^{j_2}\big].
	\]
	Note the action of $L_\beta \subset H(\A_f)$ factors through projection to $H(\Zp)$, so the map $\kappa_{\lambda,j} : V_{\lambda}^\vee(L) \to L$ chosen above is a map of $L_\beta$-modules. Moreover, $H(\Q)$ and $H_\infty^\circ$ act trivially on $V_{(j_1,j_2)}^H$, so we can use the formalism of Proposition \ref{prop:ind of delta}.

	\begin{definition}\label{def:classical evalution delta}
		Let $L/\Qp$ be an extension. The \emph{classical evaluation map for $P$ of level $p^\beta$ at $\delta$} is 
		\[
		\cE_{P,\beta,[\delta]}^{j,\sw} \defeq \mathrm{Ev}_{P,\beta,[\delta]}^{V_{\lambda}^\vee, \kappa_{\lambda,j}} : \hc{t}(S_K,V_\lambda^\vee(L)) \longrightarrow L.
		\]
	\end{definition}
	
	Here $\mathrm{Ev}_{P,\beta,[\delta]}^{V_{\lambda}^\vee, \kappa_{\lambda,j}}$ was defined in Proposition \ref{prop:ind of delta}, which shows $\cE_{P,\beta,[\delta]}^{j,\sw}$ is independent of the choice of $\delta$ representing the class $[\delta]$. We introduce the notation $\cE_{P,\beta,[\delta]}^{j,\sw}$ for consistency with \cite{DJR18,BDW20}; via \cite[Lem.\ 5.3]{BDW20} this definition is consistent with that in \cite{DJR18}.


	Recall $\pi_0(X_\beta^P) = (\Z/p^\beta m)^\times \times (\Z/m)^\times$ from \eqref{eq:component group}. Write $\mathrm{pr}_1, \mathrm{pr}_2$ for the projections of $\pi_0(X_\beta^P)$ onto the first and second factors respectively, and let $\mathrm{pr}_{\beta}$ denote the natural composition
	\begin{equation}\label{eq:pr_beta}
		\mathrm{pr}_{\beta} : (\Z/p^\beta m)^\times \times (\Z/m)^\times \xrightarrow{ \ \mathrm{pr}_1 \ } (\Z/p^\beta m)^\times  \twoheadrightarrow (\Z/p^\beta)^\times.
	\end{equation}

	\begin{definition}\label{def:ev chi}
		For $\eta_0$ be any character of $(\Z/m)^\times$, and $\brep\in (\Z/p^\beta)^\times$, define 
		\[
		\cE_{P,\beta,\brep}^{j,\eta_0} \defeq \sum_{[\delta] \in \mathrm{pr}_\beta^{-1}(\brep)} \eta_0^{-1}\big(\mathrm{pr}_2([\delta])\big)\ \cE_{P,\beta,[\delta]}^{j,\sw} \ \ \ : \hc{t}(S_K,\sV_\lambda^\vee(L)) \to L.
		\]
		(In our main application, we will take $\eta_0$ trivial; but the obstructions to taking more general $\eta_0$ are automorphic, not $p$-adic, so we develop the theory in full generality here).
		
		Let $\chi$ be a finite order Hecke character of conductor $p^{\beta'}$, let $\beta = \mathrm{max}(1,\beta')$ and let $L(\chi)$ be the smallest extension of $L$ containing $\mathrm{Im}(\chi)$. For $j\in \mathrm{Crit}(\lambda)$, define 
		\begin{align}\label{eq:classical evaluation}
			\cE_{P,\chi}^{j,\eta_0} = \sum_{[\brep] \in (\Z/p^\beta)^\times}& \chi(\brep)\ \cE_{P,\beta,\brep}^{j,\eta_0}\ :\ \hc{t}(S_K, \sV_\lambda^\vee(L)) \longrightarrow L(\chi),\\
			\phi &\longmapsto \sum_{[\delta] \in (\Z/p^\beta m)^\times \times (\Z/m)^\times} \chi\big(\mathrm{pr}_\beta([\delta])\big)\cdot  \eta_0^{-1}\big(\mathrm{pr}_2([\delta])\big)\cdot\left( \delta*\left[\kappaj \circ \mathrm{Ev}_{P,\beta,\delta}^{V_{\lambda}^\vee}(\phi)\right]\right).\notag
		\end{align}
	\end{definition}

	\begin{remark}\label{rem:classical diagram}
		We see  $\cE_{P,\chi}^{j,\eta_0}$ is the composition
		\begin{equation}\label{eq:explicit classical}
			\xymatrix@R=10mm@C=5mm{
				\hc{t}(S_K,\sV_\lambda^\vee) \ar@/^3pc/[rrrrr]_-{\oplus \cE_{P,\beta,[\delta]}^{j,\sw}} \ar@/_3pc/[rrrrrrr]^-{\oplus \cE_{P,\beta,\brep}^{j,\eta_0}} \ar[rr]^-{\oplus\mathrm{Ev}_{P,\beta,\delta}^{V_\lambda^\vee}} && \displaystyle\bigoplus_{[\delta]}(V_\lambda^\vee)_{\Gamma_{\beta,\delta}^P} \ar[rrr]^-{v \mapsto \delta * \kappaj(v)} &&&
				\displaystyle\bigoplus_{[\delta]} L \ar[rr]^-{\oplus\Xi_{\brep}^{\eta_0}} && 
				\displaystyle\bigoplus_{\brep} L \ar[rrr]^-{\ell \mapsto \Sigma \chi(\brep)\ell_\brep} &&&
				L(\chi),
			}
		\end{equation}
		where the sums are over $[\delta] \in (\Z/p^\beta m)^\times \times (\Z/m)^\times$ or $\brep\in (\Z/p^\beta)^\times$, related by $\brep = \mathrm{pr}_\beta([\delta])$, and $\Xi_{\brep}^{\eta_0}$ is the $\eta_0$-averaging map 
		\[
		\Xi_{\brep}^{\eta_0} : (m_{[\delta]})_{[\delta]} \longmapsto \sum_{[\delta] \in \mathrm{pr}_{\beta}^{-1}(\brep)} \eta_0^{-1}(\mathrm{pr}_2([\delta])) \cdot m_{[\delta]}.
		\]
	\end{remark}

We give two applications of these maps. Let $\pi$ be any RACAR of weight $\lambda$ with attached maximal ideal $\m_\pi \subset \cH'$ as in \S\ref{sec:hecke algebra}. 
	
	Firstly, classical evaluation maps can detect existence of Shalika models:
	
	\begin{proposition}\label{prop:shalika non-vanishing}
		Suppose there exists $\phi \in \hc{t}(S_K,\sV_{\lambda}^\vee(\overline{\Q}_p))^\pm_{\pi}$ and some  $\chi, j$ and $\eta_0$ such that
		\begin{equation}\label{eq:non-vanishing 1}
			\cE_{P,\chi}^{j,\eta_0}(\phi) \neq 0.
		\end{equation}
		Then $\pi$ admits a global $(\eta_0|\cdot|^{\sw},\psi)$-Shalika model, where $\sw$ is the purity weight of $\lambda$.
	\end{proposition}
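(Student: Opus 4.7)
The plan is to show that the classical evaluation map $\cE_{P,\chi}^{j,\eta_0}$, restricted to the $\pi$-isotypic component of compactly supported cohomology, factors (up to non-zero constants) through the global Shalika period attached to the character $\eta = \eta_0 |\cdot|^{\sw}$. Non-vanishing of $\cE_{P,\chi}^{j,\eta_0}$ at any $\phi \in \hc{t}(S_K,\sV_\lambda^\vee)_{\m_\pi}^\pm$ then forces the Shalika period of $\pi$ to be non-trivial, which is by definition the existence of a $(\eta,\psi)$-Shalika model.

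To make this precise, I would first use the isomorphism $\pi_f^K \isorightarrow \hc{t}(S_K,\sV_\lambda^\vee(\overline{\Q}_p))_{\m_\pi}^\pm$ of \eqref{eq:pi to cohomology}, together with the choice of archimedean basis $\Xi_\infty^\pm$, to lift $\phi$ to a genuine cusp form $\varphi \in \pi$. Then I would unwind the composition defining $\cE_{P,\chi}^{j,\eta_0}$ in Definition \ref{def:classical evalution delta} and Remark \ref{rem:classical diagram}: the twisted pullback $\tau_\beta^{P,\circ}\circ(\iota_\beta^P)^*$ restricts $\varphi$ to the automorphic cycle $X_\beta^P$ via the embedding $\iota(h)u^{-1}t_P^\beta$, the branching map $\kappa_{\lambda,j}$ projects the coefficients onto the character $V_{(j,-\sw-j)}^H$, and the cap product with $\theta_\delta^P$ integrates over the connected components of $X_\beta^P$. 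Summing over classes $[\delta]$ weighted by $\chi \cdot \eta_0^{-1}$ then identifies $\cE_{P,\chi}^{j,\eta_0}(\phi)$ with an adelic integral of $\varphi(\iota(h)u^{-1}t_P^\beta)$ against a Hecke character of $H(\A)$.

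Next I would apply the now-standard unfolding argument used in \cite[\S3--4]{GR2}, \cite[\S6]{DJR18} and \cite[\S5]{BDW20}: inserting the Shalika Fourier expansion of the cusp form $\varphi$ along the unipotent radical of $\cS$, the integral over $\GL_n(\Q)\backslash \GL_n(\A)$ implicit in the cycle $X_\beta^P$ unfolds to exhibit $\cE_{P,\chi}^{j,\eta_0}(\phi)$ as (a non-zero constant times) the Friedberg--Jacquet zeta integral $\zeta\bigl(j+\tfrac{1}{2},\cS_{\psi}^{\eta}(\varphi),\chi\bigr)$. In particular this identification is linear in $\varphi$ and depends on $\pi$ only through the Shalika model map $\cS_\psi^{\eta}: \pi \to \mathrm{Ind}_{\cS(\A)}^{G(\A)}(\eta\otimes\psi)$.

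The main technical obstacle is the bookkeeping in this unfolding: one must carefully track the combinatorics of the twist $u^{-1}t_P^\beta$, the precise normalisation of $\kappa_{\lambda,j}$, and the identification $\pi_0(X_\beta^P) \cong \Cl(p^\beta m)\times \Cl(m)$ in order to assemble the sum over $[\delta]$ into an honest idelic integral against $\chi$. Once this identification is in hand the conclusion is immediate: if $\pi$ did \emph{not} admit a global $(\eta_0|\cdot|^\sw,\psi)$-Shalika model, then by definition \eqref{eq:shalika integral} the map $\cS_{\psi}^{\eta}$ would vanish identically on $\pi$, and hence so would $\cE_{P,\chi}^{j,\eta_0}$ on the entire $\pi$-isotypic component, contradicting the hypothesis $\cE_{P,\chi}^{j,\eta_0}(\phi)\neq 0$. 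This forces $\pi$ to have a $(\eta_0|\cdot|^\sw,\psi)$-Shalika model, as required.
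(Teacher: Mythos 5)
Your proposal is correct and follows the same route as the paper's proof, which simply defers to the argument of \cite[Prop.\ 5.15]{BDW20}: lift $\phi$ to a cusp form via the $(\mathfrak{g},K_\infty)$-cohomology isomorphism, write $\cE_{P,\chi}^{j,\eta_0}(\phi)$ as a period integral over (a cover of) $Z_G(\A)H(\Q)\backslash H(\A)$, unfold via the Shalika Fourier expansion to express it as a Friedberg--Jacquet zeta integral of the Shalika period $\cS_\psi^\eta(\varphi)$, and conclude that non-vanishing forces the Shalika integral \eqref{eq:shalika integral} to be non-trivial. The only additions in the paper beyond \cite{BDW20} are the replacement of $\xi$ by $u^{-1}$ and the $P$-dependent volume constants, neither of which affects the argument; you have correctly flagged that this bookkeeping is the technical core.
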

	\begin{proof}
		This is proved exactly as in \cite[\S5.3]{BDW20}. Whilst $\xi$ is replaced by $u^{-1}$, and some non-zero volume factors change depending on $P$, the argument of proof is identical.
	\end{proof}
	
	Secondly, we generalise \cite[\S4]{DJR18}, and show that up to a local zeta factor at $p$, these maps compute $L$-values. Let $K = J_P \prod_{\ell\neq p}\GL_{2n}(\Z_\ell)$, and let $W = W_p \otimes \bigotimes_{\ell\neq p}W_\ell^\circ \in \cS_{\psi_f}^{\eta_f}(\pi_f^K, E)$, where we choose the normalised spherical vector at each $\ell \neq p$, and where $W_p$ is an arbitrary $E$-rational vector in $\pi_p^{J_P}$. Recall the map $\Theta^\pm$ from \eqref{eq:Theta}, and the Friedberg--Jacquet integral $\zeta(s,W,\chi)$ from \S\ref{sec:friedberg-jacquet}.
	
	\begin{theorem}\label{thm:critical value}
		Let $\chi$ be a finite order Hecke character of conductor $p^{\beta'}$, let $\beta = \mathrm{max}(1,\beta')$, and let $j \in \mathrm{Crit}(\lambda)$. If $(-1)^j\chi_\infty\eta_\infty(-1) = \pm 1$, then
		\begin{align*}
		\cE_{P,\chi}^{j,\eta_0}\left(\frac{\Theta^\pm(W)}{\Omega_\pi^\pm}\right) = \delta_B(t_P^{-\beta})&\Upsilon_{P}\cdot \lambda(t_{P}^\beta)\cdot
		\zetainfty \\
&\times \frac{L^{(p)}(\pi\otimes\chi,j+1/2)}{\Omega_\pi^\pm} \cdot \zeta_p\Big(j+\tfrac12, (u^{-1}t_{P}^\beta)\cdot W_p, \chi_p\Big).
		\end{align*}
	 If $(-1)^j\chi_\infty\eta_\infty(-1) = \mp 1$, then $\cE_{P,\chi}^{j,\eta_0}(\Theta^\pm(W)/\Omega_\pi^\pm) = 0$. Here all the signs are chosen consistently to be either the top or bottom sign.
	\end{theorem}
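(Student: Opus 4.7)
The strategy is to unfold the composition defining $\cE_{P,\chi}^{j,\eta_0}$ so that it becomes (up to explicit constants) a sum of Friedberg--Jacquet zeta integrals, and then to apply the local-global factorisation of those integrals together with the unramified computation \eqref{eq:jacquet-friedberg test vector} at all $\ell\neq p$. This follows the general shape of \cite[Thm.~4.11]{DJR18} and its rephrasing in \cite[Thm.~5.17]{BDW20}, but we must track carefully how the replacement of $\xi$ by $u^{-1}$, and of $t_Q$ by the general $t_P$, affects the twist at $p$ and the volume factors.

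First I would interpret $\Theta^\pm(W)$, via \eqref{eq:pi to cohomology}--\eqref{eq:Theta} and the $\sM \cong \cM$ identification of \S\ref{sec:local systems}, as (the cohomology class of) a $V_\lambda^\vee(\C)$-valued differential form on $S_K$ built out of $W$ and the generator $\Xi_\infty^\pm$. Pulling back this form along $\iota_\beta^P$ and applying the twist $\tau_\beta^{P,\circ}$ produces, over the component $X_\beta^P[\delta]$, a form on $\Gamma_{\beta,\delta}^P\backslash \cX_H$ whose coefficient at a point $h_\infty$ involves $(u^{-1}t_P^\beta) * W(\iota(\delta h_\infty)u^{-1}t_P^\beta)$. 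Applying $\kappa_{\lambda,j}$ at the level of coefficients turns this into a scalar-valued form that, after cap-product with $\theta_\delta^P$, becomes an integral over $\Gamma_{\beta,\delta}^P\backslash H_\infty^\circ/L_\infty^\circ$ against the archimedean generator. Summing over $[\delta]$ with weights $\chi \cdot \eta_0^{-1}$ and unfolding through the decomposition $\pi_0(X_\beta^P) = \Cl(p^\beta m)\times\Cl(m)$ collapses the sum to a single adelic integral over $Z_G(\A)H(\Q)\backslash H(\A)/L_\infty^\circ$ of the shape
\[
\int W\!\left[\iota(h)u^{-1}t_P^\beta\right](\chi\eta^{-1})(\det h_1)\,\chi(\det h_2)\,|\det(h_1 h_2^{-1})|^{j+\tfrac12}\,dh,
\]
up to the factor $\lambda(t_P^\beta)$ coming from the difference between the $\cdot$- and $*$-actions (Definition~\ref{def:integral} and \S\ref{sec:integral normalisations cohomology}) and the volume $\vol(L_p^{P,\beta}) = \delta_B(t_P^\beta)\cdot A_P$ from Lemma~\ref{lem:volume}. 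This is the step where the change of twisting element $u^{-1}$ (as opposed to $\xi$ in \cite{BDW20}) must be verified to give exactly the same Shalika-model expression.

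Next I would apply the Shalika model intertwining $\cS_\psi^\eta$ to recognise the inner expression and collapse the above integral, via a change of variables $h \mapsto (h_1 h_2^{-1},1)$ and the Shalika functional equation, to the Friedberg--Jacquet zeta integral $\zeta\big(j+\tfrac12,(u^{-1}t_P^\beta)\cdot W,\chi\big)$ of \S\ref{sec:friedberg-jacquet}, multiplied by the archimedean component $\zetainfty$ and the correct signed comparison of $(-1)^j\chi_\infty\eta_\infty(-1)$ with $\pm 1$ (the sign condition being forced by the sign on $\Xi_\infty^\pm$ under $K_\infty/K_\infty^\circ$, giving vanishing in the opposite-sign case). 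The period $\Omega_\pi^\pm$ is introduced precisely so that the resulting expression is algebraic, so I would simply carry the $\Omega_\pi^\pm$-normalisation through the calculation. Factorising over places and invoking \eqref{eq:jacquet-friedberg test vector} at all $\ell\neq p$ then yields $L^{(p)}(\pi\otimes\chi,j+\tfrac12)$, while the local integral at $p$ stays as $\zeta_p(j+\tfrac12,(u^{-1}t_P^\beta)\cdot W_p,\chi_p)$. Collecting constants gives the claimed formula with a single combinatorial prefactor $\Upsilon_P$ depending only on $P$ (absorbing $A_P$, the cardinality of the fibres of $\mathrm{pr}_\beta$, the archimedean Haar normalisation from \cite[\S4.1]{BDW20} and the Shalika unfolding).

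The main obstacle is bookkeeping rather than conceptual: ensuring that every change-of-variables factor lands in either $\delta_B(t_P^{-\beta})\lambda(t_P^\beta)$, the local zeta integral at $p$, or the $P$-only constant $\Upsilon_P$. The two delicate points are (a) that the twist by $u^{-1}$ (which does lie in $\Iw$, unlike $\xi$) still produces the correct Shalika shift when combined with $t_P^\beta$, which is the content of showing $(u^{-1}t_P^\beta)\cdot W$ sits inside the Shalika integrand with no extra unipotent contribution beyond what is absorbed into the local zeta integral at $p$; and (b) that the comparison between $L_p^{P,\beta}$-volumes (Lemma~\ref{lem:volume}) and the cardinality of the connected components gives the factor $\delta_B(t_P^{-\beta})$ after cancellation with $\vol(L_p^{P,\beta})$. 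Both are verified by direct computation using the Iwahori decomposition of $K_p=J_P$ and the identity $\overline{B}(\Zp)u^{-1}H(\Zp)\subset G(\Zp)$ being open, exactly as in the open-orbit argument that underpins \S\ref{sec:branching laws}.
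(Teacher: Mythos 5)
Your plan follows essentially the same route as the paper's proof, which is stated very tersely and defers to \cite[Prop.~4.6, Thm.~4.7]{DJR18} and \cite[Thm.~5.22]{BDW20} for the unfolding, recording only where the two new prefactors $\lambda(t_P^\beta)$ (the difference between the $\cdot$- and $*$-actions) and $\delta_B(t_P^{-\beta})\Upsilon_P$ (from dividing by $\vol(L_\beta^P)$ via Lemma~\ref{lem:volume}) enter. One small presentational slip: the displayed integral over $Z_G(\A)H(\Q)\backslash H(\A)$ should first be written in terms of the automorphic form $\varphi$ rather than the Shalika vector $W$; the passage to the $W$-expression is precisely \cite[Prop.~2.3]{FJ93}, which you invoke in the next paragraph, so this is an ordering issue rather than a gap.
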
 
	
	Here we recall $u$ from \eqref{eq:u} and $t_P$ from Definition \ref{def:parabolic}, and:
	\begin{itemize}
		\item $\Upsilon_{P}$ is a non-zero rational volume constant independent of $\chi$ and $j$; we have 
\[
\Upsilon_P = \gamma \cdot A_P^{-1} \cdot p^{n^2}\cdot [\#\GL_n(\Z/p\Z)]^{-1},
\] 
where $\gamma$ is the constant from \cite[(77)]{DJR18} and $A_P$ is the constant from Lemma \ref{lem:volume}. (Note that when $P = Q$ has Levi $\GL_n \times \GL_n$, we have $\Upsilon_Q = \gamma$). 
\item $\zetainfty$ is an archimedean zeta integral that depends linearly on the choice of branching law $\kappa_{\lambda,j}$; this factor is non-zero by \cite[Thm.~5.5]{Sun19}.

\item $L^{(p)}(-)$ is the $L$-function with the local factor at $p$ removed (that is, the product of all the local factors for all finite $\ell \neq p$).
\end{itemize}
	
	\begin{proof}
		A rephrasing of \cite[Prop.\ 4.6, Thm.\ 4.7]{DJR18} in this language is described in \cite[\S5.5]{BDW20}. As in \cite[Prop.\ 4.6]{DJR18}, one first  writes $\cE_{\chi}^{j,\eta_0}(-)$ as an explicit integral over $X_\beta^P$, introducing the factor $\lambda(t_{P}^\beta)$. One lifts this to an integral over $Z_G(\A)H(\Q)\backslash H(\A)$, introducing the volume constant $\delta_B(t_P^{-\beta})\Upsilon_{P}$ (as we divide by $\mathrm{vol}(L_\beta^P)$, using Lemma \ref{lem:volume}). By \cite[Prop.\ 2.3]{FJ93} the integral equals a global Friedberg--Jacquet integral, which breaks into a product of local integrals. Away from $p$, the computation of these local zeta integrals is literally identical to that \emph{op.\ cit}.; and at $p$, by definition the zeta integral is the one in the statement of the theorem (with $t_P$ replacing $t_Q$).
	\end{proof}
	
	We will evaluate the local zeta integral at $p$ for $P=B$ and specific choices of $W_p$ in \S\ref{sec:local zeta p}--\ref{sec:local zeta unramified}, and the integral at infinity for specific choices of branching law in Theorem \ref{thm:non-ordinary}.

	\part{Local Theory: Shalika $p$-refinements}

	For the remainder of the paper, unless otherwise specified we specialise to $P = B$ and consider Iwahori level. 
	
		Let us summarise what we have done so far. We took $\pi$ to be a RASCAR that is everywhere spherical.  A \emph{$p$-refinement} of $\pi$ was a choice of Hecke eigenspace $\tilde\pi_p$ in $\pi_p^{\Iw}$. To any choice of $W_p \in \cS(\tilde\pi_p)$, before Theorem \ref{thm:critical value} we associated a (global) cohomology class in $\hc{t}(S_K,\sV_\lambda^\vee)$. In that theorem, we computed its image under a scalar-valued functional, and showed that it took the form 
		\[
			\Big[\text{non-zero scalar}\Big] \times \Big[\text{critical $L$-value for $\pi$}\Big] \times \zeta_p\Big(j+\tfrac{1}{2}, (u^{-1}t_{p}^\beta)\cdot W_p, \chi_p\Big).
		\] 
	Over the next few sections, we compute the third term in this product -- the local zeta integral at $p$ -- for `nice' choices of $W_p$. This is a significant computation, spanning over several sections, so we briefly sketch the steps.
	\begin{itemize}\s
	\item	In \S\ref{sec:local zeta p}, when $\chi_p$ is ramified, we compute $\zeta_p(s,(u^{-1}t_p^\beta)\cdot W_p, \chi_p)$ as an explicit non-zero multiple of a specific value of $W_p$ (depending on $\beta$).
	
	\item We are interested in finding $p$-refinements containing Hecke eigenvectors $W_p$ for which this value is non-zero. We call such $p$-refinements \emph{Shalika $p$-refinements}.
	
	\item In \S\ref{sec:spin refinements}, we begin a systematic combinatorial study of $p$-refinements, and introduce `spin $p$-refinements', a class of $p$-refinements for $\GL_{2n}$ that `come from $\mathrm{GSpin}_{2n+1}$'.
	
	\item In \S\ref{sec:shalika refinements}, we write down explicit eigenvectors attached to spin $p$-refinements. We precisely evaluate relevant values of these eigenvectors, and thus deduce that spin $p$-refinements are Shalika $p$-refinements. (In fact, we expect that the converse is true as well; we hope to return to this in a sequel to this paper). 
	
	\item In \S\ref{sec:running assumptions}, we summarise all of the above, and fold the local theory back into the global results of Part I.
	
	\item Our above computations worked with Iwahori-invariant $W_p$, but required $\chi_p$ to be ramified. Finally, in \S\ref{sec:local zeta unramified}, we compute $\zeta_p(s,(u^{-1}t_p^\beta)\cdot W_p, \chi_p)$ for arbitrary $\chi_p$, in particular allowing $\chi_p$ unramified. In this section, we use different methods and instead assume $W_p$ is invariant for the parahoric subgroup of type $(n,n)$.

	\end{itemize}
	
		\begin{notation*}
		Since it will be entirely focused on local theory, in Part II we henceforth drop subscripts $p$. In particular, we let $\pi$ be a generic unramified principal series representation of $\GL_{2n}(\Qp)$ admitting an $(\eta,\psi)$-Shalika model, for \label{shalika character} $\eta : \Qp^\times \to \C^\times$ a smooth character and $\psi : \Qp \to \C^\times$ the usual additive character (e.g.\ \cite[\S4.1]{DJR18}). Note $\pi$ is spherical. We continue to write $\Iw$ for the Iwahori subgroup of $\GL_{2n}(\Zp)$. We write $\zeta(-)$ in place of $\zeta_p(-)$. We keep the notation of \S\ref{sec:hecke algebra}, with matrices $t_{p,r}$ and Hecke operators $U_{p,r}$ on $\pi^{\Iw}$. A $p$-refinement $\tilde\pi = (\pi,\alpha)$ is a choice of Hecke eigensystem $\alpha$ occurring in $\pi^{\Iw}$.
	\end{notation*}

	\section{The local zeta integral at Iwahori level}\label{sec:local zeta p}
	
	We now give our first reduction of the local zeta integral 
	\begin{equation}\label{eq:local zeta p}
		\zeta(s, (u^{-1}t_{p}^\beta)\cdot W, \chi) = \int_{\GL_n(\Qp)} W\left[\matrd{x}{}{}{1}u^{-1}t_p^\beta\right] \ \chi(\det(x)) \ |\det(x)|^{s-\tfrac{1}{2}} \ dx.
	\end{equation}
	 The main aim of this section is Proposition \ref{prop:local zeta p}, which computes this in terms of a specific value of $W$. First, we reduce the support of the zeta integral:

	\begin{lemma} \label{SupportLemma}
		Suppose $W \in \mathcal{S}^{\eta}_{\psi}(\pi)$ is fixed under the action of $\operatorname{Iw}_G$. Then the function
		\[
		\GL_n(\Q_p) \to \C,\qquad 
		x \mapsto W\smallmatrd{x}{}{}{1}
		\]
		is supported on $M_{n}(\Z_p) \cap \GL_n(\Q_p)$.
	\end{lemma}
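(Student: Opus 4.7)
The plan is to combine the two defining properties of $W$ (Shalika equivariance on the left, Iwahori invariance on the right) to force a support condition on $x$. The geometric identity that links the two sides is, for any $Y \in M_n(\Q_p)$,
\[
\tbyt{1_n}{Y}{0}{1_n} \tbyt{x}{0}{0}{1_n} \;=\; \tbyt{x}{0}{0}{1_n} \tbyt{1_n}{x^{-1}Y}{0}{1_n}.
\]
The left matrix $\tbyt{1_n}{Y}{0}{1_n}$ lies in the Shalika subgroup $\cS(\Q_p)$ (it has $h=1_n$, so the $\eta$-twist is trivial), so applying $W$ to the left-hand side gives $\psi(\tr Y)\, W\tbyt{x}{0}{0}{1_n}$.

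Next I would restrict attention to those $Y$ for which $x^{-1}Y \in M_n(\Z_p)$, equivalently $Y \in xM_n(\Z_p)$. For such $Y$, the rightmost factor $\tbyt{1_n}{x^{-1}Y}{0}{1_n}$ has $\Z_p$-entries and is upper unipotent, hence certainly lies in $\operatorname{Iw}_G$. Iwahori invariance then collapses the right-hand side to $W\tbyt{x}{0}{0}{1_n}$, and comparing both sides yields
\[
\bigl(\psi(\tr Y)-1\bigr)\, W\tbyt{x}{0}{0}{1_n} = 0 \qquad \text{for all } Y \in xM_n(\Z_p).
\]

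If $W\tbyt{x}{0}{0}{1_n}\neq 0$, this forces $\psi$ to be trivial on $\{\tr(xZ) : Z \in M_n(\Z_p)\}$. The latter is precisely the $\Z_p$-submodule of $\Q_p$ generated by the entries of $x$, as one sees by letting $Z$ range over the elementary matrices $E_{ji}$. Since $\psi$ has conductor $\Z_p$, this triviality is equivalent to every entry of $x$ lying in $\Z_p$, i.e.\ $x \in M_n(\Z_p)\cap \GL_n(\Q_p)$. There is no substantive obstacle: the argument is a clean interaction of the Shalika and Iwahori symmetries, and the only slightly delicate point is the observation that $\tbyt{1_n}{x^{-1}Y}{0}{1_n}$ belongs to $\operatorname{Iw}_G$ as soon as $x^{-1}Y\in M_n(\Z_p)$, which is immediate.
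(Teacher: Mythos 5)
Your proof is correct and is essentially the same as the paper's: the paper right-translates $\smallmatrd{x}{0}{0}{1}$ by $\smallmatrd{1}{y}{0}{1}\in\Iw_G$ and factors $\smallmatrd{x}{xy}{0}{1} = \smallmatrd{1}{xy}{0}{1}\smallmatrd{x}{0}{0}{1}$ to pull out $\psi(\tr(xy))$, which is exactly your computation after the substitution $Y = xy$ (so $y = x^{-1}Y$). The reparametrization and the "two sides of the identity" phrasing are cosmetic; the core interplay of Shalika-left versus Iwahori-right invariance and the conductor argument for $\psi$ are identical.
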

	\begin{proof}
		For $y \in M_{n}(\Z_p)$, right translation by $\smallmatrd{1}{y}{}{1} \in \operatorname{Iw}_G$ gives
		\[
		W \smallmatrd{x}{}{}{1} = W\left[\smallmatrd{x}{}{}{1}\smallmatrd{1}{y}{}{1} \right] = \psi(\mathrm{tr}(x  y)) W\smallmatrd{x}{}{}{1}.
		\]
		If $x \not\in  \GL_n(\Q_p)\backslash M_{n}(\Z_p)$, then we can choose $y$ such that $\mathrm{tr}(xy) \not\in \Z_p$, so $\psi(\mathrm{tr}(xy)) \neq 1$.
	\end{proof}

	Let $d^\times c$ be the Haar measure on $\Zp^\times$ with total measure 1. Let $\chi : \Qp^\times \to \C^\times$ be a finite order character of conductor $p^\beta$. In practice $\chi$ will be the local component at $p$ of a Hecke character of $p$-power conductor, which forces $\chi(p) = 1$; we thus impose this condition throughout. For such a character, denote its Gauss sum by
	\begin{equation}\label{eq:gauss sum}
	\tau(\chi) = p^{\beta}(1-p^{-1})\int_{\Z_p^{\times}} \chi(c) \psi(p^{-\beta} c) d^{\times} c.
	\end{equation}

	Recall $t_p = \mathrm{diag}(p^{2n-1},p^{2n-2},...,p,1)$. We write this in the form
	\[
	t_p = \matrd{p^nz}{}{}{z}, \qquad z \defeq \mathrm{diag}(p^{n-1},p^{n-2},...,p,1) \in T_n(\Qp).	
	\]
Let $\Iwn \subset G_n(\Z_p)$ denote the upper-triangular Iwahori subgroup of $G_n = \mathrm{GL}_n$.

	\begin{proposition}\label{prop:local zeta p}
		Let $\chi$ have conductor $p^\beta > 1$, and let $W \in \cS_{\psi}^{\eta}\left(\pi^{\Iw}\right)$. Then
		\[
	\zeta\Big(s,(u^{-1}t_p^{\beta}) \cdot W, \chi\Big) = \Upsilon' \cdot \eta(\det z^\beta) \cdot p^{-\beta \tfrac{n^2+n}{2}} \cdot p^{\beta n(s-1/2)} \cdot \tau(\chi)^n  \cdot \chi(\det(-w_n))  \cdot W\tbyt{w_n z^{2\beta}}{}{}{1},
		\]
		where $\Upsilon' = \mathrm{vol}(\Iwn) \cdot (1-p^{-1})^{-n} \cdot p^{(n^2-n)/2}$ is a scalar independent of $W, \chi$ and $\beta$.
	\end{proposition}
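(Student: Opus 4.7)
The plan is to reduce the local zeta integral, step by step, to a product of one-dimensional Gauss sums times a single value $W\tbyt{w_nz^{2\beta}}{}{}{1}$, via the Shalika transformation property, a careful change of variables, and the right-Iwahori invariance of $W$.

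First, I will apply the Shalika property to the integrand directly. A short calculation gives
\[
\matrd{x}{}{}{1}u^{-1}t_p^\beta = \matrd{p^{n\beta}xz^\beta}{-xw_nz^\beta}{0}{z^\beta},
\]
and using the factorization $\smallmatrd{A}{B}{0}{D}=\smallmatrd{D}{B}{0}{D}\smallmatrd{D^{-1}A}{0}{0}{I}$ (with the first factor in the Shalika subgroup, $h=D=z^\beta$, $X=D^{-1}B=-z^{-\beta}xw_nz^\beta$), the Shalika transformation yields
\[
W\left[\matrd{x}{}{}{1}u^{-1}t_p^\beta\right] = \eta(\det z^\beta)\,\psi(-\tr(xw_n))\,W\matrd{p^{n\beta}z^{-\beta}xz^\beta}{}{}{1}.
\]

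Next, I will perform the substitution $y=w_nz^{2\beta}\tilde{y}$ (via the intermediate $y'=p^{n\beta}z^{-\beta}xz^\beta$, a composition of left and right group translations, hence with trivial multiplicative Jacobian). Expanding the trace gives $\psi(-\tr(xw_n))=\prod_{k=1}^n\psi(-y'_{k,n+1-k}p^{-(2k-1)\beta})$, and a direct computation of the anti-diagonal entries $y'_{k,n+1-k}=p^{2(k-1)\beta}\tilde{y}_{n+1-k,n+1-k}$ collapses this to $\prod_l\psi(-p^{-\beta}\tilde{y}_{ll})$, depending \emph{only} on the diagonal of $\tilde{y}$. Using $\chi(p)=1$ and $\det(w_nz^{2\beta})=(-1)^{n(n-1)/2}p^{n(n-1)\beta}$, the $|\det|$ and $\chi$ prefactors combine into precisely $\chi(\det w_n)\cdot p^{n\beta(s-1/2)}$, matching the target exponent.

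The technical heart of the proof is to show that the resulting integral in $\tilde{y}$ is concentrated on the Iwahori $\Iwn$, on which the $W$-factor is constant, equal to $W\tbyt{w_nz^{2\beta}}{}{}{1}$ by right Iwahori invariance. I expect this to be the main obstacle. The mechanism combines three ingredients: (i) the Support Lemma, applied after pulling back, restricts the effective domain to $\{\tilde{y}: w_nz^{2\beta}\tilde{y}\in M_n(\Zp)\}$; (ii) right invariance of $W\tbyt{y}{}{}{1}$ under $\smallmatrd{\Iwn}{}{}{I}\subset\Iw$; (iii) the ramification of $\chi$ (hypothesis $\beta>1$) combined with the oscillatory $\psi$-character at depth $p^{-\beta}$ causes the integral to vanish on Bruhat strata away from the open cell associated to $w_n$. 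I will organize this by writing $\tilde{y}$ in an Iwasawa/Bruhat decomposition and showing off-diagonal and off-Iwahori contributions integrate to zero by character-orthogonality arguments.

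On the Iwahori, after decomposing $\tilde{y}=\overline{u}\,d\,n$ with $\overline{u}\in\overline{N}_n(p\Z_p)$, $d\in T_n(\Z_p)$, $n\in N_n(\Z_p)$, the factor $\chi(\det\tilde{y})=\chi(\det d)=\prod_l\chi(d_{ll})$ and the character $\prod_l\psi(-p^{-\beta}\tilde{y}_{ll})$ decouples over the diagonal entries (the $\overline{u},n$-integrations absorb into the normalizing volume $\vol(\Iwn)$). The remaining integral becomes a product of $n$ independent Gauss sums
\[
\int_{\Z_p^\times}\chi(c)\psi(-p^{-\beta}c)\,d^\times c = \chi(-1)\,\tau(\chi)\cdot(1-p^{-1})^{-1}p^{-\beta},
\]
yielding $\chi(-1)^n\tau(\chi)^n$ times a power of $p$. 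Combining $\chi(-1)^n\chi(\det w_n)=\chi(\det(-w_n))$, collecting the $p$-powers into $p^{-\beta(n^2+n)/2}$, and gathering volume constants into $\Upsilon'=\vol(\Iwn)(1-p^{-1})^{-n}p^{(n^2-n)/2}$ produces the asserted formula.
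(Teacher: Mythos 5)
Your initial reductions are correct and match the paper's: the Shalika transformation giving $\eta(\det z^\beta)\psi(-\operatorname{tr}(xw_n))\,W\smallmatrd{p^{n\beta}z^{-\beta}xz^\beta}{}{}{1}$, the change of variables through $y'=p^{n\beta}z^{-\beta}xz^\beta$ and then $y'=w_nz^{2\beta}\tilde y$, the collapse of the trace character to $\prod_l\psi(-p^{-\beta}\tilde y_{ll})$, and the prefactor $\chi(\det w_n)\,p^{n\beta(s-1/2)}$ all check out.

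However, there is a genuine gap at the step you flagged as the ``technical heart,'' and the final constant you write down is wrong for $n>1$. You claim the integral concentrates on $\tilde y \in \Iwn$, that the character $\prod_l\psi(-p^{-\beta}\tilde y_{ll})$ then decouples over the diagonal in the decomposition $\tilde y=\overline{u}\,d\,n$ with $\overline{u}\in\overline N_n(p\Zp)$, $n\in N_n(\Zp)$, and that the $\overline u,n$-integrations simply contribute $\vol(\Iwn)$. This is false when $\beta>1$: the diagonal entries $\tilde y_{ll}$ for $l>1$ contain cross terms from $\overline u$ and $n$ lying in $p\Zp$ but not $p^\beta\Zp$ (e.g.\ for $n=2$, $\tilde y_{22}=\overline{u}_{21}d_1n_{12}+d_2$ with $\overline{u}_{21}\in p\Zp$), so $\psi(-p^{-\beta}\tilde y_{ll})$ is genuinely oscillatory in $\overline u$ and $n$. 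Carrying out those integrations imposes further support conditions (effectively forcing the integrand to the depth-$p^\beta$ Iwahori $\Iwn(p^\beta)$), and the resulting volume is $\vol(\Iwn(p^\beta)) = p^{-(\beta-1)(n^2-n)/2}\vol(\Iwn)$, which is $\beta$-dependent. Your stated bookkeeping, which places the $\beta$-independent $\vol(\Iwn)$ into $\Upsilon'$, therefore misses a factor of $p^{-(\beta-1)(n^2-n)/2}$; equating your $\beta$-dependent powers with those in the target formula works out only when $n=1$. The paper handles this concentration step by an explicit averaging argument over right-translates $\smallmatrd{\alpha^{-1}}{}{}{1}\in\Iw$ with $\alpha=\gamma+\delta$, $\gamma$ diagonal and $\delta$ running over a group $B_\beta$ of matrices with lower triangle in $p^\beta\Zp$ and upper triangle in $\Zp$; Iwahori invariance makes the translated integrals equal to the original, and character orthogonality over $B_\beta$ and then the diagonal $\gamma$ cuts the support to exactly $w_n z^{2\beta}\Iwn(p^\beta)$, with $\vol(\Iwn(p^\beta))$ appearing at the end. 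If you want to salvage your approach, you must actually execute the $\overline u,n$-integrations (not declare them to be constant) and track the resulting $\beta$-dependent volume.
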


	\begin{proof}

	 Recall $u = \smallmatrd{1}{w_n}{0}{1}$ was defined in \eqref{eq:u}. Then observe that
	\[
		\matrd{x}{}{}{1}u^{-1}t_p^\beta = \matrd{z^\beta}{}{}{z^\beta}\matrd{1}{-z^{-\beta}xw_nz^\beta}{}{1}\matrd{p^{n\beta}z^{-\beta}xz^{\beta}}{}{}{1}.
	\]
	Substituting this into \eqref{eq:local zeta p}, and using the Shalika transformation property, we reduce to
	\begin{align*}
		\zeta(s,(u^{-1}t_p^\beta)&\cdot W, \chi) = \eta(\det z^\beta) \\
		\times &\int_{\GL_n(\Qp)} \psi\Big[-\mathrm{tr}(z^{-\beta}xw_nz^{\beta})\Big] W\matrd{p^{n\beta}z^{-\beta}xz^{\beta}}{}{}{1}\chi(\det x) |\det x|^{s-1/2}dx.
	\end{align*}
We make the change of variables $y = p^{n\beta}z^{-\beta}xz^{\beta}$. As $dx$ is a left and right Haar measure, we have $dy = dx$. Recalling that $\chi(p) = 1$ and $|p| = 1/p$, we get
\begin{align}\label{eq:zeta I}
	\zeta(s,(u^{-1}t_p^\beta)&\cdot W, \chi) = \eta (\det z^\beta) p^{n^2\beta(s-1/2)}\\
	&\times \int_{\GL_n(\Qp)} \psi\Big[-\mathrm{tr}(p^{-n\beta}z^{\beta}yz^{-\beta}w_n)\Big] I(y) dy.\notag
\end{align}
Here: 
\begin{itemize}\setlength{\itemsep}{0pt}
\item we define
\begin{equation}
	I(y) \defeq W\matrd{y}{}{}{1}\chi(\det y) |\det y|^{s-1/2}, 
\end{equation}
\item in the trace term, we have conjugated by $z^{\beta}$,
\item and we note that 
\[
|\det x| = |\det p^{n\beta}z^{-\beta}yz^\beta | =\det \left(\begin{smallmatrix}|p^{n\beta}| & & \\ & \ddots& \\ & & |p^{n\beta}| \end{smallmatrix}\right) \cdot |\det y| = p^{-n^2\beta}|\det y|.
\]
\end{itemize}

We now cut down the support of this integral. Firstly, by Lemma \ref{SupportLemma} we can immediately reduce the support to $\GL_n(\Qp) \cap M_n(\Zp)$. To go further, we exploit Iwahori invariance of $W$.

		\begin{notation}
		\begin{enumerate}[(1)]
			\item Let $A$ denote the set of all diagonal $n \times n$-matrices of the form
			\[
			\gamma = \operatorname{diag}(c_{11}, \dots, c_{nn}), \qquad c_{ii} \in \Z_p^\times.
			\]
			\item Let $B_{\beta}$ denote the additive group of all $n \times n$-matrices $\delta$ with
			\[
			\delta_{i, j} = \left\{ \begin{array}{cc} c_{i, j} & \text{ if } i < j \\ 0 & \text{ if } i = j \\ p^{\beta} c_{i, j} & \text{ if } i > j \end{array} \right., \qquad c_{ij} \in \Zp.
			\]
\end{enumerate}
We will consider matrices of the form $\alpha = \gamma + \delta \in M_n(\Zp)$ for $\gamma \in A$ and $\delta \in B_\beta$. Note that $\alpha$ is in the depth $p^\beta$ Iwahori subgroup $\Iwn(p^\beta) \subset \GL_n(\Zp)$ (the matrices that are upper-triangular modulo $p^\beta$). We set 
			\[
			\varepsilon = \tbyt{\alpha^{-1}}{}{}{1} \in \Iw \subset \GL_{2n}(\Zp).
			\]
	\end{notation}

Now we translate the argument of the zeta integral by $\varepsilon$. By Iwahori invariance, we get
\begin{align}
\zeta(s, &(u^{-1} t_p^\beta) \cdot W, \chi) = \zeta(s, (u^{-1} t_p^\beta\varepsilon) \cdot W, \chi) \label{eq:local zeta iwahori invariance}\\
& = \eta (\det z^\beta) p^{n^2\beta(s-1/2)} \notag\\
 &\hspace{20pt}\times \int_{\GL_n(\Qp)\cap M_n(\Zp)} \psi\Big[-\mathrm{tr}(p^{-n\beta}z^{\beta}yz^{-\beta}w_n)\Big] W\matrd{y\alpha^{-1}}{}{}{1}\chi(\det y) |\det y|^{s-1/2} dy.\notag
\end{align}
Make the change of variables $x = y\alpha^{-1}$; then this becomes
\begin{equation}\label{eq:local zeta iwahori invariance 2}
	= \eta (\det z^\beta) p^{n^2\beta(s-1/2)} \chi(\det \gamma) \int_{\GL_n(\Qp)\cap M_n(\Zp)} \psi\Big[-\mathrm{tr}(p^{-n\beta}z^{\beta}x\alpha z^{-\beta}w_n)\Big] I(x) dx.
\end{equation}
Here we used that $\det(\alpha) = \det(\gamma) \newmod{p^\beta}$, that $\chi$ has conductor $p^\beta$, and that $|\det(\gamma)| = 1$.

Now we have 
	\begin{equation}\label{eq:trace add}
	\psi\Big[ \mathrm{tr}(-p^{-n \beta} z^\beta x \alpha z^{-\beta} w_n ) \Big] = \psi\Big[ \mathrm{tr}(-p^{-n \beta} z^\beta x \gamma z^{-\beta} w_n ) \Big] \cdot \psi\Big[\mathrm{tr}(-p^{-n \beta} z^\beta x \delta z^{-\beta} w_n ) \Big].
	\end{equation}
	We cut the support down first by averaging over $\delta \in B_\beta$, then over $\gamma \in A$.
	
	\medskip
	
	\textbf{Step 1: Average over $B_\beta$}. 	For $x \in \GL_n(\Qp)\cap M_n(\Zp)$, define 
	\begin{align*}
	F_x : B_\beta &\longrightarrow \C^\times\\
	\delta &\longmapsto \psi\Big[\mathrm{tr}(-p^{-n \beta} z^\beta x \delta z^{-\beta} w_n )\Big].
\end{align*}
This is a group homomorphism by additivity of trace and $\psi$.
	
	\begin{lemma}\label{lem:F_x}
		\begin{enumerate}[(i)]
			\item There exists a finite index subgroup $B_\beta' \subset B_\beta$ such that $F_x$ is trivial on $B_\beta'$ for all $x \in \GL_n(\Qp)\cap M_n(\Zp)$. 
			\item For any fixed $x$, $F_x$ is the trivial function if and only if 
		\begin{equation}\label{eq:B_beta conditions}
		x_{n+1 - i, j} \in \left\{ \begin{array}{cc} p^{2\beta (n-i) + \beta} \Z_p & \text{ if } i > j \\ p^{2\beta (n-i)} \Z_p & \text{ if } i < j \end{array} \right.
		\end{equation}
	\end{enumerate}
	\end{lemma}
	\begin{proof}
		(i)  For $\delta$ sufficiently divisible by $p$, we have $F_x(\delta) = 1$ for all $x \in M_n(\Zp)$. 
		
		\medskip
		
		(ii) Writing out the trace explicitly, one sees
		\[
		\mathrm{tr}\Big[-p^{-n \beta} z^\beta x \delta z^{-\beta} w_n \Big] = -\sum_{i=1}^n \left( p^{\beta (1-2i)} x_{i, k} \left( \sum_{k < n+1 - i} c_{k, n+1 -i} + \sum_{k > n+1-i} p^{\beta} c_{k, n+1 - i} \right) \right) . 
		\]
		and uses the change of variables $i \mapsto n+1 - i$. If \eqref{eq:B_beta conditions} fails for some $(i,j)$, then $F_x$ will be non-trivial on the matrix $\delta$ which is zero apart from a 1 at $(i,j)$, so $F_x$ is not the trivial function. Conversely, if \eqref{eq:B_beta conditions} does hold, then the trace above is always integral and $F_x$ is trivial.
	\end{proof}
	
	Let $M_{\beta}' \subset \GL_n(\Qp)\cap M_n(\Zp)$ be the subset of matrices $x$ satisfying the conditions in \eqref{eq:B_beta conditions}. 
	
	\begin{corollary}\label{cor:average B}
		For any $\gamma \in A$, we have 
		\[
		\zeta(s,(u^{-1} t_p^{\beta}) \cdot W, \chi) = \eta(\det z^\beta) p^{n^2 \beta (s-1/2)} \chi(\operatorname{det} \gamma) \int_{M_\beta'} \psi\Big[ \mathrm{tr}(-p^{-n \beta} z^\beta x \gamma z^{-\beta} w_n ) \Big] \cdot I(x) dx.
		\]
	\end{corollary}
	\begin{proof}
		Using \eqref{eq:local zeta iwahori invariance} (in the first equality) and \eqref{eq:local zeta iwahori invariance 2} and \eqref{eq:trace add} (in the second), we have
		\begin{align*}
			\zeta&(s,(u^{-1}t_p^\beta)\cdot W, \chi) = \frac{1}{[B_\beta:B_\beta']}\sum_{\delta \in B_\beta/B_\beta'}\zeta(s,(u^{-1}t_p^\beta(\gamma + \delta))\cdot W, \chi) \\
			&= \eta (\det z^\beta) p^{n^2\beta(s-1/2)}\chi(\det \gamma) \int_{\GL_n(\Qp)\cap M_n(\Zp)} \psi\Big[-\mathrm{tr}(p^{-n\beta}z^{\beta}x\gamma z^{-\beta}w_n)\Big]\\
			&\hspace{160pt} \times \bigg[\frac{1}{[B_\beta:B_\beta']}\sum_{\delta \in B_\beta/B_\beta'} F_x(\delta)\bigg] I(x) dx.
		\end{align*}
	When $F_x$ is non-trivial, the square-bracketed term (hence the integrand) vanishes by character orthogonality; and when $F_x \equiv 1$, it is identically 1. We conclude since by Lemma \ref{lem:F_x}(ii), $F_x$ is trivial if and only if $x \in M_\beta'$. 
	\end{proof}
	
	\textbf{Step 2: Average over $A$}. Equip $A \cong (\Zp^\times)^n$ with the measure $d^{\times}A = \prod_{i=1}^n d^{\times}c_{i, i}$. 
	
	\begin{lemma}\label{lem:average A}
		We have 
		\[
		\chi(\operatorname{det}\gamma)\psi\Big[ \mathrm{tr}(-p^{-n \beta} z^\beta x \gamma z^{-\beta} w_n ) \Big] = \prod_{i=1}^n \chi( c_{i, i} )\psi\left( - p^{-(2\beta(n-i) + \beta)} x_{n+1 -i, i} c_{i,i} \right).
		\]
		Therefore
		\begin{align*} 
			\int_{\gamma \in A} \chi(\operatorname{det}\gamma)&\psi\Big[ \operatorname{tr}(-p^{-n \beta} z^\beta x \gamma z^{-\beta} w_n ) \Big] d^{\times}A\\
			 &= \prod_{i=1}^n \chi(-1)\int_{\Z_p^{\times}}  \chi( c_{i, i} )\psi\left(p^{-(2\beta(n-i) + \beta)} x_{n+1 -i, i} c_{i,i} \right) d^{\times} c_{i, i} \\
			&= \left\{ \begin{array}{cc} \frac{\chi(-1)^n\tau(\chi)^n}{p^{n\beta}(1-p^{-1})^n} \prod_{i=1}^n \chi(x'_{n+1-i, i})^{-1} & \text{ if } x_{n+1-i, i} \in p^{2\beta(n-i)} \Z_p^{\times} \ \forall i \\ 0 & \text{ otherwise } \end{array} \right.
		\end{align*}
		where $x'_{n+1-i, i} = x_{n+1-i, i}/p^{2\beta(n-i)}$ and $\tau(\chi)$ is the Gauss sum from \eqref{eq:gauss sum}.
	\end{lemma}
	\begin{proof}
		We have 
		\[
		\chi(\operatorname{det}\gamma) = \prod_{i=1}^n \chi( c_{i, i} ) 
		\]
		and
		\begin{align*} 
			\psi\Big[\operatorname{tr}(-p^{-n \beta} z^\beta x \gamma z^{-\beta} w_n ) \Big] &= \prod_{i=1}^n \psi\left( - p^{\beta(1-2i)} x_{i, n+1 - i} c_{n+1-i,n+1 -i} \right) \\
			&= \prod_{i=1}^n \psi\left( - p^{-(2\beta(n-i) + \beta)} x_{n+1 -i, i} c_{i,i} \right),
		\end{align*} 
		giving the first part. The rest follows from a simple change of variables.
	\end{proof}
	
	Let $M_{\beta} \subset M_\beta'$ be the subset where $x_{n+1-i, i} \in p^{2\beta(n-i)} \Z_p^{\times}$ for all $i$. Note that 
	\[
		M_{\beta} = w_n z^{2\beta} \Iwn(p^{\beta}).
	\]
	
	\begin{corollary}\label{cor:average A}
		We have 
		\[
		\zeta(s,(u^{-1}t_p^\beta) \cdot W, \chi) = \eta(\det z^\beta) p^{n^2 \beta (s-1/2)} \frac{\chi(-1)^n\tau(\chi)^n}{p^{n\beta}(1-p^{-1})^n} \int_{M_{\beta}} \prod_{i=1}^n \chi(x'_{n+1-i, i})^{-1} I(x) dx.
		\]
	\end{corollary}
	\begin{proof}
		This is similar to Corollary \ref{cor:average B}. Integrate the expression of that corollary over $A$, and reduce the support using Lemma \ref{lem:average A}. 
	\end{proof}
	
	Now using that fact that $M_{\beta} = w_n z^{2\beta} \Iwn(p^{\beta})$ we write $x = w_n z^{2\beta} x''$ and note that $\operatorname{det}x'' \equiv \prod_{i=1}^n x'_{n+1-i, i}$ modulo $p^\beta$. Making the change of variables, we see that
	\begin{align}
		\int_{M_\beta}  \prod_{i=1}^n& \chi(x'_{n+1-i, i})^{-1} I(x) dx\notag\\
		& = \int_{\Iwn(p^\beta)} W\matrd{w_nz^{2\beta}x''}{}{}{1} \prod_{i=1}^n \chi(x'_{n+1-i, i})^{-1}  \chi(\det w_nz^{2\beta} x'') |\det w_nz^{2\beta}|^{s-1/2}dx''\notag\\
		&= \chi(\det w_n)\cdot p^{-\beta n(n-1)(s-1/2)}\int_{\Iwn(p^\beta)} W\matrd{w_nz^{2\beta}}{}{}{1} dx''\notag\\
		&= \chi(\det w_n)\cdot p^{-\beta n(n-1)(s-1/2)}\cdot \mathrm{vol}(\Iwn(p^\beta)) \cdot W\matrd{w_nz^{2\beta}}{}{}{1}.\label{eq:W value}
	\end{align}
In the penultimate equality, we have used that
\[
\operatorname{det}x'' \equiv \prod_{i=1}^n x'_{n+1-i, i} \newmod{p^\beta},
\]
$\chi(p) = 1$, $|\det w_n| = 1$, and $|\det z| = p^{-n(n-1)/2}$.
Finally note that 
	\begin{equation}\label{eq:vol Iw}
	\operatorname{vol}(\Iwn(p^\beta)) = p^{-(\beta-1)\tfrac{n^2-n}{2}} \operatorname{vol}(\Iwn).
	\end{equation}
	Putting \eqref{eq:vol Iw} and \eqref{eq:W value} into Corollary \ref{cor:average A}, using that $\chi(-1)^n\chi(\det w_n) = \chi(\det(-w_n))$, completes the proof of Proposition \ref{prop:local zeta p}.
\end{proof}

	\section{Spin $p$-refinements}\label{sec:spin refinements}
	As highlighted in the introduction to Part II, we want to answer:
	\begin{quote}
		\emph{For which $p$-refinements $\tilde\pi$ is there $W \in \cS_{\psi}^\eta(\tilde\pi)$ with $\zeta(s,(u^{-1}t_p^\beta)\cdot W,\chi) \neq 0$?}
	\end{quote}
 Given Proposition \ref{prop:local zeta p}, this is equivalent to asking when there exists $W \in \tilde\pi$ and $\beta \geq 1$ such that $W\smallmatrd{w_nz^{2\beta}}{}{}{1} \neq 0$. We expect this to be true only for a special class of $p$-refinements, those that `interact well with the Shalika model'. 
 
 In this section, we begin to make this assertion rigorous. We define `spin' $p$-refinements as those `that come from $\mathrm{GSpin}_{2n+1}$', made precise in Proposition \ref{prop:spin refinement}. In later sections we will show that for any spin $p$-refinement $\tilde\pi$, there exists $W\in \cS_{\psi}^\eta(\tilde\pi)$ such that $W\smallmatrd{w_nz^{2\beta}}{}{}{1} \neq 0$. Our key application of spin $p$-refinements will ultimately be summarised in Corollary \ref{cor:L-value}.

	\begin{notation}
		Let $\cG \defeq \mathrm{GSpin}_{2n+1}$, taking the split form. As $\pi$ is a generic unramified principal series and admits a Shalika model, by \cite{AS06} it is the functorial transfer of an unramified principal series representation $\Pi$ of $\cG(\Qp)$. We shall describe $\Pi$ explicitly in Proposition \ref{prop:chi}.
	\end{notation}
	
	We first give a concrete definition of spin $p$-refinement. We justify it in \S\ref{sec:spin via gspin}, and give several equivalent formulations in Propositions \ref{prop:alpha spin} and \ref{prop:spin refinement}. Let $\tilde\pi = (\pi,\alpha)$ be a $p$-refinement of $\pi$ (as in \S\ref{sec:hecke p}), and for $1 \leq r \leq 2n-1$ write $\alpha_{p,r} \defeq \alpha(U_{p,r})$.
	
	\begin{definition}\label{def:spin-refinement}
		We say $\tilde\pi$ is a \emph{spin $p$-refinement} if $\alpha_{p,n+s} = \eta(p)^s\alpha_{p,n-s}$ for all $0 \leq s \leq n-1$.
	\end{definition}
	
	Recall here $\eta$ is the Shalika character (see page \pageref{shalika character}). We will show $\tilde\pi$ is spin if and only if $\alpha$ factors through an eigensystem occuring in $\Pi^{\Iwahori_{\cG}}$, for $\Iwahori_{\cG}$ the Iwahori subgroup of $\cG(\Zp)$ (see \S\ref{sec:spin via gspin}).

	\subsection{Conventions for Shalika models}\label{sec:shalika conventions}
		As $\pi$ is spherical, it can be written as a (normalised) induction from the upper-triangular Borel, i.e.\ there exists an unramified character $\UPS : T(F) \to \C^\times$ such that	$\pi = \Ind_B^G \UPS$. This $\UPS$ is well-defined up to conjugation by $\cW_G$. Moreover, we have:
	
	\begin{proposition}\cite[Prop.\ 1.3]{AG94}. \label{prop:AG 1} The unramified principal series $\Ind_B^G \UPS$ admits an $(\eta,\psi)$-Shalika model if and only if there is a decomposition $\{1,...,2n\} = X_1 \sqcup X_2$, where $\# X_j = n$, and a bijection $\nu : X_1 \to X_2$ such that $\UPS_i \UPS_{\nu(i)} = \eta$ for all $i \in X_1$.	
	\end{proposition}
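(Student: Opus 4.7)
My approach is to reduce the proposition to the characterization quoted earlier in \S\ref{sec:shalika basic properties} from \cite{AG94}: for spherical $\pi$, an $(\eta,\psi)$-Shalika model exists if and only if $\pi^\vee \cong \pi \otimes \eta^{-1}$. For $\pi = \Ind_B^G \theta$, both $\pi^\vee$ and $\pi \otimes \eta^{-1}$ are unramified principal series representations, with Satake parameters given (up to Weyl conjugacy) by the multisets $\{\theta_i^{-1}\}_{i=1}^{2n}$ and $\{\theta_i \eta^{-1}\}_{i=1}^{2n}$ respectively. By the classification of irreducible unramified principal series, these are isomorphic if and only if the two multisets agree, equivalently if and only if there exists a permutation $\sigma \in \mathrm{S}_{2n}$ with $\theta_i \theta_{\sigma(i)} = \eta$ for all $i$.

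The second step is to upgrade $\sigma$ to an involution. Iterating the relation yields $\theta_i = \theta_{\sigma^2(i)}$, so $\sigma^2$ preserves each fiber of $i \mapsto \theta_i$; a fiber-by-fiber adjustment then produces an involution $\sigma$ still satisfying $\theta_i \theta_{\sigma(i)} = \eta$. Its orbits are 2-cycles $\{i,\sigma(i)\}$, which automatically satisfy the desired multiplicative relation, and fixed points $i$ with $\theta_i^2 = \eta$.

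To produce the decomposition $X_1 \sqcup X_2$ with bijection $\nu$, the plan is to split each 2-cycle by placing one element in each $X_j$, matched by $\nu$. The fixed points require more care: they satisfy $\theta_i \in \{\mu,-\mu\}$ for some $\mu$ with $\mu^2 = \eta$. Since $\mu \cdot (-\mu) = -\eta \neq \eta$, a $\mu$-fixed point cannot be paired with a $(-\mu)$-fixed point under the relation $\theta_i \theta_{\nu(i)} = \eta$, so the only option is to pair like with like. The main technical obstacle is therefore a \emph{parity claim}: the number of fixed points with $\theta_i = \mu$, and separately the number with $\theta_i = -\mu$, must each be even. The natural tool to resolve this is the central character identity $\omega_\pi = \eta^n$ (a known consequence of having an $(\eta,\psi)$-Shalika model, as recorded in \S\ref{sec:shalika basic properties}): writing $\omega_\pi = \prod_i \theta_i$ and factoring out the contribution of paired indices (each contributing exactly $\eta$) reduces the identity to a constraint on $\mu^a (-\mu)^b$ where $a,b$ are the fixed-point multiplicities, and the presence of the sign $\mu \cdot (-\mu) = -\eta$ forces both $a$ and $b$ even. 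Once this is established, the fixed points within each of $\{\theta_i = \mu\}$ and $\{\theta_i = -\mu\}$ can be paired within themselves, completing the construction of $(X_1, X_2, \nu)$.

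The converse direction is routine: given the data $(X_1, X_2, \nu)$, the involution $\sigma$ exchanging $X_1$ and $X_2$ via $\nu$ immediately exhibits the multiset equality $\{\theta_i^{-1}\} = \{\theta_i \eta^{-1}\}$, hence $\pi^\vee \cong \pi \otimes \eta^{-1}$, and the Shalika model exists by the quoted spherical self-duality characterization.
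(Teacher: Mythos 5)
Your proposal is correct, and since the paper offers no proof of this statement (it simply cites \cite[Prop.~1.3]{AG94}), you are supplying an argument where the paper has none. Your reduction rests on two facts recalled in \S\ref{sec:shalika basic properties}: the spherical self-duality criterion (a spherical $\pi$ has an $(\eta,\psi)$-Shalika model iff $\pi^\vee \cong \pi\otimes\eta^{-1}$) and the central character identity $\omega_\pi = \eta^n$. Self-duality gives the multiset equality $\{\theta_i^{-1}\} = \{\theta_i\eta^{-1}\}$ and hence a permutation $\sigma$ with $\theta_i\theta_{\sigma(i)} = \eta$; the genuine subtlety, which you correctly isolate, is that the two square roots $\pm\mu$ of $\eta$ may occur among the $\theta_i$ and cannot be paired with each other since $\mu\cdot(-\mu)\neq\eta$. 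Your parity argument -- computing $\omega_\pi = \eta^n\cdot(-1)^b$ where $b$ counts fixed points with $\theta_i = -\mu$, so that $\omega_\pi = \eta^n$ forces $b$ even and then $a$ even since $a + b$ (the total fixed-point count of an involution on a $2n$-set) is automatically even -- is sound and is the real content of the equivalence. Two caveats you should make explicit. First, the self-duality criterion and the combinatorial criterion are attributed by the paper to the same AG94 proposition, so your argument establishes an equivalence between two formulations rather than an independent proof from first principles; be aware the criterion you take as input may, in the cited source, itself be a corollary of the conclusion. Second, \S\ref{sec:shalika basic properties} records $\omega_\pi = \eta^n$ only in the global setting; you need the local analogue, which holds because $z 1_{2n} = \smallmatrd{z1_n}{0}{0}{z1_n} \in \cS(\Q_p)$ and $(\eta\otimes\psi)$ restricts to $z\mapsto\eta(z)^n$ on the centre. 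This is not cosmetic: self-duality alone gives only $\omega_\pi^2 = \eta^{2n}$, and that unresolved sign is precisely what your parity argument needs pinned down.
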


	Given choices of a decomposition $X_1 \sqcup X_2$ and a bijection $\nu$, we can find an identification $\pi = \Ind_B^G \UPS$ with $\UPS_i\UPS_{\nu(i)} = \eta$. 

The natural choice of decomposition is $X_1 = \{1,...,n\}$, $X_2 = \{n+1,...,2n\}$. For this there will be \emph{two} natural choices of $\nu$, with their own advantages and disadvantages. 
	\begin{itemize}\s
		\item[--] In \cite{AG94}, $\nu$ is chosen so that $\nu(i) = n+i$. When $\UPS_i\UPS_{n+i} = \eta$, Ash--Ginzburg use this to define an explicit intertwining $\Ind_B^G \UPS \to \cS_\psi^\eta(\pi)$, which we describe in \S\ref{sec:AG}. 
		
		\item[--] In \cite{AS06}, $\nu$ is chosen so that $\nu(i) = 2n+1-i$. This is considerably more natural when discussing spin refinements, as will become clear later in this section. 
	\end{itemize}

	Since we will later use Ash--Ginzburg's explicit intertwining, we make the following choice:
	
	\begin{quote}
		\emph{In the rest of the paper, assume $\pi = \Ind_B^G \UPS$, where $\UPS_i\UPS_{n+i} = \eta$ for $1 \leq i \leq n$.}
	\end{quote}

Note that the Ash--Ginzburg and Asgari--Shahidi choices are interchanged by conjugation by $\tau \defeq \smallmatrd{1}{}{}{w_n}$. In particular, given the choice of $\UPS$ fixed above, we have $\UPS^\tau_{i}\UPS^{\tau}_{2n+1-i} = \eta$. This helps when it is more convenient to use Asgari--Shahidi's choice (see e.g.\ Remark \ref{rem:AG vs AS}).

	\subsection{Root systems for $\GL_{2n}$ and $\mathrm{GSpin}_{2n+1}$} \label{sec:structure gspin}
	Recall the space of algebraic characters and cocharacters of the torus $T \subset G = \GL_{2n}$ are given by
	\[
	X = \Z e_1 \oplus \Z e_2 \oplus \cdots \Z e_{2n}, \hspace{12pt} X^\vee = \Z e_1^* \oplus \Z e_2^* \oplus \cdots \Z e_{2n}^*.
	\]
	Write $\langle-,-\rangle_G$ for the natural pairing on $X \times X^\vee$. The corresponding root system is $A_{2n-1}$, with roots $R = \{\pm(e_i - e_j) : 1 \leq i < j \leq 2n\}$. The Weyl group $\cW_G = \mathrm{S}_{2n}$ acts by permuting the $e_i$, with longest Weyl element $w_{2n}$ the permutation that sends $e_i \mapsto e_{2n+1-i}$ for all $i$.  
	
	Let $X_0 \subset X$ be the space of \emph{pure characters} $X_0 = \{\lambda \in X: \exists \sw \in \Z \text{ such that } \lambda_i + \lambda_{2n-i+1} = \sw\}$, and let 
	\begin{equation}\label{eq:W_G^0}
	\cW_G^0 \defeq \{ \sigma \in \cW_G :  \forall \lambda \in X_0, \ \lambda^\sigma \in X_0\} \subset \cW_G.
	\end{equation}

	Now fix a standard upper Borel subgroup $\cB$ and maximal split torus $\cT$ in $\cG = \mathrm{GSpin}_{2n+1}$. This has rank $n+1$ \cite[Thm.\ 2.7]{Asg02}. We use calligraphic letters to denote objects for GSpin, and otherwise maintain the same notational conventions as before. 
	
	\begin{proposition}\label{prop:spin root system}
		The root system for $\cG$ is $(\cX, \cR, \cX^\vee, \cR^\vee)$, where
		\[
		\cX = \Z f_0 \oplus \Z f_1 \oplus \cdots \oplus \Z f_n, \hspace{12pt} \cX^\vee = \Z f_0^* \oplus \Z f_1^* \oplus \cdots \oplus \Z f_n^*,
		\] with roots $\cR = \{\pm f_i \pm f_j : 1 \leq i<j\leq n\} \cup \{f_i : 1 \leq i \leq n\}$ and  positive roots $\{f_i : 1 \leq i \leq n\} \cup \{f_i \pm f_j : 1 \leq i < j \leq n\}$. 	The Weyl group $\cW_{\cG}$ has size $2^n\cdot n!$, generated by permutations $\sigma \in S_n$ and sign changes $\mathrm{sgn}_i$, which act on roots and coroots respectively as (for $j \neq i$)
		\begin{equation}\label{eq:weyl action cG}
			\sigma f_0 = f_0, \ \ \sigma f_i = f_{\sigma(i)},\ \ \ \mathrm{sgn}_i f_0 = f_0 + f_i,\ \ \mathrm{sgn}_i(f_i) = -f_i, \ \ \mathrm{sgn}_j(f_i) = f_i,
		\end{equation}
		\[
		\sigma f_0^* = f_0^*, \ \ \sigma f_i^* = f_{\sigma(i)}^*,\ \ \ \mathrm{sgn}_i f_0^* = f_0^*,\ \ \mathrm{sgn}_i(f_i^*) = f_0^* -f_i^*,  \ \ \mathrm{sgn}_j(f_i^*) = f_i^*.
		\]
		In particular, $\cW_{\cG}$ is the semidirect product $\{\pm1\}^n \rtimes S_n$.
	\end{proposition}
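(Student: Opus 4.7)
The plan is to deduce the entire statement from the explicit description of $\cG = \mathrm{GSpin}_{2n+1}$ worked out by Asgari \cite{Asg02}. First I would recall from \cite[Thm.~2.7, Prop.~2.4]{Asg02} that $\cG$ has a rank $n+1$ maximal split torus $\cT$ fitting into the exact sequence $1 \to \mathrm{GL}_1 \to \cT \to \cT_{\mathrm{Spin}_{2n+1}} \to 1$, where the new $\mathrm{GL}_1$ corresponds to the similitude factor and accounts for the extra generator $f_0$. This immediately gives the claimed descriptions of $\cX$ and $\cX^\vee$. The roots $\cR$ and positive roots (relative to $\cB$) are then identified with those of the underlying simply connected group $\mathrm{Spin}_{2n+1}$, viewed inside $\cX$ as characters vanishing on the similitude direction $f_0$; this is precisely the type $B_n$ system as claimed.

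For the Weyl group, the key point is that the similitude centre is central and hence fixed by all reflections, so $\cW_\cG = \cW_{\mathrm{Spin}_{2n+1}}$, which is well known to be the hyperoctahedral group $\{\pm 1\}^n \rtimes S_n$ of order $2^n n!$. What then remains is to check the explicit formulas \eqref{eq:weyl action cG} for the action on $\cX$ and $\cX^\vee$. For this I would compute each reflection $s_\alpha(\lambda) = \lambda - \langle \lambda, \alpha^\vee\rangle \alpha$ directly from the coroot data recorded in \cite[\S2]{Asg02}, namely
\[
(f_i - f_j)^\vee = f_i^* - f_j^*, \qquad (f_i + f_j)^\vee = f_i^* + f_j^* - f_0^*, \qquad f_i^\vee = 2 f_i^* - f_0^*,
\]
where the appearance of $-f_0^*$ in the short coroot $f_i^\vee$ is exactly the feature that distinguishes $\mathrm{GSpin}_{2n+1}$ from $\mathrm{GSO}_{2n+1}$. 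The long-root reflections $s_{f_i - f_j}$ then act as the transpositions $(i\,j) \in S_n$ fixing $f_0, f_0^*$, giving the permutation part. For the short-root reflection $s_{f_i}$, pairing $f_0$ with $f_i^\vee = 2f_i^* - f_0^*$ yields $\langle f_0, f_i^\vee\rangle = -1$, whence $s_{f_i}(f_0) = f_0 + f_i$, while $s_{f_i}(f_i) = -f_i$ and $s_{f_i}(f_j) = f_j$ for $j \neq i$; these are exactly the formulas for $\mathrm{sgn}_i$ on $\cX$. The dual formulas on $\cX^\vee$ follow by the same recipe using the pairings $\langle f_i, f_i^\vee\rangle = 2$ and $\langle f_0, f_i^\vee\rangle = -1$, giving $s_{f_i}(f_i^*) = f_0^* - f_i^*$ and $s_{f_i}(f_0^*) = f_0^*$.

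The only real subtlety, and the only place the proof needs care, is the presence of the $-f_0^*$ in the short coroot — this is the feature that makes $\cG$ a $\mathrm{GSpin}$ group rather than a $\mathrm{GSO}$ group, and it is the source of the otherwise mysterious shift $f_0 \mapsto f_0 + f_i$ under $\mathrm{sgn}_i$. Once this normalisation is taken from \cite[\S2]{Asg02}, every remaining assertion reduces to a direct calculation with the reflection formula, and the semidirect product decomposition $\cW_\cG = \{\pm 1\}^n \rtimes S_n$ is then read off from the generators $\{\mathrm{sgn}_i\}_{i=1}^n$ and $S_n \subset \cW_\cG$.
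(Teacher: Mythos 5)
Your argument is correct, and it takes a deliberately more computational route than the paper. The paper proves the proposition by two citations: the root system description (the ``first part'') is attributed to \cite[Prop.~2.4]{Asg02}, and the Weyl group structure and the explicit action formulas \eqref{eq:weyl action cG} (the ``second part'') are attributed wholesale to \cite[Lem.~13.2.2]{HS16}. You instead derive \eqref{eq:weyl action cG} directly from Asgari's coroot data by computing each reflection $s_\alpha(\lambda) = \lambda - \langle\lambda,\alpha^\vee\rangle\alpha$ and its dual; the verification that the short coroot is $f_i^\vee = 2f_i^* - f_0^*$, giving $\langle f_0, f_i^\vee\rangle = -1$, is exactly the input needed and your reflection computations all check out (including the coweight side $s_{f_i}(f_i^*) = f_0^* - f_i^*$). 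The upshot is a self-contained proof depending only on \cite{Asg02} rather than also on \cite{HS16}, at the cost of a page of linear algebra that the paper avoids by citation. You also correctly isolate the source of the non-obvious shift $\mathrm{sgn}_i(f_0) = f_0 + f_i$ as the $-f_0^*$ term in the short coroot, which is a genuinely useful remark that the paper's proof does not surface.
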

	\begin{proof}
		The first part is \cite[Prop.\ 2.4]{Asg02}, and the second \cite[Lem.\ 13.2.2]{HS16}.
	\end{proof}
	
	Write $\langle-,-\rangle_{\cG}$ for the natural pairing on $\cX \times \cX^\vee$.

	\subsection{The maps $\jmath$ and $\jmath^\vee$}\label{sec:j}
	There is a natural injective map $\jmath : \cX \hookrightarrow X$ given by
	\begin{equation}\label{eq:f and e}
		f_i \longmapsto e_i - e_{2n-i+1} \text{ for } 1 \leq i \leq n,\qquad 
		f_0  \longmapsto e_{n+1} + \cdots + e_{2n}.\notag
	\end{equation}
	We may identify $\cX$ with cocharacters of $\mathrm{GSp}_{2n}$, and $X$ with cocharacters of $\GL_{2n}$. The map $\jmath$ is then the natural map on cocharacters induced by the inclusion $T_{\mathrm{GSp}_{2n}} \subset T$ of tori.

	\begin{proposition}\label{prop:local pure}
		We have $X_0 = \jmath(\cX)$.
	\end{proposition}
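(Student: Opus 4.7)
The plan is to prove the two inclusions $\jmath(\cX) \subseteq X_0$ and $X_0 \subseteq \jmath(\cX)$ by direct unwinding of the definitions of $\jmath$ and of purity. This is essentially a linear-algebra exercise, so the main task is really just to identify the right coefficients; there is no serious obstacle.

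First I would compute $\jmath$ on a general element. Writing $\mu = a_0 f_0 + \sum_{i=1}^n a_i f_i \in \cX$, the definition of $\jmath$ gives
\[
\jmath(\mu) = \sum_{i=1}^n a_i(e_i - e_{2n-i+1}) + a_0 \sum_{k=n+1}^{2n} e_k = \sum_{j=1}^n a_j e_j + \sum_{k=n+1}^{2n}(a_0 - a_{2n-k+1}) e_k.
\]
For $1 \leq j \leq n$, pairing the $j$-th and $(2n-j+1)$-th coordinates yields $a_j + (a_0 - a_j) = a_0$, independently of $j$. Hence $\jmath(\mu)$ is pure with purity weight $a_0$, proving $\jmath(\cX) \subseteq X_0$.

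For the reverse inclusion, given $\lambda = (\lambda_1, \ldots, \lambda_{2n}) \in X_0$ with purity weight $\sw$, I would set $a_0 \defeq \sw$ and $a_j \defeq \lambda_j$ for $1 \leq j \leq n$, and verify that $\jmath(a_0 f_0 + \sum_{j=1}^n a_j f_j) = \lambda$. Indeed, for the first $n$ coordinates this is immediate, and for $n+1 \leq k \leq 2n$ the $k$-th coordinate of the image is $a_0 - a_{2n-k+1} = \sw - \lambda_{2n-k+1} = \lambda_k$, where the last equality uses purity. Combining both inclusions gives $X_0 = \jmath(\cX)$, completing the proof.
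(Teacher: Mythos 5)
Your proof is correct and follows essentially the same approach as the paper's (which is stated in one terse sentence): $\jmath(f_i)$ for $i\geq 1$ are pure of purity weight $0$ and span all such weights, while adding a multiple of $\jmath(f_0)$ shifts the purity weight. Your version simply makes the coordinate computation explicit.
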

	\begin{proof}
		Any linear combination of the $\jmath(f_i)$ is a pure weight with purity weight $0$, and any such weight arises in this form; and scaling the purity weight to $\sw$ corresponds to adding $\jmath(\sw f_0)$. 
	\end{proof}

	\begin{proposition}\label{prop:weyl transfer}
		There is a map $\cW_{\cG} \to \cW_G$ of Weyl groups, which we also call $\jmath$, such that:
		\begin{itemize}\setlength{\itemsep}{0pt}
			\item[(i)] $\jmath$ induces an isomorphism $\cW_{\cG} \xrightarrow{\jmath} \cW_G^0 \subset \cW_G$;
			\item[(ii)] for all  $\sigma \in \cW_{\cG}$ and $\mu \in \cX$, we have $\jmath(\mu^\sigma) = \jmath(\mu)^{\jmath(\sigma)}$.
		\end{itemize}
	\end{proposition}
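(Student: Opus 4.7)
The plan is to define $\jmath$ explicitly on generators of $\cW_{\cG}$ and verify both claims by direct computation on these generators.

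I would start by giving a concrete description of $\cW_G^0$. For $\lambda \in X$ and $\sigma \in \cW_G = \mathrm{S}_{2n}$, the $j$-th coordinate of $\lambda^\sigma$ is $\lambda_{\sigma^{-1}(j)}$, so $\lambda^\sigma$ is pure for every pure $\lambda$ precisely when $\sigma$ preserves the set of pairs $\{\{i, 2n-i+1\} : 1 \le i \le n\}$. This identifies $\cW_G^0$ with the hyperoctahedral group $B_n = \{\pm 1\}^n \rtimes \mathrm{S}_n$, which has order $2^n n! = |\cW_{\cG}|$. On the generators of $\cW_{\cG}$ supplied by Proposition~\ref{prop:spin root system} I would then define $\jmath$ as follows: for $\sigma \in \mathrm{S}_n$, let $\jmath(\sigma) \in \mathrm{S}_{2n}$ be the permutation $i \mapsto \sigma(i)$, $2n-i+1 \mapsto 2n-\sigma(i)+1$ for $1 \le i \le n$, and for the sign change $\mathrm{sgn}_i$, let $\jmath(\mathrm{sgn}_i) = (i,\,2n-i+1) \in \mathrm{S}_{2n}$. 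Both types of images visibly lie in $\cW_G^0$ and the semidirect product relations in $\cW_{\cG}$ are preserved, so this extends to a group homomorphism $\jmath : \cW_{\cG} \to \cW_G^0$.

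For the compatibility (ii), it suffices to check $\jmath(\mu^\sigma) = \jmath(\mu)^{\jmath(\sigma)}$ on the generators $f_0, f_1, \ldots, f_n$ of $\cX$ and the chosen generators of $\cW_{\cG}$, using $\jmath(f_i) = e_i - e_{2n-i+1}$ and $\jmath(f_0) = e_{n+1}+\cdots+e_{2n}$. For $\sigma \in \mathrm{S}_n$, the permutation $\jmath(\sigma)$ stabilises $\{n+1,\dots,2n\}$ and hence fixes $\jmath(f_0)$, while sending $\jmath(f_i)$ to $e_{\sigma(i)} - e_{2n-\sigma(i)+1} = \jmath(f_{\sigma(i)}) = \jmath(f_i^\sigma)$. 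For the sign change $\mathrm{sgn}_i$, the transposition $\jmath(\mathrm{sgn}_i)$ sends $\jmath(f_i)$ to $-\jmath(f_i) = \jmath(-f_i)$, fixes $\jmath(f_j)$ for $j \neq i$, and sends $\jmath(f_0)$ to $\jmath(f_0) + e_i - e_{2n-i+1} = \jmath(f_0 + f_i)$, exactly matching the actions recorded in \eqref{eq:weyl action cG}.

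Finally, for (i), the compatibility (ii) forces $\jmath$ to be injective: a kernel element would fix every generator of $\cX$, and Weyl groups act faithfully on character lattices. Combined with the order equality $|\cW_{\cG}| = |\cW_G^0|$ established in the first paragraph, $\jmath$ is an isomorphism onto $\cW_G^0$. The only real obstacle I anticipate is notational bookkeeping: one must track the action conventions (the pairing on $X$ coming from purity, and the precise formula for $\jmath$ on $\cX$ from \S\ref{sec:j}) carefully to avoid stray signs, but no genuine mathematical difficulty arises.
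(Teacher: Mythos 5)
Your proposal is correct and takes essentially the same approach as the paper: you define $\jmath$ on the same generators (transpositions $(i,\,2n-i+1)$ for sign changes, and the block-diagonal "double" permutations for $\mathrm{S}_n$), verify (ii) by a computation on generators, and identify the image with $\cW_G^0$ via the concrete description of purity-preserving permutations. The only minor variation is that you conclude (i) by injectivity (from (ii) plus faithfulness of the Weyl action) together with the order count $|\cW_{\cG}| = 2^n n! = |\cW_G^0|$, whereas the paper argues surjectivity onto $\cW_G^0$ directly via a generic pure $\lambda$ with distinct entries; both are valid and rest on the same structural observation.
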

	\begin{proof}
		We know $\cW_{\cG} \cong \{\pm1\}^n \rtimes S_n$ is generated by sign changes and permutations in $\{f_1,...,f_n\}$, and $\cW_G = S_{2n}$ is permutations in $\{e_1,...,e_{2n}\}$. If $\sigma \in S_n \subset \cW_{\cG}$ is a permutation, then define
		\[
		\jmath(\sigma) = \smallmatrd{\sigma}{}{}{w_n\sigma w_n} : (e_1,...,e_n,e_{n+1},...,e_{2n}) \mapsto \left(e_{\sigma(1)}, ..., e_{\sigma(n)}, e_{2n-\sigma(n)+1}, ..., e_{2n-\sigma(1)+1}\right),
		\]
		and if $\sigma = \epsilon_i$ is the sign change at $i \in \{1,...,n\}$, define $\jmath(\epsilon_i)$ to be the permutation switching $e_i$ and $e_{2n-i+1}$. A simple check shows this induces a well-defined homomorphism.
		
		Suppose $\lambda \in X$ is pure with all the $\lambda_i$ distinct. Let $\sigma \in \cW_G = S_n$; if $\lambda^\sigma$ is pure, then $\sigma$ must preserve the relative positions of each pair $\{\lambda_i, \lambda_{2n-i+1}\}$. The only way to do this is to permute $i \in \{1,...,n\}$ or to switch $\lambda_i,\lambda_{2n-i+1}$. These are exactly the permutations in $\jmath(\cW_{\cG})$, giving (i).
		
		Part (ii) is a simple explicit check.
	\end{proof}
	
	\begin{corollary}
	There is a short exact sequence $1 \to \{\pm 1\}^n \to \cW_G^0 \to S_n \to 1$, which is split by $\jmath : S_n \to \cW_G^0$. The image of $\{\pm1\}^n$ is generated by the transpositions $(i,2n+1-i)$ for $1 \leq i \leq n$.
	\end{corollary}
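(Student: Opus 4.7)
The plan is to simply transport the semidirect product structure of $\cW_{\cG}$ through the isomorphism $\jmath : \cW_{\cG} \isorightarrow \cW_G^0$ established in Proposition~\ref{prop:weyl transfer}(i), and then read off the image of the sign-change subgroup from the explicit formulae in the proof of that proposition.

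First, recall from Proposition~\ref{prop:spin root system} that $\cW_{\cG} = \{\pm 1\}^n \rtimes S_n$, so there is a tautological split short exact sequence
\[
1 \longrightarrow \{\pm 1\}^n \longrightarrow \cW_{\cG} \longrightarrow S_n \longrightarrow 1,
\]
with splitting the inclusion $S_n \hookrightarrow \cW_{\cG}$. Applying the isomorphism $\jmath$ from Proposition~\ref{prop:weyl transfer}(i) to the middle term transports this into a split short exact sequence
\[
1 \longrightarrow \jmath(\{\pm 1\}^n) \longrightarrow \cW_G^0 \longrightarrow S_n \longrightarrow 1,
\]
whose splitting is precisely the composition $S_n \hookrightarrow \cW_{\cG} \xrightarrow{\jmath} \cW_G^0$; this is the map also denoted $\jmath$ in the statement.

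Finally, to identify the image of $\{\pm 1\}^n$, recall from the proof of Proposition~\ref{prop:weyl transfer} that the sign change $\epsilon_i \in \cW_{\cG}$ is sent by $\jmath$ to the transposition in $S_{2n}$ which swaps $e_i$ and $e_{2n-i+1}$ (and fixes the remaining basis vectors); this is exactly the transposition $(i,\,2n+1-i)$. Since $\{\pm 1\}^n$ is generated by the $\epsilon_i$ for $1 \leq i \leq n$, its image $\jmath(\{\pm 1\}^n)$ is generated by the transpositions $(i,\,2n+1-i)$, as claimed.
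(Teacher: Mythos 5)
Your proof is correct and follows exactly the intended route: the paper states this corollary immediately after Proposition~\ref{prop:weyl transfer} without a separate proof, treating it as an immediate consequence of the isomorphism $\jmath : \cW_{\cG} \isorightarrow \cW_G^0$ together with the semidirect product decomposition $\cW_{\cG} \cong \{\pm 1\}^n \rtimes S_n$ from Proposition~\ref{prop:spin root system}, and your argument transports the structure through $\jmath$ and reads off the generators from the explicit formula $\jmath(\epsilon_i) = (i, 2n+1-i)$ given in the proof of Proposition~\ref{prop:weyl transfer}.
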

	
	Dually, define also a map $\jmath^\vee: X^\vee \to \cX^\vee$ by sending $\nu \in X^\vee$ to
	\[
	\jmath^\vee(\nu) \defeq \sum_{i = 0}^{n} \big\langle \jmath(f_i), \nu\big\rangle_G \cdot f_i^*.
	\]
	Then for all $\mu \in X$, we have 
	\begin{equation}\label{eq:pairing jmath}
		\langle \mu, \jmath^\vee(\nu)\rangle_{\cG} = \langle \jmath(\mu), \nu\rangle_G
	\end{equation}
	by construction. (Again, this map arises from our explicit realisation of $T_{\mathrm{GSp}_{2n}} \subset T$). Also let $\jmath^\vee : \cW_G^0 \to \cW_{\cG}$ denote the inverse to $\jmath : \cW_{\cG} \cong \cW_G^0$.
	\begin{proposition}\label{prop:jmath vee eq}
		For all $\nu \in X^\vee$ and $\sigma \in \cW_G^0$, we have $\jmath^\vee(\nu^\sigma) = \jmath^\vee(\nu)^{\jmath^\vee(\sigma)}$.
	\end{proposition}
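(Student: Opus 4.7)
The plan is to deduce this equivariance statement by duality from the already-established equivariance in Proposition \ref{prop:weyl transfer}(ii), using the defining adjunction \eqref{eq:pairing jmath} together with the Weyl-invariance of the canonical pairings on $\cX \times \cX^\vee$ and $X \times X^\vee$.

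First I would note that the pairing $\langle -, -\rangle_{\cG}$ on $\cX \times \cX^\vee$ is non-degenerate, so to prove the identity $\jmath^\vee(\nu^\sigma) = \jmath^\vee(\nu)^{\jmath^\vee(\sigma)}$ in $\cX^\vee$ it suffices to show that both sides pair equally with every $\mu \in \cX$ (in practice, with each basis element $f_0, f_1, \ldots, f_n$). I would also record the standard Weyl-invariance of the pairings, namely $\langle \mu^w, \nu^w\rangle = \langle \mu, \nu\rangle$ on either side, which is immediate from the explicit action in Proposition \ref{prop:spin root system} (and the analogous statement for $\cW_G$ acting by permutations); equivalently, $\langle \mu, \nu^w\rangle = \langle \mu^{w^{-1}}, \nu\rangle$.

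Next I would compute both pairings. For the left-hand side, the defining relation \eqref{eq:pairing jmath} gives immediately
\[
\langle \mu, \jmath^\vee(\nu^\sigma)\rangle_{\cG} = \langle \jmath(\mu), \nu^\sigma\rangle_G.
\]
For the right-hand side, I would move $\jmath^\vee(\sigma)$ across the pairing, apply \eqref{eq:pairing jmath}, then transport by Proposition \ref{prop:weyl transfer}(ii), and finally use that $\jmath \circ \jmath^\vee = \mathrm{id}$ on $\cW_G^0$:
\[
\langle \mu, \jmath^\vee(\nu)^{\jmath^\vee(\sigma)}\rangle_{\cG} = \langle \mu^{\jmath^\vee(\sigma)^{-1}}, \jmath^\vee(\nu)\rangle_{\cG} = \langle \jmath(\mu^{\jmath^\vee(\sigma)^{-1}}), \nu\rangle_G = \langle \jmath(\mu)^{\sigma^{-1}}, \nu\rangle_G = \langle \jmath(\mu), \nu^\sigma\rangle_G.
\]
Comparing, the two pairings agree for every $\mu \in \cX$, and non-degeneracy yields the claim.

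There is no real obstacle here: once one writes down \eqref{eq:pairing jmath} carefully (reading the $\mu \in X$ there as $\mu \in \cX$, since otherwise $\jmath(\mu)$ is undefined) and combines it with Proposition \ref{prop:weyl transfer}(ii), the result is a formal consequence. The only mildly subtle point is book-keeping the direction in which Weyl elements move across the pairing, and remembering that $\jmath$ and $\jmath^\vee$ are genuinely inverse on Weyl groups (not merely adjoint).
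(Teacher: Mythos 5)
Your proof is correct. It differs from the paper's in a small but real way: the paper plugs directly into the defining sum formula $\jmath^\vee(\nu)=\sum_i \langle\jmath(f_i),\nu\rangle_G f_i^*$ and, after applying Proposition \ref{prop:weyl transfer}(ii), finishes with a "simple check" that amounts to verifying $\sum_i \langle f_i^{\rho^{-1}},-\rangle_{\cG}\, f_i^* = \left(\sum_i \langle f_i,-\rangle_{\cG}\, f_i^*\right)^\rho$, i.e.\ the $\cW_{\cG}$-invariance of the evaluation tensor $\sum_i f_i\otimes f_i^*$. You instead invoke the adjunction relation \eqref{eq:pairing jmath} and non-degeneracy of $\langle-,-\rangle_{\cG}$ to reduce everything to transport across the pairings. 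The key input in both arguments is Proposition \ref{prop:weyl transfer}(ii) together with the inverse relationship between $\jmath$ and $\jmath^\vee$ on Weyl groups; what you buy with the duality formulation is that the "simple check" at the end of the paper's proof is replaced by the (equally elementary, arguably more familiar) Weyl-invariance of the canonical pairings, and no explicit coordinate computation is needed. You also correctly flag the minor typo in \eqref{eq:pairing jmath} (it should read $\mu\in\cX$, not $\mu\in X$).
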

	\begin{proof}
		If $\sigma \in \cW_G^0$, then $\sigma = \jmath(\rho)$ for some $\rho \in \cW_{\cG}$, and $\jmath^\vee(\sigma) = \rho$. Then compute that
		\begin{align*}
			\jmath^\vee(\nu^\sigma) \defeq \sum_{i=0}^n \langle\jmath(f_i),\nu^\sigma\rangle_G f_i^* = \sum_{i=0}^n\langle\jmath(f_i)^{\jmath(\rho^{-1})},\nu\rangle_G f_i^* &= \sum_{i=0}^n\langle\jmath(f_i^{\rho^{-1}}),\nu\rangle_G f_i^*\\
			&= \sum_{i=0}^n\langle\jmath(f_i),\nu\rangle_G (f_i^*)^\rho = \jmath^\vee(\nu)^{\jmath^\vee(\sigma)},
		\end{align*}
		where the penultimate equality is a simple check.
	\end{proof}

	\subsection{Spin refinements via GSpin}\label{sec:spin via gspin}
	Via the Satake isomorphism, we may describe Hecke operators in terms of cocharacters. 
	Note $U_{p,r}$ is attached to the cocharacter 
	\begin{equation}\label{eq:cocharacter}
		\nu_{r} \defeq e_1^* + \cdots + e_r^* \in X^\vee,
	\end{equation}
	since $t_{p,r}^G \defeq t_{p,r} = \nu_{r}(p)$. Define $t_{p,r}^{\cG} \defeq \jmath^\vee(\nu_{r})(p) \in \cT(F)$; by definition, for $0 \leq s \leq n-1$ 
	\begin{equation}\label{eq:jmath on cocharacters}
		\jmath^\vee(\nu_{n-s}) = f_1^* + \cdots + f_{n-s}^*,\qquad \jmath^\vee(\nu_{n+s}) =  \jmath^\vee(\nu_{n-s}) + sf_0^*.
	\end{equation}
	Let $\Iwahori_{\cG} \subset \cG(\Zp)$ be the Iwahori subgroup for $\cG$, and for $1 \leq r \leq n$, let $\cU_{p,r} \defeq [\Iwahori_{\cG} \ t_{p,r}^{\cG} \ \Iwahori_{\cG}]$. Also let $\mathcal{V}_p \defeq [\Iwahori_{\cG}\ f_0^*(p)\  \Iwahori_{\cG}]$. From \eqref{eq:jmath on cocharacters}, we see $\cU_{p,n+s} = \cV_p^s \cdot \cU_{p,n-s}$.
	
	Let $\cH^{\cG}_{p} \defeq \Zp[\cV_p, \cU_{p,r}: 1 \leq r \leq n]$ be the Hecke algebra for $\cG$. Then $\jmath^\vee$ induces a map
	\[
	\cH_p \longrightarrow \cH_p^{\cG}, \qquad U_{p,n-s} \longmapsto \cU_{p,n-s}, \ \ U_{p,n+s} \longmapsto \cV_p^s \cdot \cU_{p,n-s},
	\]
	for each $0 \leq s \leq n-1$. Comparing with Definition \ref{def:spin-refinement}, we obtain:
	
	\begin{proposition}\label{prop:alpha spin}
		A $p$-refinement $(\pi,\alpha)$ is a spin $p$-refinement if and only if $\alpha$ factors through
		\[
		\cH_p \xrightarrow{\ \ \jmath^\vee \ \ } \cH_p^{\cG} \xrightarrow{ \ \ \alpha^{\cG} \ \ } \overline{\Q},
		\]
		for some character $\alpha^{\cG}$ with $\alpha^{\cG}(\cV_p) = \eta(p)$.
	\end{proposition}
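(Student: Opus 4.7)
The proof is essentially an unraveling of the definitions built up in \S\ref{sec:j}--\S\ref{sec:spin via gspin}, with the key computational input being the formula $\jmath^\vee(\nu_{n+s}) = \jmath^\vee(\nu_{n-s}) + sf_0^*$ from \eqref{eq:jmath on cocharacters}, which translates into the Hecke-algebra identity $\cU_{p,n+s} = \cV_p^s \cdot \cU_{p,n-s}$.

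The plan is to argue both implications directly. For the \emph{if} direction, assume $\alpha = \alpha^{\cG} \circ \jmath^\vee$ with $\alpha^{\cG}(\cV_p) = \eta(p)$. Then for each $0 \le s \le n-1$,
\[
\alpha_{p,n+s} = \alpha(U_{p,n+s}) = \alpha^{\cG}\bigl(\cV_p^s \cdot \cU_{p,n-s}\bigr) = \eta(p)^s \cdot \alpha^{\cG}(\cU_{p,n-s}) = \eta(p)^s \cdot \alpha_{p,n-s},
\]
where the second equality uses the explicit formula for $\jmath^\vee$ on generators recorded immediately before Proposition \ref{prop:alpha spin}. This is exactly the spin condition of Definition \ref{def:shalika refinement}.

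For the \emph{only if} direction, suppose $(\pi,\alpha)$ is spin. I would define $\alpha^{\cG}$ on the generators of $\cH_p^{\cG} = \Zp[\cV_p,\cU_{p,r} : 1 \le r \le n]$ by
\[
\alpha^{\cG}(\cU_{p,r}) \defeq \alpha_{p,r} \ \text{ for } 1 \le r \le n, \qquad \alpha^{\cG}(\cV_p) \defeq \eta(p).
\]
Since $\cH_p^{\cG}$ is the polynomial ring in these commuting variables, this extends uniquely to a $\Zp$-algebra homomorphism $\alpha^{\cG} : \cH_p^{\cG} \to \overline{\Q}$. It then remains to verify $\alpha = \alpha^{\cG} \circ \jmath^\vee$, which it suffices to check on the generators $U_{p,r}$ of $\cH_p$: for $1 \le r \le n$ this is immediate from the definition of $\alpha^{\cG}(\cU_{p,r})$, and for $r = n+s$ with $s \ge 1$ one computes
\[
(\alpha^{\cG} \circ \jmath^\vee)(U_{p,n+s}) = \alpha^{\cG}(\cV_p^s \cdot \cU_{p,n-s}) = \eta(p)^s \cdot \alpha_{p,n-s} = \alpha_{p,n+s},
\]
where the last equality is the spin hypothesis.

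There is no serious obstacle here: everything is forced by the two facts that $\cH_p^{\cG}$ is freely generated (as a commutative algebra) by $\cV_p$ and the $\cU_{p,r}$, and that $\jmath^\vee$ sends $U_{p,n+s}$ to $\cV_p^s \cdot \cU_{p,n-s}$. The only mild point to keep in mind is to check the generators really do commute and generate freely, so that defining $\alpha^{\cG}$ on generators and extending multiplicatively gives a well-defined character; this is built into the definition of $\cH_p^{\cG}$ in \S\ref{sec:spin via gspin}.
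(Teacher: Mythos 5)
Your proof is correct and is exactly the unraveling of definitions that the paper has in mind when it writes ``From the definitions, we obtain'' immediately before the statement; you have simply made explicit the two-sided check on generators that the paper left to the reader. The one point you flag at the end---that $\cH_p^{\cG}$ is freely generated by $\cV_p$ and the $\cU_{p,r}$ so that prescribing $\alpha^{\cG}$ on generators is consistent---is indeed the only thing being used silently, and is standard for the commutative $U_p$-subalgebra of an Iwahori--Hecke algebra.
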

	
	\begin{remark}
		We could add the operator $U_{p,2n} = [\Iw \ \mathrm{diag}(p,...,p)\  \Iw]$ to $\cH_p$; it acts by $\mathrm{diag}(p,...,p)$, so acts on $\pi$ by $\eta(p)^n$ (since $\pi$ has central character $\eta^n$). We would then have $\jmath^\vee(U_{p,2n}) = \cV_p^n$. In particular,  the requirement that $\alpha^{\cG}(\cV_p) = \eta(p)$ is natural.
	\end{remark}

	Recall $\pi$ on $G(\Qp)$ is the transfer of $\Pi$ on $\cG(\Qp)$, and:
	\begin{equation}\label{eq:satake}
	\text{\emph{From now on, we assume that the Satake parameter of $\pi$ is regular.}}
	\end{equation}
	 We now show in this case that if $(\pi,\alpha)$ is a spin refinement, then the system of eigenvalues $\alpha^{\cG}$ occurs in $\Pi^{\Iwahori_{\cG}}$.
	
	Recall in \S\ref{sec:shalika conventions} we fixed an unramified character $\theta$ of $T(\Qp)$ such that $\pi = \Ind_B^G \UPS$ and $\UPS_i\UPS_{n+i} = \eta$ (cf.\ \cite[(43)]{DJR18}, where $\UPS$ is denoted $|\cdot|^{(2n-1)/2}\lambda$). Recall $\tau = \smallmatrd{1_n}{}{}{w_n} \in \cW_G$, and that $\UPS^\tau$ satisfies the Asgari--Shahidi condition $\UPS_i^\tau\UPS_{2n+1-i}^\tau = \eta$. From \cite[p.177(i)]{AS06} and \cite[Prop.\ 5.1]{AS14}, we see:
	
	\begin{proposition}\label{prop:chi}
		There is an unramified character $\UPS_{\cG}$ of $\cT(\Qp)$ such that:
		\begin{itemize}\setlength{\itemsep}{0pt}
			\item[(i)] $\Pi = \Ind_{\cB}^{\cG}\UPS_{\cG}$ is a (normalised) parabolic induction,
			\item[(ii)] we have $\jmath(\UPS_{\cG}) = \UPS^\tau$.
		\end{itemize}
	\end{proposition}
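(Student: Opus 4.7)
The plan is to invoke the explicit description of the functorial transfer from $\cG = \mathrm{GSpin}_{2n+1}$ to $\GL_{2n}$ at unramified places, which is controlled at the level of Satake parameters by the dual $L$-group embedding $\mathrm{GSp}_{2n}(\C) \hookrightarrow \GL_{2n}(\C)$. The combinatorial shadow of this embedding, on cocharacter lattices, is precisely the map $\jmath$ built in \S\ref{sec:j}.

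First I would observe that $\Pi_p$ is itself unramified. Indeed, $\Pi_p$ transfers to $\pi_p$, and at unramified places the transfer is characterised by Satake parameters; since $\pi_p$ is spherical, $\Pi_p$ must be as well. Hence $\Pi_p$ occurs as a subquotient of some unramified principal series $\Ind_{\cB}^{\cG}\UPS_{\cG}$, and $\UPS_{\cG}$ is determined by $\Pi_p$ up to the action of $\cW_{\cG}$.

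Next I would translate the Satake-parameter compatibility into the language of characters. Identifying unramified characters of $\cT(\Qp)$ (resp.\ $T(\Qp)$) with elements of $\cX \otimes \C^\times$ (resp.\ $X \otimes \C^\times$), the $L$-group embedding corresponds, under this identification, to our map $\jmath : \cX \to X$ of Proposition \ref{prop:local pure}. Thus the $\cW_G$-orbit of $\jmath(\UPS_{\cG})$ coincides with the $\cW_G$-orbit of $\UPS$, so $\jmath(\UPS_{\cG}) = \UPS^\sigma$ for some $\sigma \in \cW_G$. By construction $\jmath$ pairs $f_i$ with $e_i - e_{2n-i+1}$, so any character in $\jmath(\cX)$ automatically satisfies the Asgari--Shahidi pairing condition $\vartheta_i \vartheta_{2n-i+1} = \eta$. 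Since $\UPS^\tau$ is characterised within the $\cW_G$-orbit of $\UPS$ (up to further $\cW_G^0$-action, which is absorbed into the $\cW_{\cG}$-ambiguity of $\UPS_{\cG}$ via the isomorphism $\jmath : \cW_{\cG} \isorightarrow \cW_G^0$ of Proposition \ref{prop:weyl transfer}) by satisfying this pairing condition, we may replace $\UPS_{\cG}$ by a suitable $\cW_{\cG}$-conjugate to arrange $\sigma = \tau$, giving (ii).

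Finally, for (i) I would use the regularity assumption on the Satake parameter of $\pi$. Since $\jmath$ is injective, distinct $\cW_{\cG}$-translates of $\UPS_{\cG}$ remain distinct after applying $\jmath$ and embedding in the $\cW_G$-orbit of $\UPS^\tau$; regularity of the latter thus forces regularity of $\UPS_{\cG}$ with respect to $\cW_{\cG}$. By the standard irreducibility criterion for unramified principal series, this guarantees $\Ind_{\cB}^{\cG}\UPS_{\cG}$ is irreducible, so $\Pi_p$ is the full induction, not merely a constituent. The main obstacle is the bookkeeping in the second step: reconciling the Ash--Ginzburg normalisation of $\UPS$ fixed in \S\ref{sec:shalika conventions} with the Asgari--Shahidi convention naturally produced by the $L$-group embedding, which is exactly what the conjugation by $\tau$ achieves. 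The essential content of this reconciliation is contained in \cite[p.\ 177(i)]{AS06} and \cite[Prop.\ 5.1]{AS14}, from which the proposition can be deduced directly.
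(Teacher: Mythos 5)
The paper itself offers no proof of this proposition beyond citing \cite[p.\ 177(i)]{AS06} and \cite[Prop.\ 5.1]{AS14}; your proposal is a reasonable attempt to unpack what those references supply, and the core mechanism you identify — Satake-parameter compatibility under the dual embedding $\mathrm{GSp}_{2n}(\C)\hookrightarrow\GL_{2n}(\C)$, whose combinatorial shadow on (co)character lattices is $\jmath$, together with the Ash--Ginzburg/Asgari--Shahidi convention translation via $\tau$ — is the right one.

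However, there is a genuine error in your justification of (i). Regularity of the Satake parameter (i.e.\ the Weyl group acting freely on it) does \emph{not} imply irreducibility of an unramified principal series: already for $\GL_2$, the principal series $\Ind_B^G(|\cdot|^{1/2},|\cdot|^{-1/2})$ has regular Satake parameter yet is reducible. The correct reason $\Pi_p$ equals the full induction is genericity, not regularity: by Theorem~\ref{thm:transfer}, $\Pi$ is globally generic, hence $\Pi_p$ is generic, and a local representation that is simultaneously generic and spherical must be the full irreducible unramified principal series — this is exactly the fact invoked for $\pi_\ell$ around \eqref{eq:unramified induced}, and it is what the cited references deliver for $\cG$. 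Relatedly, your opening claim that $\Pi_p$ is spherical because $\pi_p$ is cannot follow solely from Satake compatibility at unramified places, since that compatibility only applies once one already knows $\Pi_p$ is unramified; it is a feature of the strong local transfer established in \cite{AS14}, and should likewise be attributed to the references rather than derived.
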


	\begin{remark}\label{rem:AG vs AS}
 The $\tau$ is necessary as we are using the convention of Ash--Ginzburg (see \S\ref{sec:shalika conventions}). From this, we see that $\UPS^\tau$ is a more natural convention for GSpin computations. If we chose $\UPS$ as in Asgari--Shahidi, we could have removed $\tau$ from (ii) and henceforth in this section.
 
 Note that the Asgari--Shahidi condition $\UPS_i\UPS_{2n+1-i} = \eta$ is preserved by $\cW_G^0$, reflecting its natural place in the spin world. On the other hand the Ash--Ginzburg condition is preserved instead by $\tau^{-1}\cW_G^0\tau$. 
	\end{remark}

By \eqref{eq:delta ups} and \eqref{eq:satake}, our fixed choice of $\UPS$ (hence $\UPS^\tau$)  fixes a bijection 
	\[
	\Delta_{\UPS^\tau} : \{p\text{-refinements of }\pi\} \longrightarrow \cW_G.
	\]
	
	\begin{lemma}\label{lem:Shalika W_G^0}
		A $p$-refinement $\tilde\pi = (\pi,\alpha)$ is spin if and only if $\Delta_{\UPS^\tau}(\tilde\pi) \in \cW_G^0$. 
	\end{lemma}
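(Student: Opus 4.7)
\medskip

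\textbf{Proof proposal.} The plan is to unfold both sides of the claimed equivalence into explicit combinatorial conditions on $\sigma \defeq \Delta_{\UPS^\tau}(\tilde\pi) \in \cW_G$, and then match them using the Asgari--Shahidi pairing identity $\UPS^\tau_i \UPS^\tau_{2n+1-i} = \eta$ together with regularity of the Satake parameter.

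First, by Proposition~\ref{prop:p-refinement}(i) applied with $\UPS^\tau$ in place of $\UPS$, we have the explicit formula
\[
\alpha_{p,r} = \prod_{j=1}^{r} p^{(2n-2j+1)/2}\, \UPS^\tau_{\sigma(2n+1-j)}(p).
\]
I would then compute the ratio $\alpha_{p,n+s}/\alpha_{p,n-s}$ for $0 \leq s \leq n-1$. A short calculation shows the $p$-powers cancel (the exponents $(2n-2j+1)/2$ for $j=n-s+1,\dots,n+s$ sum to zero), and after the substitution $i = 2n+1-j$ the ratio becomes $\prod_{i=n+1-s}^{n+s} \UPS^\tau_{\sigma(i)}(p)$. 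Hence $\tilde\pi$ is spin if and only if
\begin{equation}\label{eq:spin-condition}
\prod_{i=n+1-s}^{n+s} \UPS^\tau_{\sigma(i)}(p) \;=\; \eta(p)^s \qquad \text{for all } 1 \leq s \leq n-1.
\end{equation}

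Next, I would argue that \eqref{eq:spin-condition} is equivalent to $\sigma(n+1-s) + \sigma(n+s) = 2n+1$ for each $s$ with $1 \leq s \leq n-1$, by induction on $s$. For the base case $s=1$: writing $\alpha_i \defeq \UPS^\tau_i(p)$, the Asgari--Shahidi identity gives $\alpha_i \alpha_{2n+1-i} = \eta(p)$, and by regularity of the Satake parameter the values $\alpha_1,\dots,\alpha_{2n}$ are pairwise distinct, so $\alpha_a \alpha_b = \eta(p)$ forces $b = 2n+1-a$. Applied to $\{a,b\}=\{\sigma(n),\sigma(n+1)\}$, this yields $\sigma(n) + \sigma(n+1) = 2n+1$. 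For the inductive step, dividing the identity for $s$ by the identity for $s-1$ reduces the problem to $\UPS^\tau_{\sigma(n+1-s)}(p)\,\UPS^\tau_{\sigma(n+s)}(p) = \eta(p)$, which by the same argument forces $\sigma(n+1-s)+\sigma(n+s) = 2n+1$.

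Finally, I would package the output: the condition that $\sigma(n+1-s) + \sigma(n+s) = 2n+1$ for every $1 \leq s \leq n-1$, together with the automatic case $s=n$ (which follows from the central character identity $\prod_{i=1}^{2n}\UPS^\tau_{\sigma(i)}(p) = \eta(p)^n$, i.e.\ compatibility with $U_{p,2n}$), says exactly that $\sigma$ sends each pair $\{k, 2n+1-k\}$ to a pair of the same form. By the description of $\cW_G^0$ given in the proof of Proposition~\ref{prop:weyl transfer}, this is precisely membership in $\cW_G^0$. The only real obstacle is the bookkeeping of indices, and verifying that the one-step implication in the inductive argument genuinely uses regularity of the Satake parameter as the input that prevents spurious factorisations $\alpha_a\alpha_b = \eta(p)$ with $\{a,b\} \neq \{i,2n+1-i\}$.
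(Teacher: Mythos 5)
Your proposal is correct and takes essentially the same approach as the paper: both use the explicit formula for $\alpha_{p,r}$ from Proposition~\ref{prop:p-refinement}, the Asgari--Shahidi identity $\UPS^\tau_i\UPS^\tau_{2n+1-i}=\eta$, regularity of the Satake parameter to rule out spurious factorisations, and the characterisation of $\cW_G^0$ as the permutations preserving the pairs $\{i,2n+1-i\}$. The paper is more terse (it asserts the equivalence ``from the explicit description... we see that'' without writing out the ratio calculation or the inductive reduction to consecutive pairs), whereas you carry out that bookkeeping explicitly; the content and the logical skeleton are the same.
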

	\begin{proof}
		Conjugating Proposition \ref{prop:AG 1} by $\tau$, we see for each $i$, we have $\UPS_i^\tau \cdot \UPS_{2n+1-i}^\tau = \eta$ as characters of $\Qp^\times$. Let $\sigma = \Delta_{\UPS^\tau}(\tilde\pi)$. By definition of $\cW_G^0$, we see if $\sigma \in \cW_G^0$ then 
		\begin{equation}\label{eq:sigma}
			\UPS_{\sigma(i)}^\tau \cdot \UPS_{\sigma(2n+1-i)}^\tau = \eta,
		\end{equation}
		whilst as the Satake parameter is regular, if $\sigma \notin \cW_G^0$, then \eqref{eq:sigma} fails for some $i$. Thus \eqref{eq:sigma} holds for all $i$ if and only if $\sigma \in \cW_G^0$.
		
From the explicit description of $\alpha_{p,r} = \alpha(U_{p,r})$ from Proposition \ref{prop:p-refinement}, we see that $\alpha_{p,n+s} = \eta(p)^s\alpha_{p,n-s}$ if and only if \eqref{eq:sigma} holds for all $i$. The result follows.
	\end{proof}

	A \emph{$p$-refinement of $\Pi$} is a tuple $\tilde\Pi = (\Pi, \alpha^{\cG})$, where $\alpha^{\cG} : \cH_{p}^{\cG} \to \overline{\Q}$ is a system of Hecke eigenvalues appearing in $\Pi^{\Iwahori_{\cG}}$. We say $\tilde\Pi$ is \emph{regular} if this system of eigenvalues appears in $\Pi^{\cG}$ without multiplicity, i.e.\ the generalised eigenspace is a line. As in Proposition \ref{prop:p-refinement}, after fixing the unramified character $\UPS_{\cG}$, such $p$-refinements correspond to elements $\sigma \in \cW_{\cG}$. 

The following is our main motivation for the definition of spin $p$-refinement.
	
	\begin{proposition}\label{prop:spin refinement}
		Suppose the Satake parameter of $\pi$ is regular, and let $\tilde\pi = (\pi,\alpha)$ be a $p$-refinement. Then $\tilde\pi$ is a spin $p$-refinement if and only if there exists a $p$-refinement $(\Pi,\alpha^{\cG})$ of $\Pi$ such that $\alpha = \alpha^{\cG} \circ \jmath^\vee$ as characters $\cH_{p} \to \overline{\Q}$.
	\end{proposition}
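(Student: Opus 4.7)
The plan is to combine the abstract factorisation of Proposition \ref{prop:alpha spin} with an explicit comparison of the analogue of Proposition \ref{prop:p-refinement}(i) applied to $\pi = \mathrm{Ind}_B^G \UPS$ and to $\Pi = \mathrm{Ind}_{\cB}^{\cG}\UPS_{\cG}$, matched via the identity $\jmath(\UPS_{\cG}) = \UPS^\tau$ of Proposition \ref{prop:chi}(ii).

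For the forward direction, suppose $\tilde\pi = (\pi,\alpha)$ is spin. By Proposition \ref{prop:alpha spin} we may factor $\alpha = \alpha^{\cG} \circ \jmath^\vee$ for a unique character $\alpha^{\cG}:\cH_p^{\cG} \to \overline{\Q}$ with $\alpha^{\cG}(\cV_p) = \eta(p)$; what remains is to exhibit an eigenvector in $\Pi^{\Iwahori_{\cG}}$ on which $\cH_p^{\cG}$ acts by $\alpha^{\cG}$. By Lemma \ref{lem:Shalika W_G^0} and Proposition \ref{prop:weyl transfer}(i), $\Delta_{\UPS^\tau}(\tilde\pi) = \jmath(\sigma_{\cG})$ for a unique $\sigma_{\cG} \in \cW_{\cG}$, and the $\cG$-analogue of Proposition \ref{prop:p-refinement}(i) -- which applies since $\pi$ has regular Satake parameter and hence so does $\Pi$ -- produces an eigensystem $\tilde\alpha^{\cG}$ on $\Pi^{\Iwahori_{\cG}}$ indexed by $\sigma_{\cG}$. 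I would then check $\alpha^{\cG} = \tilde\alpha^{\cG}$ on the generators $\cU_{p,1},\dots,\cU_{p,n},\cV_p$ of $\cH_p^{\cG}$.

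For $\cU_{p,r}$ with $r \leq n$, equation \eqref{eq:jmath on cocharacters} gives $\jmath^\vee(U_{p,r}) = \cU_{p,r}$, so $\alpha^{\cG}(\cU_{p,r}) = \alpha(U_{p,r})$. The matching of Chenevier's formulas on both sides then reduces to three ingredients: (a) the interpretation of Proposition \ref{prop:chi}(ii), via the pairing \eqref{eq:pairing jmath}, as $\UPS^\tau(\nu(p)) = \UPS_{\cG}(\jmath^\vee(\nu)(p))$ for $\nu \in X^\vee$; (b) the Weyl-equivariance of $\jmath^\vee$ from Proposition \ref{prop:jmath vee eq}, applied to $\sigma = \jmath(\sigma_{\cG})$ with $\jmath^\vee(\sigma) = \sigma_{\cG}$; and (c) the identity $\delta_B^{1/2}(\nu_r(p))^{w_{2n}} = \delta_{\cB}^{1/2}(\jmath^\vee(\nu_r)(p))^{w_{\cG}}$, which a short product over positive roots shows equals $p^{r(2n-r)/2}$ on both sides. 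For $\cV_p$, the cocharacter $f_0^*$ is $\cW_{\cG}$-invariant by \eqref{eq:weyl action cG}, so $f_0^*(p)$ lies in the centre of $\cG(\Q_p)$ and $\cV_p$ acts on any vector of $\Pi^{\Iwahori_{\cG}}$ by the central character of $\Pi$ evaluated at $f_0^*(p)$; since no positive root of $\cG$ pairs nontrivially with $f_0^*$, the modulus factor collapses and $\tilde\alpha^{\cG}(\cV_p) = \UPS_{\cG}(f_0^*(p))$. A direct calculation from the definition of $\jmath^\vee$ shows $\jmath^\vee(e_n^* + e_{n+1}^*) = f_0^*$, so (a) yields $\UPS_{\cG}(f_0^*(p)) = \UPS^\tau_n(p)\cdot\UPS^\tau_{n+1}(p) = \UPS_n(p)\cdot\UPS_{2n}(p) = \eta(p)$, the last step by the Ash--Ginzburg Shalika relation of Proposition \ref{prop:AG 1}. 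Thus $\tilde\alpha^{\cG}(\cV_p) = \alpha^{\cG}(\cV_p)$ and the matching is complete.

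For the converse, if $\alpha = \alpha^{\cG} \circ \jmath^\vee$ for some $p$-refinement $(\Pi, \alpha^{\cG})$ of $\Pi$, the same centrality argument gives $\alpha^{\cG}(\cV_p) = \UPS_{\cG}(f_0^*(p)) = \eta(p)$, and Proposition \ref{prop:alpha spin} concludes that $\tilde\pi$ is spin. The only substantive technical point is step (c) -- the comparison of half-modulus characters across $G$ and $\cG$ -- which is elementary but requires careful bookkeeping of root systems and Weyl actions; the equivariance results of Propositions \ref{prop:weyl transfer} and \ref{prop:jmath vee eq}, together with the explicit root data of \S\ref{sec:structure gspin}, reduce it to the routine positive-root computation sketched above.
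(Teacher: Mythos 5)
Your proof is correct and follows essentially the same route as the paper: factor through $\alpha^{\cG}$ via Proposition \ref{prop:alpha spin}, identify $\sigma_{\cG} = \jmath^\vee(\Delta_{\UPS^\tau}(\tilde\pi))$, match Chenevier-type eigenvalue formulas on both sides using $\jmath(\rho_{\cG}) = \rho_G$, the adjointness \eqref{eq:pairing jmath}, and the Weyl-equivariance of Proposition \ref{prop:jmath vee eq}, and finally handle $\cV_p$ via centrality of $f_0^*(p)$. Two small points of comparison: where you derive $\tilde\alpha^{\cG}(\cV_p) = \eta(p)$ from $\jmath^\vee(e_n^*+e_{n+1}^*) = f_0^*$ and the Ash--Ginzburg relation $\UPS_n\UPS_{2n} = \eta$, the paper instead cites the identification of the central character of $\Pi$ with $\eta$ from Asgari--Shahidi directly (your route is slightly more self-contained); and where you invoke "the $\cG$-analogue of Proposition \ref{prop:p-refinement}(i)," the paper supplies a concrete reference (\cite[Cor.\ 3.16]{OST19}) for that generalization of Chenevier's Iwahori-eigensystem formula to $\cG$, which you should cite rather than assert.
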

	
	\begin{proof}
		By Proposition \ref{prop:alpha spin}, $\tilde\pi$ is spin if and only if $\alpha$ factors through some $\alpha^{\cG}$; so suffices to show that in this case, the system $\alpha^{\cG}$ occurs in $\Pi^{\Iwahori_{\cG}}$. Let $\sigma = \Delta_{\UPS^\tau}(\tilde\pi)$. By Lemma \ref{lem:Shalika W_G^0}, $\sigma \in \cW_G^0$.
		
		Denote half the sum of the positive roots for $G$ and $\cG$ by
		\begin{equation}\label{eq:rho G}
			\rho_G = \left(\tfrac{2n-1}{2}, \tfrac{2n-3}{2}, \cdots, \tfrac{-(2n-3)}{2}, \tfrac{-(2n-1)}{2}\right), \qquad \rho_{\cG} = \tfrac{2n-1}{2}f_1  + \tfrac{2n-3}{2}f_2 + \cdots + \tfrac{1}{2}f_n.
		\end{equation}
		Note $\jmath(\rho_{\cG}) = \rho_G$. By rewriting the formulation of Proposition \ref{prop:p-refinement}, the $U_{p,r}$-eigenvalue of $\tilde\pi$ can be written as
		\[
			\alpha_{p,r} = q^{\langle \rho_G, \nu_{p,r}\rangle_G} p^{\langle\UPS^\tau,\nu_{p,r}^\sigma\rangle_G},
		\]
		where we identify $\UPS^\tau(\nu_{p,r}^\sigma(p)) = p^{\langle\UPS^\tau,\nu_{p,r}^\sigma\rangle_G}$ under the natural extension of $\langle-,-\rangle_G$. 
		
		Since $\sigma \in \cW_G^0$, by Proposition \ref{prop:weyl transfer} it is of the form $\jmath(w)$ for some $\omega \in \cW_{\cG}$. Let $\tilde\Pi = (\Pi,\tilde\alpha)$ be the $p$-refinement corresponding to $\omega$; then by considering the characteristic polynomial of $\cU_{p,r}$ on $\Pi_p^{\Iwahori_{\cG}}$ (see \cite[Cor.\ 3.16]{OST19}), we see that the $\cU_{p,r}$-eigenvalue attached to $\tilde\Pi$ is 
		\begin{align*}
			\tilde\alpha(\cU_{p,r}) = q^{\langle \rho_{\cG}, \jmath^\vee(\nu_{p,r})\rangle_{\cG}} p^{\langle\UPS_{\cG},\jmath^\vee(\nu_{p,r}^\omega)\rangle_{\cG}} &= q^{\langle \rho_{G}, \nu_{p,r}\rangle_{G}} p^{\langle\UPS_{\cG},\jmath^\vee(\nu_{p,r})^{\jmath^\vee(\sigma)}\rangle_{\cG}}\\
			&= q^{\langle \rho_{G}, \nu_{p,r}\rangle_{G}} p^{\langle\UPS^\tau,\nu_{p,r}^{\sigma}\rangle_{G}} = \alpha_{p,r},
		\end{align*}
		where in the second equality we have used $\jmath(\rho_{\cG}) = \rho_G$ with \eqref{eq:pairing jmath}, and in the third we have used Proposition \ref{prop:jmath vee eq} with \eqref{eq:pairing jmath}. In particular, $\tilde\alpha(\cU_{p,r}) = \alpha^{\cG}(\cU_{p,r})$ for all $r$.
		
		It remains to show $\tilde\alpha(\cV_{p}) = \alpha^{\cG}(\cV_{p})$. Note $\alpha^{\cG}(\cV_{p}) = \eta(p)$ by Proposition \ref{prop:alpha spin}. Also, $f_0^*(p)$ is central in $\cG(\Qp)$ by \cite[Prop.\ 2.3]{AS06}, and the central character of $\Pi$ is $\eta$ by p.178 \emph{op.\ cit}. Hence $\cV_{p}$ acts on $\Pi$ by $\eta(p)$. It follows that $\tilde\alpha(\cV_{p}) = \eta(p)$, and we conclude that $\tilde\alpha = \alpha^{\cG}$, as required.
	\end{proof}

	\begin{remark}
		We finally indicate how spin $p$-refinements relate to the notion of $Q$-regular $Q$-refinement in \cite[Def.\ 3.5]{DJR18}. This was defined to be an element $T \in \cW_G/\cW_H$, equivalent to a choice of $n$-element subset  $S_T \subset \{1,...,2n\}$, satisfying two conditions. Their condition (i) is our definition of regularity, and their condition (ii) guarantees that $T$ lies in the image of the composition $\cW_G^0 \hookrightarrow \cW_G \to \cW_G/\cW_H$. One sees that spin $p$-refinements are in bijection with $T \in \cW_G/\cW_H$ satisfying (ii) together with an ordering of $S_T$.
	\end{remark}

	\section{Shalika $p$-refinements}\label{sec:shalika refinements}

	We now define another class of $p$-refinements. Let $\tilde\pi = (\pi,\alpha)$ be a $p$-refinement. Recall we write $f \in \tilde\pi$ as shorthand for $f \in \pi^{\Iw} [\![U_{p,r} - \alpha(U_{p,r}) : 1 \leq r \leq 2n-1]\!]$ (and similarly for $W \in \cS_\psi^\eta(\tilde\pi)$). Recall that $z = \mathrm{diag}(p^{n-1},p^{n-2},...,p,1)$.

	\begin{definition}\label{def:shalika refinement}
		We say $\tilde\pi$ is a \emph{Shalika $p$-refinement} if there exist $W \in \cS_\psi^\eta(\tilde\pi)$ and $\beta \geq 1$ such that
		\begin{equation}\label{eq:shalika refinement}
		W\matrd{w_nz^{2\beta}}{}{}{1} \neq 0.
		\end{equation}
	\end{definition}

	Note that via Proposition \ref{prop:local zeta p}, this implies that the local zeta integral $\zeta(s,  (u^{-1}t_{\pri,n}^\beta)\cdot W, \chi)$ is non-vanishing for any smooth character $\chi : \Qp^\times \to \C^\times$ of conductor $p^\beta$. Accordingly this generalises the condition \cite[\S2.8, condition (C2)]{BDW20} required to construct a $p$-adic $L$-function for $\tilde\pi$ via the methods of \cite{BDW20}. 
	
The following connects this new definition to the previous section.
	
	\begin{expectation}\label{conj:shalika = spin}
		A refinement $\tilde\pi$ is a Shalika refinement if and only if it is a spin refinement.
	\end{expectation}
	
	Our main results of this section (Proposition \ref{prop:spin refinements are shalika} and Corollary \ref{cor:spin refinements are shalika}) show one direction of this when $\pi$ is regular: that all spin $p$-refinements are Shalika $p$-refinements. Moreover in this case we precisely compute the value \eqref{eq:shalika refinement}.

\begin{remark}
Our expectation is motivated by folklore conjectures on classical families in eigenvarieties. More precisely, in  \S\ref{sec:shalika families} we will show that through any non-critical slope Shalika refinement $\tilde\pi$, the $\GL_{2n}$-eigenvariety is \'etale over the pure weight space, and the component through $\tilde\pi$ contains a Zariski-dense set of symplectic points. The folklore conjecture says this should not be possible unless the family is transfer from $\mathrm{GSpin}_{2n+1}$, whence we expect $\tilde\pi$ to have been a spin refinement by the results of the previous section. We have elaborated on this in the sequel paper \cite{classical-locus}; in particular, the above expectation is generalised to arbitrary parabolics in Conjecture C \emph{op.\ cit}., and substantial results towards it are proved in Theorem D. As a special case, Expectation \ref{conj:shalika = spin} holds for non-critical slope refinements of regular weight.
\end{remark}

	\subsection{Explicit eigenvectors for principal series}\label{ss:intertwining}

If $\pi$ is regular, then $\tilde\pi = (\pi,\alpha)$ is a line; so one can use any non-zero eigenvector $W \in \cS_\psi^\eta(\tilde\pi)$ to test the condition for $\tilde\pi$ be a Shalika refinement. For this, we must write down explicit $\alpha$-eigenvectors in $\cS_\psi^\eta(\tilde\pi)$. First, we consider $\alpha$-eigenvectors in principal series representations.

Recall that in \S\ref{sec:shalika conventions} we fixed an identification $\pi = \Ind_B^G\UPS$, for $\UPS$ chosen as in Ash--Ginzburg (i.e.\ $\UPS_i \UPS_{n+i} = \eta$ for all $i$). Recall  also that our choice of $\UPS$ fixes a bijection $\Delta_\UPS : \{p\text{-refinements of }\pi\} $ $\isorightarrow \cW_G$ (see \eqref{eq:delta ups}). For $\sigma \in \cW_G$, let $\tilde\pi_\sigma = (\pi, \alpha_\sigma) \defeq \Delta^{-1}_\UPS(\sigma)$; every $p$-refinement is of the form $\tilde\pi_\sigma$ for some $\sigma$.

For a general $\sigma \in \cW_G$, it turns out to be non-trivial to write down an explicit $\alpha_\sigma$-eigenvector in $\Ind_B^G \UPS$. However, it is very easy to write down such an eigenvector in the (different, but isomorphic) principal series representation $\Ind_B^G \UPS^\sigma$. In particular, let
\[
	\fsigma \in \Ind_B^G \UPS^\sigma
	\]
	be the unique function that is: 
\begin{itemize}\s
\item Iwahori-invariant, 
\item supported on the big Bruhat cell $B(\Qp)\cdot w_{2n} \cdot \Iw$, and 
\item normalised so that $\fsigma(w_{2n}) = 1$. 
\end{itemize}

	\begin{proposition}	\label{prop:W_p eigenvector}
		 $\fsigma \in \Ind_B^G\UPS^\sigma$ is an $\alpha_\sigma$-eigenvector; i.e.\ for $r = 1,...,2n-1$, we have
		\[
			U_{p,r}\fsigma = \alpha_\sigma(U_{p,r}) \fsigma.
		\]
	\end{proposition}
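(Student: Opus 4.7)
The proof proceeds by direct computation via a double-coset decomposition. The plan is to fix the standard representatives
\[
\Iw\,t_{p,r}\,\Iw=\bigsqcup_{u\in U_r}u\,t_{p,r}\,\Iw,
\]
where $U_r\subset N(\Zp)$ is the set of unipotent matrices of the form $1+\sum_{i\leq r<k}x_{ik}E_{ik}$ with $x_{ik}\in\{0,\ldots,p-1\}$ and $E_{ik}$ the standard matrix unit. These arise from the Iwahori decomposition together with the fact that $t_{p,r}$ contracts the corresponding part of $N(\Zp)$. Then $(U_{p,r}\fsigma)(g)=\sum_{u\in U_r}\fsigma(g\,u\,t_{p,r})$, and by $\Iw$-invariance of both sides together with the Bruhat decomposition $G(\Qp)=\bigsqcup_{w\in\cW_G} Bw\,\Iw$, it suffices to verify the claimed identity after evaluation at each Weyl representative $g=w$.

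The heart of the argument is the case $g=w_{2n}$. Commuting $w_{2n}$ past $u$ and $t_{p,r}$ yields
\[
w_{2n}\,u\,t_{p,r}=s\,(s^{-1}\overline u\,s)\,w_{2n},
\]
where $\overline u := w_{2n}uw_{2n}^{-1}\in\overline N(\Zp)$ and $s := w_{2n}t_{p,r}w_{2n}^{-1} = \mathrm{diag}(1,\ldots,1,p,\ldots,p)\in T$ (with $p$'s in the final $r$ positions). Since $s\in B$, this element lies in the big cell $Bw_{2n}\,\Iw$ iff $s^{-1}\overline u\,s\in\Iw$. The non-trivial entries of $\overline u$ sit at positions $(2n+1-i,\,2n+1-k)$ with $i\leq r<k$, and conjugation by $s^{-1}$ multiplies these by $p^{-1}$, so integrality forces $x_{ik}\in p\Zp$, i.e.\ $u=1$. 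The sole surviving contribution is
\[
\fsigma(s\,w_{2n})=(\delta_B^{1/2}\UPS^\sigma)(s)=[\delta_B^{1/2}\UPS^\sigma]^{w_{2n}}(t_{p,r})=\alpha_\sigma(U_{p,r}),
\]
the last equality by Proposition \ref{prop:p-refinement}(i). For any Weyl element $w\neq w_{2n}$, both sides vanish at $w$: the right side since $\fsigma(w)=0$, and the left side by a parallel Bruhat analysis showing that no shift $wut_{p,r}$ can enter the big cell — the contractive direction of $t_{p,r}$ is compatible only with the longest cell's Bruhat position.

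The main technical obstacle is this final point: systematically ruling out contributions from cells $w\neq w_{2n}$, which requires root-subgroup bookkeeping through the full Bruhat stratification. Once that is in hand, the calculation at $w_{2n}$ reduces to elementary linear algebra over $\Zp$.
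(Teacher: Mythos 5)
Your coset decomposition $\Iw\,t_{p,r}\,\Iw=\bigsqcup_{u\in U_r}u\,t_{p,r}\,\Iw$ and the reduction to Weyl representatives are the same as in the paper, and your calculation at $g=w_{2n}$ is essentially the one the paper uses (with one imprecision: the correct criterion is that $s^{-1}\overline{u}s$ lies in $\overline N(\Zp)$, i.e.\ is \emph{integral}, not that it lies in $\Iw$ -- lower-unipotent matrices in $\Iw$ would have entries in $p\Zp$, which is a stronger condition than what the Bruhat-cell argument actually yields; fortunately you then apply the integrality criterion, so the conclusion at $w_{2n}$ is fine). The genuine gap is exactly where you flag it. For $w\neq w_{2n}$ you wave at ``a parallel Bruhat analysis'' and an intuition that ``the contractive direction of $t_{p,r}$ is compatible only with the longest cell''; that intuition is not a proof, and it is not a routine bookkeeping exercise either, because the Bruhat position of $w u t_{p,r}$ genuinely depends on the interplay between $w$ and the entries of $u$ (already for $\GL_2$ and $w=w_2$, the element $w_2\smallmatrd{1}{x}{0}{1}t_{p,1}$ lands in the big cell precisely when $x\equiv 0 \bmod p$, and otherwise drops to the trivial cell -- so the cell is \emph{not} determined by $w$ alone).

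The paper sidesteps all of this with a single trick that handles every Weyl element at once. Setting $\Iw^{r,-} := t_{p,r}\Iw t_{p,r}^{-1}$ and $t_{p,r}' := w_{2n}t_{p,r}w_{2n}$, one has
\[
B(\Qp)w_{2n}\Iw = B(\Qp)t_{p,r}'w_{2n}\Iw = B(\Qp)w_{2n}t_{p,r}\Iw = B(\Qp)w_{2n}\Iw^{r,-}t_{p,r},
\]
so the question of whether $\rho\smallmatrd{1_r}{m}{0}{1_{2n-r}}t_{p,r}$ lies in $B(\Qp)w_{2n}\Iw$ becomes whether $\rho\smallmatrd{1_r}{m}{0}{1_{2n-r}}$ lies in $B(\Qp)w_{2n}\Iw^{r,-}$. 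One then invokes the Bruhat decomposition $\GL_{2n}(\Qp) = \bigsqcup_{\rho'}B(\Qp)\rho'\Iw^{r,-}$ (obtained by conjugating the usual one by $t_{p,r}$) and uses disjointness of the cells together with $N_{P_r}(\Zp)\cap\Iw^{r,-}=N_{P_r}(p\Zp)$. Your manipulation $w_{2n}u t_{p,r}=s(s^{-1}\overline u s)w_{2n}$ is precisely this trick specialized to $\rho=w_{2n}$ -- since $s = w_{2n}t_{p,r}w_{2n}$ is $t_{p,r}'$; the point you are missing is that working with the conjugated Iwahori $\Iw^{r,-}$ lets you carry out the analogous move \emph{before} choosing a Weyl representative, which is what disposes of all the cells $w\neq w_{2n}$ in one stroke and renders the ``root-subgroup bookkeeping'' unnecessary.
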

	
	\begin{proof}
		For such $r$, let $P_r$ be the maximal standard parabolic subgroup with Levi $\GL_r \times \GL_{2n-r}$, and let $J_r$ be the associated parahoric subgroup. Parahoric decomposition gives $J_r = N_{P_r}(\Zp)\cdot (P_r^-(\Zp)\cap J_r)$, where $N_{P_r} \subset P_r$ is the unipotent radical. Intersecting with $\Iw$ shows $\Iw = N_{P_r}(\Zp) \cdot (P_r^-(\Zp)\cap\Iw)$. As $t_{p,r}^{-1}(P_r^-(\Zp)\cap\Iw)t_{p,r} \subset (P_r^-(\Zp)\cap\Iw) \subset \Iw$, we deduce 
		\[
		\Iw t_{p,r} \Iw = N_{P_r}(\Zp)(P_r^-(\Zp)\cap\Iw) \cdot t_{p,r} \cdot \Iw = N_{P_r}(\Zp) \cdot t_{p,r} \cdot \Iw,
		\] 
		and in particular we can decompose the double coset into single cosets via
		\begin{equation}\label{eq:single cosets}
			\Iw t_{p,r} \Iw = \bigsqcup_{m \in M_{r,2n-r}(\Zp)/M_{r,2n-r}(p\Zp)} \smallmatrd{1_r}{m}{0}{1_{2n-r}} \cdot t_{p,r}\cdot \Iw.
		\end{equation}
		We have Bruhat decomposition 
		\begin{equation}\label{eq:bruhat}
			\GL_n(\Qp) = \bigsqcup_{\rho \in W_{n}} B_n(\Qp)\cdot \rho \cdot \Iwn,
		\end{equation}
		so it suffices to compute $U_{p,r}\fsigma(\rho)$ for $\rho \in \cW_G$. By \eqref{eq:single cosets}, we have
		\[
		U_{p,r}\fsigma(\rho) = \sum_{m \in M_{r,2n-r}(\Zp)/M_{r,2n-r}(p\Zp)}\fsigma\left(\rho\smallmatrd{1_r}{m}{0}{1_{2n-r}}t_{p,r}\right).
		\]
		
		\begin{claim}
			$\rho\smallmatrd{1_r}{m}{0}{1_{2n-r}}t_{p,r} \in B(\Qp) \cdot w_{2n} \cdot \Iw$ if and only if $\rho = w_{2n}$ and $m \in M_{r,2n-r}(p\Zp)$.		
		\end{claim}
		\emph{Proof of claim:} Let $t_{p,r}' = w_{2n}t_{p,r}w_{2n} \in T(\Qp)$. Then
		\[
		B(\Qp)w_{2n}\Iw = B(\Qp)t_{p,r}'w_{2n}\Iw = B(\Qp)w_{2n}t_{p,r}\Iw = B(\Qp)w_{2n}\Iw^{r,-}t_{p,r},
		\] 
		where $\Iw^{r,-} = t_{p,r}\Iw t_{p,r}^{-1}$. Thus
		\[
		\rho \smallmatrd{1_r}{m}{0}{1_{2n-r}} t_{p,r} \in B(\Qp)w_{2n}\Iw \ \Longleftrightarrow  \rho \smallmatrd{1_r}{m}{0}{1_{2n-r}} \in B(\Qp)w_{2n}\Iw^{r,-}.
		\]
		Conjugating \eqref{eq:bruhat} by $t_{p,r}$, we obtain $\GL_{2n}(\Qp) = \bigsqcup_{\rho' \in \cW_G} B(\Qp)\rho' \Iw^{r,-}$. It follows immediately that $\rho\smallmatrd{1_r}{m}{0}{1_{2n-r}}$ is in the cell $w_{2n}\Iw^{r,-}$, and the claim follows. $\qed$\\
		
		We return to Proposition \ref{prop:W_p eigenvector}. The claim implies $U_{p,r}\fsigma(\rho) = 0$ unless $\rho = w_{2n}$, and
		\begin{align*}
			U_{p,r}\fsigma(w_{2n}) &= \fsigma\left(w_{2n}t_{p,r}\right) = \fsigma\left(t_{p,r}'w_{2n}\right)\\
			&= \delta_B^{1/2}\theta^\sigma(t_{p,r}') \fsigma(w_{2n}) = (\delta_B^{1/2}\theta^\sigma)^{w_{2n}}(t_{p,r})\fsigma(w_{2n}) = \alpha_\sigma(U_{p,r}) \fsigma(w_{2n}),
		\end{align*}
		where the last equality is the equation for $\alpha_\sigma(U_{p,r})$ in Proposition \ref{prop:p-refinement}.
	\end{proof}

\subsection{Local Shalika models \`a la Ash--Ginzburg}\label{sec:AG}
Let $\sigma \in \cW_G$, corresponding to the $p$-refinement $\tilde\pi_\sigma = (\pi,\alpha_\sigma)$. We've written down explicit $\alpha_\sigma$-eigenvectors $\fsigma \in \Ind_B^G \UPS^\sigma$. We know (abstractly) that as $\GL_{2n}(\Qp)$-representations we have
\begin{equation}\label{eq:abstract isomorphisms}
\Ind_B^G \UPS^\sigma \cong \Ind_B^G \UPS = \pi \cong \cS_\psi^\eta(\pi);
\end{equation}
Since these isomorphisms are Hecke equivariant, by Proposition \ref{prop:W_p eigenvector} the image of $\fsigma$ under any choice of isomorphisms \eqref{eq:abstract isomorphisms} lies in $\cS_\psi^\eta(\tilde\pi_\sigma)$. This gives an eigenvector with which we can test, via \eqref{eq:shalika refinement}, if $\tilde\pi_\sigma$ is a Shalika refinement. To do this we must make \eqref{eq:abstract isomorphisms} explicit.

We first describe the right-hand isomorphism, recalling \cite{AG94}. When $\UPS_i \UPS_{n+i} = \eta$, Ash--Ginzburg defined and studied an explicit intertwining $\cS_\psi^\eta$ of $\Ind_B^G \UPS$ into its Shalika model. This intertwining is the  reason we choose $\UPS$ as in Ash--Ginzburg (see \S\ref{sec:shalika conventions}).

For $f\in \Ind_B^G \UPS$ and $g \in \GL_{2n}(\Qp)$, as in \cite[(1.3)]{AG94} we let 
\begin{equation}\label{eq:AG}
	\cS_\psi^\eta(f)(g) \defeq \int_{\GL_n(\Zp)}\int_{M_n(\Qp)} f\left[\smallmatrd{}{1}{1}{}\smallmatrd{1}{X}{}{1}\smallmatrd{k}{}{}{k}g\right] \psi^{-1}(\mathrm{tr}(X))\eta^{-1}(\det(k)) dXdk.
\end{equation}

Ash--Ginzburg show (before Lemma 1.4) that $\cS_\psi^\eta$ converges absolutely for $\UPS$ in a space $\Omega$.

\begin{proposition}\label{prop:AG 2}
If $\Ind_B^G \UPS$ is regular, 
 $\cS_\psi^\eta(f)$ can be analytically continued to a non-zero intertwining 
 \[
 	\cS_\psi^\eta : \Ind_B^G \UPS  \longhookrightarrow \cS_{\psi}^\eta(\Ind_B^G\UPS) = \cS_\psi^\eta(\pi).
 \] 
\end{proposition}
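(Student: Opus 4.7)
The plan is to adapt the meromorphic continuation strategy of Ash--Ginzburg \cite{AG94}, proceeding in three stages.

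\emph{Convergent range.} On the domain $\Omega \subset \{\UPS : \UPS_i\UPS_{n+i}=\eta\}$ where \eqref{eq:AG} converges absolutely, I would first verify directly that $\cS_\psi^\eta(f)$ satisfies the Shalika transformation law $\cS_\psi^\eta(f)(sg) = (\eta \otimes \psi)(s)\cdot \cS_\psi^\eta(f)(g)$ for $s \in \cS(\Qp)$ by a change of variables: the translations $X \mapsto X - X_0$ (for the unipotent part) and $k \mapsto k_0 k$ (for the Levi part) absorb into the measures $dX$ and $dk$ while extracting the characters $\psi$ and $\eta$ respectively. This places $\cS_\psi^\eta(f)$ inside $\Ind_{\cS(\Qp)}^{G(\Qp)}(\eta\otimes\psi)$; the $G(\Qp)$-equivariance is immediate from right translation of the $g$-variable, and non-vanishing on $\Omega$ is part of Lemmas~1.4--1.5 of \cite{AG94}.

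\emph{Meromorphic continuation.} I would parametrise unramified characters $\UPS$ with $\UPS_i\UPS_{n+i} = \eta$ by the tuple $x = (\UPS_i(p))_{i=1}^n \in (\C^\times)^n$, using $\UPS_{n+i}(p) = \eta(p)/\UPS_i(p)$. For a flat family of test vectors $(f_x)$ in $\Ind_B^G\UPS_x$ and fixed $g \in G(\Qp)$, Bernstein's principle of meromorphic continuation --- applied to the family of local zeta integrals attached to the spherical pair $(G,\cS)$ --- implies that $x \mapsto \cS_\psi^\eta(f_x)(g)$ extends to a rational function on $(\C^\times)^n$, with poles contained in a finite union of explicit hyperplanes. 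This pole locus can be pinned down by evaluation on a convenient test vector, e.g.\ by computing $\cS_\psi^\eta(f_x^\circ)(1)$ for the spherical vector $f_x^\circ$, which yields a standard ratio of unramified local $L$-factors.

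\emph{Non-vanishing at regular $\UPS$.} The pole divisor so obtained corresponds to the reducibility locus of $\Ind_B^G \UPS$ (equivalently, to zeros of certain local $L$-factors). Since our $\UPS$ has regular Satake parameter and $\pi = \Ind_B^G \UPS$ is an irreducible generic unramified principal series by our standing assumptions, $\UPS$ lies off this locus, so the continuation is holomorphic at $\UPS$ and furnishes a genuine $G(\Qp)$-intertwining $\cS_\psi^\eta : \Ind_B^G\UPS \hookrightarrow \cS_\psi^\eta(\pi)$. Non-vanishing on $\Omega$ then forces non-vanishing at $\UPS$, since a rational function non-zero on an open set is non-zero off its pole divisor; injectivity then follows from irreducibility of the source. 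The main obstacle will be the explicit description of the pole divisor of the continuation and the verification that regularity of the Satake parameter suffices to avoid it --- this is where the Ash--Ginzburg bijection $\nu$ (pairing $i \leftrightarrow n+i$) plays its organisational role, providing the natural coordinates in which the normalising $L$-factors take a clean product form.
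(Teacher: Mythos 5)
Your overall strategy (absolute convergence on a region, Bernstein-type continuation, non-vanishing via open density) is in the spirit of Ash--Ginzburg, but there are two concrete errors that prevent this sketch from closing the argument, and the deviation from the paper's citation-based proof makes both matter.

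\textbf{Wrong pole locus.} You assert the pole divisor of the continuation is the reducibility locus of $\Ind_B^G\UPS$ ``equivalently, the zeros of certain local $L$-factors''. That is not what the normalising factor in \cite{AG94} controls: the factor $P(\UPS)$ from Lemmas~1.4--1.6 of \cite{AG94} (which the paper's proof quotes directly) vanishes precisely when the Satake parameter is \emph{non-regular}, i.e.\ some $\UPS_i(p)=\UPS_j(p)$. The reducibility locus is where $\UPS_i/\UPS_j = |\cdot|^{\pm 1}$; these are genuinely different conditions (one involves a factor of $q$, the other does not). Indeed, ``regular'' in the statement of Proposition~\ref{prop:AG 2} means regular Satake parameter, as in Proposition~\ref{prop:p-refinement} and the standing assumption \eqref{eq:satake} -- it is not a synonym for irreducible. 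So your non-vanishing argument targets the wrong locus, and a regular-but-reducible $\UPS$ would falsify your reasoning.

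\textbf{The non-vanishing deduction is logically broken.} You write ``a rational function non-zero on an open set is non-zero off its pole divisor''; this is false. Such a function is merely not identically zero -- it may still vanish on a nonempty closed subset (the zero locus of its numerator). You therefore have not ruled out that the continued intertwining is zero at the specific regular $\UPS$. The paper avoids this by leaning on the explicit factor $P(\UPS)$ and the content of \cite[Lemmas~1.4--1.6]{AG94}, where the vanishing locus is computed directly; to run your Bernstein-style version you would need a similarly explicit computation of some $\cS_\psi^\eta(f)(g)$ (e.g.\ at $g=w_{2n}$ or $g=1$) showing a non-zero value at the given $\UPS$, rather than an abstract density argument.

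Finally, you do not address why the image of the continued map lands in the Shalika model $\cS_\psi^\eta(\pi)$ rather than merely in $\mathrm{Ind}_{\cS(\Qp)}^{G(\Qp)}(\eta\otimes\psi)$; the paper's proof delegates this to \cite[pp.~72--73]{BFG92}, and your proposal leaves the point entirely open.
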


\begin{proof}
This is proved across Lemmas 1.4--1.6 of \cite{AG94}.  For any $g \in \GL_{2n}(\Qp)$, they consider $f$ varying with $\UPS$ in a flat family, and thus make sense of $\cS_\psi^\eta(f)(g)$ as a function of $\UPS$. They then define a quantity $P(\UPS)$ and show that $P(\UPS)\cS_\psi^\eta(f)(g)$ is polynomial in $\UPS$. Moreover $P(\UPS)$ is non-zero when $\Ind_B^G \UPS$ is regular. Thus $\cS_\psi^\eta(f)$ can be analytically continued from $\UPS \in \Omega$ to arbitrary $\UPS$, as claimed.
 
 The map is easily seen to be $\GL_{2n}(\Qp)$-equivariant. Finally $\cS_\psi^\eta(f) \in \cS_\psi^\eta(\Ind_B^G\UPS)$ (justifying the notation) using \cite[p.72--73]{BFG92}.
\end{proof}

	\subsection{Spin $p$-refinements under intertwining maps} \label{sec:spin intertwining}

In Expectation \ref{conj:shalika = spin}, we stated that we expect Shalika and spin refinements are equivalent. Let, then, $\tilde\pi_\sigma$ be a spin $p$-refinement. Recall from \S\ref{sec:shalika conventions} that we have identified $\pi = \Ind_B^G \UPS$, but that we could have replaced $\UPS$  by any of its conjugates under $\cW_G$; we now fix a choice.

\begin{lemma}\label{lem:choice of UPS}
There exists an unramified character $\UPS : T(\Qp) \to \C^\times$ such that:
\begin{itemize}\s
	\item $\pi = \Ind_B^G \UPS$, 
	\item $\UPS_i\UPS_{n+i} = \eta$ for $1 \leq i \leq n$,
	\item and $\Delta_\UPS(\tilde\pi) = \tau = \smallmatrd{1}{}{}{w_n}$.
\end{itemize}
\end{lemma}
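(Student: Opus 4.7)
The plan is to twist an arbitrary Ash--Ginzburg-compatible character by an appropriate Weyl element. By Proposition~\ref{prop:AG 1}, I would fix an initial identification $\pi = \mathrm{Ind}_B^G \UPS_0$ with $(\UPS_0)_i(\UPS_0)_{n+i} = \eta$ for all $1 \le i \le n$, and set $\sigma_0 \defeq \Delta_{\UPS_0}(\tilde\pi) \in \cW_G$. The key structural observation is that the labelling bijection $\Delta$ transforms under twisting by a simple left-multiplication rule: unwinding the formula in Proposition~\ref{prop:p-refinement}(i) under the convention $(\UPS^\rho)_i = \UPS_{\rho(i)}$, one checks that for any $\rho \in \cW_G$,
\[
\Delta_{\UPS_0^\rho}(\tilde\pi) \;=\; \rho^{-1}\sigma_0.
\]
Combined with Lemma~\ref{lem:Shalika W_G^0}, the spin hypothesis on $\tilde\pi$ becomes $\tau^{-1}\sigma_0 = \Delta_{\UPS_0^\tau}(\tilde\pi) \in \cW_G^0$, i.e.\ $\sigma_0 \in \tau\,\cW_G^0$.

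With this in hand, set $\rho \defeq \sigma_0 \tau^{-1}$ and $\UPS \defeq \UPS_0^\rho$. By construction, $\Delta_\UPS(\tilde\pi) = \rho^{-1}\sigma_0 = \tau$, and $\mathrm{Ind}_B^G\UPS_0 \cong \mathrm{Ind}_B^G \UPS$, so the first and third bullets are immediate. The substantive point is the AG condition for $\UPS$. Let $W^{\mathrm{AG}} \subseteq \cW_G$ denote the stabilizer (under the permutation action on $\{1,\ldots,2n\}$) of the partition $\{\{i, n+i\} : 1 \le i \le n\}$. Using Satake regularity of $\pi$ and the fact that, among the $\UPS_{0,j}$'s, the AG pairing is uniquely characterised by the identity $\UPS_{0,i}\UPS_{0,n+i}=\eta$, one sees that $(\UPS_0^\rho)_i(\UPS_0^\rho)_{n+i}=\eta$ for every $i$ if and only if $\rho \in W^{\mathrm{AG}}$.

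The combinatorial heart of the argument is then the identity
\[
\tau\, \cW_G^0\, \tau^{-1} \;=\; W^{\mathrm{AG}},
\]
which follows directly from the explicit description of $\tau = \smallmatrd{1_n}{}{}{w_n}$: it fixes $\{1,\ldots,n\}$ pointwise and sends $n+i \mapsto 2n+1-i$, hence carries each AG pair $\{i, n+i\}$ to the Asgari--Shahidi pair $\{i, 2n+1-i\}$, transporting the stabiliser of the latter family ($\cW_G^0$) onto the stabiliser of the former ($W^{\mathrm{AG}}$). Writing $\sigma_0 = \tau c$ with $c \in \cW_G^0$ gives $\rho = \tau c \tau^{-1} \in W^{\mathrm{AG}}$, which establishes the AG condition and concludes the proof. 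The only genuine subtlety is pinning down the direction of multiplication in the transformation rule for $\Delta$ (which is what is meant by the imprecise phrase ``conjugates by $\tau$'' in Remark~\ref{rem:conjugate}(ii)); everything else is a direct coset manipulation.
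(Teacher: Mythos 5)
Your proof is correct and follows the same approach as the paper: fix an arbitrary Ash--Ginzburg-compatible identification $\UPS_0$, use Lemma~\ref{lem:Shalika W_G^0} to constrain $\sigma_0 = \Delta_{\UPS_0}(\tilde\pi)$ to a coset of $\cW_G^0$, and then twist $\UPS_0$ by an element of the AG-stabiliser $W^{\mathrm{AG}} = \tau\cW_G^0\tau^{-1}$ to move $\Delta$ onto $\tau$. Your argument for the closing combinatorial identity $\tau\cW_G^0\tau^{-1} = W^{\mathrm{AG}}$ via transport of the AG and AS pair-partitions, and the use of Satake regularity to get the ``only if'' in the characterisation of $W^{\mathrm{AG}}$, are both exactly what is needed. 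The one place where you diverge from the printed proof is precisely the subtlety you flag: you derive $\Delta_{\UPS_0^\rho}(\tilde\pi) = \rho^{-1}\sigma_0$, which is the direction forced by the explicit formula $\alpha_{p,r} = \prod_j p^{(2n-2j+1)/2}\UPS_{\sigma(2n+1-j)}(p)$ in Proposition~\ref{prop:p-refinement}(i), and so take $\rho = \sigma_0\tau^{-1}$ (with $\sigma_0 \in \tau\cW_G^0$). The paper's own write-up uses the opposite multiplication rule $\Delta_{\UPS^\rho}(\tilde\pi) = \Delta_\UPS(\tilde\pi)\cdot\rho$, hence writes $\Delta_{\UPS^\tau}(\tilde\pi) = \sigma\tau$, places $\sigma$ in $\cW_G^0\tau$ rather than $\tau\cW_G^0$, and twists by $\rho = \sigma^{-1}\tau$. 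Both choices of $\rho$ land in $W^{\mathrm{AG}}$ (the two cosets $\tau\cW_G^0$ and $\cW_G^0\tau$ are genuinely different since $\tau$ does not normalise $\cW_G^0$, but conjugating either by $\tau$ brings you into $W^{\mathrm{AG}}$), so both establish the lemma; your bookkeeping is the one consistent with Proposition~\ref{prop:p-refinement} as stated, while the paper's is the looser ``conjugates by $\tau$'' version of Remark~\ref{rem:conjugate}(ii).
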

\begin{proof}
	First fix any $\UPS$ with $\UPS_i\UPS_{n+i} = \eta$. By Lemma \ref {lem:Shalika W_G^0}, we know 
\[
\Delta_{\UPS^\tau}(\tilde\pi_\sigma) = \Delta_\UPS(\tilde\pi_\sigma) \cdot \tau = \sigma \tau \in \cW_G^0, 
\]
so that $\sigma \in \cW_G^0\tau.$ After replacing $\UPS$ with its conjugate by $\tau^{-1}(\sigma\tau)^{-1}\tau \in \tau^{-1}\cW_G^0\tau$, we have $\Delta_{\UPS}(\tilde\pi) = \tau$; and this preserves the the Ash--Ginzburg property $\UPS_i\UPS_{n+i} = \eta$ by Remark \ref{rem:AG vs AS}. 
\end{proof} 

Henceforth we fix $\UPS$ as in Lemma \ref{lem:choice of UPS}. Motivated by \eqref{eq:abstract isomorphisms} and the discussion around it, we now compute an explicit intertwining
\begin{equation}\label{eq:M_wn}
M_{w_n} : \Ind_B^G \UPS^\tau \isorightarrow \Ind_B^G\UPS,
\end{equation}
after which we can combine with Proposition \ref{prop:AG 2} to obtain an explicit eigenvector $\cS_\psi^\eta \circ M_{w_n}(f^\tau)$ in the spin $p$-refinement $\cS_\psi^\eta(\tilde\pi_\tau)$.

Since we will study this using an inductive argument, it is convenient to consider a slightly more general setting. Let $\rho \in \cW_n$, and $\nu_\rho \defeq \smallmatrd{1}{}{}{\rho}\in \cW_{G}$. There is an isomorphism 
	\[
	M_\rho: \Ind_B^G \UPS^{\nu_{\rho}} \isorightarrow \Ind_B^G \UPS = \pi, 
	\]
	which is unique up-to-scalar  by Schur's Lemma, and which we now make precise on the big cell eigenvector $f^{\nu_\rho}$. Note $\tau = \nu_{w_n}$, hence the notation $M_{w_{n}}$ in \eqref{eq:M_wn}.

	\begin{definition}\label{def:F_w}
	For each $w, \sigma \in \cW_G$:
\begin{itemize}\s
\item Let $f_w^\sigma\in \Ind_B^G \UPS^\sigma$ be the
	unique $\Iw$-invariant function supported on $B(\Qp)\cdot w \cdot \Iw$ such that
	$f_w^\sigma(w)= 1$. 
	
\item For $\delta \in \cW_n$ (and $\nu_\delta = \smallmatrd{1}{}{}{\delta}$ as above), let  
	\[
F_{\delta}^\sigma \defeq f_{\smallmatrd{}{w_n}{\delta w_n}{}}^\sigma =f_{\nu_{\delta} w_{2n}}^\sigma \in \Ind_B^G \UPS^\sigma.
\]

\item If $\sigma = 1$, we drop the superscript and just write $F_\delta = F_\delta^1 \in \Ind_B^G\UPS = \pi$. 
\end{itemize}
	
\end{definition}

Note $F_{1_n}^\sigma = f_{w_{2n}}^\sigma = f^\sigma$, the big cell eigenvector from Proposition \ref{prop:W_p eigenvector}.

	\begin{lemma} \label{prop:intertwining}
		After possibly rescaling $M_{\rho} : \Ind_B^G \UPS^{\nu_\rho} \to \pi$, we have
		\begin{equation}\label{eq:intertwining}
			M_{\rho} (f^{\nu_\rho}) = F_{\rho} +  	\sum_{ \ell(\delta)<\ell(\rho)}c_{\delta} F_{\delta}, \qquad \text{ with } c_\delta \in \C. 
		\end{equation}
	Here $\ell(\delta)$ denotes the Bruhat length of any $\delta \in\cW_n$.
	\end{lemma}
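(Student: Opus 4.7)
My plan is to proceed by induction on $\ell(\rho)$, the Bruhat length of $\rho$ in $\cW_n$. The base case $\rho = 1_n$ is immediate: $\nu_{1_n} = 1$, so $M_{1_n}$ is the identity map (after rescaling by Schur's lemma), $f^{\nu_{1_n}} = f_{w_{2n}} = F_{1_n}$, and the sum on the right-hand side is empty.

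For the inductive step I would write $\rho = s\rho'$, where $s$ is a simple reflection in $\cW_n$ and $\ell(\rho') = \ell(\rho) - 1$. The embedding $\cW_n \hookrightarrow \cW_G$ given by $\rho \mapsto \nu_\rho$ sends simple reflections to simple reflections and preserves Bruhat lengths, so $\nu_\rho = s'\nu_{\rho'}$ with $\ell(s'\nu_{\rho'}) = \ell(\nu_{\rho'}) + 1$, where $s' \in \cW_G$ is the corresponding simple reflection. By the standard cocycle property of intertwining operators for reduced products, we may factor $M_\rho = M_{s'} \circ M_{\rho'}$ after an appropriate choice of normalisation; applying the inductive hypothesis then reduces the problem to computing $M_{s'}(F_\eta)$ for each $\eta$ with $\ell(\eta) \leq \ell(\rho')$.

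The key technical ingredient I would invoke is the explicit action of the intertwining operator attached to a simple reflection on Iwahori-fixed basis vectors (via Casselman's basis, or equivalently the Bernstein presentation of the affine Hecke algebra): for any $w \in \cW_G$,
\[
M_{s'}(f_w) \in \mathrm{span}_\C\{ f_w, f_{s'w}\},
\]
with the coefficient of $f_{s'w}$ non-zero under the regularity assumption \eqref{eq:satake}. Since $s'\nu_\eta w_{2n} = \nu_{s\eta} w_{2n}$, this yields $M_{s'}(F_\eta) = a_\eta F_\eta + b_\eta F_{s\eta}$ with $b_\eta \neq 0$. Applying this to the inductive expression, the coefficient of $F_\rho = F_{s\rho'}$ in $M_\rho(f^{\nu_\rho})$ is non-zero.

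Every remaining term is of the form $F_\eta$ with $\eta \in \{\rho'\} \cup \{\delta : \ell(\delta) < \ell(\rho')\} \cup \{s\delta : \ell(\delta) < \ell(\rho')\}$, and in all cases $\ell(\eta) \leq \ell(\rho') < \ell(\rho)$. Rescaling $M_\rho$ by the inverse of the leading coefficient on $F_\rho$ then produces the claimed identity. I expect the main obstacle to be carefully verifying the simple-reflection intertwining formula on Iwahori-fixed vectors and tracking the non-vanishing of the leading coefficient under regularity; both are standard computations in the theory of unramified principal series, but the bookkeeping with Bruhat order (noting that multiplication by $w_{2n}$ reverses length, so $s'\nu_{\rho'}w_{2n} < \nu_{\rho'}w_{2n}$) requires care.
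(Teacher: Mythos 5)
Your overall strategy matches the paper's: induct on $\ell(\rho)$, factor $M_\rho$ into rank-one intertwining operators attached to simple reflections, and use an explicit formula (the paper cites Casselman's Thm.~3.4; your invocation of Casselman's basis or the Bernstein presentation is the same toolkit) for how these rank-one operators act on the cell-supported Iwahori-fixed vectors $F_\delta^\sigma$. One small correction on emphasis: in the paper's normalisation one has $M^s_\sigma(F_\delta^\sigma) = F_{s\delta}^{s\sigma} + c_{\delta,s}F_\delta^{s\sigma}$ if $\ell(s\delta)=\ell(\delta)+1$, and $M^s_\sigma(F_\delta^\sigma) = p^{-1}F_{s\delta}^{s\sigma} + c_{\delta,s}F_\delta^{s\sigma}$ if $\ell(s\delta)=\ell(\delta)-1$, so the leading coefficient is $1$ or $p^{-1}$, hence automatically non-zero. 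Regularity is what makes the $M^s_\sigma$ normalisable isomorphisms at all, not what forces that particular coefficient to be non-zero.

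Where your write-up does not yet close up is the composition direction and, correspondingly, the strength of the inductive hypothesis. Peeling $s$ from the left, $\rho=s\rho'$ with $\nu_\rho = s'\nu_{\rho'}$, is the right move since the simple reflection acts on the inducing character; but the resulting factorisation is $M_\rho = M_{\rho'}\circ M^{s'}_{\nu_\rho}$ with the rank-one operator applied \emph{first}, not $M_{s'}\circ M_{\rho'}$. Indeed $M_{\rho'}:\Ind_B^G\UPS^{\nu_{\rho'}}\to\pi$ cannot receive $f^{\nu_\rho}\in\Ind_B^G\UPS^{\nu_\rho}$. When you apply $M^{s'}_{\nu_\rho}$ first to $f^{\nu_\rho}=F_{1_n}^{\nu_\rho}$, you obtain $F_s^{\nu_{\rho'}}+cF_{1_n}^{\nu_{\rho'}}$, which is \emph{not} a scalar multiple of $f^{\nu_{\rho'}}$; so the naive inductive hypothesis about $M_{\rho'}(f^{\nu_{\rho'}})$ alone does not apply to the term $F_s^{\nu_{\rho'}}$. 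To make the induction close, you need to prove the stronger statement controlling $M_\rho(F_\delta^{\nu_\rho})$ for all $\delta\in\cW_n$, tracking (via the two cases of the Casselman formula) how each rank-one application moves Bruhat lengths up or down. This is exactly the bookkeeping the paper is alluding to with the phrase \emph{induction on $\ell(\rho)$ via basic properties of Bruhat length}.
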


	\begin{proof} 
		For any simple reflection $s\in \cW_G$ and $\sigma \in \cW_G$, we have an intertwining isomorphism
		\[
		M^{s}_\sigma : \Ind_B^G \UPS^\sigma \longrightarrow \Ind_B^G \UPS^{s \sigma}.
		\]
		By \cite[Thm. 3.4]{Cas80} (see \cite{DJ-parahoric} for more details) this can be normalised so that for
		any  $\delta \in \cW_n$, there is a constant $c_{\delta,s} \in \C$ (depending also on $\theta$) such that 
		\[
		M^{s}_\sigma(F_\delta^\sigma)= \begin{cases} F_{s \delta}^{s \sigma} +c_{\delta, s}F_{\delta}^{s \sigma}  &:  \ell(s\delta)=\ell(\delta)+1, \\ 
			p^{-1} F_{s \delta}^{s \sigma} +c_{\delta, s}F_{\delta}^{s\sigma}   &: \ell(s\delta)=\ell(\delta)-1.\end{cases}
		\]
		
		Writing $\rho = s_1\cdots s_r$,  $M_{\rho}$ is the composition 
		\[
			M_\rho = M^{s_r}_{s_{r-1}\cdots s_1 \rho} \circ\cdots \circ M^{s_1}_\rho.
		\]
		 The lemma is then obtained by induction on $\ell(\rho)$ via basic properties of Bruhat length. 
	\end{proof}

We finally map into the Shalika model. For $\delta \in \cW_n$, recall $\cS_\psi^\eta$ from \eqref{eq:AG}, and let 
\[
W_\delta \defeq \cS_\psi^\eta(F_\delta) \in \cS_\psi^\eta(\pi).
\]
 This is well-defined, since $F_\delta \in \Ind_B^G\UPS = \pi$ and we chose $\UPS$ as in Ash--Ginzburg.

\begin{proposition}\label{prop:spin eigenvector}
 Let $\tilde\pi = (\pi, \alpha)$ be a regular spin $p$-refinement, and write $\pi = \Ind_B^G\UPS$ as in Lemma \ref{lem:choice of UPS}. The $\alpha$-eigenspace $\cS_\psi^\eta(\tilde\pi)$ is spanned by an eigenvector of the form
\begin{equation}\label{eq:spin eigenvector}
	W_0 = W_{w_n} + \sum_{w_n \neq \delta \in \cW_n} c_\delta W_\delta \qquad \in \cS_\psi^\eta(\tilde\pi).
\end{equation}
\end{proposition}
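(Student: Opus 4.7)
The plan is to assemble the statement by chaining together three constructions already in place: Proposition \ref{prop:W_p eigenvector}, Lemma \ref{prop:intertwining}, and Proposition \ref{prop:AG 2} (the Ash--Ginzburg intertwiner). All three have been set up precisely so that this composition is traceable.

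First, I would note that since $\tilde\pi$ is spin and we chose $\UPS$ as in Lemma \ref{lem:choice of UPS}, we have $\Delta_\UPS(\tilde\pi) = \tau = \nu_{w_n}$, so $\alpha = \alpha_\tau$. By Proposition \ref{prop:W_p eigenvector}, the big-cell Iwahori-fixed function $f^\tau \in \Ind_B^G \UPS^\tau$ is an $\alpha$-eigenvector for every $U_{p,r}$. Next, I apply the intertwining isomorphism $M_{w_n}: \Ind_B^G \UPS^\tau \isorightarrow \Ind_B^G \UPS = \pi$ of Lemma \ref{prop:intertwining} with $\rho = w_n$. Since $M_{w_n}$ is $\GL_{2n}(\Qp)$-equivariant (hence commutes with each $U_{p,r}$), $M_{w_n}(f^\tau) \in \pi$ remains an $\alpha$-eigenvector, and by Lemma \ref{prop:intertwining} it has the explicit form
\[
M_{w_n}(f^\tau) = F_{w_n} + \sum_{\ell(\delta) < \ell(w_n)} c_\delta F_\delta.
\]
Finally, applying the Ash--Ginzburg intertwiner $\cS_\psi^\eta: \pi \to \cS_\psi^\eta(\pi)$ from Proposition \ref{prop:AG 2} (valid because $\UPS$ satisfies the Ash--Ginzburg condition $\UPS_i\UPS_{n+i} = \eta$), we set $W_0 \defeq \cS_\psi^\eta(M_{w_n}(f^\tau))$. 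By $\GL_{2n}(\Qp)$-equivariance of $\cS_\psi^\eta$, this is again an $\alpha$-eigenvector, and by linearity it has the form
\[
W_0 = W_{w_n} + \sum_{w_n \neq \delta \in \cW_n} c_\delta W_\delta \quad \in \cS_\psi^\eta(\pi).
\]

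It remains to verify non-vanishing of $W_0$ and to deduce that it spans the $\alpha$-eigenspace. Since $f^\tau \neq 0$ and $M_{w_n}$ is an isomorphism, $M_{w_n}(f^\tau) \neq 0$. Regularity of the Satake parameter (assumption \eqref{eq:satake}) ensures that $\pi = \Ind_B^G \UPS$ is irreducible; then the non-zero intertwining $\cS_\psi^\eta$ of Proposition \ref{prop:AG 2} is automatically injective, so $W_0 \neq 0$. Finally, regularity of $\tilde\pi$ (which follows from Proposition \ref{prop:p-refinement}(ii) together with the regularity of the Satake parameter) guarantees that the generalised $\alpha$-eigenspace $\cS_\psi^\eta(\tilde\pi)$ is one-dimensional, so the non-zero eigenvector $W_0$ spans it.

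There is no real obstacle here -- the proposition is essentially a bookkeeping assembly of the previous results, with the only subtle point being to confirm the non-vanishing of $W_0$, which reduces to injectivity of $\cS_\psi^\eta$ on the irreducible representation $\pi$ via regularity.
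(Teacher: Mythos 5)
Your proof follows the same path as the paper's: $f^\tau$ is an eigenvector by Proposition~\ref{prop:W_p eigenvector}, $M_{w_n}$ sends it to the claimed form by Lemma~\ref{prop:intertwining}, applying $\cS_\psi^\eta$ yields $W_0$, and regularity of $\tilde\pi$ gives that $\cS_\psi^\eta(\tilde\pi)$ is a line. One small inaccuracy in your non-vanishing step: regularity of the Satake parameter (the $\UPS_i(p)$ being distinct) does \emph{not} by itself imply that $\Ind_B^G\UPS$ is irreducible --- for instance $\Ind_{B_2}^{\GL_2}(|\cdot|^{1/2},|\cdot|^{-1/2})$ is reducible with regular Satake parameter. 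What you actually want to cite is that $\pi$ is irreducible because it is (by hypothesis) the local component of a cuspidal automorphic representation; then Schur's lemma plus Proposition~\ref{prop:AG 2} gives injectivity of $\cS_\psi^\eta$ and hence $W_0\neq 0$, as you intended.
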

	\begin{proof}
By Proposition \ref{prop:W_p eigenvector}, we know $f^\tau \in \Ind_B^G\UPS^\tau$ is an $\alpha$-eigenvector. By Lemma \ref{prop:intertwining}, its image in $\Ind_B^G$ under the intertwining $M_{w_n}$ has the form
\[
	M_{w_n}(f^\tau) = \bigg[F_{w_n} + \sum_{w_n \neq \delta \in \cW_n} c_\delta F_\delta\bigg] \ \ \in \Ind_B^G \UPS = \pi.
\]
By definition the image of this under $\cS_\psi^\eta$ is $W_0$ (from \eqref{eq:spin eigenvector}), which hence gives a non-zero eigenvector in $\cS_\psi^\eta(\tilde\pi)$. By regularity this space is a line, so this eigenvector spans.
\end{proof}

\subsection{Non-vanishing of the local zeta integral at Iwahori level}\label{sec:local zeta integrals}

From Proposition \ref{prop:spin eigenvector} we've written down an explicit eigenvector $W_0 \in \cS_\psi^\eta(\pi)$ in any spin $p$-refinement. We now deduce that all spin $p$-refinements are Shalika $p$-refinements, by showing $W_0\smallmatrd{w_nz^{2\beta}}{}{}{1} \neq 0$, recalling $z = \operatorname{diag}(p^{n-1}, p^{n-2}, \dots, 1)$. We also compute this value exactly, and hence -- via Proposition \ref{prop:local zeta p} -- complete the computation of the local zeta integral for $W_0$. 

We first show that $W_\delta\smallmatrd{w_nz^{2\beta}}{}{}{1} = 0$ unless $\delta = w_n$ (which means, by Proposition \ref{prop:local zeta p}, that the local zeta integral vanishes for $W_\delta$ unless $\delta = w_n$). To do so, we examine when the integrand of \eqref{eq:AG} lies in the support of $F_\delta \in \Ind_B^G(\UPS)$.

\begin{proposition}\label{prop:shalika cell support}
	Let $\delta \in \cW_n$, $X \in M_n(\Qp)$ and $k \in \GL_n(\Zp)$, and let $\beta \geq 1$. Then
		\begin{equation}\label{eq:support inclusion}
			\tbyt{}{1}{1}{} \tbyt{1}{X}{}{1} \tbyt{k}{}{}{k} \tbyt{w_n z^{2\beta}}{}{}{1} \in B_n(\Qp)\tbyt{}{w_n}{\delta w_n}{} \Iw
		\end{equation}
if and only if:
\begin{itemize}\s
	\item $\delta = w_n$ is the longest Weyl element,
	\item $k \in B_n(\Zp) w_n \Iwn$ and 
	\item $k^{-1}X \in w_nz^{2\beta}M_n(\Zp)$.
\end{itemize}

\end{proposition}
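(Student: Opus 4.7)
The first step is to compute the left-hand side explicitly. A direct block-matrix multiplication yields
\[
g \defeq \tbyt{}{1}{1}{}\tbyt{1}{X}{}{1}\tbyt{k}{}{}{k}\tbyt{w_nz^{2\beta}}{}{}{1} = \tbyt{}{k}{kw_nz^{2\beta}}{Xk}.
\]
The question then becomes: when does $g$ lie in the Iwahori Bruhat cell $B(\Qp)\cdot\tbyt{}{w_n}{\delta w_n}{}\cdot\Iw$? By the Iwahori Bruhat decomposition $\GL_{2n}(\Qp) = \bigsqcup_{w\in\cW_G}B(\Qp)w\Iw$, the matrix $g$ lies in a unique such cell, which I must identify in terms of $k$ and $X$.

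My plan is a two-step reduction that separates the $X$-condition from the $k$-condition. For the former, right-multiplying $g$ by $\tbyt{1}{V}{}{1}\in\Iw$ (valid for any $V\in M_n(\Zp)$) changes only the bottom-right block, replacing $Xk$ by $Xk + kw_nz^{2\beta}V$. The unique $V$ killing this block is $V = -z^{-2\beta}w_n k^{-1}Xk$, and $V\in M_n(\Zp)$ translates (since $k\in\GL_n(\Zp)$) to the stated condition $k^{-1}X \in w_nz^{2\beta}M_n(\Zp)$. To see this condition is also necessary, I would expand $b\,\omega_\delta\, j$ block-by-block for generic $b = \tbyt{A'}{E}{}{D'}\in B$ and $j = \tbyt{A}{F}{G}{D}\in\Iw$; equating the bottom-right block with $Xk$ gives $D'\delta w_n F = Xk$, so $F = w_n\delta^{-1}D'^{-1}Xk$, and the requirement $F \in M_n(\Zp)$ (part of $j\in\Iw$), combined with the block equation $D'\delta w_n A = kw_nz^{2\beta}$ (which pins down $D'$ up to the flexibility in $A\in\Iwn$), forces exactly the stated $X$-divisibility.

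Assuming the $X$-condition, we reduce to analyzing the cell of $g_0 \defeq \tbyt{}{k}{kw_nz^{2\beta}}{}$. Using the Iwahori Bruhat decomposition $\GL_n(\Zp) = \bigsqcup_{\rho\in\cW_n}B_n(\Zp)\rho\Iwn$, I write $k = b_n\rho j_n$ and extract the block-diagonal factors $\tbyt{b_n}{}{}{b_n}\in B$ on the left and $\tbyt{j_n^{-1}}{}{}{j_n^{-1}}\in\Iw$ on the right. This reduces to the cell of $\tbyt{}{\rho}{\rho w_n z^{2\beta}j_n^{-1}}{}$. A combinatorial analysis of this monomial-type matrix, tracking the positions of nonzero entries and their $p$-valuations against the Iwahori filtration $\Iw = \overline{N}(p\Zp)T(\Zp)N(\Zp)$, then pins down the Bruhat cell and forces $\delta = w_n$ together with $\rho = w_n$, the latter being equivalent to $k\in B_n(\Zp)w_n\Iwn$.

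The main obstacle is the final combinatorial identification, which requires carefully matching the $p$-adic valuations coming from $z^{2\beta}$ with the support pattern of $\tbyt{}{w_n}{\delta w_n}{}$ and the divisibility constraints defining $\Iw$. For the converse direction, one simply reverses the chain of reductions: given the three conditions, one takes $V = -Yk$ (where $X = kw_nz^{2\beta}Y$ with $Y\in M_n(\Zp)$) and $k = b_n w_n j_n$ to construct explicit $b\in B$ and $j\in\Iw$ realising $g = b \cdot \tbyt{}{w_n}{1_n}{} \cdot j$.
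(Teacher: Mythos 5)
Your overall scaffolding — compute the left-hand side, derive the $X$-condition and the $k$-condition from block equations, then isolate the Bruhat cell of the remaining piece — is sound, and the first reduction (right-multiply by $\smallmatrd{1}{V}{0}{1} \in \Iw$ to kill the bottom-right block, identifying $V = -z^{-2\beta}w_n k^{-1}Xk$ and showing $V \in M_n(\Zp)$ is equivalent to the $X$-condition) is correct. But the proposal contains a concrete computational error and, more importantly, does not actually prove the hardest part of the statement.

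The computational error: after writing $k = b_n\rho j_n$ and extracting $\smallmatrd{b_n}{0}{0}{b_n}$ on the left and $\smallmatrd{j_n^{-1}}{0}{0}{j_n^{-1}}$ on the right, the $(2,1)$ block of what remains is $\rho\, j_n w_n z^{2\beta} j_n^{-1}$, not $\rho\, w_n z^{2\beta} j_n^{-1}$ as you assert. Since $j_n$ is a general Iwahori element of $\GL_n(\Zp)$ (not a torus element), the conjugation $j_n w_n z^{2\beta} j_n^{-1}$ does not simplify to anything monomial, so the matrix you propose to analyze combinatorially is not actually monomial-type, and the ``combinatorial analysis of positions and $p$-valuations'' you invoke does not get started.

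The deeper gap: the real content of the proposition is forcing $\delta = w_n$, and this is the part your proposal hand-waves away. The paper devotes its Case 1 entirely to this, and the argument is not an elementary cell-counting exercise: it identifies a suitable $r$ so that $\delta w_n \notin \cW_{r,n-r}$, uses the opposite parahoric Bruhat decomposition $\GL_n(\Qp) = \bigsqcup_\sigma B_n(\Qp)\sigma\overline{J}_r$, factors $z^{2\beta} = t_{p,r}\mu$, and shows that an element would have to lie simultaneously in $B_n(\Qp)\delta w_n\overline{J}_r$ and $B_n(\Qp)\overline{J}_r$, which are disjoint. Nothing in your sketch reproduces, replaces, or even acknowledges this difficulty; ``a combinatorial analysis'' does not constitute a proof here. (For contrast, the paper gets the $k$-condition and $X$-condition cheaply and directly from the block equations, valid for any $\delta$; it is $\delta = w_n$ that needs the parahoric argument.) To repair your proposal you would need either to carry out the monomial analysis with the correct $(2,1)$ block — which is likely intractable because $j_n$ is generic — or to supply a genuine cell-disjointness argument of the paper's kind.
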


\begin{proof}
 Suppose \eqref{eq:support inclusion} holds, and write
\begin{align}\label{eq:support equality}
\tbyt{}{1}{1}{} \tbyt{1}{X}{}{1} \tbyt{k}{}{}{k} &\tbyt{w_n z^{2\beta}}{}{}{1}\\ 
	&= \tbyt{A}{B}{}{D} \tbyt{}{w_n}{\delta w_n}{} \tbyt{a}{b}{c}{d},\notag
\end{align}
where
\[
A,D \in B_n(\Qp), \ \ B \in M_n(\Qp), \ \ a,d \in \Iwn, \ \ c \in pM_n(\Zp), \ \ b \in M_n(\Zp).
\]
Expanding this out, we get the equality of matrices:
\[
\tbyt{}{k}{k w_n z^{2\beta}}{X k} = \tbyt{B \delta w_n a + A w_n c}{B \delta w_n b + A w_n d}{D \delta w_n a}{D \delta w_n b} .
\]
This implies the following:
\begin{enumerate}[(1)]\s
    \item $B = -Aw_n c a^{-1} w_n^{-1} \delta^{-1}$ (from the top left entry).
    \item $-Aw_n (c a^{-1} b - d ) = k$ ((1) and top right), whence $A \in B_n(\Z_p)$ and 
    \[
    k \in B_n(\Z_p) \cdot w_n \cdot \Iwn.
    \]
\item $D \delta w_n a = k w_n z^{2\beta}$ (bottom left) which implies 
\[
k^{-1}D = w_nz^{2\beta}a^{-1}w_n\delta^{-1} \in w_nz^{2\beta}\GL_n(\Zp).
\]
    \item $D \delta w_n bk^{-1} = X$ (bottom right), which by (3) implies
\[
k^{-1} X = k^{-1}D\cdot \delta w_nbk^{-1} \in w_nz^{2\beta}M_n(\Zp).
\]

\end{enumerate}

We treat the cases $\delta \neq w_n$ and $\delta = w_n$ separately.

\medskip

\textbf{Case 1: $\delta \neq w_n$.} Suppose there exist $X$ and $k$ such that \eqref{eq:support inclusion} holds. We will derive a contradiction. 

For $r \in \{1,...,2n-1\}$, let $P_r$ be the parabolic with Levi $\GL_r \times \GL_{n-r}$, with associated (opposite) parahoric subgroup $\overline{J}_r \subset \GL_{2n}(\Zp)$. 

\begin{claim}
If $\delta \neq w_n$, there exists $r \in \{1,...,2n-1\}$ such that
\[
B_n(\Q_p) \cdot \delta w_n \cdot \overline{J}_r \cap \; B_n(\Q_p) \cdot \overline{J}_r = \varnothing.
\]
\end{claim}

\emph{Proof of claim:} Let $r \defeq \mathrm{min}(i : \delta w_n(i) \neq i)$, which exists since $\delta w_n \neq 1$. Let $\cW_{r,n-r}$ be the Weyl group of $\GL_r\times \GL_{n-r}$; it is the subgroup of $\cW_n$ that preserves both $\{1,...,r\}$ and $\{r+1,...,n\}$. In particular, $\delta w_n \notin \cW_{r,n-r}$.

We have the opposite Bruhat decomposition
\[
G(\Qp) = \bigsqcup_{\sigma \in \cW_n/\cW_{r,n-r}} B(\Qp)\sigma\overline{J}_r.
\]
Since $\delta w_n \notin \cW_{r,n-r}$, the cells $B(\Qp)\delta w_n \overline{J}_r$ and $B(\Qp)\overline{J}_r$ are disjoint, giving the claim. 

\bigskip

For $r$ as in the claim, let $\mu$ be such that $z^{2\beta} = t_{p,r} \cdot \mu$, recalling $t_{p,r} = \operatorname{diag}(p, \dots, p,1, \dots, 1)$ with $r$ lots of $p$. We see that 
\[
t_{p,r} \operatorname{Iw}_n t_{p,r}^{-1} \subset \overline{J}_r.
\]
Note that the valuation of $\mu$ under any positive root is non-negative, so $\mu^{-1} \overline{N}_n(\Z_p) \mu \subset \overline{N}_n(\Z_p)$, where $\overline{N}_n$ is the lower triangular unipotent. 

We now analyse (3) from the list above. Multiply both sides by $t_{p,r}^{-1}$ to get
\[
D \delta w_n a t_{p,r}^{-1} = k w_n \mu .
\]
We know from (2) that we can write $k = \alpha \cdot w_n \cdot \beta,$ 
with $\alpha \in B_n(\Z_p)$ and $\beta \in \Iwn$. We have
\[
k w_n \mu = \alpha w_n \beta w_n \mu \in B_n(\Z_p) \cdot \overline{\Iwn} \cdot \mu .
\]
But by the Iwahori decomposition, we see that
\[
B_n(\Z_p) \cdot \overline{\Iwn} \cdot \mu \subset B_n(\Q_p) \cdot \overline{\Iwn} \subset B_n(\Q_p) \cdot \overline{J}_r.
\]
We also have
\[
kw_n\mu = D \delta w_n a t_{p,r}^{-1} \in B_n(\Q_p) \cdot \delta w_n t_{p,r}^{-1} \cdot  \overline{J}_r \subset B_n(\Q_p) \cdot \delta w_n \cdot \overline{J}_r
\]
as the element $\delta w_n$ normalises the torus.

We must therefore have
\[
kw_n\mu \in \left( B_n(\Q_p) \cdot \delta w_n \cdot \overline{J}_r \right) \cap \left( B_n(\Q_p) \cdot \overline{J}_r \right) \neq \varnothing,
\] 
a contradiction. In particular, there do not exist $X$ and $k$ such that \eqref{eq:support inclusion} holds if $\delta \neq w_n$.

\bigskip

\textbf{Case 2: $\delta = w_n$}. We have shown that if \eqref{eq:support inclusion} holds, then 
\begin{equation}\label{eq:k and X}
k \in B(\Zp) w_n \Iwn \qquad \text{and} \qquad k^{-1}X \in w_nz^{2\beta}M_n(\Zp).
\end{equation}
Conversely, suppose \eqref{eq:k and X}, and write $k = A w_n d$, with $A \in B_n(\Zp)$ and $d \in \Iwn$. Via the Iwahori decomposition, we may assume $d \in N_n(\Zp)$ is upper unipotent. 

    For $A$ and $d$ as above, set $B = 0$ and $D = A z^{2\beta} \in B_n(\Q_p)$. Also set $c = 0$ and $a = z^{-2\beta} w_n d w_n z^{2\beta} \in \Iwn$ (since $\beta \geq 1$). If we set $b = D^{-1}Xk \in M_n(\Qp)$, then \eqref{eq:support equality} holds. Clearly $\smallmatrd{A}{B}{0}{D} \in B_n(\Qp)$, so we are done if we can show $\smallmatrd{a}{b}{c}{d} \in \Iw$. This will hold if $b \in M_n(\Zp)$. Observe
    \begin{align*}
    	b = D^{-1}Xk = z^{-2\beta}A^{-1}Xk &= z^{-2\beta}w_nd k^{-1} Xk\\
    	&\in z^{-2\beta}w_nd \cdot w_nz^{2\beta}M_n(\Zp)k = aM_n(\Zp)k \subset M_n(\Zp),
    \end{align*}
where in the second step we substitute $D$, the third we substitute $A^{-1} = w_ndk$, in the third we use \eqref{eq:k and X}, and in the fourth we substite $a = z^{-2\beta}w_ndw_nz^{2\beta}$. Thus $\smallmatrd{a}{b}{c}{d} \in \Iw$, completing the proof.
\end{proof}

From the proof, we also see the following:

\begin{corollary}\label{cor:values when delta = w}
    Let $\Theta$ be any unramified character of $T(\Qp)$, extended trivially to $B(\Qp)$. If we have
    \begin{align}\label{eq:control borel}
    \matrd{}{1}{1}{}\matrd{1}{X}{}{1}\matrd{k}{}{}{k}\matrd{w_nz^{2\beta}}{}{}{1} = \cB\matrd{}{w_n}{1}{}\cI
    \in B_n(\Qp)\matrd{}{w_n}{1}{} \Iw,
    \end{align}
    then
    \[
    \Theta(\cB) = \Theta\matrd{1}{}{}{z^{2\beta}}.
    \]
\end{corollary}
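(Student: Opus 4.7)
The plan has two distinct parts: first show that $\Theta(\cB)$ is independent of the factorisation, then read off its value from the explicit witness produced in the proof of Proposition~\ref{prop:shalika cell support}.

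For well-definedness, suppose $\cB w \cI = \cB' w \cI'$ with $w = \tbyt{}{w_n}{1}{}$. Then $\cB^{-1}\cB' = w \cI (\cI')^{-1} w^{-1}$. Since $w_n$ is a permutation matrix we have $w \in \GL_{2n}(\Zp)$, so $\cB^{-1}\cB'$ lies in $B(\Qp) \cap \GL_{2n}(\Zp) = B(\Zp) = T(\Zp)N(\Zp)$. As $\Theta$ is unramified (trivial on $T(\Zp)$) and extended trivially to $N$, it vanishes on $B(\Zp)$, giving $\Theta(\cB) = \Theta(\cB')$. Hence it suffices to evaluate $\Theta$ on any single witness.

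Case~2 of the proof of Proposition~\ref{prop:shalika cell support} constructs such a witness explicitly: writing $k = A w_n d$ with $A \in B_n(\Zp)$ and $d \in \Iwn \cap N_n(\Zp)$, the proof produces the factorisation with
\[
\cB = \tbyt{A}{0}{0}{A z^{2\beta}}.
\]
Since $A$ is upper triangular integral with unit determinant, decompose $A = T_A N_A$ with $T_A \in T_n(\Zp)$ and $N_A \in N_n(\Zp)$. A direct computation writes
\[
\cB = \tbyt{T_A}{0}{0}{T_A z^{2\beta}} \cdot \tbyt{N_A}{0}{0}{z^{-2\beta} N_A z^{2\beta}},
\]
the second factor being upper unipotent (over $\Qp$). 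Thus the torus part of $\cB$ is $\mathrm{diag}(T_A, T_A z^{2\beta}) = \mathrm{diag}(T_A, T_A) \cdot \mathrm{diag}(1, z^{2\beta})$, and using that $\Theta$ is trivial on $T(\Zp)$ we conclude
\[
\Theta(\cB) = \Theta\tbyt{1}{0}{0}{z^{2\beta}},
\]
as claimed.

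The only step requiring any care is the well-definedness argument; this is light because the Weyl representative $w$ is integral, so the factorisation is unique up to right-translation of $\cB$ by an element of $B(\Zp)$, precisely the group where the unramified character $\Theta$ vanishes. Everything else is a direct transcription from the proof of the preceding proposition.
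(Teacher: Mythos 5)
Your proof is correct and follows the paper's approach of evaluating $\Theta$ on the explicit block-diagonal witness $\cB = \tbyt{A}{0}{0}{Az^{2\beta}}$ with $A \in B_n(\Zp)$ constructed in Case~2 of the proof of Proposition~\ref{prop:shalika cell support}. You additionally spell out the well-definedness step -- that $\Theta(\cB)$ is independent of the chosen factorisation because any two Borel factors differ by an element of $B(\Qp)\cap w\Iw w^{-1}\subset B(\Zp)$, on which the unramified $\Theta$ is trivial -- which the paper elides under ``the result follows easily''; this is a welcome clarification, since the corollary as stated quantifies over all factorisations.
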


\begin{proof}
   The proposition gave necessary and sufficient conditions for \eqref{eq:control borel}. When they hold, we wrote down explicit values of $\cB = \smallmatrd{A}{}{}{D}$ and $\cI = \smallmatrd{a}{b}{}{d}$. By definition $A \in B_n(\zp)$ and $D = Az^{2\beta}$. The result follows easily.
\end{proof}

We now put this all together. Recall that (without loss of generality, as at the start of \S\ref{sec:spin intertwining}) we have chosen $\UPS$ so that identification $\Delta_\UPS : \{\text{$p$-refinements}\} \isorightarrow \cW_G$ sends our fixed spin $p$-refinement $\tilde\pi$ to $\tau = \smallmatrd{1}{}{}{w_n}$. By Proposition \ref{prop:p-refinement}, the Hecke eigenvalue of $U_{p,r}$ on $\tilde\pi$ is
\[
	\alpha_{p,r} = \Big[\delta_B^{1/2}\UPS^\tau\Big]^{w_{2n}}(t_{p,r}).
\]
Note also that since $\tilde\pi$ is a spin $p$-refinement, by Definition \ref{def:spin-refinement} we have $\alpha_{p,n+s} = \eta(p)^s \alpha_{p,n-s}$  for $0 \leq s \leq n-1$. We also have the $U_p$-eigenvalue $\alpha_p = \alpha_{p,1} \cdots \alpha_{p,2n-1}$.

\begin{proposition}\label{prop:spin refinements are shalika}
	\begin{enumerate}[(i)]
		\item If $\delta \neq w_n$, then $W_\delta\smallmatrd{w_nz^{2\beta}}{}{}{1} = 0$.
		\item We have 
		\begin{align*}
			W_0& \matrd{w_nz^{2\beta}}{}{}{1}= W_{w_n}\matrd{w_nz^{2\beta}}{}{}{1}\\ &
			= \Upsilon''  \cdot p^{\beta n^2} \cdot \frac{\delta_B(t_p)^\beta}{\eta(\det z^\beta)}\cdot\left(\frac{\alpha_p}{\alpha_{p,n}}\right)^\beta\neq 0,
		\end{align*}
	where $\Upsilon'' = \vol\big[B_n(\Z_p) \cdot w_n \cdot \Iwn \big]$ is a constant independent of $\beta$.
	\end{enumerate}

\end{proposition}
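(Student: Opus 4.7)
My plan is to prove Proposition \ref{prop:spin refinements are shalika} by a direct evaluation of the Ash--Ginzburg integral \eqref{eq:AG} defining $W_\delta = \cS_\psi^\eta(F_\delta)$, exploiting the support analysis already carried out in Proposition \ref{prop:shalika cell support} and Corollary \ref{cor:values when delta = w}.

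Part (i) is immediate: $F_\delta$ is, by Definition \ref{def:F_w}, supported on the cell $B(\Qp) \smallmatrd{}{w_n}{\delta w_n}{} \Iw$, and Proposition \ref{prop:shalika cell support} says that for any $k \in \GL_n(\Zp)$, $X \in M_n(\Qp)$, and $\beta \geq 1$, the element $\smallmatrd{}{1}{1}{}\smallmatrd{1}{X}{}{1}\smallmatrd{k}{}{}{k}\smallmatrd{w_n z^{2\beta}}{}{}{1}$ lies in this cell only when $\delta = w_n$. Hence the integrand defining $W_\delta\smallmatrd{w_nz^{2\beta}}{}{}{1}$ vanishes identically when $\delta \neq w_n$. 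The first equality in (ii) is then immediate from the expression \eqref{eq:spin eigenvector} for $W_0$.

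For the explicit formula for $W_{w_n}\smallmatrd{w_n z^{2\beta}}{}{}{1}$, I will proceed as follows. By Proposition \ref{prop:shalika cell support}, the domain of integration reduces to the subset where $k \in B_n(\Zp) w_n \Iwn$ and $k^{-1}X \in w_n z^{2\beta} M_n(\Zp)$. On this locus, Corollary \ref{cor:values when delta = w} (together with Iwahori invariance and the transformation property of normalised induction) shows that $F_{w_n}[\cdots]$ equals the constant $(\delta_B^{1/2}\UPS)\smallmatrd{1_n}{}{}{z^{2\beta}}$. I will then substitute $X = k w_n z^{2\beta} Z$ with $Z \in M_n(\Zp)$; the change-of-variable Jacobian is $|\det z|^{2\beta n} = p^{-\beta n^2(n-1)}$. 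Since $k w_n z^{2\beta} \in M_n(\Zp)$, the additive character orthogonality relation $\int_{M_n(\Zp)} \psi^{-1}(\mathrm{tr}(Z\cdot k w_n z^{2\beta}))\, dZ = 1$ holds uniformly in $k$. Unramifiedness of $\eta$ makes $\eta^{-1}(\det k) = 1$ on $B_n(\Zp) w_n \Iwn \subset \GL_n(\Zp)$, so the remaining $k$-integral collapses to $\vol(B_n(\Zp) w_n \Iwn) = \Upsilon''$. Thus
\[
W_{w_n}\matrd{w_n z^{2\beta}}{}{}{1} = \Upsilon'' \cdot p^{-\beta n^2 (n-1)} \cdot (\delta_B^{1/2}\UPS)\matrd{1_n}{}{}{z^{2\beta}}.
\]

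The last step is to match this with the claimed form. Using $\UPS_i \UPS_{n+i} = \eta$ (Lemma \ref{lem:choice of UPS}) one computes $(\delta_B^{1/2}\UPS)\smallmatrd{1_n}{}{}{z^{2\beta}}$ directly from the definition \eqref{eq:delta_B} of $\delta_B$; the explicit formula for $\alpha_{p,r}$ in Proposition \ref{prop:p-refinement}(i), specialised to $\sigma = \tau$, expresses both $\alpha_p = \prod_r \alpha_{p,r}$ and $\alpha_{p,n}$ as explicit monomials in $\UPS_i(p)$ times powers of $p$. Applying the spin relation $\alpha_{p,n+s} = \eta(p)^s \alpha_{p,n-s}$ from Definition \ref{def:shalika refinement} lets one fold the $\UPS_{n+s}(p)$'s into $\UPS_{n-s}(p)\eta(p)^s$, and a short algebraic manipulation produces the factor $p^{\beta n^2}\delta_B(t_p)^\beta \eta(\det z^\beta)^{-1}(\alpha_p/\alpha_{p,n})^\beta$ as required. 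Non-vanishing is then automatic, since $\alpha_p$ and $\alpha_{p,n}$ are non-zero Hecke eigenvalues of the regular refinement $\tilde\pi$. The main obstacle is expected to be the bookkeeping of this final combinatorial identification; the substantive analytic content lives entirely in the cell-support analysis of Proposition \ref{prop:shalika cell support} and in the character-orthogonality step.
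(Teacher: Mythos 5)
Your proposal is correct and follows essentially the same route as the paper: reduce the domain of integration via Proposition \ref{prop:shalika cell support}, evaluate $F_{w_n}$ on the reduced domain via Corollary \ref{cor:values when delta = w}, use unramifiedness of $\psi$ and $\eta$ to trivialise the remaining factors (your change of variables $X = kw_n z^{2\beta} Z$ does the same job as the paper's $X \mapsto w_n k^{-1} X k w_n$, and both produce the volume factor $p^{\beta(n^2-n^3)}$), and then match $(\delta_B^{1/2}\UPS)\smallmatrd{1}{}{}{z^{2\beta}}$ to the claimed expression via Proposition \ref{prop:p-refinement}(i) at $\sigma=\tau$ together with the spin relation. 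One small slip in the write-up: the ``folding'' you describe should be phrased at the level of the eigenvalues, $\alpha_{p,n-s} = \eta(p)^{-s}\alpha_{p,n+s}$, not as ``$\UPS_{n+s}(p)$ into $\UPS_{n-s}(p)\eta(p)^s$''; the correct version is what yields the $\eta(\det z^\beta)^{-1}\cdot(\alpha_p/\alpha_{p,n})^\beta$ factor.
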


\begin{proof}
	In \eqref{eq:AG}, we gave an integral representation of 
	\begin{align*}
		W_\delta&\smallmatrd{w_nz^{2\beta}}{}{}{1} = \cS_{\psi}^\eta(F_\delta)\smallmatrd{w_n z^{2\beta}}{}{}{1}\\
		&=  \int_{\GL_n(\Zp)}\int_{M_n(\Qp)} F_{\delta}\left[ \smallmatrd{}{1}{1}{}\smallmatrd{1}{X}{}{1}\smallmatrd{k}{}{}{k}\smallmatrd{w_n z^{2\beta}}{}{}{1}\right] \psi^{-1}(\mathrm{tr}(X))\eta^{-1}(\det(k)) dXdk.
	\end{align*}

	\medskip
	
	(i) If $\delta \neq w_n$, then Proposition \ref{prop:shalika cell support} shows the domain of $F_\delta$ in the integral is disjoint from the support of $F_\delta$, hence the integrand (thus the integral) vanishes.
	
	\medskip
	
	(ii) The first equality in (ii) follows from \eqref{eq:spin eigenvector} and (i). Now, if $\delta = w_n$, then Proposition \ref{prop:shalika cell support} means we can restrict the domain of the integral to $k \in B_n(\Zp)w_n\Iwn$ and $X \in kw_nz^{2\beta}M_n(\Zp)$. Moreover:
	\begin{itemize}\s
		\item If $k \in B_n(\Qp)w_n\Iwn$ and $X  \in kw_nz^{2\beta}M_n(\zp)$, then
		\begin{align*}
			F_{w_n}\left[ \smallmatrd{}{1}{1}{}\smallmatrd{1}{X}{}{1}\smallmatrd{k}{}{}{k}\smallmatrd{w_nz^{2\beta}}{}{}{1}\right] &=  (\delta_B^{1/2}\UPS)\Big[\smallmatrd{1}{}{}{z^{2\beta}}\Big]F_{w_n}\smallmatrd{}{w_n}{1}{}\\
			&=  (\delta_B^{1/2}\UPS)\Big[\smallmatrd{1}{}{}{z^{2\beta}}\Big],
		\end{align*}
		using Corollary \ref{cor:values when delta = w}, Iwahori-invariance of $F_{w_n}$, and Definition \ref{def:F_w}.
		
		\item Since $X \in M_n(\Zp)$, we have $\psi(\mathrm{tr}(X)) = 1$.
		
		\item $\eta(\det(k)) = 1$ as $\eta$ is unramified.
	\end{itemize}
	In particular, the integral collapses to
	\begin{align}\notag
		W_{w_n}&\smallmatrd{w_nz^{2\beta}}{}{}{1} = (\delta_B^{1/2}\UPS)\Big[\smallmatrd{1}{}{}{z^{2\beta}}\Big] \cdot \int_{B_n(\Zp)w_n\Iwn}\int_{kw_nz^{2\beta}M_n(\Zp)} dXdk\\
		&= (\delta_B^{1/2}\UPS)\Big[\smallmatrd{1}{}{}{z^{2\beta}}\Big]\cdot \operatorname{vol}\Big(B_n(\Z_p) \cdot w_n \cdot \Iwn \Big) \operatorname{vol}\Big(z^{2\beta}M_n(\zp)\Big),\label{eq:F_1 value 1}
	\end{align}
	where in the last step we made the change of variables $X \mapsto w_nk^{-1}Xkw_n$ before integrating. We easily see that
	\begin{equation}\label{eq:F_1 value 2}
		\operatorname{vol}\Big(z^{2\beta}M_n(\zp)\Big) = p^{\beta(n^2-n^3)} = p^{\beta n^2} \cdot \delta_B\smallmatrd{p^{n\beta}}{}{}{1}.
	\end{equation}
	
	Now, note that $w_{2n}\tau\smallmatrd{1}{}{}{z}\tau^{-1}w_{2n}^{-1} = \smallmatrd{z}{}{}{1} = t_{p,1}\cdots t_{p,n-1}$ (from \eqref{eq:t_p i}). In particular,
	\begin{align}
		(\delta_B^{1/2}\theta)\Big[\smallmatrd{1}{}{}{z^{2\beta}}\Big] &= 
		 \delta_B^{1/2}\Big[\smallmatrd{1}{}{}{z^{2\beta}}\Big] (\theta^\tau)^{w_{2n}}\Big[t_{p,1}^{2\beta} \cdots t_{p,n-1}^{2\beta}\Big]\notag \\
		 &= 	 \delta_B^{1/2}\Big[\smallmatrd{1}{}{}{z^{2\beta}}\Big](\delta_B^{-1/2})^{w_{2n}}\Big[t_{p,1}^{2\beta}\cdots t_{p,n-1}^{2\beta}\Big] \alpha_{p,1}^{2\beta} \cdots \alpha_{p,n-1}^{2\beta}\notag\\
		&= 	 \delta_B^{1/2}\Big[\smallmatrd{z^{2\beta}}{}{}{z^{2\beta}}\Big] \alpha_{p,1}^{\beta} \cdots \alpha_{p,n-1}^{\beta}\cdot [\eta(p)\alpha_{p,n+1}]^\beta \cdots [\eta(p)^{n-1}\alpha_{p,2n-1}]^\beta\notag\\
		 &= \eta(\det z^\beta)^{-1} \delta_B\Big[\smallmatrd{z^\beta}{}{}{z^{\beta}}\Big]\frac{\alpha_p^\beta}{\alpha_{p,n}^\beta}.\label{eq:F_1 value 3}
	\end{align}
Here in the second equality we use that $\delta_B^{w_{2n}} = \delta_B^{-1}$, in the third we use the definition of spin refinements (Definition \ref{def:spin-refinement}), and in the fourth that $\eta(p)\cdot \eta(p)^2 \cdots \eta(p)^{n-1} = \eta(\det z)$ and $\alpha_{p,1}\cdots\alpha_{p,n-1}\cdot\alpha_{p,n+1}\cdots\alpha_{p,2n-1} = \alpha_p/\alpha_{p,n}.$ Finally, we obtain (ii) after combining \eqref{eq:F_1 value 1}, \eqref {eq:F_1 value 2} and \eqref{eq:F_1 value 3}, as
\[
\delta_B(t_p)^\beta = \delta_B\smallmatrd{p^{n\beta}}{}{}{1}\delta_B\smallmatrd{z^\beta}{}{}{z^\beta}. \qedhere
\]
\end{proof}

\begin{corollary}
\label{cor:spin refinements are shalika}
Any regular spin $p$-refinement is a Shalika $p$-refinement.
\end{corollary}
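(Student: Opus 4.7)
The plan is to observe that the corollary is essentially immediate from Propositions \ref{prop:spin eigenvector} and \ref{prop:spin refinements are shalika}: all the substantive work has already been carried out. Let $\tilde\pi = (\pi,\alpha)$ be a regular spin $p$-refinement. By Proposition \ref{prop:p-refinement}(ii), regularity of $\tilde\pi$ means the Satake parameter of $\pi$ is regular, so we are in the setting of assumption \eqref{eq:satake} and of Proposition \ref{prop:AG 2}; in particular $\pi$ admits an explicit Ash--Ginzburg intertwining into its Shalika model.

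First I would apply Lemma \ref{lem:choice of UPS} to choose an unramified character $\UPS$ of $T(\Qp)$ with $\pi = \Ind_B^G \UPS$, satisfying the Ash--Ginzburg condition $\UPS_i\UPS_{n+i}=\eta$ for all $1\leq i \leq n$, and normalised so that $\Delta_\UPS(\tilde\pi) = \tau = \smallmatrd{1}{}{}{w_n}$. This is precisely the setup required for Proposition \ref{prop:spin eigenvector}, which, combining the big-cell eigenvector $f^\tau \in \Ind_B^G \UPS^\tau$ of Proposition \ref{prop:W_p eigenvector} with the explicit intertwining $M_{w_n}$ of Lemma \ref{prop:intertwining} and the Ash--Ginzburg intertwining $\cS_\psi^\eta$ of Proposition \ref{prop:AG 2}, produces a distinguished spanning eigenvector $W_0 \in \cS_\psi^\eta(\tilde\pi)$ of the shape $W_0 = W_{w_n} + \sum_{\delta \neq w_n} c_\delta W_\delta$.

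Then I would invoke Proposition \ref{prop:spin refinements are shalika}(ii), which evaluates
\[
W_0\smallmatrd{w_n z^{2\beta}}{}{}{1} = W_{w_n}\smallmatrd{w_n z^{2\beta}}{}{}{1} = \Upsilon'' \cdot p^{\beta n^2} \cdot \frac{\delta_B(t_p)^\beta}{\eta(\det z^\beta)} \cdot \left(\frac{\alpha_p}{\alpha_{p,n}}\right)^\beta
\]
for any $\beta \geq 1$, and observes in particular that this value is non-zero (all factors are non-zero: $\Upsilon''$ is a non-zero volume, and the Hecke eigenvalues $\alpha_p, \alpha_{p,n}$ are non-zero by regularity). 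Taking $\beta = 1$, the pair $(W_0,1)$ witnesses the defining condition of Definition \ref{def:shalika refinement 2}, so $\tilde\pi$ is a Shalika $p$-refinement.

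There is no real obstacle: all the difficulty was absorbed into the construction of $W_0$ and the explicit computation of its value. The only thing worth double-checking is that the change of normalisation $\UPS \mapsto \UPS^{\tau^{-1}(\sigma\tau)^{-1}\tau}$ in Lemma \ref{lem:choice of UPS} preserves the Ash--Ginzburg condition; this is precisely Remark \ref{rem:AG vs AS}, which notes that the Ash--Ginzburg condition is stable under $\tau^{-1}\cW_G^0\tau$.
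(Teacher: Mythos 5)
Your proof is correct and follows the same route as the paper: the paper's own proof of this corollary is a two-sentence observation that Proposition \ref{prop:spin refinements are shalika}(ii) exhibits an eigenvector $W_0 \in \cS_\psi^\eta(\tilde\pi)$ with $W_0\smallmatrd{w_n z^{2\beta}}{}{}{1}\neq 0$, which is exactly the defining condition. You spell out the intermediate steps (Lemma \ref{lem:choice of UPS} to pin down $\UPS$, Proposition \ref{prop:spin eigenvector} to build $W_0$) that the paper leaves implicit, and your sanity check on the non-vanishing of the Hecke eigenvalues is reasonable but not strictly needed since Proposition \ref{prop:spin refinements are shalika}(ii) already asserts the value is non-zero.
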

\begin{proof}
In Proposition \ref{prop:spin refinements are shalika}, we started with an arbitrary regular spin $p$-refinement $\tilde\pi$, and exhibited in (ii) an eigenvector $W_0 \in \cS_\psi^\eta(\tilde\pi)$ with $W_0\smallmatrd{w_n z^{2\beta}}{}{}{1}\neq 0$. By definition, $\tilde\pi$ is thus a Shalika refinement.
\end{proof}

	\section{Interlude: an automorphic summary}\label{sec:running assumptions}
	
	Everything so far has been classical/automorphic in nature. The rest of the paper is concerned with a $p$-adic interpolation of the previous sections. For the benefit of the reader, we now collect our running automorphic assumptions in one place, and summarise our main classical results.
	
	Throughout the rest of the paper, we work with base field $\Q$, fix $K = \Iw \prod_{\ell\neq p}\GL_{2n}(\Z_\ell)$, and let $\pi$ be a RACAR of $\GL_{2n}(\A)$ of weight $\lambda$ such that: 
	
	\begin{conditions}\label{cond:running assumptions}
		\begin{itemize}\setlength{\itemsep}{0pt}
			\item[(C1)] $\pi$ admits a global $(\eta,\psi)$-Shalika model, for a Hecke character $\eta$;
			\item[(C2)] $\pi_{p}$ is spherical and admits a regular spin $p$-refinement $\tilde\pi_{p} = (\pi_{p}, \alpha)$;
			\item[(C3)] for each $\ell \neq p$, $\pi_\ell$ is spherical;
			\item[(C4)] for each $r = 1,...,2n-1$, letting $\alpha_{p,r} = \alpha(U_{p,r})$ and $\alpha_{p,r}^\circ = \lambda(t_{p,r})\alpha_{p,r}$, we have 
			\[
				v_p(\alpha_{p,r}^\circ) = \Big[v_p(\alpha_{p,r}) - \sum_{j=1}^r \lambda_{2n+1-j}\Big] < \lambda_r - \lambda_{r+1} + 1.
			\]
			We also write $\alpha_p = \alpha_{p,1} \cdots \alpha_{p,2n-1} = \alpha(U_p)$ and $\alpha_p^\circ = \alpha_{p,1}^\circ \cdots \alpha_{p,2n-1}^\circ$.
			
		\end{itemize}
	\end{conditions}
	In this case we write $\tilde\pi = (\pi,\alpha)$ and call it a \emph{$p$-refined RACAR satisfying (C1-4)}. 
	
	(C2) and (C3) imply that $\eta$ is everywhere unramified, so $\eta = |\cdot|^{\sw}$, where $\sw$ is the purity weight of $\lambda$. (C4) ensures $\tilde\pi$ is a \emph{non-critical slope} $p$-refinement, which we will explain in \S\ref{sec:nc slope}.
	
	If $\tilde\pi$ satisfies (C1-4), choose 
	\begin{equation}\label{eq:W_f}
		W_f = \otimes_\ell W_\ell \in \cS_{\psi_f}^{\eta_f}(\pi_f, E)
	\end{equation}
	as follows: for each $\ell\neq p$, let $W_\ell = W_\ell^\circ$ be the spherical test vector, and at $p$, let $W_p$ be the vector $W_0 \in \cS_{\psi_p}^{\eta_p}(\pi_p,E)$ from Proposition \ref{prop:spin eigenvector}. Define 
	\begin{equation}\label{eq:phi}
		\phi_{\tilde\pi}^\pm \defeq \Theta^\pm(W_f)/\Omega^\pm_\pi \in \hc{t}(S_K,\sV_\lambda^\vee(L))^\pm_{\tilde\pi},
	\end{equation}
	where $\Theta^\pm/\Omega_\pi^\pm$ is defined in \eqref{eq:Theta}, and $(-)_{\tilde\pi}$ is the localisation at $\m_{\tilde\pi}$ (see \S\ref{sec:hecke p}). Now recap:
	\begin{itemize}\s
		\item In Theorem \ref{thm:critical value}, we showed that for $\chi$ of conductor $p^{\beta'}$ and $\beta = \mathrm{max}(\beta',1)$,
		\[
			\cE_{B,\chi}^{j,\eta_0}\left(\phi_{\tilde\pi}^\pm\right) = \delta_B(t_B^{-\beta})\cdot\Upsilon_B\cdot\lambda(t_B^\beta)\cdot \zeta_j(W_\infty^\pm) \cdot \frac{L^{(p)}(\pi\otimes\chi,j+1/2)}{\Omega_\pi^\pm}\cdot \zeta_p\Big(j+\tfrac{1}{2},(u^{-1}t_B^\beta)\cdot W_p,\chi_p\Big).
		\]
		\item In Proposition \ref{prop:local zeta p}, we showed that if $\chi_p$ is ramified,
		\[
		\zeta_p\Big(j+\tfrac{1}{2},(u^{-1}t_p^{\beta}\Big) \cdot W_p, \chi_p) = \Upsilon' \cdot \eta(\det z^\beta) \cdot p^{-\beta \tfrac{n^2+n}{2}} \cdot p^{\beta nj} \cdot \tau(\chi)^n  \cdot \chi_p(\det(-w_n))  \cdot W_p\tbyt{w_n z^{2\beta}}{}{}{1}.
		\]
		
		\item In Proposition \ref{prop:spin refinements are shalika}, since $\tilde\pi$ is a spin $p$-refinement, we showed
		\[
		W_p\matrd{w_nz^{2\beta}}{}{}{1}= \Upsilon''  \cdot p^{\beta n^2} \cdot \frac{\delta_B(t_p)^\beta}{\eta(\det z^\beta)}\cdot\left(\frac{\alpha_p}{\alpha_{p,n}}\right)^\beta.
		\]
		Note that we showed this by making specific `good' identifications, namely fixing $\pi = \Ind_B^G \UPS$ for an appropriate $\UPS$ (chosen in Lemma \ref{lem:choice of UPS}) and then using the Shalika intertwining of \eqref{eq:AG}. However, by Multiplicity One for Shalika models \cite{Nia09, CS20}, we immediately deduce this for all regular spin $p$-refinements $\tilde\pi$ independent of any choices.
	\end{itemize}
Let $\gamma_{(pm)} \defeq \Upsilon_B \cdot \Upsilon' \cdot \Upsilon''$ (recall that $\Upsilon_B$ depends on the constant $\gamma$ appearing in \cite[(77)]{DJR18}, which in turn depends on the fixed integer $m$ in Definition \ref{def:auto cycle}). We deduce:

	\begin{corollary}\label{cor:L-value}
		Let $\chi$ be a finite order Hecke character of conductor $p^{\beta},$ with $\beta \in \Z_{\geq 1}$, and let $j \in \mathrm{Crit}(\lambda)$. If $(-1)^j\chi_\infty\eta_\infty(-1) = \mp 1$, then $\cE_{\chi}^{j,\eta_0}(\phi_{\tilde{\pi}}^\pm) = 0$. If $(-1)^j\chi_\infty\eta_\infty(-1) = \pm 1$, we have
		\[
		\cE_{B,\chi}^{j,\eta_0}\left(\phi_{\tilde\pi}^\pm\right) = \gamma_{(pm)}\cdot\chi_p(\det(-w_n)) \cdot \left(\frac{\alpha_p^\circ}{\alpha_{p,n}}\right)^\beta \cdot Q'(\pi,\chi,j)  \cdot
		\zetainfty \cdot \frac{L^{(p)}(\pi\otimes\chi,j+1/2)}{\Omega_\pi^\pm},
		\]
		where
		\[
			Q'(\pi,\chi,j) \defeq  p^{\beta\left(nj + \tfrac{n^2-n}{2}\right)}  \tau(\chi)^n.
		\]
	\end{corollary}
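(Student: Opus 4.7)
The plan is to prove this corollary by simply chaining together the three main results highlighted just before its statement. There are essentially no new ideas; the content is bookkeeping of scalar factors, and the main obstacle is just tracking how the various powers of $p$, the modulus character $\delta_B$, and the Hecke eigenvalues combine.

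First, I would apply Theorem \ref{thm:critical value} with $P = B$ (so $t_P = t_B = t_p$). The vanishing statement when $(-1)^j \chi_\infty \eta_\infty(-1) = \mp 1$ is then immediate from that theorem, so the remaining work is in the non-vanishing case. I would then substitute the formula of Proposition \ref{prop:local zeta p} for the local zeta integral $\zeta_p(j+\tfrac12,(u^{-1}t_p^\beta)\cdot W_p,\chi_p)$ at $s = j + \tfrac12$; here $\beta \geq 1$ by the hypothesis on the conductor, so the proposition applies and produces a factor $p^{-\beta(n^2+n)/2} \cdot p^{\beta n j} \cdot \eta(\det z^\beta) \cdot \tau(\chi)^n \cdot \chi_p(\det(-w_n))$ times $\Upsilon'$ and an unspecified value of $W_p$. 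Next, I would substitute Proposition \ref{prop:spin refinements are shalika}(ii), which evaluates
\[
W_p\matrd{w_n z^{2\beta}}{}{}{1} = \Upsilon'' \cdot p^{\beta n^2} \cdot \frac{\delta_B(t_p)^\beta}{\eta(\det z^\beta)} \cdot \left(\frac{\alpha_p}{\alpha_{p,n}}\right)^\beta.
\]
Crucially, the factors of $\eta(\det z^\beta)$ from these two substitutions cancel, which is the key reason the answer is clean.

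The remaining simplifications are: (i) the prefactor $\delta_B(t_B^{-\beta})$ from Theorem \ref{thm:critical value} cancels against $\delta_B(t_p)^\beta$ coming from the value of $W_p$ above, since $t_B = t_p$; (ii) combining the two powers of $p$ gives $p^{\beta(nj + (n^2-n)/2)}$, which is the definition of $Q'(\pi,\chi,j)/\tau(\chi)^n$; and (iii) the factor $\lambda(t_B^\beta)(\alpha_p/\alpha_{p,n})^\beta$ rewrites as $(\alpha_p^\circ/\alpha_{p,n})^\beta$ using the relation $\alpha_p^\circ = \lambda(t_p)\alpha_p$, which is just Definition \ref{def:integral} unpacked: since $t_p = \mathrm{diag}(p^{2n-1},\ldots,p,1)$, one has $\lambda(t_p) = p^{\sum_{r=1}^{2n-1}(2n-r)\lambda_r} = \prod_{r=1}^{2n-1}\lambda(t_{p,r})$, whose product over $r$ of the normalising factors is exactly $\alpha_p^\circ/\alpha_p$.

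Collecting the three constants $\Upsilon_B$, $\Upsilon'$, $\Upsilon''$ into the single constant $\gamma_{(pm)}$ as declared before the statement, and identifying the remaining factors with the claimed expression (noting that the $\zeta_j(W_\infty^\pm)$ and $L^{(p)}(\pi\otimes\chi, j+\tfrac12)/\Omega_\pi^\pm$ terms pass through unchanged from Theorem \ref{thm:critical value}), finishes the proof. The only subtlety worth highlighting in writing it up is the remark that our choice of test vector $W_p = W_0$ computed in Proposition \ref{prop:spin refinements are shalika} was made with respect to a specific identification $\pi_p = \mathrm{Ind}_B^G \theta$ fixed in Lemma \ref{lem:choice of UPS}, but by Multiplicity One for Shalika models the resulting value of $W_p\smallmatrd{w_n z^{2\beta}}{}{}{1}$ is independent of this choice (as already observed in the bulleted summary preceding the corollary).
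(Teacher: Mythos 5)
Your proposal is correct and takes essentially the same approach as the paper, which simply records \textup{``We use $\alpha_p^\circ = \lambda(t_p)\alpha_p$. The rest is book-keeping.''} You have carried out that bookkeeping correctly: the cancellations of $\delta_B(t_p^{\mp\beta})$ and of $\eta(\det z^\beta)$, the combination of the powers $p^{-\beta(n^2+n)/2}\cdot p^{\beta n^2}\cdot p^{\beta nj} = p^{\beta(nj + (n^2-n)/2)}$, and the absorption of $\lambda(t_B^\beta)$ into $(\alpha_p^\circ/\alpha_{p,n})^\beta$ all check out, as does your final remark invoking multiplicity one for Shalika models to justify that the evaluation of $W_p\smallmatrd{w_n z^{2\beta}}{}{}{1}$ is independent of the identification $\pi_p = \mathrm{Ind}_B^G\theta$ fixed in Lemma \ref{lem:choice of UPS}.
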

	\begin{proof}
		We use $\alpha_p^\circ = \lambda(t_p)\alpha_p$. The rest is book-keeping.
	\end{proof}
	
	Finally, we record one more relevant result:
	\begin{proposition}\label{prop:mult one}
		The $L$-vector space $\hc{t}(S_K,\sV_\lambda^\vee(L))^\pm_{\tilde\pi}$ is 1-dimensional, generated by $\phi_{\tilde\pi}^\pm$.
	\end{proposition}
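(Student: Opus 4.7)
The plan is to transfer the dimension count to the Shalika model side via the isomorphism $\Theta^{\pm}$ of \eqref{eq:Theta}, which provides an $\cH$-equivariant identification
\[
\hc{t}(S_K,\sV_\lambda^\vee(L))^\pm_\pi \;\cong\; \cS_{\psi_f}^{\eta_f}(\pi_f^K)_L.
\]
Localizing further at $\m_{\tilde\pi}$ corresponds to passing to the generalized eigenspace for the Iwahori Hecke operators $U_{p,r}$ at the eigenvalues $\alpha_{p,r}$, so it suffices to prove that this generalized eigenspace is one-dimensional and to verify that the specific vector $W_f$ of \eqref{eq:W_f} is a non-zero element of it.

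First I would decompose the Shalika model as a restricted tensor product over places,
\[
\cS_{\psi_f}^{\eta_f}(\pi_f^K) \;=\; \cS_{\psi_p}^{\eta_p}(\pi_p^{\Iw}) \otimes \bigotimes_{\ell \neq p} \cS_{\psi_\ell}^{\eta_\ell}(\pi_\ell^{G(\Z_\ell)}).
\]
At each $\ell \neq p$, condition (C3) gives sphericality, so the factor is a line generated by the Shalika image of the normalised spherical vector $W_\ell^\circ$. At $p$, Hecke-equivariance of the local Shalika intertwining identifies the generalized $\alpha$-eigenspace in $\cS_{\psi_p}^{\eta_p}(\pi_p^{\Iw})$ with the generalized eigenspace \eqref{eq:U_p refined line} inside $\pi_p^{\Iw}$; this is one-dimensional by the regularity condition (C2) together with Proposition~\ref{prop:p-refinement}(ii). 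Tensoring, the whole $\m_{\tilde\pi}$-localization has dimension one over $L$.

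Finally, to identify $\phi_{\tilde\pi}^\pm$ as a generator, I would check that $W_f = W_p \otimes \bigotimes_{\ell \neq p} W_\ell^\circ$ is both non-zero and lies in this line. Non-vanishing of $W_p$ follows from Proposition~\ref{prop:spin refinements are shalika}(ii); each $W_\ell^\circ$ is non-zero by normalisation. Moreover $W_p$ is an honest $\alpha$-eigenvector for $\cH_p$ by Proposition~\ref{prop:spin eigenvector} (combined with Proposition \ref{prop:W_p eigenvector} transferred through the intertwining), so $W_f$ belongs to the $\m_{\tilde\pi}$-localization. Since $\Theta^\pm$ is an isomorphism onto the $\m_\pi$-localization and $\Omega_\pi^\pm \in \C^\times$, we conclude that $\phi_{\tilde\pi}^\pm$ is a non-zero vector in the one-dimensional space, hence a generator.

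I do not anticipate a serious obstacle: the argument is essentially a tensor-product multiplicity-one computation, with the regularity of the $p$-refinement carrying the weight at $p$. The only subtlety to track is that the $\m_{\tilde\pi}$-localization picks out the \emph{generalized} (rather than the honest) eigenspace; but this is exactly the convention in Definition~\ref{def:refinement} and \eqref{eq:U_p refined line}, so the regularity hypothesis delivers the required one-dimensionality without any further work.
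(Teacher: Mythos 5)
Your proposal is correct and takes essentially the same approach as the paper's own proof: both reduce, via the Hecke-equivariant isomorphism $\Theta^\pm$, to showing that the generalised $\psi_{\tilde\pi}$-eigenspace in $\pi_f^K$ (equivalently, in its Shalika model) is a line, using sphericality at $\ell \neq p$ and the regularity condition (C2) at $p$, and then observing that $W_f$ generates this line by construction. The paper is terser, leaving the tensor-product decomposition implicit, but the logic is identical.
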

	\begin{proof}
		The generalised eigenspace in $\pi_f^K$ where $\cH$ acts by $\psi_{\tilde\pi}$ is 1-dimensional; locally, at $\ell\neq p$ we have $\pi_\ell^{G(\Z_\ell)}$ is a line as $\pi_\ell$ is spherical, and at $p$, this is (C2). This line is generated by $W_f$ by construction. The result now follows from Hecke-equivariance of $\Theta^\pm$.
	\end{proof}


\section{The local zeta integral at parahoric level}

\label{sec:local zeta unramified}

The local zeta integral we computed in Proposition \ref{prop:local zeta p} required the twisting character $\chi$ to be \emph{ramified}. This is similar to previous papers \cite{DJR18,BDW20} on this topic, where $p$-adic $L$-functions were only shown to have the required interpolation property at ramified characters. However, this excludes the trivial character, which is typically the most interesting one. We finish Part II by computing the local zeta integral again in a different way which allows us to also handle \emph{unramified} characters. Doing this at Iwahoric level appears to be very difficult. Instead, for this section only, we work at $Q$-\emph{parahoric} level, for $Q \subset \GL_{2n}$ the parabolic with Levi $\GL_n \times \GL_n$ (the setting treated in \cite{DJR18,BDW20}). This allows us to directly strengthen the results of \cite{DJR18,BDW20} (see Proposition \ref{prop:unramified interpolation} below). In \S\ref{sec:p-adic L-functions} we'll exploit the interpolation properties of $p$-adic $L$-functions to deduce the result at Iwahoric level, completing the present paper.

For compatibility with \cite{DJR18,BDW20}, we work over a general local field rather than just over $\Qp$. In particular, \emph{for this section only} we adopt the following notation.

\begin{notation}
		Let $F/\Qp$ be a finite extension, with ring of integers $\cO$ and maximal ideal $\pri = \varpi\cO$, and let $q \defeq \#\cO/\pri$. Let $\delta$ be such that $\varpi^\delta\cO$ be the different of $F/\Qp$. We let $J \subset \mathrm{GL}_{2n}(\mathcal{O})$ denote the parahoric subgroup associated with $Q$, i.e., all elements which land in $Q$ modulo $\mathfrak{p}$. 

		Let $\pi = \Ind_B^G \UPS$ be a generic unramified principal series representation of $\GL_{2n}(F)$ admitting an $(\eta,\psi)$-Shalika model, for $\eta : F^\times \to \C^\times$ a smooth character and $\psi : F \to \C^\times$ the usual additive character (e.g.\ \cite[\S4.1]{DJR18}). Note $\pi$ is spherical. We will assume that \eqref{eq:AG} gives a non-trivial intertwining $\cS_\psi^\eta : \Ind_B^G \UPS \hookrightarrow \cS_\psi^\eta(\pi)$ for an unramified character $\UPS$ in the Ash--Ginzburg convention (cf.\ Proposition \ref{prop:AG 2}).
		\end{notation}

	\subsection{Normalisation of local data}
	
Let $\tilde\pi = (\pi, \alpha)$ be a regular spin $p$-refinement of $\pi$ (to Iwahori level), normalised without loss of generality as in the proof of Proposition \ref{prop:spin eigenvector}. At parahoric level:
\begin{itemize}\s
	\item Rather than $\Iw$-invariant vectors we use $J$-invariant vectors.
	\item Attached to $\tilde\pi$ is a parahoric refinement $\tilde\pi^Q = (\pi,\alpha_{\pri,n})$, for 
	\begin{align}
		\alpha_{\pri,n} = \alpha(U_{\pri,n}) &= \Big[\delta_B^{1/2}\UPS\Big]^{w_{2n}}(t_{\pri,n})\notag\\
		&= \Big[\delta_B^{1/2}\UPS\Big]\smallmatrd{1_n}{}{}{\varpi 1_n} = q^{n^2/2}\UPS_{n+1}(\varpi)\cdots\UPS_{2n}(\varpi)\label{eq:unramified alpha}
	\end{align}
	(via Proposition \ref{prop:p-refinement}). This is a $Q$-regular $Q$-refinement in the sense of \cite[Def.\ 3.5]{DJR18}. (In the notation \emph{op.\ cit}., it corresponds to the set $\tau = \{n+1,...,2n\}$).
	\item For a vector $W_{\pri} \in \pi_{\pri}^J$, the relevant twisted local zeta integral arising in Theorem \ref{thm:critical value} is $\zeta_{\pri}(j+\tfrac{1}{2}, (u^{-1}t_{\pri, n}^\beta)\cdot W_{\pri}, \chi_{\pri})$.
\end{itemize}

\begin{definition}
	Let $F_0\in \Ind_B^G \UPS$ be the
	unique $J$-invariant function supported on $B(F)\cdot w_{2n}\cdot J$ such that
	\[
		F_0(w_{2n})= \alpha_{\pri,n}^\delta \cdot q^{-\delta n^2}.
		\]
		Let $W_0 = \cS_{\psi}^\eta(F_0) \in \cS_\psi^\eta(\pi)$ be the Shalika vector associated with $F_0$ by \eqref{eq:AG}. 
\end{definition}

This is the same choice of normalisation as \cite[\S3.3]{DJR18} (hence in \cite{BDW20}). By Lemma 3.6 of that paper, we have $W_0(t_{\pri,n}^{-\delta}) = 1$. In particular, $F_0 \in \tilde\pi^Q$ is non-zero, so generates this line.

\subsection{The local zeta integral}

Let $d^\times c$ be the Haar measure on $\cO^\times$ of total measure 1. Let $\chi : F^\times \to \C^\times$ be a finite order character of conductor $\pri^{\beta'}$, and denote its Gauss sum by 
\[
\tau(\chi) = \tau(\chi,\psi) = q^{\beta'}(1-q^{-1})\int_{\cO^{\times}} \chi(c\varpi^{-\beta'-\delta}) \psi(c\varpi^{-\beta'-\delta}) d^{\times} c.
\]
Our normalisation means that if $\chi$ arises from a Dirichlet character of $p$-power conductor, this recovers the usual Gauss sum \[	
\tau(\chi) = \sum_{c \in (\Z/p^{\beta'})^\times} \chi(c)e^{2\pi i c/p^{\beta'}}.
\]
 In the rest of this section, we prove:

\begin{proposition}\label{p:local-zeta}
	Let $\beta=\max(1, \beta')$. Then 
	\[
	\zeta\Big(s, (u^{-1}t_{\pri,n}^\beta) \cdot W_0, \chi\Big)
	=q^{\beta n\left(s-\tfrac{n}{2}\right)+\delta n\left(s - \tfrac{n}{2} - 1\right)} \cdot \chi(\det(-w_n)) \cdot Q(\pi,\chi,s)
	\]
	where 
	\[
	Q(\pi,\chi,s)=\left\{\begin{array}{cl} q^{-\beta n}\frac{q^n}{(q-1)^n} \cdot \tau(\chi)^n &: \chi \text{ ramified},\\
		\displaystyle{\frac{\chi(\varpi)^{-n(\delta+1)}}{(1-q)^n}\prod_{i=n+1}^{2n}
			\frac{1-\UPS_{i}\chi(\varpi)q^{1-s}}{1-\UPS_{i}\chi(\varpi)q^{-s}}} &: \chi \text{ unramified}.\end{array}\right.
	\]
\end{proposition}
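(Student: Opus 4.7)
The starting point is to substitute the Ash--Ginzburg integral representation \eqref{eq:AG} for $W_0 = \cS_\psi^\eta(F_0)$ into the Friedberg--Jacquet integral and swap the order of integration. Writing
\[
g_{x,X,k} \defeq \smallmatrd{0}{1_n}{1_n}{0}\smallmatrd{1_n}{X}{0}{1_n}\smallmatrd{k}{0}{0}{k}\smallmatrd{x}{0}{0}{1_n}u^{-1}t_{\pri,n}^\beta,
\]
the first task is to determine when $g_{x,X,k}$ lies in the support $B(F)\cdot w_{2n}\cdot J$ of $F_0$ and to evaluate $F_0$ there. A direct matrix calculation, analogous to Proposition~\ref{prop:shalika cell support} but much easier since $J$ is larger than $\Iw$, expresses $g_{x,X,k}w_{2n}^{-1}$ as an explicit upper block-triangular matrix (determined by $k$ and $x$) times an element which lies in $J$ precisely when $(x,X,k)$ satisfies suitable integrality conditions. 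Pushing through the value of $\delta_B^{1/2}\UPS$ on this upper part, and using the normalization $F_0(w_{2n}) = \alpha_{\pri,n}^\delta q^{-\delta n^2}$ together with \eqref{eq:unramified alpha}, produces an explicit formula for the integrand in terms of $x$ and the Satake parameters $\UPS_{n+1}(\varpi),\ldots,\UPS_{2n}(\varpi)$.

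For the ramified case I would follow the strategy of Proposition~\ref{prop:local zeta p}: use translates of $x$ by diagonal matrices in $T_n(\cO)$ and by the Iwahori part of $\GL_n(\cO)$ (which are absorbed by the $J$-invariance of $F_0$), then apply character orthogonality to cut the support of $x$ down to a coset where $\chi$ produces a Gauss sum at each of the $n$ diagonal positions. This yields the factor $\tau(\chi)^n$ and the explicit $q$-power prefactors claimed in the statement.

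For the unramified case character averaging produces no cancellation, so a different approach is required. Here the plan is to use the Iwasawa decomposition $\GL_n(F) = T_n(F)N_n(F)\GL_n(\cO)$ to parametrize $x$; the integral over $\GL_n(\cO)$ and $N_n(F)$ collapses using the bi-$\GL_n(\cO)$-invariance coming from the unramified setting, and the outer integral reduces to an explicit lattice sum over dominant cocharacters of $T_n$. Since the integrand is a product of the characters $\UPS_{n+1},\ldots,\UPS_{2n}$ twisted by $\chi\circ\det\cdot|\det|^{s-1/2}$, this lattice sum is a product of geometric series summing to $\prod_{i=n+1}^{2n}(1-\UPS_i\chi(\varpi)q^{-s})^{-1}$, which accounts for the denominator of $Q(\pi,\chi,s)$.

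The main obstacle will be producing the numerator factor $\prod(1-\UPS_i\chi(\varpi)q^{1-s})$. My expectation is that this emerges from restricting the $X$-integral: the condition that $g_{x,X,k}$ lie in the big cell (and not in a smaller Bruhat stratum with respect to $J$) removes a boundary subset of $M_n(F)$, and the boundary contribution converts a pure geometric series $\sum y^m$ into the truncated form $(1-qy)/(1-y)$, one such factor per diagonal position. A careful tracking of measure normalizations, together with the $\delta$-dependent correction from $\alpha_{\pri,n}^\delta$ (which should give $\chi(\varpi)^{-n(\delta+1)}$ upon extracting the $\chi$-twist), then yields the precise ratio of $L$-factors. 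A useful sanity check is specialisation to trivial $\chi$ and $s = 1/2$: this should recover $L(\pi,1/2)$ up to elementary factors via Theorem~\ref{thm:critical value}, filling in the interpolation at the trivial character that was left open in \cite{DJR18,BDW20}.
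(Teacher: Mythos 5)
Your opening step (substitute the Ash--Ginzburg integral for $W_0 = \cS_\psi^\eta(F_0)$ and swap the order of integration) matches the paper's starting point, but from there your plan diverges in ways that introduce genuine gaps, especially for the unramified case. Most seriously, your claim that the unramified integral "collapses using the bi-$\GL_n(\cO)$-invariance coming from the unramified setting" is false: the integrand in $\zeta(s,(u^{-1}t_{\pri,n}^\beta)\cdot W_0,\chi)$ is $h \mapsto W_0\left[\smallmatrd{h}{}{}{1}u^{-1}t_{\pri,n}^\beta\right]$, and while the Shalika transformation property gives left-$\GL_n(\cO)$-invariance (up to $\eta$), right-translating $h$ by $k \in \GL_n(\cO)$ conjugates $\smallmatrd{k}{}{}{1}$ past $u^{-1}t_{\pri,n}^\beta$ to produce $\smallmatrd{k}{\varpi^{-\beta}(w_n - kw_n)}{}{1}$, which lies in $J$ only when $k \equiv 1 \pmod{\varpi^\beta}$. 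So the function is right-invariant only under a deep congruence subgroup, not $\GL_n(\cO)$, and the reduction to a lattice sum over dominant cocharacters does not go through. Your explanation for the numerator $\prod(1-\UPS_i\chi(\varpi)q^{1-s})$ as a "boundary Bruhat stratum" contribution from the $X$-integral is also not what happens.

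The paper's actual argument is uniform in $\chi$ almost to the end: after a change of variables absorbing $u^{-1}t_{\pri,n}^\beta$ into the $h$ and $X$ parameters, one uses $J$-invariance plus \cite[Lem.\ 3.6]{DJR18} to show the $F_0$-factor vanishes unless $X \in M_n(\cO)$ (where it equals $\alpha_{\pri,n}^\delta q^{-\delta n^2}$), then a Fourier/$\psi$-argument restricts $b \in B_n$ to $\varpi^{-\delta}M_n(\cO)$, and a Bruhat decomposition of $K_n$ together with another $\psi$-orthogonality argument (using $\beta \geq 1$) kills all Weyl chambers except $\rho = 1$. Only then does the integral reduce to a product of $n$ independent one-variable integrals over $t_i \in \cO\setminus\{0\}$, and it is only at this final stage that the ramified/unramified dichotomy enters. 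In the unramified case, the numerator $L$-factor emerges because the $k=0$ term of the geometric series $\sum_{k\geq 0} q^{-k-(k-\delta)(s-1)}\theta\chi(\varpi)^{k-\delta-1}\int_{\cO^\times}\psi(t\varpi^{k-\delta-1})d^\times t$ picks up the special value $\int_{\cO^\times}\psi(t\varpi^{-1})d^\times t = 1/(1-q)$, converting $(1 - \theta\chi(\varpi)q^{-s})^{-1}$ into the ratio $\frac{1-\theta\chi(\varpi)q^{1-s}}{1-\theta\chi(\varpi)q^{-s}}$ up to an elementary factor. Your ramified-case plan (character averaging as in Proposition~\ref{prop:local zeta p}) is a different route that plausibly could be made to work, though note it cannot be quoted directly: the conjugating matrix here is $t_{\pri,n}^\beta = \smallmatrd{\varpi^\beta}{}{}{1}$ rather than $t_p^\beta = \smallmatrd{p^{n\beta}z^\beta}{}{}{z^\beta}$, so the $z$-conjugation structure that drives that proof is absent and the calculation would have to be redone.
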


\begin{proof}
	We first rewrite the local zeta integrals. For simplicity, we write $G_n = \GL_n(F)$, $B_n = B_n(F)$, and $M_n = M_n(F)$. Since $\eta$ is unramified, we have
	\begin{align*}
		\zeta(s, (u^{-1}t_{\pri,n}^\beta) \cdot W, \chi)&=\int_{G_n}W\left[\begin{pmatrix}
			h&\\&1\end{pmatrix}\begin{pmatrix} 1&-w_n\\&1\end{pmatrix} \begin{pmatrix}
			\varpi^\beta&\\&1\end{pmatrix}\right]\chi(\det h)|\det
		h|^{s-1/2}dh\\
		&\stackrel{\eqref{eq:AG}}{=}\int_{G_n}\int_{K_n}\int_{M_n}F_0\left[\begin{pmatrix}
			&1\\1&\end{pmatrix}\begin{pmatrix}
			1&X\\&1\end{pmatrix}\begin{pmatrix}
			kh&\\&k\end{pmatrix}\begin{pmatrix}
			1&-w_n\\&1\end{pmatrix}\begin{pmatrix}
			\varpi^\beta&\\&1\end{pmatrix}\right]\\& \hspace{60pt} \times \chi(\det h)|\det
		h|^{s-1/2}\eta^{-1}\big(\det k\big)\psi\big(-\tr X\big)\ dXdkdh\\
		&=\int_{G_n}\int_{K_n}\int_{M_n}F_0\left[\begin{pmatrix}
			&1\\1&\end{pmatrix}\begin{pmatrix}kh\varpi^\beta
			&\\&k\end{pmatrix}\begin{pmatrix}
			1&\varpi^{-\beta}(h^{-1}k^{-1}X k-w_n)\\&1\end{pmatrix}\right]\\&\hspace{60pt} \times \chi(\det h)|\det
		h|^{s-1/2}\psi\big(-\tr X\big) \ dXdkdh.
	\end{align*}
	Changing variables so that $kh\varpi^\beta$ becomes $h$ and $\varpi^{-\beta}(h^{-1}k^{-1}X k-w_n)$ becomes $X$ will not change $dh$ but changes $dX$ to $|\det h|^n dX$. The integral becomes 
	\begin{align*}
		&=\int_{G_n}\int_{K_n}\int_{M_n}F_0\left[\begin{pmatrix}
			&1\\1&\end{pmatrix}\begin{pmatrix}h
			&\\&k\end{pmatrix}\begin{pmatrix}
			1&X\\&1\end{pmatrix}\right]\\
		&\hspace{20pt}\times  \chi\Big(\det \varpi^{-\beta}k^{-1}h\Big)|\det
		\varpi^{-\beta}k^{-1}h|^{s-1/2}|\det h|^n\psi\Big(-\tr \big[hX k^{-1}+\varpi^{-\beta} hw_nk^{-1}\big]\Big)\ dXdkdh\\
		&=q^{\beta n(s-1/2)}\int_{G_n}\int_{K_n}\int_{M_n}F_0\left[\begin{pmatrix}
			&1\\1&\end{pmatrix}\begin{pmatrix}h
			&\\&k\end{pmatrix}\begin{pmatrix}
			1&X\\&1\end{pmatrix}\right]\\&\hspace{60pt} \times \chi\Big(\det \varpi^{-\beta}k^{-1}h\Big)|\det h|^{s+n-1/2}\psi\Big(-\tr \big[hX k^{-1}+\varpi^{-\beta} hw_nk^{-1}\big]\Big)\ dXdkdh
	\end{align*}
	
	Since $G_n=B_nK_n$ (with $B_n\cap K_n$ of volume 1) we may
	rewrite the above integral using $h=b\ell$ with $b\in B_n$ and
	$\ell \in K_n$ (so $|\det h|=|\det b|$):
	\begin{align*}
		&=q^{\beta n(s-1/2)}\int_{B_n}\int_{K_n}\int_{K_n}\int_{M_n}F_0\left[\begin{pmatrix}
			&1\\1&\end{pmatrix}\begin{pmatrix}b\ell
			&\\&k\end{pmatrix}\begin{pmatrix}
			1&X\\&1\end{pmatrix}\right]\\&\hspace{60pt} \times \chi\Big(\det
		\varpi^{-\beta}k^{-1}b\ell\Big)|\det b|^{s+n-1/2}\psi\Big(-\tr \big[b\ell X
		k^{-1}+\varpi^{-\beta} b\ell w_nk^{-1}\big]\Big)dXdkd\ell db\\
		&=q^{\beta n(s-1/2)}\int_{B_n}\int_{K_n}\int_{K_n}\int_{M_n}\delta_B^{1/2} \theta\left[\begin{pmatrix} 1&\\&b\end{pmatrix}\right] F_0\left[\begin{pmatrix}
			&1\\1&\end{pmatrix}\begin{pmatrix}\ell
			&\\&k\end{pmatrix}\begin{pmatrix}
			1&X\\&1\end{pmatrix}\right]\\&\hspace{60pt} \times  \chi\Big(\det
		\varpi^{-\beta}k^{-1}b\ell\Big)|\det b|^{s+n-1/2}\psi\Big(-\tr \big[b\ell X
		k^{-1}+\varpi^{-\beta} b\ell w_nk^{-1}\big]\Big)dXdkd\ell db  
	\end{align*}
	
	Using $J$-invariance of $F_0$, we see that 
	\[
	F_0\left[\begin{pmatrix}
		&1\\1&\end{pmatrix}\begin{pmatrix}\ell
		&\\&k\end{pmatrix}\begin{pmatrix}
		1&X\\&1\end{pmatrix}\right]=F_0\left[\begin{pmatrix}
		&1\\1&\end{pmatrix}\begin{pmatrix}
		1&\ell X k^{-1}\\&1\end{pmatrix}\right].
	\]
	The proof of \cite[Lemma 3.6]{DJR18} implies that this vanishes unless $X\in M_n(\mathcal{O})$, in which case it equals $F_0\left[\matrd{}{1}{1}{}\right]=F_0\left[w_{2n}\matrd{w_n}{}{}{w_n}\right] = F_0(w_{2n}) = \alpha_{\pri,n}^\delta q^{-\delta n^2}$. The above integral becomes
	\begin{align*}
		&=q^{\beta n(s-1/2)}\alpha_{\pri,n}^\delta q^{-\delta n^2}\int_{B_n}\int_{K_n}\int_{K_n}\delta_B^{1/2}\theta\left[\begin{pmatrix} 1&\\&b\end{pmatrix}\right]  \chi\Big(\det k^{-1}b\ell\varpi^{-\beta}\Big)\\
		&\hspace{60pt} \times |\det b|^{n+s-1/2}\psi\Big(-\tr \big[\varpi^{-\beta}b\ell w_n k^{-1}\big]\Big)\left(\int_{M_n(\mathcal{O})}\psi\Big(-\tr b\ell X k^{-1}\Big)dX\right)dkd\ell db
	\end{align*}
	Note that $\displaystyle \int_{M_n(\mathcal{O})}\psi(\tr AX)dX=0$
	unless $A\in \varpi^{-\delta}M_n(\mathcal{O})$, in which case the integral is $\vol(M_n(\mathcal{O}))=1$. We conclude that the above inside
	integral vanishes unless $k^{-1}b\ell\in
	\varpi^{-\delta}M_n(\mathcal{O})$, i.e., if $b\in
	B_n\cap \varpi^{-\delta}M_n(\mathcal{O})$. The integral becomes
	\begin{align*}
		&=q^{\beta n(s-1/2)} \alpha_{\pri,n}^\delta q^{-\delta n^2}\int_{B_n\cap \varpi^{-\delta}M_n(\mathcal{O})}\int_{K_n}\int_{K_n}\delta_B^{1/2}\theta\left[\begin{pmatrix} 1&\\&b\end{pmatrix}\right]\\
		&\hspace{80pt} \times \chi\Big(\det k^{-1}b\ell\varpi^{-\beta}\Big)|\det b|^{n+s-1/2}\psi\Big(-\tr \big[\varpi^{-\beta}b\ell w_nk^{-1}\big]]\Big)dkd\ell db.
	\end{align*}
	Changing variables so that $\ell w_n k^{-1}$ becomes $\ell$, and integrating out $k$, the integral becomes
	\begin{align*}
		&=q^{\beta n(s-1/2)} \alpha_{\pri,n}^\delta q^{-\delta n^2}\int_{B_n\cap \varpi^{-\delta}M_n(\mathcal{O})}\int_{K_n}\delta_B^{1/2}\theta\left[\begin{pmatrix} 1&\\&b\end{pmatrix}\right]\\
		&\hspace{80pt} \times \chi\Big(\det b\ell\varpi^{-\beta}w_n\Big)|\det b|^{n+s-1/2}\psi\Big(-\tr \big[\varpi^{-\beta}b\ell \big]]\Big)d\ell db.
	\end{align*}

	Since $G_n = \bigsqcup_{ \rho\in \cW_n}B_n  \rho \Iwahori_n$ we see $K_n=\bigsqcup_{\rho \in \cW_n} (B_n\cap K_n) \rho \Iwahori_n$. In fact, since $B_n \cap K_n = N_n(\cO)T_n(\cO)$ and $ \rho$ normalises $T_n(\cO)$, we may
	replace $B_n\cap K_n$ with the unipotent radical
	$N_n(\mathcal{O})$ of $B_n(\mathcal{O})$. Write
	$\ell = n \rho i$ with $n\in
	N_n(\mathcal{O})$, $ \rho\in \cW_n$ and $i\in \Iwahori_n$. Using the Iwahori decomposition $I_n = N_n^-(\varpi \mathcal{O}) B_n(\mathcal{O})$ we write $i = \overline{n}b_1$. The
	integral becomes
	\begin{align*}
		&=q^{\beta n(s-1/2)} \alpha_{\pri,n}^\delta q^{-\delta n^2} \sum_{\rho\in W_n}\int_{B_n\cap \varpi^{-\delta}M_n(\mathcal{O})}\int_{N_n(\mathcal{O})}\int_{N_n^{-}(\varpi \mathcal{O})}\int_{B(\mathcal{O})}\delta_B^{1/2}\theta\left[\begin{pmatrix} 1&\\&b\end{pmatrix}\right]\\
		&\hspace{80pt} \times \chi\Big(\det b\rho b_1\varpi^{-\beta}w_n\Big)|\det b|^{n+s-1/2}\psi\Big(-\tr \big[\varpi^{-\beta}b n \rho \overline{n}b_1 \big]]\Big)db_1d \overline{n} dn db.
	\end{align*}

	Changing variables so that $b_1bn$ becomes $b$ doesn't change the Haar measure, as $b_1\in B(\mathcal{O})$. Integrating out $n$ and $b_1$, the integral becomes
	\begin{align*}
		&=q^{\beta n(s-1/2)} \alpha_{\pri,n}^\delta q^{-\delta n^2}\sum_{\rho\in W_n}\chi(\det\rho w_{n}\varpi^{-\beta})\int_{B_n\cap \varpi^{-\delta}M_n(\mathcal{O})}\int_{N_n^{-}(\varpi \mathcal{O})}\delta_B^{1/2}\theta\left[\begin{pmatrix} 1&\\&b\end{pmatrix}\right]\\
		&\hspace{80pt} \times \chi\Big(\det b\Big)|\det b|^{n+s-1/2}\psi\Big(-\tr \big[\varpi^{-\beta}b  \rho \overline{n} \big]]\Big)d \overline{n}  db.
	\end{align*}
	
	Write 
	\[\overline{n}=\begin{pmatrix} 1&&0\\&\ddots&\\x_{ij}&&1\end{pmatrix}\ \ \ \ \ \ b=\varpi^{-\delta}\begin{pmatrix} t_1 & & u_{ij}\\
		&\ddots&\\
		0&& t_n\end{pmatrix},\]
	where $t_i\in \mathcal{O}\backslash\{0\}$,
	$u_{ij}\in \mathcal{O}$ and $x_{ij}\in \varpi\mathcal{O}$. In
	this case, $db = \prod|t_i|^{i-n-1}\prod dt_i\prod du_{ij}$ and $d \overline{n}=\prod dx_{ij}$.
	
	Fix $\rho\in W_n$. Writing $\rho\overline{n}=(m_{ij})$ we see that
	\[\tr b \overline{n}\rho = \varpi^{-\delta}\left(\sum_{i=1}^n
	m_{ii}t_i + \sum_{i<j}u_{ij}m_{ji}\right).\]
	If $\rho\neq 1$, there exist indices $i<j$ such that
	$m_{ji}=1$. Indeed, this is equivalent to there existing
	$j=\tau(i)>i$ where $\tau$ is the permutation of $\rho$. In this
	case, from the inner integral we may factor
	\[\int_{\mathcal{O}}\psi(-\varpi^{-\delta-\beta}m_{ji}u_{ij})du_{ij}=0,\]
	as $\beta\geq 1$.
	
	We conclude that all the terms of the sum vanish, except for the
	term $\rho=1$. We see that the zeta integral becomes 
	\begin{align*}
		&=q^{\beta n(s-1/2)} \alpha_{\pri,n}^\delta q^{-\delta n^2}\chi(\det w_{n})\int\cdots\int\prod_{i=1}^n\theta_{n+i}|\cdot|^{s-1}(\varpi^{-\delta}t_i)\chi(\varpi^{-\beta-\delta}t_i) \psi\Big(-\varpi^{-\beta-\delta}t_i \Big)\\
		&\hspace{60pt}\times \prod_{i<j}\psi\bigg[-\varpi^{-\delta-\beta}x_{ji}u_{ji}\bigg]\prod
		dx_{ij}dt_idu_{ij},
	\end{align*}
	where $x_{ij}\in \varpi \mathcal{O}, u_{ij}\in \mathcal{O},
	t_i\in \mathcal{O}\backslash\{0\}$.
	
	The integral $\displaystyle
	\int_{\mathcal{O}}\psi\bigg[\varpi^{-\delta-\beta}x_{ji}u_{ji}\bigg]d
	u_{ij}$ vanishes unless $x_{ji}\in \varpi^\beta \mathcal{O}$, in
	which case the integral is 1. Pulling out the resulting factor $\int \cdots \int \prod dx_{ij} =  \mathrm{vol}(N^-(\varpi^\beta\cO)) = q^{-\beta{n\choose 2}}$, the zeta integral is
	\begin{align*}
		&=q^{\beta n(s-1/2)-\beta\binom{n}{2}} \alpha_{\pri,n}^\delta q^{-\delta n^2}\chi(\det w_{n})\\
		&\hspace{60pt} \times \int\cdots\int\prod_{i=1}^n\theta_{n+i}|\cdot|^{s-1}(\varpi^{-\delta}t_i)\chi(\varpi^{-\beta-\delta}t_i) \psi\Big(-\varpi^{-\beta-\delta}t_i \Big)\prod
		dt_i.
	\end{align*}
	
	Now we consider separately the cases where $\chi$ is ramified or unramified.
	
	\bigskip
	
	\textbf{$\chi$ ramified:} When $\chi$ is ramified with conductor $\beta$ then
	\begin{align*}
		&\int\theta_{n+i}|\cdot|^{s-1}(\varpi^{-\delta}t_i)\chi(\varpi^{-\beta-\delta}t_i) \psi\Big(-\varpi^{-\beta-\delta}t_i \Big)dt_i\\
		&=\chi(-1)\sum_{k = 0}^\infty \UPS_{n+i}|\cdot|^{s-1}(\varpi^{k-\delta}) \int_{\cO^\times} \chi\Big(t_i\varpi^{k-\delta-\beta}\Big)\psi\Big(t_i\varpi^{k-\delta-\beta}\Big)dt_i\\
		&= \chi(-1) q^{\delta (s-1)}\cdot q^{-\beta }\frac{q}{q-1} \UPS_{n+i}(\varpi)^{-\delta}\tau(\chi),
	\end{align*}
	since the integral vanishes unless $k = 0$, whence it is the Gauss sum. Using $\chi(-1)^n\chi(\det w_n) = \chi(\det(-w_n))$, the zeta integral thus simplifies to
		\begin{align*}
		&q^{\beta n(s-1/2)-\beta\binom{n}{2}}\cdot \alpha_{\pri,n}^\delta q^{-\delta n^2}\cdot \chi(\det w_{n})\prod_{i=n+1}^{2n}\left[\chi(-1) \UPS_{i}(\varpi)^{-\delta}q^{\delta (s-1)}\cdot q^{-\beta }\frac{q}{q-1} \tau(\chi)\right]\\
		&= q^{\beta n(s-n/2)}\cdot \alpha_{\pri,n}^\delta  q^{-\delta n^2}\cdot \chi(\det(-w_n)) \cdot  q^{\delta n (s-1)} \cdot \bigg[\prod_{i=n+1}^{2n}\UPS_{i}(\varpi)^{-\delta}\bigg] \cdot Q(\pi, \chi, s).
		\end{align*}

Finally note that by \eqref{eq:unramified alpha}, we have $\prod_{i=n+1}^{2n}\UPS_i(\varpi)^{-\delta} = (q^{-n^2/2}\alpha_{\pri,n})^{-\delta}$. Simplifying yields
\begin{equation}\label{eq:finishing ramified}
	= q^{\beta n(s-n/2)} q^{\delta n(s - 1 -n/2)} \cdot\chi(\det(-w_n))\cdot Q(\pi,\chi,s),
\end{equation}
as required.

\medskip
		
	\textbf{$\chi$ unramified:}	If $\chi$ is unramified, then $\beta=1$ and, dropping indices for simplicity,
	\begin{align*}
		\int_{\mathcal{O}\backslash\{0\}}\theta|\cdot|^{s-1}&(t\varpi^{-\delta})\chi(t
		\varpi^{-\delta-1})\psi(-t\varpi^{-\delta-1})dt\\
		&=\chi(-1)\sum_{k=0}^\infty
		q^{-k-(k-\delta)(s-1)}
		\theta(\varpi)^{k-\delta}\chi(\varpi)^{k-\delta-1}\int_{\mathcal{O}^\times}\psi(t
		\varpi^{k-\delta-1})d^\times
		t\\
		&=\chi(-1)\frac{1}{1-q}q^{\delta(s-1)}\theta(\varpi)^{-\delta}\chi(\varpi)^{-\delta-1} + \sum_{k=1}^\infty
		q^{-k-(k-\delta)(s-1)}\theta(\varpi)^{k-\delta}\chi(\varpi)^{k-\delta-1}\\
		&=\chi(-1)q^{-1-(1-\delta)(s-1)}\theta(\varpi)^{1-\delta}\chi(\varpi)^{-\delta}\left(\sum_{k=0}^\infty
		\left(\frac{\theta
			\chi(\varpi)}{q^{s}}\right)^k+\frac{q^{s}\theta\chi(\varpi)^{-1}}{1-q}\right)\\
		&=\chi(-1)q^{-1-(1-\delta)(s-1)}\theta(\varpi)^{1-\delta}\chi(\varpi)^{-\delta}\left(\frac{1}{1-\frac{\theta\chi(\varpi)}{q^{s}}}+\frac{q^{s}\theta\chi(\varpi)^{-1}}{1-q}\right)\\
		&=\chi(-1)\frac{1}{1-q}q^{\delta(s-1)}\theta(\varpi)^{-\delta}\chi(\varpi)^{-\delta-1}\frac{1-\frac{\theta\chi(\varpi)}{q^{s-1}}}{1-\frac{\theta\chi(\varpi)}{q^{s}}},
	\end{align*}
	using that $\displaystyle \int_{\mathcal{O}^\times}\psi(t \varpi^{k-\delta -1})d^\times t$ is 0 if $k<0$, is 1 if $k > 0$, and is $1/(1-q)$ if $k=0$.
	
	Substituting this in, we find the zeta integral is 
			\begin{align*}
		&q^{\beta n(s-1/2)-\beta\binom{n}{2}} \cdot \alpha_{\pri,n}^\delta q^{-\delta n^2} \cdot \chi(\det w_{n})\prod_{i=n+1}^{2n}\left[\chi(-1)\frac{q^{\delta (s-1)}}{1-q}\UPS_{i}(\varpi)^{-\delta}\cdot  \chi(\varpi)^{-\delta -1} \frac{1-\frac{\theta\chi(\varpi)}{q^{s-1}}}{1-\frac{\theta\chi(\varpi)}{q^{s}}}\right]\\
		&= q^{\beta n(s-n/2)} \cdot \alpha_{\pri,n}^\delta q^{-\delta n^2} \cdot \chi(\det (-w_n)) \cdot q^{\delta n (s-1)} \cdot \bigg[\prod_{i=n+1}^{2n}\UPS_{i}(\varpi)^{-\delta}\bigg] \cdot Q(\pi, \chi, s).
	\end{align*}
	We conclude that this has the required shape via the same computations as in \eqref{eq:finishing ramified}.
\end{proof}

	\part{$p$-adic Interpolation}
	
	\section{Overconvergent cohomology}\label{sec:overconvergent cohomology}
	
	We recap the theory of overconvergent cohomology in $p$-adic families, as developed for example in \cite{Urb11, Han17}. All of this material is explained in depth \emph{op.\ cit}., so we are terse with details.

	\subsection{Weight spaces}\label{sec:weight spaces}
	Recall $X^*(T),$ $X^*_0(T),$ $X^*(H)$ and $X^*_0(H)$ from \S\ref{sec:algebraic weights}. If $K \subset G(\A_f)$ is an open compact subgroup, let $\overline{Z}_K$ be the $p$-adic closure of $Z_G(\Q)\cap K$. 
	
	\begin{definition}\label{def:weight space}
		\begin{itemize}\setlength{\itemsep}{0pt}
			\item[(i)] The \emph{weight space} for $G$ of level $K$ is $\sW_K^G \defeq \mathrm{Spf}(\Zp[\![ T(\Zp)/\overline{Z}_K]\!])^{\eta}.$ This is a rigid analytic space whose $L$-points, for $L \subset \C_p$ any extension of $\Q_p$, are given by
			\[ 
			\sW^{G}_K(L) = \Hom_{\mathrm{cont}}(T(\Z_p)/\overline{Z}_K,L^\times).
			\]
			\item[(ii)]	Any element of $\sW^G_K(L) \cap X_+^*(T)$ is called an \emph{algebraic weight}. 
			\item[(iii)] The \emph{pure weight space} $\sW_{K,0}^G$ is the Zariski-closure of $X_0^*(T)\cap \sW^G_K$ in $\sW^G_K$.
		\end{itemize}
	\end{definition}
	
	We view all of these weights as characters of $T(\Zp)$ trivial on $\overline{Z}_K$. If $\lambda$ is pure of weight $\sw$ and $z \in \overline{Z}_K$, then $\lambda(z) \subset \{\pm 1\}$, so if $\lambda$ is trivial on $\overline{Z}_K$, then this is also true of all weights in a neighbourhood of $\lambda$ in the pure weight space. Since the level subgroup will always be fixed, we will henceforth always drop it from notation.
	
	A weight $\lambda \in \sW^G$ decomposes as $\lambda = (\lambda_1,...,\lambda_{2n})$, where each $\lambda_i$ is a character of $\Zp^\times$. We see $\lambda \in \sW_0^G$ if and only if there exists $\sw_\lambda \in \mathrm{Hom}_{\mathrm{cts}}(\Zp^\times, L^\times)$ such that $\lambda_i\cdot\lambda_{2n+1-i} = \sw_\lambda$ for all $1 \leq i \leq n$. The space $\sW_0^G$ has dimension $n+1$ (corresponding to changing $\lambda_1,...,\lambda_n, \sw_\lambda$).

	If $\Omega \subset \sW^G_0$ is an affinoid in the pure weight space, then $\Omega$ is equipped with a character $\chi_\Omega : T(\Zp) \to \cO_\Omega^\times$ such that for any $\lambda \in \Omega(L)$, the composition $T(\Zp) \xrightarrow{\chi}\cO_\Omega^\times \xrightarrow{\mathrm{sp}_\lambda} L^\times$ is the character attached to $\lambda$, where $\mathrm{sp}_\lambda$ is evaluation at $\lambda$. Moreover, write $\chi_\Omega = (\chi_{\Omega,1},...,\chi_{\Omega,2n})$, where each $\chi_{\Omega,i}$ is a character of $\Zp^\times$; then since $\Omega$ is pure, there exists a character 
	\begin{equation}\label{eq:sw_omega}
		\sw_\Omega : \Zp^\times\to \cO_\Omega^\times \hspace{12pt}\text{such that  } \sw_\Omega(z) = \chi_{\Omega,i}(z)\cdot \chi_{\Omega,2n+1-i}(z) \ \forall z \in \Zp^\times, 1 \leq i \leq n.
	\end{equation}

	\subsection{Algebraic and analytic induction}
	
	For $\lambda \in X_0^*(T)$, recall $V_\lambda$ is the algebraic representation of $G$ of highest weight $\lambda$. The $L$-points of $V_\lambda$ can be described explicitly as the algebraic induction, whose points are algebraic functions $f : G(\Qp) \to L$ such that
	\begin{equation}\label{eq:alg induction}
		f(n^-tg) = \lambda(t)f(g) \ \ \ \forall n^- \in \overline{N}(\Qp), t \in T(\Qp), g \in G(\Qp).
	\end{equation}
	The action of $\gamma \in G(\Qp)$ on $f \in V_\lambda(L)$ is by $(\gamma \cdot f)(g) \defeq f(g \gamma)$. 
	
	As $G(\Zp)$ is Zariski-dense in $G(\Qp)$, we can identify $V_\lambda(L)$ with the set of algebraic $f : G(\Zp) \to L$  satisfying \eqref{eq:alg induction}. We have an integral subspace $V_\lambda(\cO_L)$ of $f$ such that $f(G(\zp)) \subset \cO_L$, and we let $V_\lambda^\vee(\cO_L) = \mathrm{Hom}_{\cO_L}(V_\lambda(\cO_L),\cO_L)$.\medskip
	
	If $I$ is a $p$-adic Lie group and $R$ a $\Qp$-algebra, let $\cA(I,R)$ denote the space of locally analytic functions $I \to R$. Let $\Iwahori_G \subset G(\Zp)$ be the Iwahori subgroup, and $\Omega \subset \sW^G_0(L)$ an affinoid, with attached character $\chi_\Omega$; we allow $\Omega = \{\lambda\}$, whence $\chi_\Omega = \lambda$. Recall the analytic induction spaces:
	
	\begin{definition}\label{def:A_Omega}
		Let $\cA_{\Omega}$ be the space of $f \in \cA(\Iwahori_G,\cO_\Omega)$ such that
		\begin{equation}\label{eq:iwahori induction}
			f(n^-t g) = \chi_\Omega(t)f(g)\text{ for all }n^- \in \overline{N}(p\Zp),\ t \in T(\Zp),\text{ and }g \in \Iwahori_G.
		\end{equation}
	\end{definition}
	Via Iwahori decomposition and \eqref{eq:iwahori induction}, restriction to $N(\Zp)$ identifies $\cA_{\Omega}(L)$ with $\cA(N(\Zp),\cO_\Omega)$. 
	
	As any $f \in V_\lambda(L)$ is determined by its restriction to the Zariski-dense subgroup $\Iwahori_G$, we see $V_\lambda(L)$ is the (finite Banach) subspace of $f \in \cA_\lambda$ that are algebraic on $\Iwahori_G$ (e.g.\ \cite[\S3.2.8]{Urb11}).

	\begin{definition}\label{def:D_Omega}
		Define $\cD_{\Omega} \defeq \Hom_{\mathrm{cont}}(\cA_{\Omega}, \cO_\Omega)$, a compact Fr\'echet $\cO_\Omega$-module.
	\end{definition}
	
	If $\Sigma \subset \Omega$ is a closed affinoid, then $\cD_{\Omega} \otimes_{\cO_{\Omega}}\cO_{\Sigma} \cong \cD_{\Sigma}$.

	\subsection{Hecke actions and slope decompositions}\label{sec:slope-decomp}\label{sec:slope-decomp 2}
	Let $K \subset G(\A_f)$ be an open compact subgroup such that $K_p\subset \Iwahori_G$. 
	Let $\Omega \subset \W$ be an affinoid. Recall $t_p = \mathrm{diag}(p^{2n-1},p^{2n-2},...,p,1)$, and let $\Delta_p \subset G(\Q_p)$ be the semigroup generated by $\Iw$ and $t_p$. There is a left-action of $\Delta_p$ on $\cD_\Omega$ as follows:
	\begin{itemize}\setlength{\itemsep}{0pt}
		\item[--] $k \in \Iw$ acts on $f \in \cA_\Omega$ by $(k * f)(g) \defeq f(gk)$, inducing a dual left action on $\mu \in \cD_\Omega$ by $(k * \mu)(f(g)) \defeq \mu(f(gk^{-1}))$.
		\item[--] $t_p$ acts on the left on $B(\Zp)$ by $t_p * b \defeq t_pbt_p^{-1}$. Since any $f \in \cA_\Omega$ is uniquely determined by its restriction to $B(\Zp)$, this induces a left action of $t_p$ on $\mu \in \cD_\Omega$ by $(t_p * \mu)(f(b)) = \mu(f(t_pbt_p^{-1}))$.
	\end{itemize}

	As $\bigcap_{i\geqslant 0} t_p^{i} N(\Zp) t_p^{-i} = 1,$ we have $t_p \in T^{++}$ in the notation of \cite[\S2]{Han17}. Thus we get an $\cO_\Omega$-linear controlling operator $U_p^\circ \defeq [K_p t_p K_p]$ on the cohomology groups $\hc{\bullet}(S_K,\sD_\Omega)$. Up to shrinking $\Omega$, the $\cO_{\Omega}$-module $\hc{\bullet}(S_K,\sD_{\Omega})$ admits a slope decomposition with respect to $U_p^\circ$  (see \cite[Def.~2.3.1]{Han17}). 
	For  $h \in \Q_{\geqslant 0}$ we let  $\hc{\bullet}(S_K,\sD_{\Omega})^{\leqslant h}$ denote the subspace of elements of slope at most $h$, and note that it is an $\cO_{\Omega}$-module of finite type.

	\begin{remark}\label{rem:integral normalisation}
		The operator $U_p^\circ$ preserves the integral structure $\hc{\bullet}(S_K,\sD_\lambda(\cO_L)) \subset \hc{\bullet}(S_K,\sD_\lambda(L))$. We also have a $*$-action of $\Delta_p$ on $V_\lambda^\vee(L)$, defined identically, giving an operator $U_p^\circ$ on $\hc{\bullet}(S_K,\sV_\lambda^\vee(L))$ that preserves its natural integral subspace. If $U_p$ denotes the automorphic Hecke operator from \S\ref{sec:hecke p}, one may check $U_p^\circ = \lambda(t_p) \cdot U_p$. This is all explained in the remark in \cite[\S3.5]{BDW20}.
	\end{remark}

	\subsection{Non-critical slope refinements}\label{sec:nc slope}
	Let $\lambda \in X_0^*(T)$ be a pure dominant integral weight, $K$ as above, and $L/\Q_p$ a finite extension. The natural inclusion of $V_\lambda(L) \subset \cA_\lambda(L)$ induces dually a surjection $r_\lambda : \cD_\lambda(L) \longrightarrow V_\lambda^\vee(L)$, which is equivariant for the $*$-actions of $\Delta_p$. This induces a map
	\begin{equation}\label{eqn:specialisation}
		r_\lambda : \hc{\bullet}(S_K, \sD_\lambda(L)) \longrightarrow \hc{\bullet}(S_K,\sV_\lambda^\vee(L)),
	\end{equation}
	equivariant for the $*$-actions of $\Delta_p$ (hence $U_p^\circ$) on both sides.
	
	\begin{definition}\label{def:non-Q-critical}
		Let $\tilde\pi = (\pi,\alpha)$ be a $p$-refined RACAR of $G(\A)$ of weight $\lambda$. We say $\tilde\pi$ is \emph{non-critical} if $r_\lambda$ restricts to an isomorphism
		\begin{align*}
			r_\lambda : \hc{\bullet}(S_{K}, \sD_\lambda(L))\locpi  \isorightarrow&\  \hc{\bullet}(S_{K},\sV_\lambda^\vee(L))\locpi
		\end{align*}
		of generalised eigenspaces. 
		We say $\tilde\pi$ is \emph{strongly non-critical} if this is also true for $\h^{\bullet}$ (i.e., if $\tilde\pi$ is non-critical for $\h^\bullet$ \emph{and} for $\hc{\bullet}$ as in \cite[Rem.~4.6]{BW20}). 
	\end{definition}

	\begin{definition}\label{def:non-critical slope}
		Let $\tilde\pi$ be a $p$-refinement of $\pi$.  For $1 \leq r \leq 2n-1$, let $\alpha_{p,r}^\circ = \lambda(t_{p,r}) \alpha_{p,r}$, the corresponding $U_{p,r}^\circ$-eigenvalue. We say $\tilde\pi$ has \emph{non-critical slope} if for each $1 \leq r \leq 2n-1$, we have
		\[	 
		v_p\big(\alpha_{p,r}^\circ\big) < \lambda_{r} - \lambda_{r+1} + 1.
		\]
	\end{definition}

	\begin{theorem}[Classicality] \label{thm:control}
		If $\tilde\pi$ has non-critical slope, then it is strongly non-critical.
	\end{theorem}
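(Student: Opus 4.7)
The plan is to deduce this from the standard BGG-resolution strategy for classicality in overconvergent cohomology, as developed in \cite{BW20} and previous works. The specialisation map $r_\lambda : \cD_\lambda(L) \to V_\lambda^\vee(L)$ is $\Delta_p$-equivariant, so fits into a short exact sequence
\[
0 \to N_\lambda \to \cD_\lambda(L) \xrightarrow{\,r_\lambda\,} V_\lambda^\vee(L) \to 0.
\]
Taking the long exact sequence in $\hc{\bullet}(S_K,-)$ (resp.\ $\h^\bullet(S_K,-)$), and localising at $\m_{\tilde\pi}$, the theorem reduces to showing $\hc{\bullet}(S_K, \sN_\lambda)_{\tilde\pi} = 0$ (and similarly for $\h^\bullet$).

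Next I would invoke the dual BGG resolution, adapted to the Iwahori setting. This produces a filtration on $N_\lambda$ whose graded pieces are, up to shifts, the analytic distribution modules $\cD_{s_r \cdot \lambda}(L)$ for simple reflections $s_r = (r, r{+}1) \in \cW_G$, $1 \leq r \leq 2n-1$ — where the dot action is $s_r \cdot \lambda = s_r(\lambda + \rho) - \rho$ — together with further pieces indexed by longer Weyl elements, each obtained by iterating simple-reflection contributions. By an induction on the filtration (and a spectral-sequence/five-lemma argument), it is enough to verify that for each $r$ the localised cohomology $\hc{\bullet}(S_K, \sD_{s_r \cdot \lambda}(L))_{\tilde\pi}$ vanishes, i.e.\ that the eigenvalue $\alpha_{p,r}^\circ$ of $U_{p,r}^\circ$ is not attained on the $U_{p,r}^\circ$-spectrum of $\hc{\bullet}(S_K, \sD_{s_r \cdot \lambda}(L))$.

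This is the crux of the argument, and is a slope calculation. The weight $s_r \cdot \lambda$ differs from $\lambda$ in coordinates $r, r{+}1$ by a shift of size $\lambda_r - \lambda_{r+1} + 1$, and consequently $t_{p,r}$ acts on $\cD_{s_r \cdot \lambda}$ with an extra factor of $p^{\lambda_r - \lambda_{r+1} + 1}$ under the integral normalisation of Remark \ref{rem:integral normalisation}. Thus every $U_{p,r}^\circ$-eigenvalue on $\hc{\bullet}(S_K, \sD_{s_r \cdot \lambda}(L))$ has $p$-adic valuation at least $\lambda_r - \lambda_{r+1} + 1$. Combined with the non-critical slope hypothesis $v_p(\alpha_{p,r}^\circ) < \lambda_r - \lambda_{r+1} + 1$, the desired vanishing follows.

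The main obstacle is handling the non-simple layers of the BGG complex: a priori, control of each simple reflection $s_r$ via the single Hecke operator $U_{p,r}^\circ$ is not obviously sufficient to kill contributions from $w \in \cW_G$ of length $\geq 2$. The standard resolution is to decompose the BGG complex into successive cones of simple-reflection steps, so that the slope estimate above can be applied one $r$ at a time: at each layer one uses the corresponding $U_{p,r}^\circ$ to excise the problematic eigensystem, reducing to the $\tilde\pi$-vanishing for the simple-reflection pieces. This is precisely the argument assembled in \cite[\S4]{BW20} for the Iwahori case, from which Theorem \ref{thm:control} follows.
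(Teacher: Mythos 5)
The paper's own proof is a direct citation to \cite[Thm.~4.4, Rem.~4.6]{BW20}, and your sketch correctly reconstructs the content of that cited argument: the BGG filtration of $\ker r_\lambda$, the slope bound $\lambda_r - \lambda_{r+1} + 1$ for $U_{p,r}^\circ$ on $\hc{\bullet}(S_K, \sD_{s_r\cdot\lambda})$ coming from the integral-normalisation shift, and the comparison with the non-critical-slope hypothesis. One small correction to your final paragraph: no `iterated cone' decomposition is needed for longer Weyl elements --- for any $w \neq 1$ one can choose a single simple reflection $s_r$ with $w^{-1}(r) > w^{-1}(r+1)$, and the same normalisation computation (using strict dominance of $\lambda + \rho$) shows the $U_{p,r}^\circ$-slope on $\hc{\bullet}(S_K,\sD_{w\cdot\lambda})$ is already at least $\lambda_r - \lambda_{r+1} + 1$, so a single well-chosen Hecke operator kills the $\m_{\tilde\pi}$-localisation for each graded piece in one step.
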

	\begin{proof}
		This is \cite[Thm.~4.4, Rem.~4.6]{BW20}, explained in Examples 4.5 \emph{op.\ cit}.
	\end{proof}

	
	\section{$p$-adic interpolation of branching laws}\label{sec:branching laws}
	A \emph{branching law} describes how an irreducible representation of $G$ decomposes upon restriction to $H$. Of particular interest to us is the branching law given by Lemma \ref{lem:branching law 2}, rephrased below in Lemma \ref{lem:branching law}, which provides a representation-theoretic interpretation of the Deligne-critical range. We now give a $p$-adic interpolation of this branching law.

	\subsection{Classical branching laws, revisited}\label{sec:classical branching laws}
	Let $\lambda \in X_0^*(T)$ be a pure algebraic weight, with purity weight $\sw$. By dualising, we get the following equivalent formulation of Lemma \ref{lem:branching law 2}:
	
	\begin{lemma}\label{lem:branching law}
		For $j \in \Z$, we have $j \in \mathrm{Crit}(\lambda)$ if and only if $\mathrm{dim}\  \mathrm{Hom}_{H(\Zp)}(V_\jw^H, V_\lambda) = 1$, that is if and only if $V_{\lambda}|_{H(\Zp)}$ contains $V_\jw^H$ with multiplicity 1.
	\end{lemma}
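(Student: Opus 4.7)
The plan is to deduce this from Lemma \ref{lem:branching law 2}, which was already recorded as a known representation-theoretic branching law. The two statements are linear-algebra duals of one another, so the proof amounts to tracking the dualization carefully.

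First I would recall that for any two finite-dimensional $H(\Zp)$-representations $M, N$ over a field of characteristic zero, the natural pairing yields a canonical $H(\Zp)$-equivariant isomorphism
\[
\mathrm{Hom}_{H(\Zp)}(M, N) \;\cong\; \mathrm{Hom}_{H(\Zp)}(N^\vee, M^\vee),
\]
where both sides carry the dual actions as fixed in \S\ref{sec:notation}. I would apply this with $M = V_\lambda^\vee$ and $N = V^H_{(j,-\sw-j)}$, recalling the standard identifications $V_\lambda^{\vee\vee} \cong V_\lambda$ and, since $V^H_{(j_1,j_2)}$ is the one-dimensional character $(h_1,h_2) \mapsto \det(h_1)^{j_1}\det(h_2)^{j_2}$ of $H$, its linear dual is
\[
\bigl(V^H_{(j_1,j_2)}\bigr)^\vee \;\cong\; V^H_{(-j_1,-j_2)}.
\]
In particular, taking $(j_1,j_2) = (j,-\sw-j)$ gives $\bigl(V^H_{(j,-\sw-j)}\bigr)^\vee \cong V^H_{(-j,\sw+j)} = V^H_\jw$.

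Combining, I obtain the $H(\Zp)$-equivariant identification
\[
\mathrm{Hom}_{H(\Zp)}\bigl(V_\lambda^\vee,\, V^H_{(j,-\sw-j)}\bigr) \;\cong\; \mathrm{Hom}_{H(\Zp)}\bigl(V^H_\jw,\, V_\lambda\bigr).
\]
The right-hand side therefore has the same dimension as the left, and by Lemma \ref{lem:branching law 2} this common dimension equals $1$ precisely when $j \in \mathrm{Crit}(\lambda)$. Since any non-zero map from an irreducible $H(\Zp)$-representation realises it as a direct summand (equivalently, as a submodule, by semisimplicity of $V_\lambda|_{H(\Zp)}$), the second formulation in the lemma — multiplicity one of $V^H_\jw$ in $V_\lambda|_{H(\Zp)}$ — follows immediately.

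There is no substantive obstacle: the content is contained in Lemma \ref{lem:branching law 2}, and the only work is the bookkeeping of dual actions and the computation of the dual of a one-dimensional character of $H$.
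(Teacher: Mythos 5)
Your proposal is correct and follows essentially the same route as the paper, whose proof consists of the single phrase "by dualising" applied to Lemma \ref{lem:branching law 2}; you simply spell out the standard dualization in full, including the computation $\bigl(V^H_{(j,-\sw-j)}\bigr)^\vee \cong V^H_\jw$.
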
 
	We now give a conceptual description of the generators of $V_\jw^H \subset V_{\lambda}|_{H(\Zp)}$. For $n \geq 2$, define weights
	\begin{align}\label{eq:alpha_i}
		\alpha_{1} = (1,0,...,0,-1), \ \ \alpha_{2} = (1,1,0,...,0,-1,-1)&,\ \  ...,\ \  \alpha_{n-1} = (1,...,1,0,0,-1,...,-1),\notag\\
		\alpha_{0} = (1,...,1,1,...,1), \hspace{12pt} &\alpha_{n} = (1,...,1,0,...,0).
	\end{align}
For $n=1$, we define $\alpha_0 = (1,1)$ and $\alpha_1 = (1,0)$. 
	For any $n$, if $\lambda = (\lambda_1,...,\lambda_{2n}) \in X_0^*(T)$ is a dominant pure algebraic weight, then we easily see that 
	\begin{equation}\label{eq:lambda in alpha}
		\lambda = (\lambda_1-\lambda_2)\alpha_{1} + (\lambda_2 - \lambda_3)\alpha_{2}+ \cdots + (\lambda_n - \lambda_{n+1})\alpha_{n} + \lambda_{n+1}\alpha_{0},
	\end{equation}
	where each coefficient is a non-negative integer except perhaps $\lambda_{n+1}$, which can be negative.
	
	\begin{notation}\label{not:v_i}
		\begin{itemize}\setlength{\itemsep}{0pt}
			\item[(i)] If $1 \leq i \leq n-1$, then $\mathrm{Crit}(\alpha_{i}) = \{0\}$. Since the purity weight of $\alpha_{i}$ is $0$, by Lemma \ref{lem:branching law} there is a non-zero vector $v_{(i)}\in V_{\alpha_{i}}(\Qp)$ upon which the action of $H(\Zp)$ is trivial, and $v_{(i)}$ is unique up to $\Qp^\times$-multiple.
			\item[(ii)] We have $\mathrm{Crit}(\alpha_{n}) = \{-1,0\}$, and the purity weight of $\alpha_{n}$ is 1. By Lemma \ref{lem:branching law}, there exist non-zero vectors $v_{(n), 1}, v_{(n),2} \in V_{\alpha_{n}}(\Qp)$  such that the action of $(h_1,h_2) \in H(\Zp)$ on $v_{(n),i}$ is by $\det(h_i)$. Again, these vectors are unique up to $\Qp^\times$-multiple.
			\item[(iii)] The space $V_{\alpha_{0}}$ is a line, with basis $v_{(0)}(g) = \mathrm{det}(g)$. \medskip
		\end{itemize}
	\end{notation}

	All of the $v_{(i)}$ are explicit algebraic functions $G(\Zp) \to \Qp$, and we can multiply them in the ring $\Qp[G]$. The following gives an explicit branching law from such a product.
	
	\begin{proposition}\label{prop:product for v lambda j}
		Let $\lambda \in X_0^*(T)$ be a dominant pure algebraic weight and $j \in \mathrm{Crit}(\lambda)$. Then
		\[
		v_{\lambda,j} \defeq (v_{(1)}^{\lambda_1-\lambda_2}) \cdot (v_{(2)}^{\lambda_2-\lambda_3}) \cdots (v_{(n-1)}^{\lambda_{n-1}-\lambda_n}) \cdot (v_{(n),1}^{-\lambda_{n+1}-j}) \cdot (v_{(n),2}^{\lambda_n + j}) \cdot (v_{(0)}^{\lambda_{n+1}})  
		\]
		is a generator of the line $V_\jw^H(\Qp)$ inside the $H(\Zp)$-representation $V_\lambda(\Qp)|_{H(\Zp)}$.
	\end{proposition}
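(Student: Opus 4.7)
The strategy is to verify, one piece at a time, that the explicit product $v_{\lambda,j}$ has the correct transformation properties under $\overline{B}(\Zp)$ (to land in $V_\lambda$) and under $H(\Zp)$ (to land in the $(-j,\sw+j)$-isotypic component), and then invoke the multiplicity-one statement of Lemma~\ref{lem:branching law} to conclude.

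First I would recall that $V_\mu$, viewed via the algebraic induction model of \S\ref{sec:overconvergent cohomology}, is the space of algebraic $f \colon G \to \Qp$ satisfying $f(n^- t g) = \mu(t) f(g)$; and that for a product, $(f_\mu \cdot f_\nu)(n^- t g) = (\mu+\nu)(t) (f_\mu f_\nu)(g)$. Thus for each factor of $v_{\lambda,j}$, the $\overline{B}$-weight multiplies under products. Using the definitions in Notation~\ref{not:v_i}, the $\overline{B}$-weight of $v_{\lambda,j}$ is
\[
(\lambda_1-\lambda_2)\alpha_1 + \cdots + (\lambda_{n-1}-\lambda_n)\alpha_{n-1} + \bigl[(-\lambda_{n+1}-j) + (\lambda_n+j)\bigr]\alpha_n + \lambda_{n+1}\alpha_0,
\]
which equals $\lambda$ by \eqref{eq:lambda in alpha}. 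Since $\lambda$ is dominant, any algebraic function on $G$ satisfying the $\overline{B}$-equivariance property for $\lambda$ lies in $V_\lambda$, so $v_{\lambda,j} \in V_\lambda(\Qp)$ provided we know it is algebraic. The only possible issue is the factor $v_{(0)}^{\lambda_{n+1}}$ if $\lambda_{n+1} < 0$; but $v_{(0)}(g) = \det(g)$ is invertible on $G = \GL_{2n}$, so $\det^{\lambda_{n+1}}$ is indeed a regular function on $G$.

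Next, I would compute the $H(\Zp)$-action. By construction each $v_{(i)}$ for $1 \leq i \leq n-1$ is $H(\Zp)$-fixed, $v_{(n),1}$ transforms under $(h_1,h_2)$ by $\det(h_1)$, $v_{(n),2}$ by $\det(h_2)$, and $v_{(0)}$ by $\det(h_1)\det(h_2)$. Multiplying characters, $v_{\lambda,j}$ transforms by
\[
\det(h_1)^{(-\lambda_{n+1}-j) + \lambda_{n+1}} \cdot \det(h_2)^{(\lambda_n+j)+\lambda_{n+1}} = \det(h_1)^{-j} \det(h_2)^{\sw+j},
\]
using $\sw = \lambda_n + \lambda_{n+1}$. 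So $v_{\lambda,j} \in V_{(-j,\sw+j)}^H \subset V_\lambda|_{H(\Zp)}$.

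Finally, I would verify $v_{\lambda,j} \neq 0$: each factor is a non-zero regular function on $G$, and since $G$ is an integral variety, the product of non-zero regular functions is non-zero. Then since $j \in \mathrm{Crit}(\lambda)$, Lemma~\ref{lem:branching law} gives that the line $V_{(-j,\sw+j)}^H$ inside $V_\lambda|_{H(\Zp)}$ is one-dimensional, and the non-zero element $v_{\lambda,j}$ must generate it. The only genuinely delicate point in the above is checking non-negativity of the exponents, which follows from the criticality condition $-\lambda_n \leq j \leq -\lambda_{n+1}$ giving $\lambda_n + j \geq 0$ and $-\lambda_{n+1} - j \geq 0$; otherwise the argument is essentially a bookkeeping exercise combining the explicit branching laws for the building blocks $\alpha_i$.
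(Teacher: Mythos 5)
Your proposal is correct and takes essentially the same approach as the paper: verify algebraicity (nonnegativity of exponents, invertibility of $\det$), check the $\overline B$-transformation law gives $\lambda$ via the decomposition \eqref{eq:lambda in alpha}, and check the $H(\Zp)$-action gives $\det(h_1)^{-j}\det(h_2)^{\sw+j}$. The paper leaves the non-vanishing and the final appeal to multiplicity one implicit, whereas you spell them out, but the substance is identical.
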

	\begin{proof}
		This product is algebraic, as the $v_{(i)}$ and $v_{(n),i}$ are algebraic by construction, and:
		\begin{itemize}\setlength{\itemsep}{0pt}
			\item[--] for $1 \leq i \leq n-1$, we have $\lambda_{i} - \lambda_{i+1}\geq 0$, so $v_{(i)}^{\lambda_i - \lambda_{i+1}}$ is algebraic;
			\item[--] since $\lambda_{n} \geq -j \geq \lambda_{n+1}$, we have $v_{(n),1}^{-\lambda_{n+1}-j}$ and $v_{(n),2}^{\lambda_n+ j}$ are algebraic;
			\item[--] and $v_{(0)}^{\lambda_{n+1}} = \det^{\lambda_{n+1}}$ is algebraic.
		\end{itemize} 
		Thus their product is algebraic. If $t \in T(\Zp)$, then by \eqref{eq:lambda in alpha} we see
		\[
		\lambda(t) = \alpha_1^{\lambda_1-\lambda_2}(t) \cdots \alpha_{n-1}^{\lambda_{n-1}-\lambda_n}(t) \cdot \alpha_{n}(t)^{-\lambda_{n+1}-j} \cdot \alpha_{n}^{\lambda_n + j}(t) \cdot \alpha_0^{\lambda_{n+1}}(t),
		\]
		so for $n^- \in \overline{N}(\Zp)$ and $g \in G(\Zp)$, we see $v_{\lambda,j}(n^-tg) = \lambda(t)v_{\lambda,j}(g)$ by multiplying together the analogous relations for the $v_{(i)}$. Finally, by Notation \ref{not:v_i} we see $(h_1,h_2) \in H(\Zp)$ acts on $v_{\lambda,j}$ by
		\[
		\det(h_1)^{-\lambda_{n+1}- j} \cdot \det(h_2)^{\lambda_n + j} \cdot \det(h_1h_2)^{\lambda_{n+1}} = \det(h_1)^{-j} \det(h_2)^{\sw+ j},	
		\]
		as required (using $\sw = \lambda_n + \lambda_{n+1}$ in the final step).
	\end{proof}

	\subsection{Support conditions for branching laws}\label{sec:support conditions}
	 Let $w_n$ denote the antidiagonal $n\times n$ matrix with $(w_n)_{ij} = \delta_{i, n+1-j}$, and recall
	\begin{equation}\label{def:xi}
		u = \smallmatrd{1_n}{w_n}{0}{1_n} \in G(\zp).
	\end{equation}
	For $\beta \in \Z_{\geq 1}$, let
	\[
	N^\beta(\Zp) \defeq N(p^\beta\Zp)\cdot u = \left\{n \in N(\Zp) : n \equiv \smallmatrd{1_n}{w_n}{0}{1_n} \newmod{p^\beta}\right\}.
	\]
	Note this is \emph{not} a subgroup of $N(\Zp)$. We also emphasise that $N^\beta(\Zp)$ is not the set of $\Zp$-points of an algebraic group, and hope the notation does not cause confusion. 
	
	Let $\lambda \in X_0^*(T)$ be a pure dominant algebraic weight, let $j \in \mathrm{Crit}(\lambda)$, and let $v_{\lambda,j}$ be a generator of the line $V_\jw^H$ inside the $H(\Zp)$-representation $V_\lambda(\Qp)$, viewed as an explicit algebraic function $G(\Zp) \to \Qp$. The key examples we consider are $\lambda = \alpha_i$ from \eqref{eq:alpha_i}, with $v_{\lambda,j} = v_{(i)}$ from Notation \ref{not:v_i}. The aim of this subsection is to prove:

	\begin{proposition}\label{prop:support in N1}
		After possibly rescaling $v_{\lambda,j} \in V_\jw^H \subset V_\lambda(\Qp)\big|_{H(\Zp)}$, we have 
		\[
			v_{\lambda,j}(N^\beta(\Zp)) \subset 1+p^\beta\Zp \qquad \text{for all}\ \beta \geq 1.
		\]
	\end{proposition}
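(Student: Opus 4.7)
The plan is to combine the explicit product decomposition in Proposition~\ref{prop:product for v lambda j} with the open-orbit structure of the spherical pair $(G, H)$, verifying the estimate on each factor separately. Since $1 + p^\beta\Z_p$ is multiplicatively closed, and the exponents of $v_{(i)}$ ($1 \leq i \leq n-1$), $v_{(n),1}$ and $v_{(n),2}$ in Proposition~\ref{prop:product for v lambda j} are all non-negative integers by dominance of $\lambda$ and $j \in \mathrm{Crit}(\lambda)$, it suffices to prove the inclusion for each factor separately; and the factor $v_{(0)} = \det$ is identically $1$ on the upper unipotent group $N(\Z_p) \supset N^\beta(\Z_p)$.

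For $v_{(n),1}$ and $v_{(n),2}$, explicit minor formulas suffice: up to rescaling, $v_{(n),1}(g) = \det(g_{[1:n],[1:n]})$ and $v_{(n),2}(g) = \det(g_{[1:n],[n+1:2n]})/\det(w_n)$ (a straightforward check of the transformation laws), and both take value $1$ at $u$. Writing $\nu = \smallmatrd{A}{B}{0}{D} \in N^\beta(\Z_p)$ with $A \equiv 1_n$ upper unipotent and $B = w_n + p^\beta X$ for some $X \in M_n(\Z_p)$, one computes $v_{(n),1}(\nu) = \det A = 1$ and $v_{(n),2}(\nu) = \det(1_n + p^\beta w_n^{-1} X) \in 1 + p^\beta\Z_p$.

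The main case is $v_{(i)}$ for $1 \leq i \leq n-1$; here $0 \in \mathrm{Crit}(\alpha_i)$, so $v_{(i)}$ is $H(\Z_p)$-invariant. The key tool is the open orbit $\overline{B} u H \subset G$ of the spherical pair $(G, H)$: this is Zariski-dense, and on it
\[
v_{(i)}(\overline{b} u h) = \alpha_i(\overline{b}) \cdot v_{(i)}(u).
\]
This formula is well-defined because the stabilizer $\overline{B} \cap u H u^{-1}$ is an $n$-dimensional torus on which $\alpha_i$ restricts trivially --- a direct check using the identity $\alpha_{i, j} + \alpha_{i, 2n+1-j} = 0$ for all $j$. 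Since $u$ lies in the open orbit, $v_{(i)}(u) \neq 0$ (else $v_{(i)}$ would vanish on a dense open subset, forcing $v_{(i)} \equiv 0$), so we may rescale to $v_{(i)}(u) = 1$. The claim then reduces to showing that every $\nu \in N^\beta(\Z_p)$ admits a decomposition $\nu = \overline{b} u h$ with $\overline{b} \in \overline{B}(\Z_p)$ and $h \in H(\Z_p)$ both congruent to $1$ modulo $p^\beta$; given this, $\alpha_i(\overline{b}) \in 1 + p^\beta\Z_p$ and the conclusion follows.

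The hardest step is this $\Z_p$-integral Bruhat-like decomposition. The differential of the orbit map $\mu : \overline{B} \times H \to G$, $(\overline{b}, h) \mapsto \overline{b} u h$, at $(1, 1)$ equals $\overline{\mathfrak{b}} + \mathrm{Ad}(u)\mathfrak{h}$, which is all of $\mathfrak{g}$ by sphericality of $(G, H)$, with $n$-dimensional kernel equal to the Lie algebra of the stabilizer. A Hensel-type lifting then produces the required decomposition once one fixes a $\Z_p$-rational section of $\mu$ near $u$: a smooth codimension-$n$ subscheme of $\overline{B} \times H$ transverse to the stabilizer at $(1, 1)$ on which $\mu$ is étale over $\Z_p$. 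Constructing such a section --- or equivalently, verifying that the decomposition varies $p$-adically continuously in $\nu$ with the required integrality --- is the main technical point of the argument.
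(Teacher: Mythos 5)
Your high-level strategy is sound and in fact matches the paper's in its central idea: pull everything back to the open $H$-orbit $\overline{B}_{2n}\cdot u\cdot H$, use the transformation laws of $v_{\lambda,j}$ (or $v_{(i)}$) under $\overline{B}$ on the left and $H$ on the right, deduce $v_{\lambda,j}(u)\neq 0$ from density of the open cell, rescale, and then reduce everything to producing a decomposition $\nu=\overline{b}\, u\, h$ with $\overline{b},h\equiv 1\newmod{p^\beta}$. Your factor-by-factor reduction via Proposition~\ref{prop:product for v lambda j} is valid (the exponents of $v_{(1)},\dots,v_{(n-1)},v_{(n),1},v_{(n),2}$ are non-negative, and $v_{(0)}=\det\equiv 1$ on $N(\Zp)$), and the minor formulas you write for $v_{(n),1},v_{(n),2}$ check out and give a clean direct verification of those cases. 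This reduction is not needed, though: the paper applies the argument once to $v_{\lambda,j}$ itself, since it transforms by $\lambda(\overline{b})\det(h_1)^{-j}\det(h_2)^{\sw+j}\in\Zp^\times$ on the open cell, which is both shorter and avoids separate treatment of the factors.

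The genuine gap is exactly where you flag it. The statement you reduce to --- that every $\nu\in N^\beta(\Zp)$ decomposes as $\overline{b}\,u\,h$ with $\overline{b}\in\overline{B}_{2n}(\Zp)$, $h\in H(\Zp)$, both $\equiv 1_{2n}\newmod{p^\beta}$ --- is the entire content of the proposition, and you do not prove it. Your sketch via a Hensel-type lifting along a $\Zp$-rational section of the orbit map would need the section to be constructed over $\Zp$ with the stabilizer controlled integrally, and your own text concedes this is ``the main technical point of the argument'' without carrying it out. A smoothness/\'etaleness argument gives the decomposition over the generic fibre, or formally near $u$, but the claim here is a precise statement about the depth of congruence to $1$, and transporting the Hensel argument to give exactly $p^\beta$-integrality needs work that you do not do. This is why the proof as written is incomplete.

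The paper avoids this entirely with a short explicit computation (Lemma~\ref{lem:big cell}). Writing $\nu=\smallmatrd{A}{w_n+p^\beta Y}{0}{B}$ and peeling off $\smallmatrd{A}{0}{0}{B}\in H(\Zp)$ (which is $\equiv 1\newmod{p^\beta}$), one reduces to $\smallmatrd{1_n}{w_n+p^\beta X}{0}{1_n}$. Then one simply Iwahori-factors $1_n+p^\beta w_n X=RS$ with $R\in N_n(\Zp)$, $S\in\overline{B}_n(\Zp)$ --- an LU-type decomposition which exists because the matrix is $\equiv 1\newmod{p^\beta}$, and which visibly satisfies $R,S\equiv 1\newmod{p^\beta}$. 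Setting $\overline{b}=\smallmatrd{w_nRw_n}{0}{0}{S^{-1}}$ and $h=\smallmatrd{(w_nRw_n)^{-1}}{0}{0}{S}$ then gives the required factorization explicitly. Compared with a Hensel/section argument, this is both shorter and exhibits the congruence level directly without any lifting step. Your approach could in principle be completed, but it is substantially more work than the elementary factorization, and as it stands the key step is asserted rather than proved.
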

	
	Recall $G_n = \GL_n$ and its subgroups $B_n$, $\overline{B}_n$, $N_n$, and $\overline{N}_n$ from \S\ref{sec:notation}. Let $\Iwn$ be the Iwahori subgroup of $G_n(\Zp)$, and let $J^\beta \defeq \{g \in G_n(\zp):  g\newmod{p^\beta} \in T_n(\Z/p^\beta)\} \subset \Iwn$. By the Iwahori factorisation, any element $1_n + p^\beta Y \in J^\beta$ has an Iwahori factorisation $1_n + p^\beta Y = RS$, where $R \in N_n(\Zp)$ and $S \in \overline{B}_n(\Zp)$. 
	
	\begin{lemma}\label{lem:big cell}
		If $X \in M_n(\Zp)$, there exist $\overline{B} \in \overline{B}_{2n}(\Zp), h \in H(\Zp)$ with $\overline{B}, h \equiv 1_{2n} \newmod{p^\beta}$ and
		\[
		\smallmatrd{1_n}{w_n + p^\beta X}{0}{1_n} = \overline{B} \cdot u \cdot h.
		\]
	\end{lemma}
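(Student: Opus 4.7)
\textbf{Proof plan for Lemma~\ref{lem:big cell}.}

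The idea is to solve the matrix equation directly by parametrising both sides. Write $\overline{b} = \smallmatrd{A}{0}{C}{D} \in \overline{B}_{2n}(\Zp)$ (so $A, D$ are lower triangular $n\times n$ blocks and $C$ is arbitrary) and $h = \smallmatrd{h_1}{0}{0}{h_2} \in H(\Zp)$. Expanding $\overline{b}\cdot u\cdot h = \smallmatrd{Ah_1}{Aw_n h_2}{Ch_1}{Cw_n h_2 + Dh_2}$ and comparing block-by-block with $\smallmatrd{1_n}{w_n + p^\beta X}{0}{1_n}$ immediately forces $C = 0$, $A = h_1^{-1}$, $D = h_2^{-1}$, and leaves the single non-trivial constraint
\[
	w_n h_2 \;=\; h_1(w_n + p^\beta X).
\]
The triangularity of $\overline{b}$ translates into $h_1, h_2 \in \overline{B}_n(\Zp)$, and the congruences $\overline{b}, h \equiv 1_{2n} \newmod{p^\beta}$ become $h_i \equiv 1_n \newmod{p^\beta}$.

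Next I would parametrise $h_i = 1_n + p^\beta M_i$ with $M_i \in M_n(\Zp)$ lower triangular, so that the equation becomes
\[
	M_2 \;=\; w_n^{-1} M_1 w_n \;+\; w_n^{-1}(1_n + p^\beta M_1)X.
\]
The key feature is that conjugation by $w_n$ sends lower to upper triangular: entry-wise $(w_n^{-1}M_1 w_n)_{ij} = (M_1)_{n+1-i,n+1-j}$, so the strictly upper triangular part of $w_n^{-1}M_1 w_n$ is parametrised bijectively by the strictly lower triangular part of $M_1$. Demanding that $M_2$ be lower triangular amounts to requiring that the strictly upper part of the right-hand side vanish, which after the relabeling above becomes a fixed-point equation $M_1^{\mathrm{low}} = \Psi(M_1)$ where $\Psi$ is a contraction on $M_n(\Zp)$ (since the $M_1$-dependence of $\Psi$ carries an extra factor of $p^\beta$ coming from $p^\beta w_n^{-1}M_1 X$). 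Solving by $p$-adic iteration (equivalently, the Banach fixed-point theorem) produces a unique solution in $\Zp$; the diagonal entries of $M_1$ remain unconstrained and may be set to zero. Then $M_2$ is computed from the equation and is lower triangular by construction.

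The main subtlety is the dimension count: the $\binom{n}{2}$ constraints forcing $M_2$ to have vanishing strictly-upper part use up exactly the $\binom{n}{2}$ strictly-lower entries of $M_1$, leaving the $n$ diagonal entries of $M_1$ as genuine free parameters. This matches the general principle that the stabiliser of $u$ under the left-right $(\overline{B}_{2n}, H)$-action on $G$ is the $n$-dimensional diagonal torus of $H$, so that $\dim\overline{B}_{2n} + \dim H = \dim G + n$. Conceptually, the lemma is an instance of Hensel's lemma applied to the smooth multiplication map $\overline{B}_{2n} \times H \to G$ at a point in its open image (the very `open orbit element' highlighted in the introduction), and the explicit iterative construction above just makes this smooth-map lifting transparent.
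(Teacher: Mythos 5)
Your proof is correct, and it reaches the same conclusion as the paper via a somewhat more hands-on route. The paper's argument is shorter: it invokes the Iwahori factorisation of the depth-$\beta$ congruence subgroup $J^\beta \subset \GL_n(\Zp)$ to write $1_n + p^\beta w_nX = RS$ with $R \in B_n(\Zp)$ upper triangular and $S \in \overline{B}_n(\Zp)$ lower triangular, both $\equiv 1_n \newmod{p^\beta}$, and then directly checks that $\overline{b} = \smallmatrd{w_nRw_n}{0}{0}{S^{-1}}$ and $h = \smallmatrd{(w_nRw_n)^{-1}}{0}{0}{S}$ do the job. Your proof instead parametrises $\overline{b}$ and $h$ in full generality, deduces the block constraints (in particular $C=0$ and $A=h_1^{-1}, D=h_2^{-1}$, matching the paper's block-diagonal ansatz), and solves the remaining equation $w_nh_2 = h_1(w_n+p^\beta X)$ by a contraction argument on the strictly lower entries. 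The two arguments are the same mechanism seen from opposite ends: the Iwahori factorisation of $J^\beta$ is itself established by precisely such a $p$-adic iteration, so the paper outsources the fixed-point step to a standard citation while you carry it out explicitly. Your closing remark — that this is Hensel's lemma for the smooth multiplication map $\overline{B}_{2n}\times H \to G$ at a point of the open Bruhat-like cell, with the $n$-dimensional torus stabiliser accounting for the surplus free parameters — is exactly the conceptual content, and is what makes $u$ an ``open orbit element'' in the sense emphasised in the introduction and in \cite[\S5.1.3]{loeffler-parabolics}. Both are valid; the paper's version is a little cleaner, yours is more self-contained.
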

	
	\begin{proof}
		Fix $R \in B_n(\Zp), S \in \overline{B}_n(\Zp)$ such that $1 + p^\beta w_nX = RS.$ A simple check shows $R,S \equiv 1 \newmod{p^\beta}$. We see 
		\[
		\smallmatrd{w_nRw_n}{}{}{S^{-1}} \in \overline{B}_{2n}(\Zp), \ \ \ \ \ \ \smallmatrd{(w_nRw_n)^{-1}}{}{}{S} \in H(\Zp),
		\]
		and 
		\[
		\smallmatrd{w_nRw_n}{}{}{S^{-1}}\cdot \smallmatrd{1_n}{w_n}{0}{1_n} \cdot \smallmatrd{(w_nRw_n)^{-1}}{}{}{S} = \smallmatrd{1_n}{w_n+p^nX}{0}{1_n},
		\]	
		which has the claimed form.
	\end{proof}
	
	Now let $v_{\lambda,j}$ be as in Proposition \ref{prop:support in N1}. Since $v_{\lambda,j}$ transforms by $\lambda(\overline{B}) \in \Zp^\times$ under left translation by $\overline{B}\in \overline{B}_{2n}(\Zp)$, and transforms like $\det(h_1)^{-j}\det(h_2)^{\sw+j} \in \Zp^\times$ under right translation by $h = (h_1,h_2) \in H(\zp)$, we see
	\[
	v_{\lambda,j}\big(\overline{B}_{2n}(\Zp) \cdot u \cdot H(\Zp)\big) \subset \Zp^\times \cdot v_{\lambda,j}(u).
	\]
	Suppose $v_{\lambda,j}(u) = 0$; then $v_{\lambda,j}$ vanishes on the cell $\overline{B}_{2n}(\Zp) \cdot u\cdot H(\Zp)$. This cell is open and dense in $G(\Zp)$ (e.g. \cite[\S5.1.3]{loeffler-parabolics}), forcing $v_{\lambda,j} = 0$, which contradicts our assumptions. Thus $v_{\lambda,j}(u) \neq 0$, and we are free to rescale it by an element of $\Qp^\times$ so that $v_{\lambda,j}(u) = 1$. Further if $\overline{B}, h \equiv 1_{2n} \newmod{p^\beta}$, then $\lambda(\overline{B}), \det(h_1)^{-j}\det(h_2)^{\sw+j} \equiv 1 \newmod{p^\beta}$. Combining all of this with Lemma \ref{lem:big cell}, we deduce that for any $X \in M_n(\Zp)$, we have
	\begin{equation}\label{eq:cell to Zp^times}
		v_{\lambda,j}\left(\smallmatrd{1_n}{w_n + p^\beta X}{0}{1_n}\right) \in  1 + p^\beta\Zp.
	\end{equation}
	
	\begin{proof} \emph{(Proposition \ref{prop:support in N1}).}
		A general element of $N^\beta(\Zp)$ looks like $n = \smallmatrd{A}{w_n + p^\beta Y}{0}{B}$, where $A,B \in N_n(\Zp)$ with $A \equiv B \equiv 1_n \newmod{p^\beta}$ and $Y \in M_n(\Zp)$. Letting
		\[
		X \defeq \left(Y- w_n\left(\tfrac{B-1}{p^\beta}\right)\right)B^{-1} \in M_n(\Zp),
		\]
		we have 
		\[
		\smallmatrd{A}{w_n + p^\beta Y}{0}{B} = \smallmatrd{1_n}{w_n+p^\beta X}{0}{1_n}\smallmatrd{A}{}{}{B}.
		\]
		Then $v_{\lambda,j}(n) = \det(A)^{-j}\det(B)^{\sw+j} \cdot v_{\lambda,j}\left(\smallmatrd{1_n}{w_n+pY}{0}{1_n}\right) \in 1 + p^\beta\Zp$ by \eqref{eq:cell to Zp^times}.
	\end{proof}

	\subsection{$p$-adic interpolation of branching laws}\label{sec:p-adic interpolation branching}

	Now we $p$-adically interpolate. Assume the choices $v_{(0)}, v_{(1)},...,v_{(n-1)},v_{(n),1},v_{(n),2}$ in Notation \ref{not:v_i} were all normalised so $v_{(i)}(u) = 1$, whence $v_{(i)}(N^1(\Zp)) \subset 1+p\Zp \subset \Zp^\times$, as in Proposition \ref{prop:support in N1} (and its proof).
	
	If $R$ is a $\Qp$-algebra and $\chi = (\chi_1,...,\chi_{2n}): T(\Zp) \to R^\times$ is a character, then define a function
	\begin{align}\label{eq:v circ}
		w_\chi : N^1(\Zp) &\longrightarrow R^\times,\\
		g &\longmapsto v_{(0)}(g)^{\chi_{n+1}}  \cdot \left(\prod_{i=1}^{n-1} v_{(i)}(g)^{\chi_i - \chi_{i+1}}\right) \cdot v_{(n),1}(g)^{-\chi_{n+1}} \cdot v_{(n),2}(g)^{\chi_n},\notag
	\end{align}
	where if $x \in \Zp^\times$ we write $x^{\chi_i}$ as shorthand for $\chi_i(x)$. If $\beta \geq 1$, let
	\begin{equation}\label{eq:Iw 1}
		\Iw^\beta \defeq \overline{N}(p\zp)\cdot T(\zp) \cdot N^\beta(\zp) \subset \Iw,
	\end{equation}
	which again is not a subgroup. We may extend $w_\chi$ to a function $w_\chi :  \Iw^1 \to R^\times$ via \eqref{eq:iwahori induction}:
	\begin{equation}\label{eq:iwahori induction 2}
		w_\chi(\overline{n} \cdot t \cdot n) = \chi(t) \cdot w_\chi(n), \hspace{12pt} \overline{n} \in \overline{N}(p\Zp), t \in T(\zp), n \in N^1(\Zp).
	\end{equation}
	For $\beta \geq 1$, let
	\begin{equation}\label{eq:Iw_H^1}
		\IwH^\beta \defeq H(\Zp) \cap u^{-1}\Iw^\beta.
	\end{equation}
	If $g \in \Iw^1$ and $h \in \IwH^1$, then a simple check shows $gh \in \Iw^1$, so we can consider $w_\chi(gh)$. The following will be important in the sequel: recalling $L_p^{B,\beta}$ from Definition \ref{def:auto cycle}, note that for appropriate choices of $K_p$ (e.g. if $K_p = \Iw$) we have $L_p^{B,\beta} \subset \IwH^\beta$. 
	
	\begin{lemma} \label{lem:w_kappa action}
		If $h = (h_1,h_2) \in \IwH^1$, then $w_\chi(gh) = \det(h_2)^{\chi_{n}+\chi_{n+1}} \cdot w_\chi(g).$
	\end{lemma}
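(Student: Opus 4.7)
The strategy is a direct factor-by-factor computation using the algebraic $H(\Zp)$-transformation laws of the generators $v_{(i)}$ collected in Notation~\ref{not:v_i}.  Before computing, I first need to verify that each $\chi$-exponentiation appearing in the definition of $w_\chi$ is well-defined at both $g$ and $gh$.  Since $g \in \Iw^1$ and (as remarked in the paragraph preceding the lemma) also $gh \in \Iw^1$, the Iwahori factorisation \eqref{eq:iwahori induction 2} reduces this to showing that $v_{(i)}(n)$, $v_{(n),k}(n)$ and $v_{(0)}(n)$ all lie in $\Zp^\times$ for any $n \in N^1(\Zp)$; this follows from Proposition~\ref{prop:support in N1} (applied with $\beta=1$, after the normalisation $v_{(i)}(u) = 1$ fixed at the start of \S\ref{sec:p-adic interpolation branching}).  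Hence each $v_{(i)}(\cdot)^{\chi_j} = \chi_j(v_{(i)}(\cdot))$ is a well-defined element of $R^\times$.

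Next I would record, from Notation~\ref{not:v_i}, the algebraic transformation law of each $v_{(i)}$ under right-translation by $h=(h_1,h_2) \in H(\Zp)$: for $1 \le i \le n-1$ the function $v_{(i)}$ is $H(\Zp)$-invariant, so $v_{(i)}(gh)=v_{(i)}(g)$; the functions $v_{(n),1}$ and $v_{(n),2}$ satisfy $v_{(n),1}(gh) = \det(h_1)\,v_{(n),1}(g)$ and $v_{(n),2}(gh) = \det(h_2)\,v_{(n),2}(g)$; and $v_{(0)} = \det$ gives $v_{(0)}(gh) = \det(h_1)\det(h_2)\,v_{(0)}(g)$.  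Substituting these into \eqref{eq:v circ} and using that each $\chi_j$ is a group homomorphism to pull out the $\det(h_k)$-factors, the product becomes
\[
w_\chi(gh) \;=\; \det(h_1)^{\chi_{n+1} - \chi_{n+1}}\cdot \det(h_2)^{\chi_{n+1} + \chi_n}\cdot w_\chi(g),
\]
where the $\det(h_1)$-exponents cancel (the contribution $+\chi_{n+1}$ from $v_{(0)}$ matches the contribution $-\chi_{n+1}$ from $v_{(n),1}$).  This simplifies to the claimed identity $w_\chi(gh) = \det(h_2)^{\chi_n + \chi_{n+1}}\cdot w_\chi(g)$.

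There is essentially no serious obstacle here: the proof is bookkeeping.  The only conceptual point worth highlighting is that the exponents in the definition of $w_\chi$ were precisely engineered so that the $\det(h_1)$-contributions from $v_{(0)}$ and $v_{(n),1}$ annihilate each other; this is what allows $w_\chi$ to extend to a genuine function on $\Iw^1$ that is (essentially) right-$\IwH^1$-equivariant.  This cancellation is the $p$-adic shadow of the branching law of Proposition~\ref{prop:product for v lambda j}, and it is what will make the interpolated distribution-valued evaluation maps of \S\ref{sec:distribution valued evaluations} well-defined.
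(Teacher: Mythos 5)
Your proposal is correct and follows essentially the same route as the paper's proof: both record the right-translation behaviour of each generator $v_{(i)}$, $v_{(n),1}$, $v_{(n),2}$, $v_{(0)}$ from Notation~\ref{not:v_i}, substitute into the product definition \eqref{eq:v circ}, and observe that the $\det(h_1)^{\chi_{n+1}}$ contribution from $v_{(0)}$ cancels the $\det(h_1)^{-\chi_{n+1}}$ contribution from $v_{(n),1}$. The well-definedness check you prepend is sound (and implicit in the paper's setup), and your closing remark about the engineered cancellation is the right conceptual point, though the paper itself leaves it unstated.
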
 
	\begin{proof}
		By definition, $w_\chi(gh)$ is a product of terms involving $v_{(i)}(gh)$, $v_{(n),i}(gh)$. By construction $v_{(i)}(gh) = v_{(i)}(g)$ for $1 \leq i \leq n-1$, and $v_{(0)}(gh) = \det(h_1)\det(h_2) v_{(0)}(g)$, and $v_{(n),i}(gh) = \det(h_i)v_{(n),i}(g)$ for $i = 1,2$. We get factors of $\det(h_1)^{\chi_{n+1}}\det(h_2)^{\chi_{n+1}}$ from $v_{(0)}$, $\det(h_1)^{-\chi_{n+1}}$ from $v_{(n),1}$, and $\det(h_2)^{\chi_n}$ from $v_{(n),2}$, so 
		\[
		w_\chi(gh) = \det(h_1)^{\chi_{n+1}}\det(h_2)^{\chi_{n+1}} \cdot \det(h_1)^{-\chi_{n+1}} \cdot \det(h_2)^{\chi_n} \cdot w_\chi(g) = \det(h_2)^{\chi_n + \chi_{n+1}} \cdot w_\chi(g). \qedhere
		\]
	\end{proof}
	
	\begin{remark}
		The definition of $w_\chi$ is heavily motivated by Proposition \ref{prop:product for v lambda j}. Indeed we see that if $\lambda \in X_0^*(T)$ is a pure dominant algebraic weight and $j \in \mathrm{Crit}(\lambda)$, then for any $g \in N^1(\Zp)$ we have
		\begin{equation}\label{eq:explicit v_lambda,j}
			v_{\lambda,j}(g) = w_\lambda(g) \cdot \left(\frac{v_{(n),2}(g)}{v_{(n),1}(g)}\right)^j.
		\end{equation}
	\end{remark}
	
	To $p$-adically vary branching laws, we take $R = \cO_\Omega$, for $\Omega \subset \sW_0^G$ an affinoid in the pure weight space, equipped with a character $\chi_\Omega : T(\Zp) \to \cO_\Omega^\times$ as in \S\ref{sec:weight spaces}. 
	
	\begin{definition}\label{def:v_Omega}
		Let $f \in \cA(\Zp^\times,\cO_\Omega)$ be a locally analytic function. Define a function $v_\Omega(f) : N(\Zp) \to \cO_\Omega$ by
		\[
		v_\Omega(f)(g) \defeq \left\{\begin{array}{cc} w_{\chi_\Omega}(g)\cdot f\left(\tfrac{v_{(n),2}(g)}{v_{(n),1}(g)}\right) &: g \in N^1(\Zp),\\
			0 &: \text{otherwise}.
		\end{array}\right. 
		\]
		This is well-defined by the normalisations fixed at the start of \S\ref{sec:p-adic interpolation branching}, by Proposition \ref{prop:support in N1}, and the definition \eqref{eq:v circ}. Under the transformation law \eqref{eq:iwahori induction} and Iwahori decomposition, this extends to a unique element $[v_\Omega(f) : \Iwahori_G \to \cO_\Omega] \in \cA_\Omega$, with support on $\Iw^1$ from \eqref{eq:Iw 1}.
	\end{definition}
	
	We allow $\Omega = \{\lambda\}$ to be a point, in which case the above defines, for any $f \in \cA(\Zp^\times,L)$, a function $v_\lambda(f) : \Iwahori_G \to L$ in $\cA_\lambda$.
	
	\begin{lemma}\label{lem:v_lambda interpolates}
		If $\lambda \in X_0^*(T)$ and $g \in \Iw^1$, then for all $j \in \mathrm{Crit}(\lambda)$ we have 
		\[
		\big[v_\lambda(z\mapsto z^j)\big](g) = v_{\lambda,j}(g).
		\]
	\end{lemma}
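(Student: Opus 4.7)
The plan is to verify the identity first on the subset $N^1(\Zp)$, and then to propagate it to all of $\Iw^1$ by observing that both sides satisfy the same transformation law under the Iwahori decomposition $\Iw^1 = \overline{N}(p\Zp) \cdot T(\Zp) \cdot N^1(\Zp)$.

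For the first step, I would specialize Definition \ref{def:v_Omega} to $\Omega = \{\lambda\}$ (so $\chi_\Omega = \lambda$) and $f(z) = z^j$; for $g \in N^1(\Zp)$ this reads
\[
[v_\lambda(z \mapsto z^j)](g) = w_\lambda(g) \cdot \left(\frac{v_{(n),2}(g)}{v_{(n),1}(g)}\right)^j.
\]
On the other hand, combining the explicit product formula for $v_{\lambda,j}$ in Proposition \ref{prop:product for v lambda j} with the definition \eqref{eq:v circ} of $w_\lambda$ (applied with $\chi = \lambda$) gives exactly the identity \eqref{eq:explicit v_lambda,j}, namely
\[
v_{\lambda,j}(g) = w_\lambda(g) \cdot \left(\frac{v_{(n),2}(g)}{v_{(n),1}(g)}\right)^j.
\]
This is just bookkeeping with the exponents $\lambda_i - \lambda_{i+1}$, $-\lambda_{n+1}-j$, $\lambda_n+j$, $\lambda_{n+1}$, and matches the two sides factor by factor. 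So both functions coincide on $N^1(\Zp)$.

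For the second step, write a general $g \in \Iw^1$ uniquely as $g = \overline{n}\, t\, n$ with $\overline{n} \in \overline{N}(p\Zp)$, $t \in T(\Zp)$ and $n \in N^1(\Zp)$. By construction (via \eqref{eq:iwahori induction}, cf.\ \eqref{eq:iwahori induction 2}), the element $v_\lambda(z \mapsto z^j) \in \cA_\lambda$ satisfies
\[
[v_\lambda(z \mapsto z^j)](\overline{n}\, t\, n) = \lambda(t)\cdot [v_\lambda(z \mapsto z^j)](n).
\]
On the other hand $v_{\lambda,j}$ lies in $V_\lambda(\Qp)$ and thus, by the algebraic induction relation \eqref{eq:alg induction}, obeys the same transformation rule $v_{\lambda,j}(\overline{n}\, t\, n) = \lambda(t)\, v_{\lambda,j}(n)$. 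Combining with Step~1 yields the claimed equality on all of $\Iw^1$.

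There is no real obstacle here: the lemma is essentially a compatibility check, and the two key inputs — the product identity \eqref{eq:explicit v_lambda,j} coming from Proposition \ref{prop:product for v lambda j}, and the matching transformation laws \eqref{eq:alg induction} and \eqref{eq:iwahori induction} under the character $\lambda$ — have already been established. The only mild subtlety worth checking is that the Iwahori-type decomposition of $g \in \Iw^1$ places the $N$-component genuinely in $N^1(\Zp)$ (so that Step~1 applies), which is immediate from the definition \eqref{eq:Iw 1} of $\Iw^1$.
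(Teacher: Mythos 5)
Your proof is correct and follows the same two-step structure as the paper's: check the identity on $N^1(\Zp)$ via the explicit product formula \eqref{eq:explicit v_lambda,j} (itself a direct consequence of Proposition \ref{prop:product for v lambda j} and the definition \eqref{eq:v circ}), then extend to all of $\Iw^1$ by noting that both sides obey the same transformation law under the decomposition $\Iw^1 = \overline{N}(p\Zp)\cdot T(\Zp)\cdot N^1(\Zp)$. Nothing to add.
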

	\begin{proof}
		For $g \in N^1(\Zp)$, this follows by combining the definition of $v_\lambda\textsc{}$ with \eqref{eq:explicit v_lambda,j}. Both sides satisfy the same transformation law \eqref{eq:iwahori induction 2} to extend to $\Iw^1$, so we have equality on the larger group too.
	\end{proof}
	
	Recall $\sw_\Omega : \Zp^\times \to \cO_\Omega^\times$ from \eqref{eq:sw_omega}. Let $h = (h_1,h_2) \in H(\Zp)$ act on $f \in \cA(\Zp^\times,\cO_\Omega)$ by
	\begin{equation}\label{eq:H(Zp) action on A(Zp*)}
		(h*f)(z) = \det(h_2)^{\sw_\Omega} \cdot f\left(\tfrac{\det(h_2)}{\det(h_1)} \cdot z\right).
	\end{equation}
	It follows that $\IwH^1 \subset H(\Zp)$, as defined in \eqref{eq:Iw_H^1}, acts on $\cA(\Zp^\times,\cO_\Omega)$. It also acts on $\cA_\Omega$ via its embedding into $\Iwahori_G$. 
	
	\begin{lemma}\label{lem:v_Omega equivariant}
		The map $v_{\Omega} : \cA(\zp^\times,\cO_\Omega) \to \cA_\Omega$ is $\IwH^1$-equivariant.
	\end{lemma}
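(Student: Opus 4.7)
The plan is a direct computation: unwind both actions on a test function $f\in\cA(\Zp^\times,\cO_\Omega)$ and an element $g\in\Iwahori_G$, and verify the identity $v_\Omega(h*f)(g)=v_\Omega(f)(gh)$ for $h\in\IwH^1$. First I would check that $\IwH^1\subset\Iw$, so that the target action (right translation by $h$, as an element of $\Iw$) is well-defined: if $uh\in\Iw^1\subset\Iw$, then $h=u^{-1}(uh)\in\Iw$ since $u\in\Iw$. I would also note that $\IwH^1=H(\Zp)\cap u^{-1}\Iw_0 u$, where $\Iw_0=\overline N(p\Zp)T(\Zp)N(p\Zp)$ is a group; hence $\IwH^1$ itself is a group. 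Combined with the observation in the text preceding Lemma~\ref{lem:w_kappa action} that $\Iw^1\cdot\IwH^1\subset\Iw^1$, this will handle the support analysis.

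Next I would split into cases according to whether $g\in\Iw^1$ or not. On $\Iw^1$, by Definition~\ref{def:v_Omega} the left-hand side equals
\[
w_{\chi_\Omega}(g)\cdot\det(h_2)^{\sw_\Omega}\cdot f\!\left(\tfrac{\det(h_2)}{\det(h_1)}\cdot\tfrac{v_{(n),2}(g)}{v_{(n),1}(g)}\right),
\]
using \eqref{eq:H(Zp) action on A(Zp*)}. For the right-hand side, since $gh\in\Iw^1$ by the closure statement above, I apply Definition~\ref{def:v_Omega} at $gh$ and use: (i) Lemma~\ref{lem:w_kappa action} to rewrite $w_{\chi_\Omega}(gh)=\det(h_2)^{\chi_{\Omega,n}+\chi_{\Omega,n+1}}w_{\chi_\Omega}(g)$, which equals $\det(h_2)^{\sw_\Omega}w_{\chi_\Omega}(g)$ by the defining property \eqref{eq:sw_omega} of $\sw_\Omega$; and (ii) the eigenvector relation $v_{(n),i}(gh)=\det(h_i)\,v_{(n),i}(g)$ from Notation~\ref{not:v_i}(ii) (which is the ingredient driving the proof of Lemma~\ref{lem:w_kappa action}) to rewrite the argument of $f$ as $\tfrac{\det(h_2)}{\det(h_1)}\cdot\tfrac{v_{(n),2}(g)}{v_{(n),1}(g)}$. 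The two expressions then match on the nose.

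For $g\notin\Iw^1$ the left-hand side vanishes by definition, so I need $v_\Omega(f)(gh)=0$ as well, i.e.\ $gh\notin\Iw^1$. This follows because $\Iw^1\cdot h^{-1}=\Iw^1$: the inclusion $\Iw^1\cdot h^{-1}\subset\Iw^1\cdot\IwH^1\subset\Iw^1$ uses that $\IwH^1$ is a group, and the reverse inclusion $\Iw^1\subset\Iw^1\cdot h^{-1}$ follows similarly by writing $\Iw^1=(\Iw^1\cdot h)\cdot h^{-1}$. Hence $g\in\Iw^1\cdot h^{-1}=\Iw^1$ would contradict the hypothesis. The only subtle point of the whole argument is matching the abelian twist $\det(h_2)^{\sw_\Omega}$ on the source with the right-translation contribution on the target; the group-theoretic stability of $\Iw^1$ under $\IwH^1$ is what makes everything consistent, and I expect no serious obstacle beyond careful bookkeeping.
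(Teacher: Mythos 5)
Your proof is correct and takes essentially the same approach as the paper's own, reducing the equivariance to Lemma~\ref{lem:w_kappa action} for the $w_{\chi_\Omega}$-factor and the relation $v_{(n),i}(gh)=\det(h_i)v_{(n),i}(g)$ for the argument of $f$, together with the stability of $\Iw^1$ under right-translation by $\IwH^1$. Two remarks on presentation are worth recording. First, the display in Definition~\ref{def:v_Omega} is stated only on $N(\Zp)$, so when you ``apply Definition~\ref{def:v_Omega} at $gh$'' (where $gh\in\Iw^1$ is not generally in $N^1(\Zp)$) you need the easy but unstated observation that the formula $v_\Omega(f)(g)=w_{\chi_\Omega}(g)\,f(v_{(n),2}(g)/v_{(n),1}(g))$ extends to all of $\Iw^1$: both sides transform by $\chi_\Omega(t)$ under left multiplication by $\overline{N}(p\Zp)T(\Zp)$, the $\alpha_n$-factors in $v_{(n),2}/v_{(n),1}$ cancelling. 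The paper avoids this by restricting to $g\in N^1(\Zp)$ and then explicitly Iwahori-decomposing $gh=b'g'$ with $g'\in N^1(\Zp)$, tracking $\chi_\Omega(b')$ through by hand, which accomplishes the same thing. Conversely, your explicit treatment of the vanishing case $g\notin\Iw^1$ via the group structure of $\IwH^1$ is correct and slightly more careful than the paper's: indeed $\Iw^1=\overline{N}(p\Zp)T(\Zp)N(p\Zp)\cdot u$, and $\overline{N}(p\Zp)T(\Zp)N(p\Zp)$ is the mod-$p$ preimage of $T(\F_p)$, hence a group, so $u^{-1}\Iw^1=u^{-1}\overline{N}(p\Zp)T(\Zp)N(p\Zp)u\cdot 1$ is a group and $\IwH^1$ is its intersection with $H(\Zp)$; the paper checks the identity only on $N^1(\Zp)$ and leaves the vanishing outside implicit.
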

	\begin{proof}
		Let $h = (h_1,h_2) \in \IwH^1$ and $g \in N^1(\Zp)$. First note that by \eqref{eq:sw_omega} we know $\sw_\Omega = \chi_{\Omega,n} + \chi_{\Omega,n+1}$; so $\det(h_2)^{\sw_\Omega}w_{\chi_\Omega}(g) = w_{\chi_\Omega}(gh)$ by Lemma \ref{lem:w_kappa action}.
		
		Now let $f \in \cA(\Zp^\times,\cO_\Omega)$, and compute
		\begin{align}\label{eq:equivariant 1}
			v_\Omega(h*f)(g) &= w_{\chi_\Omega}(g)\cdot (h*f)\left(\frac{v_{(n),2}(g)}{v_{(n),1}(g)}\right)\\
			&= \det(h_2)^{\sw_\Omega} w_{\chi_\Omega}(g)\cdot f\left(\frac{\det(h_2)\cdot v_{(n),2}(g)}{\det(h_1)\cdot v_{(n),1}(g)}\right)= w_{\chi_\Omega}(gh)\cdot f\left(\frac{v_{(n),2}(gh)}{ v_{(n),1}(gh)}\right).\notag
		\end{align}
		Since $gh \in \Iw^1$, we may write $gh = b'g'$, with $b' \in \overline{N}(p\Zp)\cdot B(\Zp)$ and $g' \in N^1(\Zp)$. Then 
		\begin{align} \label{eq:equivariant 2}
			[h*&v_\Omega(f)](g) = v_\Omega(f)(gh) = v_\Omega(f)(b'g') = \chi_\Omega(b') \cdot v_\Omega(f)(g')\\
			&= \chi_\Omega(b')w_{\chi_\Omega}(g')  f\left(\frac{v_{(n),2}(g')}{v_{(n),1}(g')}\right) = w_{\chi_\Omega}(b'g')  f\left(\frac{v_{(n),2}((b')^{-1}gh)}{v_{(n),1}((b')^{-1}gh)}\right) = w_{\chi_\Omega}(gh)  f\left(\frac{v_{(n),2}(gh)}{v_{(n),1}(gh)}\right),\notag
		\end{align}
		where in the last equality we use that $v_{(n),i}((b')^{-1}gh) = \alpha_n(b')^{-1}v_{(n),i}(gh)$ for both $i = 1,2$. Combining \eqref{eq:equivariant 1} and \eqref{eq:equivariant 2} yields $v_\Omega(h*f)(g) = [h*v_\Omega(f)](g),$	as required.
	\end{proof}

	\subsection{Branching laws for distributions}
	
	The overconvergent cohomology groups we consider have coefficients in $\cD_\Omega$, not $\cA_\Omega$, so we now dualise the above. Finally, we collate everything we have proved in the main result of this section (Proposition \ref{prop:distribution commute}).\medskip
	
	By Lemma \ref{lem:branching law 2}, for $\lambda \in X_0^*(T)$ we have $j \in \mathrm{Crit}(\lambda)$ if and only if $\mathrm{Hom}_{H(\Zp)}(V_\lambda^\vee, V^H_{(j,-\sw-j)})$ is a line; and moreover, the choices made in \S\ref{sec:classical branching laws} fix a generator
	\begin{equation}\label{eq:kappa lambda j}
		\kappa_{\lambda,j} : V_\lambda^\vee(\Qp) \to V^H_{(j,-\sw-j)}(\Qp) \isorightarrow \Qp, \hspace{12pt} \mu \mapsto \mu(v_{\lambda,j}).
	\end{equation}
	This is $H(\Zp)$-equivariant, as if $h = (h_1,h_2) \in H(\Zp)$, then
	\begin{equation}\label{eq:kappa_lambda,j equivariant}
		(h*\mu)(v_{\lambda,j}) = \mu(h^{-1}*v_{\lambda,j}) = \det(h_1)^{j}\det(h_2)^{-\sw-j} \mu(v_{\lambda,j}).
	\end{equation}
	We can base-extend this to any extension $L/\Qp$ and consider $\kappa_{\lambda,j}$ as a map $V_\lambda^\vee(L) \to L$.\medskip
	
	Similarly, we can dualise the map $v_\Omega : \cA(\zp^\times,\cO_\Omega) \to \cA_\Omega$ from Definition \ref{def:v_Omega} to get
	\begin{equation}\label{eq:kappa omega}
		\kappa_\Omega : \cD_\Omega \longrightarrow \cD(\zp^\times,\cO_\Omega), \hspace{12pt} \mu \mapsto [f \mapsto \mu(v_\Omega(f))] \text{ for } f \in \cA(\zp^\times,\cO_\Omega).
	\end{equation}

	\begin{definition}\label{def:distribution support}
		Let $\beta \geq 1$. 
		\begin{itemize}\s
			\item[(1)] We say that $f \in \cA_\Omega$  has  \emph{support on $\Iw^\beta$} if $f(g) = 0$ for all $g \notin \Iw^\beta$. By the definition of $\cA_\Omega$, this is equivalent to asking $f(n) = 0$ for all $n \in N(\Zp)\backslash N^\beta(\Zp)$.
			
			Similarly, we say $f$ has \emph{support away from $\Iw^\beta$} if $f(g) = 0$ for all $g \in \Iw^\beta$, or equivalently if $f(n) = 0$ for all $n \in N^\beta(\Zp)$.
			
			\item[(2)] Let $\cA_\Omega^\beta \subset \cA_\Omega$  be the subspace of functions supported on $\Iw^\beta$. 
			\item[(3)] If $f \in \cA_\Omega$, define a function $f|_{\Iw^\beta} \in \cA_\Omega^\beta \subset \cA_\Omega$ by multiplying $f$ by the indicator function of (the closed and open subset) $\Iw^\beta$; explicitly,
			\[
				f|_{\Iw^\beta}(g) \defeq \left\{\begin{array}{cl} f(g) &: g \in \Iw^\beta\\
					0 &: \text{otherwise}.\end{array}\right.
			\]
			This is well-defined: it is locally analytic on $N(\Zp)$ as $N^\beta(\Zp)$ is closed and open in $N(\Zp)$, it defines an element of $\cA_\Omega$ by Iwahori decomposition and the definition of $\Iw^\beta$, and by definition it has support on $\Iw^\beta$.
			\item[(4)] We say that $\mu \in \cD_\Omega$ has \emph{support on $\Iw^\beta$} if $\mu(f) = \mu(f|_{\Iw^\beta})$ for all $f \in \cA_\Omega$. Let $\cD_\Omega^\beta \subset \cD_\Omega$ be the subspace of distributions supported on $\Iw^\beta$.
		\end{itemize}
		We similarly write $\cA_\lambda^\beta$, $\cD_\lambda^\beta$, etc.\ when $\Omega = \{\lambda\}$ is a point.
	\end{definition}

The following is a useful alternative criterion for support.

\begin{lemma}\label{lem:support alternative}
	Let $\mu \in \cD_\Omega$. We have 
	\[
		\Big[\mu \in \cD_\Omega^\beta\Big] \iff \Big[\mu(f) = 0\text{ for all }f \in \cA_\Omega\text{ with support away from $\Iw^\beta$}\Big].
	\]
\end{lemma}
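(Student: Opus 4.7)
The plan is to prove this as a direct consequence of the fact that $\Iw^\beta$ is a clopen subset of $\Iw$, so that any $f \in \cA_\Omega$ decomposes as a sum of two pieces in $\cA_\Omega$, one supported on $\Iw^\beta$ and one supported away from it.

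First, I would observe that for any $f \in \cA_\Omega$, both $f|_{\Iw^\beta}$ and $g \defeq f - f|_{\Iw^\beta}$ lie in $\cA_\Omega$. The first is in $\cA_\Omega^\beta$ by definition (part (3)); for the second, it suffices to note that $N^\beta(\Zp)$ is clopen in $N(\Zp)$ (as $N^\beta(\Zp) = u \cdot N(p^\beta \Zp)$ is a translate of an open subgroup, and is the complement in $N(\Zp)$ of finitely many other such translates), so the indicator of $\Iw^\beta$ is locally constant, and multiplication by a locally constant function preserves local analyticity. By construction $g$ vanishes on $\Iw^\beta$, so $g$ has support away from $\Iw^\beta$ in the sense of Definition \ref{def:distribution support}(1).

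For the forward direction ($\Rightarrow$), suppose $\mu \in \cD_\Omega^\beta$ and let $f \in \cA_\Omega$ have support away from $\Iw^\beta$. Then $f|_{\Iw^\beta} = 0$, and by the defining property of $\cD_\Omega^\beta$ we get $\mu(f) = \mu(f|_{\Iw^\beta}) = 0$.

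For the reverse direction ($\Leftarrow$), suppose $\mu(f) = 0$ for every $f \in \cA_\Omega$ with support away from $\Iw^\beta$. Given an arbitrary $f \in \cA_\Omega$, the decomposition $f = f|_{\Iw^\beta} + (f - f|_{\Iw^\beta})$ combined with linearity of $\mu$ and the vanishing hypothesis applied to $f - f|_{\Iw^\beta}$ gives $\mu(f) = \mu(f|_{\Iw^\beta})$, so $\mu \in \cD_\Omega^\beta$. Since there is no real obstacle beyond confirming the clopen nature of $\Iw^\beta$ (which is essentially built into Definition \ref{def:distribution support}(3), where $f|_{\Iw^\beta}$ is already asserted to lie in $\cA_\Omega$), the argument is short.
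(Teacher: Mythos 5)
Your proof is correct and follows essentially the same decomposition $f = f|_{\Iw^\beta} + (f - f|_{\Iw^\beta})$ used in the paper; the paper simply writes the second summand as $f|_{\Iw\backslash\Iw^\beta}$ and leaves the two implications as routine, whereas you spell them out and also add the (true, and already implicit in Definition \ref{def:distribution support}(3)) observation that both pieces lie in $\cA_\Omega$ because $\Iw^\beta$ is clopen.
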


\begin{proof}
	Any $f \in \cA_\Omega$ can be written as $f = f|_{\Iw^\beta} + f|_{\Iw\backslash \Iw^\beta}$, where $f|_{\Iw\backslash\Iw^\beta}$ has support away from $\Iw^\beta$. Then $\mu(f) = \mu(f|_{\Iw^\beta}) + \mu(f|_{\Iw\backslash\Iw^\beta})$. The two conditions are now easily seen to be equivalent.
\end{proof}

	There is a natural map $s_\lambda : V_\lambda(L) \to \cA_\lambda^\beta(L)$ given by 
	\[
		s_\lambda(f)(g) = \begin{cases}f(g) &: g \in \Iw^\beta\\0 &: \text{ else}.\end{cases}
	\]
	 Abusing notation, for any $\beta$ we continue to write $r_\lambda : \cD_\lambda^\beta \to V_\lambda^\vee$ for its dual.
	
	\begin{proposition}\label{prop:distribution commute}
		For each classical $\lambda \in \Omega$ and each $j \in \mathrm{Crit}(\lambda)$, the following diagram commutes:
		\[
		\xymatrix@C=20mm{
			\cD_\Omega^1 \ar[r]^-{\mathrm{sp}_\lambda}\ar[d]^-{\kappa_\Omega}&  	
			\cD_\lambda^1(L) \ar[r]^-{r_\lambda} \ar[d]^-{\kappa_\lambda}& 
			V_\lambda^\vee(L) \ar[d]^-{\kappa_{\lambda,j}}\\
			\cD(\Zp^\times, \cO_\Omega)  \ar[r]^-{\mathrm{sp}_\lambda} &
			\cD(\Zp^\times,L) \ar[r]^-{\mu \mapsto \mu(z^j)}  & 
			L.
		}
		\]
	\end{proposition}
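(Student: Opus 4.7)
The plan is to check the two squares separately, by dualising to the level of functions (where everything has been set up in the section) and invoking Lemma \ref{lem:v_lambda interpolates} together with compatibility of the family-level construction $v_\Omega$ with specialisation.

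\textbf{Right square.} The easy square is the right-hand one. First I would unwind the definitions: for $\mu \in \cD_\lambda^1(L)$, the map $r_\lambda$ is dual to $s_\lambda : V_\lambda(L) \to \cA_\lambda^1(L)$, and $\kappa_{\lambda,j}$ sends $r_\lambda(\mu)$ to $r_\lambda(\mu)(v_{\lambda,j}) = \mu(s_\lambda(v_{\lambda,j}))$, i.e.\ $\mu$ evaluated at $v_{\lambda,j}|_{\Iw^1}$. On the other path, $\kappa_\lambda(\mu)(z^j) = \mu\bigl(v_\lambda(z\mapsto z^j)\bigr)$. Since $v_\lambda(z\mapsto z^j)$ is supported on $\Iw^1$ by Definition~\ref{def:v_Omega}, and agrees with $v_{\lambda,j}$ on $\Iw^1$ by Lemma~\ref{lem:v_lambda interpolates}, the two elements of $\cA_\lambda$ at which we evaluate $\mu$ are literally equal, and commutativity is immediate.

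\textbf{Left square.} The left-hand square reduces to showing that $v_\Omega$ is compatible with specialisation, in the sense that for every $g \in \cA(\Zp^\times, L)$ and any lift $\tilde g \in \cA(\Zp^\times, \cO_\Omega)$ (which exists because $\cA(\Zp^\times,\cO_\Omega)\otimes_{\cO_\Omega} L = \cA(\Zp^\times, L)$), the element $v_\Omega(\tilde g) \in \cA_\Omega$ specialises at $\lambda$ to $v_\lambda(g) \in \cA_\lambda(L)$. Granted this, the two paths around the left square, evaluated on $g$, both compute $\mathrm{sp}_\lambda\bigl(\mu(v_\Omega(\tilde g))\bigr)$: one applies $\mathrm{sp}_\lambda$ to the $\cO_\Omega$-valued distribution $\kappa_\Omega(\mu)$ before evaluating on $g$, while the other first specialises $\mu$, then evaluates against $v_\lambda(g) = \mathrm{sp}_\lambda(v_\Omega(\tilde g))$.

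\textbf{Why $v_\Omega$ specialises correctly.} By Definition~\ref{def:v_Omega}, $v_\Omega(\tilde g)$ is supported on $\Iw^1$ and for $n \in N^1(\Zp)$ equals $w_{\chi_\Omega}(n)\cdot \tilde g\bigl(v_{(n),2}(n)/v_{(n),1}(n)\bigr)$. Both factors are built canonically from the universal character $\chi_\Omega$ and from fixed algebraic functions on $G(\Zp)$, so applying $\mathrm{sp}_\lambda$ sends $\chi_\Omega$ to $\lambda$ and $\tilde g$ to $g$, yielding $w_\lambda(n)\cdot g\bigl(v_{(n),2}(n)/v_{(n),1}(n)\bigr) = v_\lambda(g)(n)$. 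Both $v_\Omega(\tilde g)$ and $v_\lambda(g)$ are determined by their restriction to $N^1(\Zp)$ via the same transformation law \eqref{eq:iwahori induction 2} (which itself specialises correctly, as $\chi_\Omega \mapsto \lambda$), so this forces equality on all of $\Iw$.

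I do not expect a real obstacle: the whole of \S\ref{sec:p-adic interpolation branching} has been engineered so that $v_\Omega$ is the obvious interpolation of $v_{\lambda,j}$ along the two independent directions (variation of $\lambda$ in $\Omega$ and variation of $j$ as the exponent of $z$). The only care needed is in handling the support conditions via Definition~\ref{def:distribution support}, which ensures $r_\lambda$ is defined on $\cD_\lambda^1$ and that the identification $v_\lambda(z\mapsto z^j) = s_\lambda(v_{\lambda,j})$ in the right square makes sense; and in explicitly verifying that a lift $\tilde g$ of $g$ exists and that the answer is independent of the choice of lift (which follows since $\mu\in \cD_\Omega^1$ kills the subspace $\ker(\mathrm{sp}_\lambda)\cdot\cA_\Omega$ after applying $\mathrm{sp}_\lambda$).
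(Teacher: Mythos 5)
Your proof is correct and takes essentially the same route as the paper: both squares are checked by dualising to the level of test functions, with the left square resting on $\mathrm{sp}_\lambda\circ\chi_\Omega=\lambda$ (so $v_\Omega$ specialises to $v_\lambda$) and the right square resting on Lemma \ref{lem:v_lambda interpolates} together with the support condition on $\mu\in\cD_\lambda^1$. The only cosmetic difference is that you identify $v_\lambda(z\mapsto z^j)=s_\lambda(v_{\lambda,j})$ as a single equality in $\cA_\lambda$, whereas the paper unpacks this as a chain of integral rewritings over $\Iw$ and $\Iw^1$.
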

	\begin{proof}
		The first square commutes directly from the definitions and the fact that $\mathrm{sp}_\lambda \circ \chi_{\Omega} = \lambda$.
		
		To see the second square commutes, let  $\mu \in \cD_\lambda^1(L)$; then
		\begin{align*}
			\kappa_\lambda(\mu)(z \mapsto z^j) = \int_{\Zp^\times} z^j \cdot d\kappa_\lambda(\mu) &= \int_{\Iw} v_\lambda(z^j) \cdot d\mu = \int_{\Iw^1} v_\lambda(z^j) \cdot d\mu = \int_{\Iw^1} v_{\lambda,j} \cdot d\mu\\
			&= \int_{\Iw} v_{\lambda,j} \cdot d\mu = \int_{\Iw} v_{\lambda,j} \cdot dr_{\lambda}(\mu) = \kappa_{\lambda,j}\circ r_\lambda(\mu),
		\end{align*}
		where for clarity we write $\int_X f \cdot d\mu$ for $\mu(f|_X)$, interpreted suitably for each term. Then the second equality is by definition of $\kappa_\lambda$, the third and fifth equalities follow as $\mu$ is supported on $\Iw^1$, the fourth equality is Lemma \ref{lem:v_lambda interpolates}, the sixth equality is by definition of $r_\lambda$ as $v_{\lambda,j} \in V_\lambda$, and the seventh equality is the definition of $\kappa_{\lambda,j}$.
	\end{proof}
	
	Recall the $H(\Zp)$-action on $\cA(\Zp^\times,\cO_\Omega)$ from \eqref{eq:H(Zp) action on A(Zp*)}. We equip $\cD(\Zp^\times,\cO_\Omega)$ with the left dual action. In the diagram of Proposition \ref{prop:distribution commute}, recall the action of $H(\Zp)$ on the bottom-right term is by $\det(h_1)^{j}\det(h_2)^{-\sw-j}$. Finally we note:
	
	\begin{lemma}\label{lem:big diagram equivariant}
		Every map in the diagram of Proposition \ref{prop:distribution commute} is $\IwH^1$-equivariant.
	\end{lemma}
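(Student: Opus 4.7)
The plan is to verify the equivariance of each of the seven arrows in the diagram separately. Several of them are essentially tautological given the earlier results, and only one genuinely requires a computation.

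First I would dispatch the two horizontal specialisation maps $\mathrm{sp}_\lambda$: both are induced by the evaluation map $\cO_\Omega \to L$ at $\lambda$, and the $\IwH^1$-actions on $\cD_\Omega^1$ and on $\cD(\Zp^\times,\cO_\Omega)$ are $\cO_\Omega$-linear, so equivariance is immediate from functoriality of base change. Next, the vertical maps $\kappa_\Omega$ and $\kappa_\lambda$ are by definition the duals of $v_\Omega$ and $v_\lambda$, and $\IwH^1$-equivariance is exactly the content of Lemma~\ref{lem:v_Omega equivariant}, so dualising gives the claim. The rightmost vertical map $\kappa_{\lambda,j}$ is equivariant by the explicit calculation in \eqref{eq:kappa_lambda,j equivariant}.

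For the map $r_\lambda : \cD_\lambda^1(L) \to V_\lambda^\vee(L)$, I would use that it is dual to the inclusion $s_\lambda : V_\lambda(L) \hookrightarrow \cA_\lambda^1(L)$ (sending $f$ to its restriction to $\Iw^1$, extended by zero). Both the algebraic induction $V_\lambda$ and the analytic induction $\cA_\lambda$ are defined by the same transformation rule \eqref{eq:iwahori induction} with respect to right-translation, so $s_\lambda$ is $\Iw$-equivariant; restricting to the subgroup $\IwH^1 \subset \Iw$ gives the claim, and dualising transfers it to $r_\lambda$.

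The only map requiring a direct computation is the bottom right arrow $\mathrm{ev}_j : \mu \mapsto \mu(z \mapsto z^j)$. Fix $h = (h_1,h_2) \in \IwH^1$. Specialising \eqref{eq:H(Zp) action on A(Zp*)} to $\Omega = \{\lambda\}$ (so $\sw_\Omega = \sw$), an easy inversion of the formula yields
\[
	(h^{-1} * f)(z) = \det(h_2)^{-\sw} f\!\left(\tfrac{\det(h_1)}{\det(h_2)} z\right).
\]
Applied to $f(z) = z^j$, this gives $(h^{-1} * f)(z) = \det(h_1)^j \det(h_2)^{-\sw-j} z^j$, and therefore
\[
	\mathrm{ev}_j(h * \mu) = (h*\mu)(z^j) = \mu(h^{-1}*f) = \det(h_1)^j \det(h_2)^{-\sw-j} \cdot \mathrm{ev}_j(\mu).
\]
This matches exactly the $\IwH^1$-action on $L$ coming from the identification $L \cong V^H_{(j,-\sw-j)}(L)$ used in the definition of $\kappa_{\lambda,j}$, completing the check. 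The main obstacle, such as it is, lies only in getting the signs and inversions right in this last calculation; all the other verifications are formal consequences of earlier results.
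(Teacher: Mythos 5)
Your proof follows the same decomposition as the paper's: treat each arrow separately, invoke Lemma~\ref{lem:v_Omega equivariant} for $\kappa_\Omega$, $\kappa_\lambda$, cite \eqref{eq:kappa_lambda,j equivariant} for $\kappa_{\lambda,j}$, and compute $\mathrm{ev}_j$ directly. Your computation for $\mathrm{ev}_j$ is correct and coincides with the paper's.

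There is, however, one false intermediate claim in your argument for $r_\lambda$. You assert that $s_\lambda : V_\lambda(L) \to \cA_\lambda^1(L)$ is $\Iw$-equivariant because both source and target are defined by the rule \eqref{eq:iwahori induction}. This is not so: the map $s_\lambda$ is not merely the inclusion $V_\lambda(L) \hookrightarrow \cA_\lambda(L)$, it is the inclusion followed by truncation by $\mathbf{1}_{\Iw^1}$, and the truncation spoils $\Iw$-equivariance. Explicitly, for $k \in \Iw$ one has $s_\lambda(k*f)(g) = f(gk)\,\mathbf{1}_{\Iw^1}(g)$ whereas $(k * s_\lambda(f))(g) = f(gk)\,\mathbf{1}_{\Iw^1}(gk)$, and these disagree whenever $\Iw^1 k \neq \Iw^1$. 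Since $\Iw^1 = \overline{N}(p\Zp)T(\Zp)N(p\Zp)u$ is a right coset of a proper subgroup of $\Iw$, the set $\Iw^1$ is not stable under right multiplication by all of $\Iw$, so $\cA_\lambda^1$ is not even an $\Iw$-subrepresentation of $\cA_\lambda$. The correct statement is that $s_\lambda$ is $\IwH^1$-equivariant \emph{directly}, the reason being precisely the stability $\Iw^1 \cdot \IwH^1 \subset \Iw^1$ observed after \eqref{eq:Iw_H^1}; this is the genuine content of the step, and your route skips it. As you draw only the conclusion of $\IwH^1$-equivariance, the final result of the lemma is unaffected, but the reasoning as written is incorrect and should be replaced by the direct check. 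All other parts of your proposal are fine.
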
 
	\begin{proof}
		For $\mathrm{sp}_\lambda$ and $r_\lambda$ this follows straight from the definition; for $\kappa_\Omega$ and $\kappa_\lambda$ this is Lemma \ref{lem:v_Omega equivariant}; for $\kappa_{\lambda,j}$ this is \eqref{eq:kappa_lambda,j equivariant}; and for evaluation at $z^j$, this follows since for $\mu \in \cD(\Zp^\times,L)$ and $h \in H(\Zp)$, we have $(h * \mu)(z^j) = \mu(h^{-1} * z^j) = \mu(\det(h_1)^{j}\det(h_2)^{-\sw-j}\cdot z^j).$
	\end{proof}

	\section{Distribution-valued evaluation maps}\label{sec:distribution valued evaluations}
	We now define distribution-valued evaluation maps on the overconvergent cohomology by combining the $p$-adic branching laws of \S\ref{sec:branching laws} with the abstract evaluation maps of \S\ref{sec:abstract evaluation maps}.

	\subsection{Combining evaluations and branching laws}\label{sec:distribution evaluations}

	Recall $H(\Zp)$ acts on $\cA(\Zp^\times,\cO_\Omega)$ by \eqref{eq:H(Zp) action on A(Zp*)}. We have an isomorphism 
	\begin{equation}\label{eq:Zptimes adeles}
		\Zp^\times \cong \Cl(p^\infty) \defeq  \Q^\times \big\backslash \A^\times\big/\prod_{\ell\neq p}\Z_\ell^\times \R_{>0},
	\end{equation} 
	and we extend the $H(\Zp)$-action on $f \in \cA(\Zp^\times,\cO_\Omega)$ to an action of $h = (h_1,h_2) \in H(\A)$ by
	\begin{equation}\label{eq:H(A) action A(Zp*)}
		(h * f)(z) = \chi_{\cyc}(\det(h_2))^{\sw_\Omega} \cdot f\left(\tfrac{\det(h_{2})}{\det(h_{1})} \cdot z\right),
	\end{equation}
	where translation of $z \in \Zp^\times$ under $\det(h_2)\det(h_1)^{-1} \in \A^\times$ is defined by lifting $z$ to $\A^\times$ under \eqref{eq:Zptimes adeles}, translating, and projecting back to $\Zp^\times$. Note $H_\infty^\circ$ and $H(\Q)$ again act trivially, and that any subgroup of $H(\widehat{\Z})$ acts through projection to $H(\Zp)$.
	
	\begin{lemma}\label{lem:kappa omega equivariant}
		If $\beta \geq 1$, then the map $\kappa_{\Omega}$ from \eqref{eq:kappa omega} is $L_\beta$-equivariant.
	\end{lemma}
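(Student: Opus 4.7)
The strategy is to reduce the desired equivariance for $L_\beta \subset H(\A_f)$ to the local $\IwH^1$-equivariance of $v_\Omega$ that we already established in Lemma \ref{lem:v_Omega equivariant}. The key observation is that both actions in the statement factor through the natural projection $L_\beta \twoheadrightarrow L_p^{B,\beta}$, and we have the containment $L_p^{B,\beta} \subset \IwH^\beta \subset \IwH^1$ noted just after the definition \eqref{eq:Iw_H^1}.

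First I would dualise Lemma~\ref{lem:v_Omega equivariant}: since $\kappa_\Omega$ is by definition the continuous dual of $v_\Omega$ (with the usual left dual action $(h \cdot \mu)(f) = \mu(h^{-1}\cdot f)$), the $\IwH^1$-equivariance of $v_\Omega$ immediately yields $\IwH^1$-equivariance of $\kappa_\Omega$, where on $\cD(\Zp^\times,\cO_\Omega)$ we use the left dual of the action \eqref{eq:H(Zp) action on A(Zp*)}. Next, on the source side, the action of $L_\beta$ on $\cD_\Omega$ is (by the local system formalism of \S\ref{sec:local systems} and the fact that $L_\beta \subset K$) induced by the projection $L_\beta \twoheadrightarrow L_p^{B,\beta} \subset \Iw$, and $L_p^{B,\beta}$ sits inside $\IwH^1$ as just recalled; so the $L_\beta$-action factors through the $\IwH^1$-action.

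The only remaining point is to check that on the target $\cD(\Zp^\times,\cO_\Omega)$ the adelic action \eqref{eq:H(A) action A(Zp*)} of $h = (h_1,h_2) \in L_\beta$ also coincides with the local $\IwH^1$-action \eqref{eq:H(Zp) action on A(Zp*)} applied to $h_p \in L_p^{B,\beta}$. Since $L_\beta \subset H(\widehat{\Z})$, we have $\det(h_i)_\ell \in \Z_\ell^\times$ for every finite $\ell$, so the finite idele $\det(h_2)$ has trivial adelic norm and hence
\[
\chi_{\cyc}(\det(h_2)) \;=\; \det(h_{2,p}) \;\in\; \Zp^\times
\]
by the formula \eqref{eq:cyc}. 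Likewise, the idele $\det(h_2)\det(h_1)^{-1} \in \widehat{\Z}^\times$ lies in $\prod_{\ell\neq p}\Z_\ell^\times \cdot \Zp^\times$, so its image in $\Cl(p^\infty) \cong \Zp^\times$ under \eqref{eq:Zptimes adeles} is simply $\det(h_{2,p})\det(h_{1,p})^{-1}$. Substituting these identifications into \eqref{eq:H(A) action A(Zp*)} gives exactly the formula \eqref{eq:H(Zp) action on A(Zp*)} applied to $h_p$.

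There is no real obstacle here; the content of the lemma is packaging the already-proved local statement with the compatibility of adelic and local actions. Putting the three points together -- factorisation of both actions through $L_p^{B,\beta} \hookrightarrow \IwH^1$, and dualisation of Lemma~\ref{lem:v_Omega equivariant} -- yields the claim.
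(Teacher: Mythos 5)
Your argument is correct and follows essentially the same route as the paper's own proof: dualise Lemma~\ref{lem:v_Omega equivariant}, use the containment $L_p^{B,\beta}\subset\IwH^1$, and observe that both actions factor through projection to the $p$-component. The one place you go beyond the paper's terse ``factors through projection to $L_p^\beta$'' is in explicitly verifying via \eqref{eq:cyc} and \eqref{eq:Zptimes adeles} that for $h\in L_\beta\subset H(\widehat{\Z})$ the adelic formula \eqref{eq:H(A) action A(Zp*)} reduces to the local formula \eqref{eq:H(Zp) action on A(Zp*)} applied to $h_p$ --- a worthwhile sanity check, correctly executed.
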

	\begin{proof}
		Recall that $\kappa_\Omega$ is the dual of $v_\Omega : \cA(\Zp^\times,\cO_\Omega) \to \cA_\Omega$, so it suffices to prove $v_\Omega$ is $L_\beta$-equivariant. Recall $\IwH^1$ from \eqref{eq:Iw_H^1}, and that $v_\Omega$ is $\IwH^1$-equivariant by Lemma \ref{lem:v_Omega equivariant}. A simple check shows that $L_p^\beta \subset \IwH^1$, so $v_\Omega$ is $L_p^\beta$-equivariant. But the $L_\beta$-action on both terms factors through projection to $L_p^\beta$, since $L_\beta \subset H(\A)$ (for $\cA(\Zp^\times,\cO_\Omega)$) and by definition (for $\cA_\Omega$).
	\end{proof}

	To define our distribution-valued evaluations, we take strong motivation from Definition \ref{def:classical evalution delta}, adapting it with the Borel $B$ in place of the parabolic $Q$. Let $\Omega \subset \sW_0^G$ be an affinoid in the pure weight space; we allow $\Omega = \{\lambda\}$ a single weight. Lemma \ref{lem:kappa omega equivariant} allows us to make the following definition:
	
	\begin{definition}\label{def:oc evalution delta}
		Let $\beta \in \Z_{\geq 0}$ and $\delta \in H(\A_f)$, representing $[\delta] \in \pi_0(X_\beta)$. The \emph{overconvergent evaluation map of level $p^\beta$ at $[\delta]$} is the map
		\[
		\mathrm{Ev}_{B,\beta,[\delta]}^{\Omega} \defeq \mathrm{Ev}_{B,\beta,[\delta]}^{\cD_{\Omega}, \kappa_{\Omega}} = \delta * \left[\kappa_\Omega \circ \mathrm{Ev}_{B,\beta,\delta}^{\cD_\Omega}\right] : \hc{t}(S_K,\cD_\Omega) \longrightarrow \cD(\Zp^\times,\cO_\Omega).
		\]
	\end{definition}
	
	This is well-defined and independent of the choice of $\delta$ representing $[\delta]$ by Proposition \ref{prop:ind of delta}. Here we also use that $H(\Q)$ and $H_\infty^\circ$ act trivially on $\cD(\Zp^\times,\cO_\Omega)$.
	
	\begin{proposition}\label{prop:evs main commutative diagram}
		Suppose $\beta \geq 1$. Then for every $\lambda \in \Omega$, and $j \in \mathrm{Crit}(\lambda)$, we have a commutative diagram
		\[
		\xymatrix@C=18mm{
			\hc{t}(S_K,\sD_\Omega) \ar[d]^-{\mathrm{Ev}_{B,\beta,[\delta]}^\Omega}\ar[r]^-{\mathrm{sp}_\lambda} & \hc{t}(S_K,\sD_\lambda(L)) \ar[d]^-{\mathrm{Ev}_{B,\beta,[\delta]}^\lambda}\ar[r]^{r_\lambda} &    \hc{t}(S_K,\sV_\lambda^\vee(L))  \ar[d]^-{\cE_{B,\beta,[\delta]}^{j,\sw}}\\
			\cD(\Zp^\times,\cO_\Omega)\ar[r]^{\mathrm{sp}_\lambda} &
			\cD(\Zp^\times,L)\ar[r]^{\mu \mapsto \mu(z^j)} &
			L}
		\]
	\end{proposition}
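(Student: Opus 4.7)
The plan is to split the diagram vertically into two squares and reduce each to the combination of two inputs already developed: the functoriality of abstract evaluation maps in the coefficient module (Lemma \ref{lem:pushforward}), and the interpolation of branching laws (Proposition \ref{prop:distribution commute}).

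For the \emph{left square}, the specialisation $\mathrm{sp}_\lambda : \cD_\Omega \to \cD_\lambda(L)$ is a $\Delta_B$-equivariant $\cO_\Omega$-module map, since $\chi_\Omega$ specialises to $\lambda$ and the $\Delta_B$-action is defined in terms of $\chi_\Omega$. Applying Lemma \ref{lem:pushforward} to $\kappa = \mathrm{sp}_\lambda$ gives $\mathrm{Ev}_{B,\beta,\delta}^{\cD_\lambda} \circ (\mathrm{sp}_\lambda)_* = (\mathrm{sp}_\lambda)_* \circ \mathrm{Ev}_{B,\beta,\delta}^{\cD_\Omega}$ at the level of coinvariants. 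Post-composing with $\kappa_\Omega$ and $\kappa_\lambda$ respectively and invoking the left square of Proposition \ref{prop:distribution commute} (which says $\kappa_\lambda \circ \mathrm{sp}_\lambda = \mathrm{sp}_\lambda \circ \kappa_\Omega$ on $\cD^1$) gives the identity before the $\delta$-twist; since the $H(\A)$-action on $\cD(\Zp^\times,\cO_\Omega)$ of \eqref{eq:H(A) action A(Zp*)} is compatible with $\mathrm{sp}_\lambda$, twisting by $\delta$ preserves the equality, closing the left square.

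For the \emph{right square}, unpacking Definition \ref{def:classical evalution delta} identifies $\cE_{B,\beta,[\delta]}^{j,\sw}$ with $\mathrm{Ev}_{B,\beta,[\delta]}^{V_\lambda^\vee,\kappa_{\lambda,j}}$. The dual restriction $r_\lambda : \cD_\lambda \to V_\lambda^\vee$ is $\Delta_B$-equivariant for the $*$-action (\S\ref{sec:slope-decomp}), so a second application of Lemma \ref{lem:pushforward}, now with $\kappa = r_\lambda$, gives $\mathrm{Ev}_{B,\beta,\delta}^{V_\lambda^\vee} = (r_\lambda)_* \circ \mathrm{Ev}_{B,\beta,\delta}^{\cD_\lambda}$. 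Post-composing with $\kappa_{\lambda,j}$ and using the right square of Proposition \ref{prop:distribution commute} (i.e.\ $\kappa_{\lambda,j} \circ r_\lambda = \mathrm{ev}_{z^j} \circ \kappa_\lambda$ on $\cD_\lambda^1$), followed by a $\delta$-twist, closes the right square. Pasting the two squares finishes the proof.

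The principal obstacle is a support issue: Proposition \ref{prop:distribution commute} only applies to distributions lying in $\cD^1$, so we must verify that the coinvariant-valued image of $\mathrm{Ev}_{B,\beta,\delta}^{\cD_\Omega}$ actually factors through $(\cD_\Omega^1)_{\Gamma_{\beta,\delta}^B}$. This comes from the $u^{-1}t_B^\beta$-twist $\tau_\beta^{B,\circ}$ built into Definition \ref{def:evaluation map}. A direct unravelling of the $*$-action shows that for any $\mu \in \cD_\Omega$, the distribution $u^{-1}t_B^\beta * \mu$ is supported on $\Iw^\beta \subset \Iw^1$ whenever $\beta \geq 1$: translating by $t_B^\beta$ contracts the unipotent support into $N(p^\beta\Zp)$, and right-multiplication by $u^{-1}$ then brings this into the coset $N(p^\beta\Zp) \cdot u = N^\beta(\Zp)$ governing $\Iw^\beta$ via the Iwahori factorisation. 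Once this support property is established, the two squares commute as described and the full diagram follows.
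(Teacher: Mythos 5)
Your proposal is correct and takes essentially the same route as the paper: two applications of Lemma \ref{lem:pushforward} (to $\mathrm{sp}_\lambda$ and $r_\lambda$), Proposition \ref{prop:distribution commute} together with its $\IwH^1$-equivariance (Lemma \ref{lem:big diagram equivariant}) to close the diagrams after the $\delta$-twist, and the crucial support observation that the $u^{-1}t_p^\beta$-twist lands distributions in $\cD_\Omega^\beta \subset \cD_\Omega^1$ (the paper's Lemma \ref{lem:Ev support in D_beta}, proved exactly by the mechanism you describe). The only cosmetic differences are that you split the diagram into two squares where the paper pastes one $3\times 3$ diagram together, and a small slip in phrasing -- when dualising, it is right-multiplication by $u$ (not $u^{-1}$) on the argument of the test function that brings $t_p^\beta n t_p^{-\beta}$ into $N^\beta(\Zp)$ -- but your understanding of the mechanism is plainly correct.
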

	\begin{proof}
		First note that applying Lemma \ref{lem:pushforward} first to $\cD_\Omega \xrightarrow{\mathrm{sp}_\lambda} \cD_\lambda(L)$ and then $\cD_\lambda(L) \xrightarrow{r_\lambda} V_\lambda^\vee(L)$, we have a commutative diagram
		\begin{equation}\label{eq:functorial}
			\xymatrix@C=18mm{
				\hc{t}(S_K,\sD_\Omega) \ar[d]^-{\mathrm{Ev}_{B,\beta,\delta}^{\cD_\Omega}}\ar[r]^-{\mathrm{sp}_\lambda} & \hc{t}(S_K,\sD_\lambda(L)) \ar[d]^-{\mathrm{Ev}_{B,\beta,\delta}^{\cD_\lambda}}\ar[r]^{r_\lambda} &    \hc{t}(S_K,\sV_\lambda^\vee(L))  \ar[d]^-{\mathrm{Ev}_{B,\beta,\delta}^{V^{\vee}_\lambda}}\\
				(\cD_\Omega)_{\Gamma_{\beta,\delta}}\ar[r]^{\mathrm{sp}_\lambda} &
				\cD_\lambda(L)_{\Gamma_{\beta,\delta}}\ar[r]^{r_\lambda} &
				V_\lambda^\vee(L)_{\Gamma_{\beta,\delta}}.
			}
		\end{equation}
		Recall $\cD_\Omega^1 \subset \cD_\Omega$ from Definition \ref{def:distribution support}. Since $\beta \geq 1$, we have $\delta^{-1}\Gamma_{\beta,\delta}\delta \subset \IwH^1$ from \eqref{eq:Iw_H^1}, so the action of $\Gamma_{\beta,\delta}$ on $\cD_\Omega$ preserves $\cD_\Omega^1$, and we can consider $(\cD_\Omega^1)_{\Gamma_{\beta,\delta}}$. Moreover, $H(\Q)$ (hence $\Gamma_{\beta,\delta}$) acts trivially on $\cD(\Zp^\times,\cO_\Omega)$ and $L$, and $\kappa_\Omega$ and $\kappa_{\lambda,j}$ are $\IwH^1$-equivariant; combining, each of these maps factors through the coinvariants, giving maps $(\cD_\Omega^1)_{\Gamma_{\beta,\delta}} \to \cD(\Zp^\times,\cO_\Omega)$ and $V_{\lambda}^\vee(L)_{\Gamma_{\beta,\delta}} \to L$. Then by Proposition \ref{prop:distribution commute} (for the top squares) and Lemma \ref{lem:big diagram equivariant} (for the bottom squares) we have a commutative diagram
		\begin{equation}\label{eq:minus delta}
			\xymatrix@C=24mm{
				(\cD_\Omega^1)_{\Gamma_{\beta,\delta}} \ar[d]^-{\kappa_\Omega}\ar[r]^-{\mathrm{sp}_\lambda} & \cD_\lambda^1(L)_{\Gamma_{\beta,\delta}} \ar[d]^-{\kappa_\lambda}\ar[r]^{r_\lambda} &    V_\lambda^\vee(L)_{\Gamma_{\beta,\delta}}  \ar[d]^-{\kappa_{\lambda,j}}\\
				\cD(\Zp^\times,\cO_\Omega)\ar[r]^{\mathrm{sp}_\lambda}\ar[d]^-{\delta * -} &
				\cD(\Zp^\times,L)\ar[r]^{\mu \mapsto \mu(z^j)}\ar[d]^-{\delta * -} &
				L\ar[d]^-{\delta * -}\\
				\cD(\Zp^\times,\cO_\Omega)\ar[r]^{\mathrm{sp}_\lambda} &
				\cD(\Zp^\times,L)\ar[r]^{\mu \mapsto \mu(z^j)} &
				L.
			}
		\end{equation}
		Since $\mathrm{Ev}_{B,\beta,[\delta]}^\Omega = \delta * [\kappa_\Omega \circ \mathrm{Ev}_{B,\beta,\delta}^{\cD_\Omega}]$ and $\cE_{B,\beta,[\delta]}^{j,\sw} = \delta*[\kappa_{\lambda,j} \circ \mathrm{Ev}_{B,\beta,\delta}^{V_\lambda^\vee}]$, we can complete the proof by combining \eqref{eq:functorial} with \eqref{eq:minus delta}. This is possible by Lemma \ref{lem:Ev support in D_beta} below, noting if $\beta \geq 1$, then $\cD_{\Omega}^\beta \subset \cD_\Omega^1$.
	\end{proof}
	
	\begin{lemma} \label{lem:Ev support in D_beta}
		Let $\beta \geq 1$ and $\Phi \in \hc{t}(S_K,\sD_\Omega)$. We have $\mathrm{Ev}_{B,\beta,\delta}^{\cD_\Omega}(\Phi) \in (\cD_\Omega^\beta)_{\Gamma_{\beta,\delta}}.$
	\end{lemma}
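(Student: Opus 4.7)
The plan is to trace through the construction of $\mathrm{Ev}_{B,\beta,\delta}^{\cD_\Omega}$ in Definition \ref{def:evaluation map} and locate where the support condition arises. The only step in that composition which transforms the coefficient module is the twisting operator $\tau_\beta^{B,\circ}$, which on fibres is the map $\cD_\Omega \to \cD_\Omega$ sending $\mu \mapsto u^{-1} t_B^\beta * \mu = u^{-1} t_p^\beta * \mu$. My key claim is that the image of this operator already lies in the subspace $\cD_\Omega^\beta$; granting this, the subsequent steps $c_\delta^*$, the quotient $\mathrm{coinv}_{\beta,\delta}^B$, and cap product with $\theta_\delta^B$ are all internal to the coinvariants of $\cD_\Omega^\beta$ once we verify that the $\Gamma_{\beta,\delta}^B$-action stabilises this subspace. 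That stabilisation is routine: by construction $\delta^{-1}\Gamma_{\beta,\delta}^B\delta$ maps into $L_p^{B,\beta}$ at $p$, and a direct check using the Iwahori factorisation of $L_p^{B,\beta}$ (provided by Lemma \ref{lem:det 1 mod beta} and Corollary \ref{cor:B inside Q}) shows that right multiplication by $L_p^{B,\beta}$ preserves $\Iw^\beta$, whence the induced action preserves $\cD_\Omega^\beta$.

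The heart of the argument is therefore to establish that $u^{-1} t_p^\beta * \mu \in \cD_\Omega^\beta$ for every $\mu \in \cD_\Omega$. By Lemma \ref{lem:support alternative}, it suffices to show that $(u^{-1} t_p^\beta * \mu)(f) = 0$ for every $f \in \cA_\Omega$ with support away from $\Iw^\beta$. Unwinding the $*$-action of $\Delta_p$ on $\cD_\Omega$ from \S\ref{sec:slope-decomp 2}, we compute
\[
(u^{-1} t_p^\beta * \mu)(f) \ =\ (t_p^\beta * \mu)(u * f) \ =\ \mu\bigl(b \mapsto f(t_p^\beta b t_p^{-\beta}\,u)\bigr),
\]
where the right-hand function is defined on $B(\Zp)$ and extended to $\Iw$ via the character $\chi_\Omega$. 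Since $t_p = \mathrm{diag}(p^{2n-1},\dots,p,1)$ has strictly decreasing diagonal exponents, conjugation by $t_p^\beta$ contracts $N(\Zp)$ into $N(p^\beta\Zp)$ (the $(i,j)$-entry for $i<j$ is multiplied by $p^{\beta(j-i)}$), and clearly commutes with $T(\Zp)$. Thus for any $b \in B(\Zp)$ one has $t_p^\beta b t_p^{-\beta} \in T(\Zp)\cdot N(p^\beta\Zp)$, and consequently
\[
t_p^\beta b t_p^{-\beta}\, u \ \in\ T(\Zp) \cdot N(p^\beta\Zp)\cdot u \ =\ T(\Zp) \cdot N^\beta(\Zp) \ \subset\ \Iw^\beta.
\]
Since $f$ vanishes on $\Iw^\beta$ by hypothesis, the function $b \mapsto f(t_p^\beta b t_p^{-\beta} u)$ vanishes identically on $B(\Zp)$; as any element of $\cA_\Omega$ is determined by its restriction to $B(\Zp)$ via the transformation law \eqref{eq:iwahori induction}, it vanishes on all of $\Iw$, and so $\mu$ kills it.

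Putting these two ingredients together, the class obtained after the twist $\tau_\beta^{B,\circ}$ in Definition \ref{def:evaluation map} is naturally valued in the subsystem with fibre $\cD_\Omega^\beta$, and thus its image under the remaining maps lies in $(\cD_\Omega^\beta)_{\Gamma_{\beta,\delta}^B}$. The main obstacle is purely bookkeeping: carefully tracking that the left $*$-action of $u^{-1} t_p^\beta$ on $\cD_\Omega$ (where $u \in \Iw$ but $t_p^\beta$ comes from the semigroup part of $\Delta_p$) expands correctly so as to produce the argument $t_p^\beta b t_p^{-\beta} u$, which is what forces the support into $\Iw^\beta$. Once this unwinding is done correctly, the contraction property of $t_p^\beta$ does all the work.
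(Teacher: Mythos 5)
Your proposal is correct and follows essentially the same route as the paper's proof: unwind the $*$-action to reduce the claim to $u^{-1}t_p^\beta * \mu \in \cD_\Omega^\beta$, invoke Lemma \ref{lem:support alternative}, and then observe that conjugation by $t_p^\beta$ contracts $N(\Zp)$ into $N(p^\beta\Zp)$, forcing the argument $t_p^\beta b\,t_p^{-\beta} u$ into $\Iw^\beta$. Your version spells out the matrix entry calculation that the paper outsources to a citation, and explicitly records that $\Gamma_{\beta,\delta}^B$ stabilises $\cD_\Omega^\beta$ (which the paper defers to the proof of Proposition \ref{prop:evs main commutative diagram}), but these are expository rather than substantive differences.
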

	\begin{proof}
		Note that $\mathrm{Ev}_{B,\beta,\delta}^{\cD_\Omega}(\Phi) \in [(u^{-1} t_p^\beta) * \cD_\Omega]_{\Gamma_{\beta,\delta}}$ by construction of evaluation maps.  Thus it suffices to show that if $\mu \in \cD_\Omega$, then $u^{-1}t_p^\beta * \mu \in \cD_\Omega^\beta$. By Lemma \ref{lem:support alternative}, this is equivalent to showing that $(u^{-1}t_p^\beta * \mu)(f) = 0$ for all $f \in \cA_\Omega$ with support away from $\Iw^\beta$ (that is, such that $f(n) = 0$ for all $n \in N^\beta(\Zp)$). 
			
		Suppose $f \in \cA_\Omega$ has support away from $\Iw^\beta$. By definition (see \S\ref{sec:slope-decomp}), we have
		\[
			(u^{-1}t_p^\beta * \mu)(f) = \mu(ut_p^\beta * f),
		\] 
		where $(ut_p^\beta * f)(n) = f(t_p^\beta n t_p^{-\beta} u)$ for any $n \in N(\Zp)$.		It is easily seen that $t_p^\beta n t_p^{-\beta} \equiv 1_{2n} \newmod{p^\beta}$ (e.g.\ from \cite[\S2.5, Rem.\ 4.19]{BW20}). By definition of $N^\beta(\Zp)$ we deduce $t_p^\beta nt_p^{-\beta}u \in N^\beta(\Zp)$.  Since $f$ vanishes on $N^\beta(\Zp)$, we deduce that $ut_p^\beta * f$ vanishes on $N(\Zp)$, so is the zero function in $\cA_\Omega$. It follows that $(u^{-1}t_p^\beta * \mu)(f) = \mu(0) = 0$ for all $f$ with support away from $\Iw^\beta$, hence $u^{-1}t_p^\beta * \mu \in \cD_\Omega^\beta$ by Lemma \ref{lem:support alternative}. This proves the lemma (and thus Proposition \ref{prop:evs main commutative diagram}).
	\end{proof}

	\subsection{Further support conditions}
	
	Our ultimate goal is to interpolate the classical evaluation maps $\cE_{B,\chi}^{j,\eta_0}$ from \S\ref{sec:classical evaluations}. In Remark \ref{rem:classical diagram}, we described this map as a composition of four maps; and in Proposition \ref{prop:evs main commutative diagram}, we have used the branching laws of \S\ref{sec:branching laws} to interpolate the first two maps of this composition. We will combine over $\beta$ and $\delta$ to interpolate the final two maps in Remark \ref{rem:classical diagram}. First, we give a more precise description of the image of $\mathrm{Ev}_{\beta,[\delta]}^{\Omega}$.
	
	Note that for $\beta \geq 1$, we have decompositions
	\begin{equation}\label{eq:A decomposition}
		\cA(\Zp^\times,\cO_\Omega) = \bigoplus\limits_{\brep \in (\Z/p^\beta)^\times} \cA(\brep + p^\beta\Zp,\cO_\Omega), \hspace{12pt} \cD(\Zp^\times,\cO_\Omega) = \bigoplus\limits_{\brep \in (\Z/p^\beta)^\times} \cD(\brep + p^\beta\Zp,\cO_\Omega).
	\end{equation}
	As in Definition \ref{def:distribution support}, a distribution $\mu \in \cD(\Zp^\times,\cO_\Omega)$ lies in the summand $\cD(\brep + p^\beta\Zp,\cO_\Omega)$ if and only if $\mu(f) = \mu(f|_{\brep + p^\beta\Zp})$ for all $f \in \cA(\Zp^\times,\cO_\Omega)$.
	
	\begin{lemma}\label{lem:kappa omega 1+pbeta}
		If $\mu \in \cD_\Omega^\beta$, then $\kappa_\Omega(\mu) \in \cD(1+p^\beta\Zp,\cO_\Omega).$ 
	\end{lemma}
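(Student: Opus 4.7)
The plan is to reduce the desired support statement for $\kappa_\Omega(\mu)$ to a pointwise support statement for $v_\Omega(f)$, and then use Proposition \ref{prop:support in N1} applied to the generators $v_{(n),1}, v_{(n),2}$ of the branching law. First, I would establish an analogue of Lemma \ref{lem:support alternative} for distributions on $\Zp^\times$: because $1 + p^\beta\Zp$ is both open and closed in $\Zp^\times$, any $f \in \cA(\Zp^\times, \cO_\Omega)$ splits uniquely as $f = f|_{1+p^\beta\Zp} + f|_{\Zp^\times \setminus (1+p^\beta\Zp)}$, so showing $\kappa_\Omega(\mu) \in \cD(1+p^\beta\Zp, \cO_\Omega)$ is equivalent to showing $\kappa_\Omega(\mu)(f) = 0$ for every $f$ vanishing on $1 + p^\beta\Zp$.

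Second, I would unravel the definition $\kappa_\Omega(\mu)(f) = \mu(v_\Omega(f))$. Since $\mu \in \cD_\Omega^\beta$, Lemma \ref{lem:support alternative} reduces the problem to showing that for every such $f$ (vanishing on $1 + p^\beta\Zp$), the function $v_\Omega(f) \in \cA_\Omega$ has support away from $\Iw^\beta$. By the Iwahori decomposition $\Iw^\beta = \overline{N}(p\Zp) \cdot T(\Zp) \cdot N^\beta(\Zp)$ and the transformation law \eqref{eq:iwahori induction 2}, it suffices to check $v_\Omega(f)(n) = 0$ for every $n \in N^\beta(\Zp)$. By Definition \ref{def:v_Omega}, this value equals $w_{\chi_\Omega}(n) \cdot f\bigl(v_{(n),2}(n)/v_{(n),1}(n)\bigr)$, so the problem reduces to showing that $v_{(n),2}(n)/v_{(n),1}(n) \in 1 + p^\beta\Zp$ for every $n \in N^\beta(\Zp)$.

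Third, I would invoke Proposition \ref{prop:support in N1} separately for the two functions $v_{(n),1}$ and $v_{(n),2}$, each of which lies in an $H(\Zp)$-subrepresentation of $V_{\alpha_n}$ on which the branching law applies (for $j = 0$ and $j = -1$ respectively). Given the normalisation $v_{(i)}(u) = 1$ fixed at the start of \S\ref{sec:p-adic interpolation branching}, the proposition yields $v_{(n),i}(n) \in 1 + p^\beta\Zp$ for $i=1,2$ and any $n \in N^\beta(\Zp)$. Since $1 + p^\beta\Zp$ is a multiplicative subgroup of $\Zp^\times$, the ratio lies in $1 + p^\beta\Zp$, so $f$ vanishes there by hypothesis, finishing the proof.

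There is essentially no genuine obstacle; the statement is a clean consequence of the pointwise support control of $v_{(n),1}, v_{(n),2}$ established in \S\ref{sec:support conditions}. The only care needed is in the bookkeeping, specifically in checking that our normalisation conventions make the quotient $v_{(n),2}(n)/v_{(n),1}(n)$ equal to $1$ when $n = u$ (hence congruent to $1 \bmod p^\beta$ for $n \in N^\beta(\Zp)$) rather than to some other unit — but this follows immediately from $v_{(n),1}(u) = v_{(n),2}(u) = 1$.
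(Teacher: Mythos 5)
Your proof is correct and takes essentially the same approach as the paper's: both reduce the statement to behaviour on $N^\beta(\Zp)$ via the transformation law \eqref{eq:iwahori induction}, then invoke Proposition~\ref{prop:support in N1} to conclude $v_{(n),i}(n) \in 1+p^\beta\Zp$ and hence that the ratio $v_{(n),2}(n)/v_{(n),1}(n)$ lies in $1+p^\beta\Zp$. The only differences are cosmetic: you organise the reduction via the ``support away from'' criterion of Lemma~\ref{lem:support alternative} applied on both sides, whereas the paper shows directly that $v_\Omega(f)|_{\Iw^\beta} = v_\Omega(f|_{1+p^\beta\Zp})|_{\Iw^\beta}$; and you have the indices in ``$j=0$ and $j=-1$ respectively'' reversed (by \eqref{eqn:crit lambda} and Lemma~\ref{lem:branching law 2}, $v_{(n),1}$ corresponds to $j=-1$ and $v_{(n),2}$ to $j=0$), a harmless slip since Proposition~\ref{prop:support in N1} is applied to both in the same way.
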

	
	\begin{proof}
		Let $f \in \cA(\Zp^\times,\cO_\Omega)$. If $g \in N^\beta(\Zp)$, then by Proposition \ref{prop:support in N1}, we know $v_{(n),i}(g) \in 1+p^\beta\Zp$. Thus by the definition of $v_\Omega(f)$, we see $v_\Omega(f)(g) = v_\Omega(f|_{1+p^\beta\Zp})(g)$, that is, 
		\[
		v_\Omega(f)|_{N^\beta(\Zp)} = v_\Omega(f|_{1+p^\beta\Zp})|_{N^\beta(\Zp)}.
		\]
		By the transformation law \eqref{eq:iwahori induction}, the function $v_\Omega(f)|_{\Iw^\beta} \in \cA_\Omega$ depends only on $v_\Omega(f)|_{N^\beta(\Zp)}$,  so we deduce $v_\Omega(f)|_{\Iw^\beta} = v_\Omega(f|_{1+p\Zp})|_{\Iw^\beta}$. Thus if $\mu$ has support on $\Iw^\beta$, for any $f \in \cA(\Zp^\times,\cO_\Omega)$ we have
		\begin{align*}
			\kappa_\Omega(\mu)(f) = \mu[v_\Omega(f)] &= \mu[v_\Omega(f)|_{\Iw^\beta}]\\ 
			&= \mu[v_\Omega(f|_{1+p^\beta\Zp})|_{\Iw^\beta}] = \mu[v_\Omega(f|_{1+p^\beta\Zp})] = \kappa_\Omega(\mu)(f|_{1+p^\beta\Zp}),
		\end{align*}
		so $\kappa_\Omega(\mu)$ is supported on $1+p^\beta\Zp$, as required.
	\end{proof}
	
	Recall $\mathrm{pr}_\beta : \pi_0(X_\beta) \to (\Z/p^\beta)^\times$ from \eqref{eq:pr_beta}.
	\begin{corollary}\label{cor:ev support in beta}
		If $\Phi \in \hc{t}(S_K,\sD_\Omega)$, we have $\mathrm{Ev}_{B,\beta,[\delta]}^\Omega(\Phi) \in \cD(\brep + p^\beta\Zp,\cO_\Omega)$, where $\brep = \mathrm{pr}_\beta([\delta])$.
	\end{corollary}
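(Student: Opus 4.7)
The plan is to combine the support statements already proved in Lemmas \ref{lem:Ev support in D_beta} and \ref{lem:kappa omega 1+pbeta} with an explicit tracking of how the adelic translation action of $\delta$ shifts the support of a distribution on $\Zp^\times$. By definition,
\[
\mathrm{Ev}_{B,\beta,[\delta]}^{\Omega}(\Phi) = \delta * \bigl[\kappa_{\Omega} \circ \mathrm{Ev}_{B,\beta,\delta}^{\cD_\Omega}(\Phi)\bigr].
\]
First, by Lemma \ref{lem:Ev support in D_beta}, the inner evaluation lies in $(\cD_\Omega^\beta)_{\Gamma_{\beta,\delta}}$, i.e.\ has support on $\Iw^\beta$. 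Applying Lemma \ref{lem:kappa omega 1+pbeta} then shows
\[
	\mu \defeq \kappa_\Omega \circ \mathrm{Ev}_{B,\beta,\delta}^{\cD_\Omega}(\Phi) \in \cD(1+p^\beta\Zp, \cO_\Omega).
\]
So it remains to prove that if $\mu$ is supported on $1+p^\beta\Zp$, then $\delta * \mu$ is supported on $\brep + p^\beta\Zp$.

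For this, recall from \eqref{eq:H(A) action A(Zp*)} that for $\delta = (\delta_1,\delta_2) \in H(\A_f)$ and $f \in \cA(\Zp^\times,\cO_\Omega)$,
\[
(\delta^{-1} * f)(z) = \chi_{\cyc}(\det \delta_2)^{-\sw_\Omega} \cdot f\bigl(z\cdot \det(\delta_1)/\det(\delta_2)\bigr),
\]
where translation is interpreted via the isomorphism \eqref{eq:Zptimes adeles}. Moreover, recall from \eqref{eq:component group} and \eqref{eq:pr_beta} that $\mathrm{pr}_\beta([\delta])$ is precisely the image of $\det(\delta_1)/\det(\delta_2)$ under the projection $\A^\times \twoheadrightarrow \Cl(p^\beta m) \twoheadrightarrow (\Z/p^\beta)^\times$, i.e.\ $\brep$ is this image.

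By Lemma \ref{lem:support alternative}, it suffices to show $(\delta * \mu)(f) = 0$ for every $f \in \cA(\Zp^\times,\cO_\Omega)$ that vanishes on $\brep + p^\beta\Zp$. For such $f$ and any $z \in 1+p^\beta\Zp$, the translate $z \cdot \det(\delta_1)/\det(\delta_2)$ lies in $\brep + p^\beta\Zp$, since multiplication by $1+p^\beta\Zp$ preserves residue classes modulo $p^\beta$. Hence $(\delta^{-1}*f)(z) = 0$ for all $z \in 1+p^\beta\Zp$, and consequently $(\delta * \mu)(f) = \mu(\delta^{-1} * f) = 0$ because $\mu$ is supported on $1+p^\beta\Zp$. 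This gives the claim.

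No real obstacle is anticipated: the argument is purely a matter of unpacking the translation action and using the two preceding lemmas; the only point requiring minor care is the compatibility between the adelic translation in \eqref{eq:H(A) action A(Zp*)} and the identification of $\mathrm{pr}_\beta([\delta])$ with $\det(\delta_1)/\det(\delta_2) \newmod{p^\beta}$, which follows directly from the definitions.
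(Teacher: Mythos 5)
Your proposal is correct and follows essentially the same three-step route as the paper's own proof: first Lemma \ref{lem:Ev support in D_beta}, then Lemma \ref{lem:kappa omega 1+pbeta} (both feeding through the coinvariants), and finally the observation that the $\delta$-translation shifts support from $1+p^\beta\Zp$ to $\brep+p^\beta\Zp$. Your additional explicit unpacking of the last step via Lemma \ref{lem:support alternative} and the formula \eqref{eq:H(A) action A(Zp*)} is only a slightly more detailed rendering of the same argument.
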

	
	\begin{proof}
		Recall $\mathrm{Ev}_{B,\beta,[\delta]}^\Omega \defeq \delta *[\kappa_\Omega \circ \mathrm{Ev}_{B,\beta,\delta}^{\cD_\Omega}]$. By Lemma \ref{lem:Ev support in D_beta}, we have $\mathrm{Im}(\mathrm{Ev}_{B,\beta,[\delta]}^{\cD_\Omega}) \subset (\cD_\Omega^\beta)_{\Gamma_{\beta,\delta}}$. Since $\kappa_\Omega$ factors through the coinvariants, Lemma \ref{lem:kappa omega 1+pbeta} implies that $\kappa_\Omega[(\cD_\Omega^\beta)_{\Gamma_{\beta,\delta}}] \subset \cD(1+p^\beta\Zp,\cO_\Omega)$.
		
		Finally the action of $\delta$ on $\cD(\Zp^\times,\cO_\Omega)$ is by $\delta^{-1}$ on $\cA(\Zp^\times,\cO_\Omega)$, which includes translation on $z \in \Zp^\times$ by $\det(\delta_1\delta_2^{-1})$. As this is a representative of $\brep = \mathrm{pr}_\beta([\delta]) \in (\Z/p^\beta)^\times,$ translation by $\det(\delta_1\delta_2^{-1})$ sends $1+p^\beta\Zp$ to $\brep+p^\beta\Zp$. This induces a map $\delta * - : \cD(1+p^\beta\Zp,\cO_\Omega) \to \cD(\brep +p^\beta\Zp,\cO_\Omega)$. Combining all of the above gives the corollary.
	\end{proof}

	\subsection{Interpolation of classical evaluations}
	
	Let $\eta_0$ be any character of $(\Z/m)^\times$. For $\beta \geq 1$ and $\brep \in (\Z/p^\beta\Z)^\times,$ motivated by Definition \ref{def:ev chi}, we define a map
	\begin{align*}
		\mathrm{Ev}_{B,\beta, \brep}^{\Omega,\eta_0} : \htc(S_K, \sD_\Omega) &\longrightarrow \cD(\brep + p^\beta\Zp, \cO_\Omega)\\
		\Phi &\longmapsto \sum_{[\delta] \in \mathrm{pr}_{\beta}^{-1}(\brep)}\eta_{0}\big(\mathrm{pr}_2([\delta])\big) \  \mathrm{Ev}_{B,\beta, [\delta]}^\Omega(\Phi).
	\end{align*}
	Combining under \eqref{eq:A decomposition}, we finally obtain an evaluation map
	\begin{align}\label{eq:final evaluation}
		\mathrm{Ev}_{B,\beta}^{\Omega,\eta_0} \defeq \bigoplus_{\brep \in (\Z/p^\beta)^\times}& \mathrm{Ev}_{B,\beta,\brep}^{\Omega,\eta_0} : \htc(S_K, \sD_\Omega) \longrightarrow \cD(\Zp^\times, \cO_\Omega)\\
		\Phi\   &\longmapsto \sum_{[\delta] \in \pi_0(X_\beta)} \eta_0\big(\mathrm{pr}_2([\delta])\big) \  \times \left(\delta * \left[\kappa_\Omega \circ \mathrm{Ev}_{B,\beta,\delta}^{\cD_\Omega}(\Phi)\right]\right).\notag
	\end{align}

	\begin{remark}\label{rem:overconvergent diagram}
		We have an analogue of Remark~\ref{rem:classical diagram}; $\mathrm{Ev}_{B,\beta}^{\Omega,\eta_0}$ is the composition
		\begin{equation}\label{eq:explicit overconvergent}
			\xymatrix@R=10mm@C=1mm{
				\hc{t}(S_K,\sD_\Omega) \ar@/^3pc/[rrrrrrrrr]_-{\oplus \mathrm{Ev}^\Omega_{B,\beta,[\delta]}} \ar@/_3pc/[rrrrrrrrrrrrrr]^-{\ \ \ \ \ \ \ \ \ \ \ \ \ \sum_{\brep} \mathrm{Ev}_{B,\beta,\brep}^{\Omega,\eta_0}}\ar[rrrr]^-{\oplus\mathrm{Ev}_{B,\beta,\delta}^{\cD_\Omega}}  &&&& \displaystyle\bigoplus_{[\delta]}(\cD_\Omega^\beta)_{\Gamma_{\beta,\delta}} \ar[rrrrr]^-{\delta *\kappa_\Omega} &&&&&
				\displaystyle\bigoplus_{[\delta]}\cD(\mathrm{pr}_\beta([\delta]) + p^\beta\Zp,\cO_\Omega) \ar[rrrrr]^-{\sum_{\brep}\Xi_{\brep}^{\eta_0}} &&&&&
				\cD(\Zp^\times, \cO_\Omega),
			}
		\end{equation}
		where again $\Xi_{\brep}^{\eta_0}$ sends a tuple $(m_{[\delta]})_{[\delta]}$ to $\sum_{[\delta]\in\mathrm{pr}_\beta^{-1}(\brep)} \eta_0(\mathrm{pr}_2([\delta])) \times m_{[\delta]}$.
	\end{remark}
	
	Combining all of the results of this section, we finally deduce:
	
	\begin{proposition}\label{prop:galois evs commute main diagram}
		Suppose $\beta \geq 1$ and $\chi$ is a finite order Hecke character of conductor $p^\beta$. Then for every $\lambda \in \Omega$, and $j \in \mathrm{Crit}(\lambda)$, we have a commutative diagram
		\[
		\xymatrix@C=18mm{
			\hc{t}(S_K,\sD_\Omega) \ar[d]^-{\mathrm{Ev}_{B,\beta}^{\Omega,\eta_0}}\ar[r]^-{\mathrm{sp}_\lambda} & \hc{t}(S_K,\sD_\lambda(L)) \ar[d]^-{\mathrm{Ev}_{B,\beta}^{\lambda,\eta_0}}\ar[r]^{r_\lambda} &    \hc{t}(S_K,\sV_\lambda^\vee(L))  \ar[d]^-{\cE_{B,\chi}^{j,\eta_0}}\\
			\cD(\Zp^\times,\cO_\Omega)\ar[r]^{\mathrm{sp}_\lambda} &
			\cD(\Zp^\times,L)\ar[r]^{\mu \mapsto \mu\big[\chi(z)z^j\big]} &
			L.}
		\]
	\end{proposition}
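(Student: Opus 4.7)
The plan is to deduce this proposition by summing (i.e., averaging with appropriate characters) the pointwise commutative diagram of Proposition~\ref{prop:evs main commutative diagram} over the connected components $[\delta] \in \pi_0(X_\beta)$, and exploiting the support condition of Corollary~\ref{cor:ev support in beta} to convert evaluation at $\chi(z)z^j$ into $\chi(\brep)$ times evaluation at $z^j$.

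For the left-hand square, the commutativity is essentially formal. By Definition~\ref{def:oc evalution delta} and \eqref{eq:final evaluation}, both $\mathrm{Ev}_{B,\beta}^{\Omega,\eta_0}$ and $\mathrm{Ev}_{B,\beta}^{\lambda,\eta_0}$ are sums over $[\delta] \in \pi_0(X_\beta)$, weighted by the same scalars $\eta_0(\mathrm{pr}_2([\delta]))$, of the maps $\mathrm{Ev}_{B,\beta,[\delta]}^\Omega$ and $\mathrm{Ev}_{B,\beta,[\delta]}^\lambda$ respectively. Since $\mathrm{sp}_\lambda$ on both source and target is $\cO_\Omega$-linear and since the left square of Proposition~\ref{prop:evs main commutative diagram} commutes for each individual $[\delta]$, commutativity of the left square of our diagram follows by linearity.

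For the right-hand square, one argues as follows. Fix $\phi \in \hc{t}(S_K,\sD_\lambda(L))$. By Corollary~\ref{cor:ev support in beta}, for each $[\delta]$ the distribution $\mathrm{Ev}_{B,\beta,[\delta]}^\lambda(\phi) \in \cD(\Zp^\times,L)$ is supported on $\brep + p^\beta \Zp$, where $\brep \defeq \mathrm{pr}_\beta([\delta])$. Since $\chi$ has conductor $p^\beta$, it is constant on this coset with value $\chi(\brep)$, and hence
\[
\mathrm{Ev}_{B,\beta,[\delta]}^\lambda(\phi)\bigl(\chi(z) z^j\bigr) = \chi(\brep)\cdot \mathrm{Ev}_{B,\beta,[\delta]}^\lambda(\phi)(z^j).
\]
Applying Proposition~\ref{prop:evs main commutative diagram} to rewrite the second factor as $\cE_{B,\beta,[\delta]}^{j,\sw}(r_\lambda\phi)$, and summing over $[\delta]$ weighted by $\eta_0(\mathrm{pr}_2([\delta]))$, one obtains
\[
\mathrm{Ev}_{B,\beta}^{\lambda,\eta_0}(\phi)\bigl(\chi(z)z^j\bigr) = \sum_{[\delta] \in \pi_0(X_\beta)} \chi\big(\mathrm{pr}_\beta([\delta])\big)\, \eta_0\big(\mathrm{pr}_2([\delta])\big)\, \cE_{B,\beta,[\delta]}^{j,\sw}(r_\lambda\phi),
\]
which matches the definition of $\cE_{B,\chi}^{j,\eta_0}(r_\lambda\phi)$ via Definition~\ref{def:ev chi} (up to the convention for $\eta_0$ versus $\eta_0^{-1}$, which is a harmless rewriting once one fixes a sign convention).

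There is no substantive obstacle here: the bulk of the technical work was already carried out in \S\ref{sec:branching laws} (the $p$-adic interpolation of branching laws, yielding Proposition~\ref{prop:distribution commute}) and in Proposition~\ref{prop:evs main commutative diagram} and Corollary~\ref{cor:ev support in beta}. The present proposition is the bookkeeping step that packages these results into a single diagram that interpolates the critical $L$-value evaluation maps across the full $(n+1)$-dimensional pure weight space. The only mildly delicate point is checking that the support condition on $\mathrm{Ev}_{B,\beta,[\delta]}^\lambda(\phi)$ matches up correctly with the conductor of $\chi$, which is exactly the content of Corollary~\ref{cor:ev support in beta} combined with the hypothesis that $\chi$ has conductor $p^\beta$.
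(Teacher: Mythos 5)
Your proof is correct and takes essentially the same approach as the paper: both sum Proposition~\ref{prop:evs main commutative diagram} over $[\delta]$, and both invoke the support statement of Corollary~\ref{cor:ev support in beta} together with the identity $\int_{\Zp^\times}\chi(z)z^j\,d\mu=\sum_{\brep}\chi(\brep)\int_{\brep+p^\beta\Zp}z^j\,d\mu$ to reduce to $\cE_{B,\chi}^{j,\eta_0}$. The paper merely packages this as a gluing of two commutative ladders (\eqref{eq:galois ev commute 1} and \eqref{eq:galois ev commute 2}) rather than the element-wise computation you give, and the $\eta_0$ vs.\ $\eta_0^{-1}$ discrepancy you flag is indeed present in the paper's own Remarks~\ref{rem:classical diagram} and~\ref{rem:overconvergent diagram} but is harmless since $\eta_0=\mathbf{1}$ in all applications.
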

	\begin{proof}
		By combining Proposition \ref{prop:evs main commutative diagram} with Corollary \ref{cor:ev support in beta}, and taking a direct sum over $[\delta] \in \pi_0(X_\beta)$, there is a commutative diagram
		\begin{equation}\label{eq:galois ev commute 1}
			\xymatrix@C=18mm{
				\hc{t}(S_K,\sD_\Omega) \ar[d]^-{\mathrm{Ev}_{B,\beta,[\delta]}^\Omega}\ar[r]^-{\mathrm{sp}_\lambda} & \hc{t}(S_K,\sD_\lambda(L)) \ar[d]^-{\mathrm{Ev}_{B,\beta,[\delta]}^\lambda}\ar[r]^{r_\lambda} &    \hc{t}(S_K,\sV_\lambda^\vee(L))  \ar[d]^-{\cE_{B,\beta,[\delta]}^{j,\sw}}\\
				\bigoplus_{[\delta]} \cD(\brep + p^\beta\Zp,\cO_\Omega)\ar[r]^{\mathrm{sp}_\lambda} &
				\bigoplus_{[\delta]} \cD(\brep + p^\beta\Zp,L)\ar[r]^{\mu \mapsto \mu(z^j)} &
				\bigoplus_{[\delta]} L,}
		\end{equation}
		where $\brep = \mathrm{pr}_\beta([\delta]) \in (\Z/p^\beta)^\times$. Also, there is a commutative diagram
		\begin{equation}\label{eq:galois ev commute 2}
			\xymatrix@C=18mm{
				\bigoplus_{[\delta]} \cD(\brep + p^\beta\Zp, \cO_\Omega) \ar[r]^-{\mathrm{sp}_{\lambda}}\ar[d]^-{\Xi_{\brep}^{\eta_0}} &	
				\bigoplus_{[\delta]} \cD(\brep + p^\beta\Zp, L) \ar[r]^-{\mu \mapsto \mu(z^j)}\ar[d]^-{\Xi_{\brep}^{\eta_0}} &
				\bigoplus_{[\delta]} L\ar[d]^-{\Xi_{\brep}^{\eta_0}}\\
				\bigoplus_{\brep} \cD(\brep + p^\beta\Zp, \cO_\Omega) \ar[r]^-{\mathrm{sp}_{\lambda}}\ar[d]^-{\sum_{\brep}} &	
				\bigoplus_{\brep} \cD(\brep + p^\beta\Zp, L) \ar[r]^-{\mu \mapsto \mu(z^j)}\ar[d]^-{\sum_{\brep}} &
				\bigoplus_{\brep} L\ar[d]^-{(\ell_{\brep}) \mapsto \sum_{\brep}\chi(\brep)\ell_{\brep}}\\
				\cD(\Zp^\times,\cO_\Omega) \ar[r]^-{\mathrm{sp}_\lambda}& 
				\cD(\Zp^\times,L) \ar[r]^-{\mu \mapsto \mu\big[\chi(z)z^j\big]} &
				L,
			}
		\end{equation}
		where the direct sums are over $[\delta] \in \pi_0(X_\beta)$ and $\brep \in (\Z/p^\beta)^\times$. Indeed the top squares and bottom-left square all commute directly from the definitions; and the bottom right square commutes since for any $\mu \in \cD(\Zp^\times,L)$, we have
		\[
		\int_{\Zp^\times} \chi(z)z^j \cdot d\mu = \sum_{\brep \in (\Z/p^\beta)^\times} \chi(\brep) \int_{\brep+p^\beta\Zp} z^j \cdot \mu.
		\]
		Now, in line with Remarks \ref{rem:classical diagram} and \ref{rem:overconvergent diagram}, the proposition follows by combining \eqref{eq:galois ev commute 1} and \eqref{eq:galois ev commute 2}.
	\end{proof}
	
	\subsection{$p$-adic $L$-functions attached to RASCARs} \label{sec:p-adic L-functions}

	\begin{proposition}\label{prop:oc evaluation changing beta}
		If $\beta \geq 1$, then $\mathrm{Ev}_{B,\beta+1}^{\Omega,\eta_0} = \mathrm{Ev}_{B,\beta}^{\Omega,\eta_0} \circ U_p^\circ  : \hc{t}(S_K,\sD_\Omega) \to \cD(\Zp^\times,\cO_\Omega).$
	\end{proposition}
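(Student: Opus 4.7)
The proof reduces to an application of the abstract variation-in-$\beta$ result, Proposition \ref{prop:evaluations changing beta}, combined with a bookkeeping argument to track the $\eta_0$-twist through the partition of $\pi_0(X_{\beta+1})$ into fibers over $\pi_0(X_\beta)$.

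First, I will verify that Proposition \ref{prop:evaluations changing beta} applies in our setting, taking $P=B$, $M = \cD_\Omega$, $N = \cD(\Zp^\times,\cO_\Omega)$ and $\kappa = \kappa_\Omega$. The module $N$ carries an $H(\A)$-action via \eqref{eq:H(A) action A(Zp*)} for which both $H(\Q)$ and $H_\infty^\circ$ act trivially; the map $\kappa_\Omega$ is $L_\beta^B$-equivariant by Lemma \ref{lem:kappa omega equivariant}; and $K_p = \Iw = J_B$ is the parahoric for $B$. Finally, $U_B^\circ = U_p^\circ$ by construction. Hence, writing $\mathrm{pr}_\beta^{\mathrm{cyc}} : X_{\beta+1} \to X_\beta$ for the projection from Proposition \ref{prop:evaluations changing beta} (distinguishing it from the map \eqref{eq:pr_beta} of the same name), we obtain for every $[\delta] \in \pi_0(X_\beta)$ the identity
\[
\sum_{[\delta'] \in (\mathrm{pr}_\beta^{\mathrm{cyc}})^{-1}([\delta])} \mathrm{Ev}_{B,\beta+1,[\delta']}^{\Omega} = \mathrm{Ev}_{B,\beta,[\delta]}^{\Omega} \circ U_p^\circ.
\]

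Next, the key bookkeeping observation: under the identifications $\pi_0(X_{\beta+1}) = (\Z/p^{\beta+1}m)^\times \times (\Z/m)^\times$ and $\pi_0(X_\beta) = (\Z/p^\beta m)^\times \times (\Z/m)^\times$ from \eqref{eq:component group}, the map $\mathrm{pr}_\beta^{\mathrm{cyc}}$ acts by reduction on the first factor and identity on the second. Consequently, for any $[\delta'] \in \pi_0(X_{\beta+1})$ with image $[\delta] \in \pi_0(X_\beta)$, we have $\mathrm{pr}_2([\delta']) = \mathrm{pr}_2([\delta])$, so the $\eta_0$-factor is constant along each fiber.

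Unpacking the definition of $\mathrm{Ev}_{B,\beta+1}^{\Omega,\eta_0}$ given in \eqref{eq:final evaluation} and partitioning the outer sum along fibers of $\mathrm{pr}_\beta^{\mathrm{cyc}}$, we compute for any $\Phi \in \hc{t}(S_K,\sD_\Omega)$:
\[
\mathrm{Ev}_{B,\beta+1}^{\Omega,\eta_0}(\Phi) = \sum_{[\delta] \in \pi_0(X_\beta)} \eta_0\bigl(\mathrm{pr}_2([\delta])\bigr) \sum_{[\delta'] \in (\mathrm{pr}_\beta^{\mathrm{cyc}})^{-1}([\delta])} \mathrm{Ev}_{B,\beta+1,[\delta']}^{\Omega}(\Phi) = \sum_{[\delta] \in \pi_0(X_\beta)} \eta_0\bigl(\mathrm{pr}_2([\delta])\bigr) \, \mathrm{Ev}_{B,\beta,[\delta]}^{\Omega}(U_p^\circ \Phi),
\]
where the second equality applies Proposition \ref{prop:evaluations changing beta} fiber-by-fiber. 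The right-hand side is exactly $\mathrm{Ev}_{B,\beta}^{\Omega,\eta_0}(U_p^\circ \Phi)$, completing the proof.

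There is no real obstacle here beyond disentangling the two conflicting uses of the symbol $\mathrm{pr}_\beta$; all mathematical content is already packaged in Proposition \ref{prop:evaluations changing beta}, whose hypotheses have been arranged in the preceding sections precisely to make this formal manipulation possible.
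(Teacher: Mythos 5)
Your proof is correct and takes the same route as the paper, whose own proof simply invokes Proposition \ref{prop:evaluations changing beta}. You have supplied the bookkeeping the paper leaves implicit: verifying the hypotheses of the abstract variation-in-$\beta$ result for $M = \cD_\Omega$, $N = \cD(\Zp^\times,\cO_\Omega)$, $\kappa = \kappa_\Omega$, and checking that the $\eta_0$-twist is constant along the fibres of the component projection $\pi_0(X_{\beta+1})\to\pi_0(X_\beta)$, so that the fibre-by-fibre identity assembles into the stated one.
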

	\begin{proof}
		This follows from Proposition \ref{prop:evaluations changing beta} (cf.\ \cite[Prop.\ 6.16]{BDW20}).
	\end{proof}
	
	\begin{definition}
		Let $\Phi \in \hc{t}(S_K,\sD_\Omega)$ be a $U_p^\circ$-eigenclass with invertible eigenvalue $\alpha_p^\circ \in \cO_\Omega^\times$, fix $\beta \geq 1$, and define
		\[
		\mu^{\Omega,\eta_0}(\Phi) \defeq (\alpha_p^\circ)^{-\beta} \cdot \mathrm{Ev}_{B,\beta}^{\Omega,\eta_0}(\Phi).
		\]
		By Proposition \ref{prop:oc evaluation changing beta}, this is independent of the choice of $\beta$.
	\end{definition}
	
	Now let $\tilde\pi = (\pi,\alpha)$ be a $p$-refined RACAR of weight $\lambda$ satisfying Conditions~\ref{cond:running assumptions}; so it admits an $(|\cdot|^{\sw},\psi)$-Shalika model, with $\sw$ the purity weight of $\lambda$. In particular, we have $\eta_0 = \mathbf{1}$ trivial.  For $K$ as in \eqref{eq:K}, let $\phi_{\tilde{\pi}}^\pm \in \hc{t}(S_{K},\sV_\lambda^\vee(L))\locpi^\pm$ as \eqref{eq:phi}. By (C4) and Theorem \ref{thm:control}, $\tilde{\pi}$ is non-critical (Definition~\ref{def:non-Q-critical}), hence $\phi_{\tilde{\pi}}^\pm$ lifts uniquely to an eigenclass $\Phi_{\tilde{\pi}}^\pm \in \hc{t}(S_K,\sD_\lambda)^\pm\locpi$ with $U_{p}^\circ$-eigenvalue $\alpha_{p}^\circ$, recalling $\alpha_{p}^\circ = \lambda(t_{p})\alpha_{p}$. 
	
	For $h \in \Q_{\geq 0}$, recall the notion of $\mu \in \cD(\Zp^\times,\cO_\Omega)$ having \emph{growth of order $h$} \cite[Def.\ 3.10]{BDJ17}.

	\begin{definition}\label{def:non-critical slope Lp}
		Let $[\det(-w_n)] \in \cD(\Zp^\times,L)$ be the (bounded) Dirac measure defined by $\int_{\Zp^\times} f \cdot d[\det(-w_n)] = f(\det(-w_n))$.
		
		Let $\cL_p^\pm(\tilde\pi) \defeq \mu^{\lambda,\mathbf{1}}(\Phi_{\tilde\pi}^\pm) \in \cD(\Zp^\times,L).$ Let $\Phi_{\tilde\pi} = \Phi_{\tilde\pi}^+ + \Phi_{\tilde\pi}^-$, and define the \emph{$p$-adic $L$-function} attached to $\tilde{\pi}$ to be 
		\[
			\cL_p(\tilde\pi) = [\det(-w_n)] \cdot \mu^{\lambda,\mathbf{1}}(\Phi_{\tilde\pi}) = [\det(-w_n)] \cdot(\cL_p^+(\tilde\pi) + \cL_p^-(\tilde\pi)).
		\] 
		(The presence of $[\det(-w_n)]$ will become clear). Let $\sX \defeq (\mathrm{Spf}\ \Zp\lsem \Zp^\times \rsem )^{\mathrm{rig}}$. The Amice transform allows us to consider $\cL_p(\tilde\pi, -) : \sX \to \overline{\Q}_p$ as an element of $\cO(\sX)$.
	\end{definition}

	\begin{definition}[Modified Euler factors]  \label{def:modified euler factors} Let $\chi$ be a finite order character of conductor $p^\beta$ with $\beta \geq 0$, and $j \in \mathrm{Crit}(\lambda)$. At $p$, let 
				\[
			e_p(\tilde\pi,\chi,j) \defeq \bigg(\frac{p^{nj + \tfrac{n^2-n}{2}}}{\alpha_{p,n}}\bigg)^\beta \tau(\chi)^n \ \text { for }\chi \neq \mathbf{1}, \qquad \text{and} \qquad e_p(\tilde\pi,\mathbf{1},j) = \prod_{i=n+1}^{2n}
			\frac{1-p^{-1}\alpha_{i,j}^{-1}}{1 - \alpha_{i,j}},
			\]
			where $\alpha_{i,j} = \UPS_i(p)/p^{j+1/2}$, recalling $\pi_p = \Ind_B^G \UPS$. At $\infty$, let
			\[
				e_\infty(\pi, \chi, j) \defeq  i^{-jn}  \cdot L(\pi_\infty \otimes \chi_\infty, j+1/2).
			\]
	\end{definition}

	\begin{theorem} \label{thm:non-ordinary} 	Let $\tilde\pi$ be a $p$-refined RACAR of weight $\lambda_\pi$ satisfying Conditions \ref{cond:running assumptions}. 
		The distribution $\cL_p({\tilde{\pi}})$ has growth of order $h_p = v_p(\alpha_p^\circ)$. For every finite order character $\chi$ of $\Q^\times\backslash\A^\times$ of conductor $p^\beta$ with $\beta \in \Z_{\geq 0}$, and all $j \in \mathrm{Crit}(\lambda)$, we have
		\begin{align*}
			\cL_p(\tilde\pi, \chi(z)z^j) \defeq \int_{\Zp^\times} \chi(z) z^j \cdot d\cL_p(\tilde\pi)
			=  \gamma_{(pm)} \cdot e_p(\tilde\pi,\chi,j) \cdot e_\infty(\pi,\chi,j) \cdot \frac{L^{(p)}\big(\pi\otimes\chi, j+\tfrac{1}{2}\big)}{\Omega_\pi^{\pm}},
		\end{align*}
		where $\pm1 = \chi_\infty(-1)(-1)^j$. Here $\gamma_{(pm)}$ is defined before Corollary \ref{cor:L-value}, and all further notation is as in Definition \ref{def:modified euler factors} and Theorem~\ref{thm:critical value}. 
	\end{theorem}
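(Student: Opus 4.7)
The proof proceeds in three stages: a growth bound, verification of the interpolation formula for ramified characters, and extension to the trivial character via comparison with the parahoric-level construction of \cite{BDW20}.

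For the growth, the class $\Phi_{\tilde\pi}$ lies in the finite slope part $\hc{t}(S_K,\sD_\lambda)^{\leq h_p}$ of the overconvergent cohomology, which is a finite-type $L$-module by \S\ref{sec:slope-decomp}. The map $\mathrm{Ev}_{B,\beta}^{\lambda,\mathbf{1}}$ is continuous and $\cL_p(\tilde\pi)$ is obtained from $\Phi_{\tilde\pi}$ by $\beta$-fold application of evaluation operators normalised by $(\alpha_p^\circ)^{-1}$. A standard admissibility estimate (identical to \cite[Thm.~6.23]{BDW20}) then shows $\cL_p(\tilde\pi)$ has growth of order $v_p(\alpha_p^\circ) = h_p$.

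For a character $\chi$ of conductor $p^\beta$ with $\beta \geq 1$, the plan is to unwind definitions and apply Proposition \ref{prop:galois evs commute main diagram} with $\Omega=\{\lambda\}$. Writing $\Phi_{\tilde\pi}=\Phi_{\tilde\pi}^++\Phi_{\tilde\pi}^-$ and using that $\det(-w_n) \in \{\pm 1\}$ so that $(\det(-w_n))^j$ contributes only a sign, one obtains
\[
\cL_p(\tilde\pi,\chi(z)z^j) \;=\; \chi(\det(-w_n))\cdot (\alpha_p^\circ)^{-\beta}\cdot \cE_{B,\chi}^{j,\mathbf{1}}(\phi_{\tilde\pi}^+ + \phi_{\tilde\pi}^-).
\]
Corollary \ref{cor:L-value} then evaluates the right-hand side: one of the two terms vanishes by the sign condition $(-1)^j\chi_\infty(-1)=\pm 1$, and the other gives the formula $\gamma_{(pm)}\cdot e_p(\tilde\pi,\chi,j)\cdot e_\infty(\pi,\chi,j)\cdot L^{(p)}(\pi\otimes\chi,j+\tfrac12)/\Omega_\pi^\pm$, after matching the explicit factor $Q'(\pi,\chi,j)\cdot(\alpha_p^\circ/\alpha_{p,n})^\beta$ from Corollary \ref{cor:L-value} with the stated $e_p(\tilde\pi,\chi,j)$ and checking that the archimedean zeta integral $\zeta_j(W_\infty^\pm)$ agrees with $e_\infty(\pi,\chi,j)$ up to the constants absorbed into $\Omega_\pi^\pm$ (this archimedean identification is the content of \cite[Thm.~5.5]{Sun19} combined with the explicit choice of branching law in \S\ref{sec:classical branching laws}).

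The case of unramified $\chi$ (in particular $\chi=\mathbf{1}$, $\beta=0$) is the most delicate, since Proposition \ref{prop:galois evs commute main diagram} only applies for $\beta \geq 1$. The strategy is comparison with the parahoric-level $p$-adic $L$-function $\cL_p^Q(\tilde\pi^Q) \in \cD(\Zp^\times,L)$ constructed in \cite{BDW20}, where $\tilde\pi^Q=(\pi,\alpha(U_{p,n}))$ is the parahoric refinement underlying the spin $p$-refinement $\tilde\pi$. Both $\cL_p(\tilde\pi)$ and $\cL_p^Q(\tilde\pi^Q)$ are admissible distributions of order $h_p$, and by the ramified interpolation just established (for $\cL_p(\tilde\pi)$) and its parahoric analogue in \cite{BDW20} (for $\cL_p^Q(\tilde\pi^Q)$), they agree on all characters $\chi(z)z^j$ with $\chi$ of positive conductor and $j\in \mathrm{Crit}(\lambda)$. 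Such characters form an Amice--V\'elu--Vishik test set, so the uniqueness theorem for admissible distributions forces $\cL_p(\tilde\pi)=\cL_p^Q(\tilde\pi^Q)$. The required interpolation at $\chi=\mathbf{1}$ now follows by specialising the parahoric-level interpolation formula, whose Euler factor at $p$ is dictated precisely by the unramified local zeta computation of Proposition \ref{p:local-zeta}; a direct comparison yields the expression for $e_p(\tilde\pi,1,j)$ in Definition \ref{def:modified euler factors}.

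The main obstacle is the bookkeeping in the comparison step, specifically the reconciliation of normalisations between Iwahoric and parahoric constructions. The Iwahori $p$-adic $L$-function is normalised by $(\alpha_p^\circ)^{-\beta}$, the parahoric one by $(\alpha_{p,n}^\circ)^{-\beta}$, and the respective local test vectors differ. The identification passes through the factor $(\alpha_p/\alpha_{p,n})^\beta$ appearing in Proposition \ref{prop:spin refinements are shalika}, and crucially exploits the spin relation $\alpha_{p,n+s}=\eta(p)^s\alpha_{p,n-s}$ to match the two Euler factors. Once these accounting steps are executed, the unramified interpolation drops out cleanly from Proposition \ref{p:local-zeta}.
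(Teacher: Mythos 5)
Your overall strategy coincides with the paper's: establish growth as in \cite{BDW20}, prove ramified interpolation by combining Proposition~\ref{prop:galois evs commute main diagram} with Corollary~\ref{cor:L-value}, and recover the unramified case by comparing with the parahoric $p$-adic $L$-function of \cite{BDW20} via uniqueness of admissible distributions (this is exactly Propositions~\ref{cor:BDW} and~\ref{prop:unramified interpolation}). However, there is a genuine gap in your treatment of the archimedean factor. You claim the archimedean zeta integral $\zeta_j(W_\infty^\pm)$ ``agrees with $e_\infty(\pi,\chi,j)$ up to the constants absorbed into $\Omega_\pi^\pm$'' — this cannot be right, because the discrepancy is the factor $\det(-w_n)^j$, which depends on $j$ and therefore cannot be absorbed into a $j$-independent period. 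The correct statement is $\zeta_j(W_\infty^\pm) = \det(-w_n)^j \cdot e_\infty(\pi,\chi,j)$, where $\det(-w_n)^j$ arises precisely from the normalisation discrepancy $v_{\lambda,j}^{\mathrm{BDW}} = \det(-w_n)^j \cdot v_{\lambda,j}$ between the branching vectors of \cite{BDW20} (normalised by $v_{\lambda,j}^{\mathrm{BDW}}\smallmatrd{1}{1}{0}{1}=\det(-w_n)^j$) and those of \S\ref{sec:branching laws} here (normalised by $v_{\lambda,j}(u)=1$). The Dirac measure $[\det(-w_n)]$ in Definition~\ref{def:non-critical slope Lp} is there precisely to cancel both $\chi(\det(-w_n))$ (from Corollary~\ref{cor:L-value}) and $\det(-w_n)^j$ (from the branching-law comparison). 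In your displayed formula you extracted only the $\chi(\det(-w_n))$ from the Dirac measure and waved away the $\det(-w_n)^j$, which leaves the factors mismatched. Relatedly, the citation of \cite[Thm.~5.5]{Sun19} for this identification is misplaced: that theorem supplies only non-vanishing of the archimedean integral, whereas the explicit evaluation needed here is \cite[Thm.~5.17]{BDW20}, resting on the period relations of Jiang--Sun--Tian \cite{JST}.

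A secondary point: in the unramified comparison you assert both $\cL_p(\tilde\pi)$ and $\cL_p(\tilde\pi^Q)$ are admissible of order $h_p = v_p(\alpha_p^\circ)$ and invoke Amice--V\'elu--Vishik uniqueness. For that uniqueness to apply to the ramified test set $\{\chi(z)z^j : \chi \text{ ramified},\, j\in\mathrm{Crit}(\lambda)\}$, the relevant growth bound must be strictly less than $\#\mathrm{Crit}(\lambda_\pi) = \lambda_n-\lambda_{n+1}+1$; what (C4) actually guarantees is $v_p(\alpha_{p,n}^\circ) < \#\mathrm{Crit}(\lambda_\pi)$ (see the inequality \eqref{eq:very small slope}), not $v_p(\alpha_p^\circ) < \#\mathrm{Crit}(\lambda_\pi)$. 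You should be explicit that it is the parahoric bound $v_p(\alpha_{p,n}^\circ)$, coming from (C4) at $r=n$, that makes the uniqueness argument legitimate, rather than the generally larger $h_p$.
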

	
	\begin{proof}
		The growth property follows as in \cite[Prop.\ 6.20]{BDW20}. The Dirac measure is bounded so does not affect the growth.
		
		For ramified $\chi$, the interpolation is analogous to \cite[Thm.\ 6.23]{BDW20}.  In particular, 
		we have
		\begin{align*}
			\int_{\Zp^\times} \chi(z)z^j \cdot d\mu^{\lambda,\mathbf{1}}(\Phi_{\tilde\pi}) &=  (\alpha_p^\circ)^{-\beta}\Big[\mathrm{Ev}_{B,\beta}^{\lambda,\eta_0}(\Phi_{\tilde\pi})\Big](\chi(z)z^j)\\
			&= (\alpha_p^\circ)^{-\beta}\Big[\cE_{B,\chi}^{j,\eta_0} \circ r_\lambda\Big](\Phi_{\tilde\pi}) \qquad \text{(by Prop.\ \ref{prop:galois evs commute main diagram})}\\
			&=  (\alpha_p^\circ)^{-\beta}\cE_{B,\chi}^{j,\eta_0}(\phi_{\tilde\pi}) \qquad\qquad\ \  \text{(by definition of $\Phi_{\tilde\pi}$)}.
		\end{align*}
		 We  computed $\cE_{B,\chi}^{j,\eta_0}(\phi_{\tilde\pi})$ in Corollary \ref{cor:L-value}; the term $(\alpha_p^\circ)^{-\beta}$ here combines with the $(\alpha_{p}^\circ/\alpha_{p,n})^{\beta}$ there to give $\alpha_{p,n}^{-\beta}$. This is the difference between $Q'(\pi,\chi,j)$ there and $e_p(\tilde\pi, \chi, j)$ here, so we get the claimed factors at $p$.
		 
		 We now have a term $\chi(\det(-w_n))\zetainfty$. We recall that $\zetainfty$ depends linearly on the choice of branching law $\kappa_{\lambda,j}$ and the choice $\Xi_\infty^\pm$ from \eqref{eq:pi to cohomology}. We may make the same choice of $\Xi_\infty^\pm$ as in \cite{BDW20}, and then the analogous integral was computed in \cite[\S5.4]{BDW20} -- using the period relations proved by Jiang--Sun--Tian and Geng \cite{JST, Geng} -- and shown to equal $e_\infty(\pi,\chi,j)$. To obtain the relation here, we need only compare the branching laws $\kappa_{\lambda,j}^{\mathrm{BDW}}$ of \cite[\S5.2]{BDW20} with the ones $\kappa_{\lambda,j}$ of \S\ref{sec:branching laws} here. By definition, $\kappa_{\lambda,j}^{\mathrm{BDW}}$ (resp.\ $\kappa_{\lambda,j}$) is dual to an element $v_{\lambda,j}^{\mathrm{BDW}}$ (resp.\ $v_{\lambda,j}$) of $V_{\lambda}$. Since both lie in a line we know there exists $C_j$ such that $v_{\lambda,j}^{\mathrm{BDW}} = C_jv_{\lambda,j}$. Both are algebraic functions on $G(\Zp)$, and we evaluate them at $u = \smallmatrd{1_n}{w_n}{0}{1_n}$. 
		 
		 By Lemmas 3.6 and 5.11 of \cite{BDW20}, $v_{\lambda,j}^{\mathrm{BDW}}\smallmatrd{1}{1}{0}{1} = (-1)^{jn}\det(w_n)^j = \det(-w_n)^j$. We have chosen (in Proposition \ref{prop:support in N1}) that $v_{\lambda,j}(u) = 1$ always. Thus $C_j = \det(-w_n)^j$, and we deduce
		 \[
		 	\zetainfty = \det(-w_n)^j \cdot e_{\infty}(\pi,\chi,j).
		 \]
		 Combining all of the above, for any ramified character we get exactly the claimed formula, but with an additional factor of $\chi(\det(-w_n))(\det(-w_n))^j$. This is removed when we multiply by the Dirac measure $[\det(-w_n)]$.

This leaves interpolation at the unramified character $\chi = \mathbf{1}$. We will complete the proof in this case in Proposition \ref{prop:unramified interpolation}.
	\end{proof}

	\begin{remark}
		The factors $e_p(\tilde\pi,\chi,j)$ and $e_\infty(\pi,\chi,j)$ are exactly as predicted by the Coates--Perrin-Riou conjecture from \cite{coates89}; for $p$, this is mostly explained in \cite[\S3]{AG94}.
	\end{remark}

	\section{Shalika families and $p$-adic $L$-functions}\label{sec:shalika families}
	
	We fix a sufficiently large coefficient field $L/\Qp$ and drop it from most of the notation. Let $\tilde\pi$ be a $p$-refined RACAR of weight $\lambda_\pi$ satisfying (C1-4) of Conditions \ref{cond:running assumptions}. Recall $K = \Iw\prod_{\ell \neq p}\GL_{2n}(\Z_\ell)$.

	\subsection{Existence and \'etaleness of Shalika families}
	
	This entire subsection is dedicated to the proof of Theorem \ref{thm:shalika family} below, in which we will use our evaluation maps to construct Shalika families. The proof closely follows the proofs of \cite[Thms.\ 7.6, 8.12]{BDW20}.

	\begin{definition}\label{def:local piece S}
		Modify Definition \ref{def:hecke algebra p} by defining normalised Hecke algebras $\cH_p^\circ \defeq \Z[U_{p,r}^\circ : r = 1,...,2n-1]$, and $\cH^\circ \defeq \cH'\otimes\cH_p^\circ$. At single weights the $\cH$ and $\cH^\circ$ eigenspaces in cohomology agree, but (unlike $\cH$) the normalised algebra $\cH^\circ$ acts on the cohomology in families.
		
		Define $\T\Uha$  to be the image of $\cH^\circ\otimes\cO_\Omega$ in $ \End_{\cO_\Omega}\big(\htc(S_K,\sD_{\Omega})^{\leqslant h}\big)$. Define $\sE\Uha \defeq \mathrm{Sp}(\T\Uha)$, a rigid analytic space.
		
		Let $w : \sE\Uha \rightarrow \Omega$ be the \emph{weight map} induced by the structure map  $\cO_\Omega \rightarrow \T\Uha$. Also write $\bT^{\pm}_{\Omega,h}$ and $\sE^{\pm}_{\Omega,h}$ for the analogues using $\pm$-parts of the cohomology. By \cite[Thm.~3.2.1]{JoNew}, $\sE_{\Omega,h}^{\pm}$ embeds as a closed subvariety of $\sE_{\Omega,h}$, and $\sE_{\Omega,h} = \sE_{\Omega,h}^{+}\sqcup \sE_\Omega^-$. 
	\end{definition}
	
	By definition, $\sE\Uha$ is a rigid space whose $L$-points $y$ biject with non-trivial homomorphisms $\T\Uha \rightarrow L$, i.e.\ with systems of eigenvalues of  $\psi_y^\circ : \cH^\circ \to L$ appearing  in $\htc(S_K,\sD_{\Omega})\ssh$.

	\begin{definition}
		A point $y \in \sE_{\Omega,h}$ is \emph{classical} if there exists a cohomological automorphic representation $\pi_y$ of $G(\A)$ of weight $\lambda_y \defeq w(y)$ such that $\psi_y^\circ$ appears in $\pi_y^{K}$, whence $\tilde\pi_y = (\pi_y,\alpha_y^\circ)$ is a $p$-refined RACAR (where $\alpha_y^\circ = \psi_y^\circ|_{\cH^\circ_p})$.  A classical point $y$ is \emph{cuspidal} if $\pi_y$ is. 
		A \emph{$(\mathbf{1},\psi)$-Shalika point} is a classical cuspidal point $y$ such that $\pi_y$ admits an $(|\cdot|^{\sw_y},\psi)$-Shalika model, for $\sw_y$ the purity weight of $\lambda_y$.
		
		A \emph{classical family} in $\sE_{\Omega,h}$ is an irreducible component $\sI$ in $\sE_{\Omega,h}$ containing a Zariski-dense set of classical points. A \emph{$(\mathbf{1},\psi)$-Shalika family} is a classical family containing a Zariski-dense set of $(\mathbf{1},\psi)$-Shalika points.
	\end{definition}

	Since $\tilde\pi$ satisfies Conditions \ref{cond:running assumptions}, it is strongly non-critical by (C4) and Theorem \ref{thm:control}. Let $\Lambda = \cO_{\Omega,\m_{\lambda_\pi}}$ be the algebraic localisation of $\cO_\Omega$ at $\lambda_\pi$.

	\begin{lemma}\label{lem:spec isomorphism}
		We have (Hecke-equivariant) isomorphisms
		\[
		\hc{t}(S_K,\sD_\Omega)_{\tilde\pi}^\pm \otimes \Lambda/\m_{\lambda_\pi} \cong \hc{t}(S_K,\sD_{\lambda_\pi})_{\tilde\pi}^\pm \cong \hc{t}(S_K,\sV_{\lambda_\pi}^\vee)^\pm_{\tilde\pi}.
		\]
	\end{lemma}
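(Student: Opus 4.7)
The plan is to prove the two isomorphisms separately, with the right-hand one being essentially a restatement of non-criticality, and the left-hand one being a standard base-change property of overconvergent cohomology after slope-decomposition.

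First I would dispose of the second isomorphism. By Condition \ref{cond:running assumptions}(C4), the slope of $\tilde\pi$ is non-critical in the sense of Definition~\ref{def:non-critical slope}, so Theorem~\ref{thm:control} implies that $\tilde\pi$ is strongly non-critical. By Definition~\ref{def:non-Q-critical}, the specialisation map $r_{\lambda_\pi}$ restricts to an isomorphism $\hc{t}(S_K,\sD_{\lambda_\pi})_{\tilde\pi} \isorightarrow \hc{t}(S_K,\sV_{\lambda_\pi}^\vee)_{\tilde\pi}$. This map is $\cH$-equivariant (by functoriality of $r_{\lambda_\pi}$) and commutes with the action of $K_\infty/K_\infty^\circ$, hence induces the claimed isomorphism on $\pm$-parts.

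Next I would handle the left-hand isomorphism via slope-decomposition and base-change. Set $h = v_p(\alpha_p^\circ)$; by shrinking $\Omega$ if necessary we may assume a slope $\leq h$ decomposition exists for $\hc{t}(S_K,\sD_\Omega)$ (see \S\ref{sec:slope-decomp}), and that $\hc{t}(S_K,\sD_\Omega)^{\leq h}$ is a finite type $\cO_\Omega$-module. Since slope decompositions are compatible with flat base change along $\cO_\Omega \to \Lambda/\m_{\lambda_\pi} = L$ (as in \cite[Thm.~3.3.1]{Han17}, for example), there is a canonical isomorphism
\[
\hc{t}(S_K,\sD_\Omega)^{\leq h} \otimes_{\cO_\Omega} \Lambda/\m_{\lambda_\pi} \;\isorightarrow\; \hc{t}(S_K,\sD_{\lambda_\pi})^{\leq h},
\]
which is $\cH^\circ$-equivariant. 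Since $\tilde\pi$ has slope exactly $h_p = v_p(\alpha_p^\circ) \leq h$, localising both sides at the maximal ideal $\m_{\tilde\pi}$ has no effect beyond restricting to the slope $\leq h$ part (as $\m_{\tilde\pi}$ lies in the support of this finite $\cO_\Omega$-module). Localisation at $\m_{\tilde\pi}$ is flat over $\cO_\Omega$, so it commutes with the base change $\otimes_{\cO_\Omega} \Lambda/\m_{\lambda_\pi}$. Finally, passing to $\pm$-parts commutes with all of the above, since the action of $K_\infty/K_\infty^\circ$ commutes with the Hecke action and with specialisation.

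The main technical point to be careful about is the compatibility of the two localisations: the maximal ideal $\m_{\tilde\pi} \subset \cH^\circ \otimes L$ appearing in the right-hand side of the first isomorphism must correspond, under specialisation $\cO_\Omega \to L$, to the ideal used on the left-hand side. This is automatic from the construction: the normalised operators $U_{p,r}^\circ$ act on cohomology in families (unlike $U_{p,r}$, see Definition~\ref{def:local piece S}), so $\psi_{\tilde\pi} : \cH^\circ \otimes L \to L$ factors through a classical point of $\sE_{\Omega,h}$ above $\lambda_\pi \in \Omega$, and localising at this point before or after specialisation gives the same answer. Combined with Proposition \ref{prop:mult one}, which guarantees the right-hand side is one-dimensional over $L$, this yields the claimed chain of isomorphisms.
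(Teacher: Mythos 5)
Your treatment of the second isomorphism is exactly right and matches the paper, which simply observes that it "follows from non-criticality of $\tilde\pi$": condition (C4) plus Theorem~\ref{thm:control} give strong non-criticality, whence $r_{\lambda_\pi}$ is an isomorphism on the $\tilde\pi$-generalised eigenspace, and compatibility with the $K_\infty/K_\infty^\circ$-action lets you pass to $\pm$-parts.

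The first isomorphism is where there is a genuine gap. You write that since "slope decompositions are compatible with flat base change along $\cO_\Omega \to \Lambda/\m_{\lambda_\pi} = L$," there is a canonical isomorphism $\hc{t}(S_K,\sD_\Omega)^{\leq h}\otimes_{\cO_\Omega} L \isorightarrow \hc{t}(S_K,\sD_{\lambda_\pi})^{\leq h}$. But the map $\cO_\Omega \to L$ is a surjection onto the residue field at $\lambda_\pi$ --- this is emphatically not flat. And the result you cite from \cite{Han17} does not produce an isomorphism; it produces a Tor spectral sequence of the shape
\[
E_2^{p,q} = \mathrm{Tor}^{\cO_\Omega}_{-p}\bigl(\hc{q}(S_K,\sD_\Omega)^{\leq h},\, L\bigr) \Longrightarrow \hc{p+q}(S_K,\sD_{\lambda_\pi})^{\leq h},
\]
whose degeneration at the relevant spot is precisely what needs to be proved. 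One must show that, after localisation at $\m_{\tilde\pi}$, the higher Tor terms contributing to degree $t$ vanish, so that the edge map in top degree really is an isomorphism. This is the content of \cite[Lem.~2.9]{BDJ17} and \cite[Prop.~7.8]{BDW20}, which the paper cites here. The standard mechanism is: working with the (finitely generated) slope-$\leq h$ complex over the regular local ring $\Lambda = \cO_{\Omega,\m_{\lambda_\pi}}$, choose a regular sequence generating $\m_{\lambda_\pi}\Lambda$, and argue inductively via the long exact sequences in cohomology, using that $t$ is the top non-vanishing degree (so $\hc{t+1}$ of the intermediate specialisations vanishes after localisation at $\m_{\tilde\pi}$). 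Without this input, your argument only gives a map, not an isomorphism.

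Two smaller slips in the same paragraph: localisation at $\m_{\tilde\pi}$ (a maximal ideal of the Hecke algebra acting on the module, not of $\cO_\Omega$) is better thought of as projection onto a direct summand (the generalised eigenspace) than as a flat ring localisation over $\cO_\Omega$; and "slope exactly $h_p$" should be "slope at most $h$" --- the inequality is what guarantees $\m_{\tilde\pi}$ is in the support of the slope-$\leq h$ part. These are cosmetic, but they compound the impression that the base-change step needs to be handled more carefully.
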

	\begin{proof}
		The first isomorphism is proved identically to \cite[Lem.\ 2.9]{BDJ17} (cf.\ \cite[Prop.\ 7.8]{BDW20}). The second follows from non-criticality of $\tilde\pi$.
	\end{proof}
	
	Let $\sC^\pm = \mathrm{Sp}(T^\pm)$ be the connected components of $\sE_{\Omega,h}^\pm$ through $x_{\tilde\pi}^\pm$. 
	
	\begin{corollary}\begin{enumerate}[(i)]\setlength{\itemsep}{0pt} \label{cor:cyclic}
			\item There exist ideals $I_{\tilde\pi}^\pm \subset \Lambda$ such that $\hc{t}(S_K,\sD_\Omega)_{\tilde\pi}^\pm \cong \Lambda/I_{\tilde\pi}^\pm$. 
			\item  Possibly shrinking $\Omega$, there exist ideals $I^\pm_{\sC} \subset \cO_\Omega$ such that $\hc{t}(S_K,\sD_\Omega)^{\leq h,\pm} \otimes_{\bT_{\Omega,h}^\pm} T^\pm \cong \cO_\Omega/I_{\sC}^\pm$.
		\end{enumerate}
	\end{corollary}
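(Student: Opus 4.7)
The plan for both parts is to apply Nakayama's lemma, using Proposition \ref{prop:mult one} (multiplicity one at the classical weight) and Lemma \ref{lem:spec isomorphism} (which controls the fibre of the overconvergent module at $\lambda_\pi$).

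For part (i), first note that $M^\pm \defeq \hc{t}(S_K,\sD_\Omega)^{\leq h,\pm}_{\tilde\pi}$ is finitely generated as a $\Lambda$-module: the slope-$\leq h$ subspace $\hc{t}(S_K,\sD_\Omega)^{\leq h,\pm}$ is finitely generated over $\cO_\Omega$ by the slope theory of \S\ref{sec:slope-decomp}, and passage to the $\m_{\tilde\pi}$-localisation cuts out a direct summand. By Lemma \ref{lem:spec isomorphism}, the quotient $M^\pm/\m_{\lambda_\pi}M^\pm$ is identified with $\hc{t}(S_K,\sV_{\lambda_\pi}^\vee)^\pm_{\tilde\pi}$, which is one-dimensional over $L$ by Proposition \ref{prop:mult one}. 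A standard application of Nakayama's lemma (noting that $\Lambda$ is local with maximal ideal $\m_{\lambda_\pi}$) then produces a surjection $\Lambda \twoheadrightarrow M^\pm$; its kernel is the desired ideal $I^\pm_{\tilde\pi}$.

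Part (ii) proceeds by an analogous argument but localised at the component $\sC^\pm$ rather than at the single weight $\lambda_\pi$. Setting $M'^\pm \defeq \hc{t}(S_K,\sD_\Omega)^{\leq h,\pm}\otimes_{\bT^\pm_{\Omega,h}} T^\pm$, this is the direct summand of the overconvergent cohomology cut out by $\sC^\pm$, and it is finitely generated over $\cO_\Omega$ since it is a quotient of the finitely generated $\cO_\Omega$-module $\hc{t}(S_K,\sD_\Omega)^{\leq h,\pm}$. After shrinking $\Omega$, we may assume $\sC^\pm$ is the unique irreducible component of $\sE_{\Omega,h}^\pm$ specialising at $\lambda_\pi$ to a point with eigensystem $\psi_{\tilde\pi}$; this uses that the non-critical slope condition (C4) ensures that the $\m_{\tilde\pi}$-localisation of $\bT^\pm_{\Omega,h}$ agrees, after shrinking, with the local ring of $\sE^\pm_{\Omega,h}$ at $x_{\tilde\pi}^\pm$. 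Under this arrangement, the fibre $M'^\pm/\m_{\lambda_\pi}M'^\pm$ coincides with $\hc{t}(S_K,\sV^\vee_{\lambda_\pi})^\pm_{\tilde\pi}$ — again one-dimensional — and Nakayama yields the required surjection $\cO_\Omega \twoheadrightarrow M'^\pm$.

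The main obstacle is the shrinking step in part (ii): one must arrange that $\sC^\pm$ is the only component contributing to the localisation over a neighbourhood of $\lambda_\pi$, so that the fibre computation via Lemma \ref{lem:spec isomorphism} is available. This is where non-criticality genuinely enters, since in the critical case other components can meet $x_{\tilde\pi}^\pm$ and the analogous cyclicity would fail. Granting this geometric input (which is standard for eigenvarieties of non-critical slope classes, cf.\ the proofs of \cite[Thms.\ 7.6, 8.12]{BDW20}), the Nakayama argument runs without difficulty.
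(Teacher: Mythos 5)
Part (i) is correct and essentially reproduces the paper's argument: by Lemma \ref{lem:spec isomorphism} and Proposition \ref{prop:mult one} the fibre of $\hc{t}(S_K,\sD_\Omega)^{\leq h,\pm}_{\tilde\pi}$ at $\m_{\lambda_\pi}$ is a line, and since this is a finitely generated module over the local ring $\Lambda$, Nakayama gives cyclicity.

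Part (ii) has a genuine gap in the last step. You conclude from the one-dimensionality of the fibre $M'^\pm/\m_{\lambda_\pi}M'^\pm$ that ``Nakayama yields the required surjection $\cO_\Omega \twoheadrightarrow M'^\pm$.'' But $\cO_\Omega$ is an affinoid algebra, hence not local, and its Jacobson radical is zero; the maximal ideal $\m_{\lambda_\pi}$ is just one of many. Nakayama over $\cO_\Omega$ with respect to $\m_{\lambda_\pi}$ therefore does not apply. What the one-dimensional fibre actually gives you, via Nakayama over the local ring $\Lambda = \cO_{\Omega,\m_{\lambda_\pi}}$, is that the \emph{localisation} $(M'^\pm)_{\m_{\lambda_\pi}}$ is cyclic over $\Lambda$ --- which is in substance the same statement as part (i), not part (ii). Passing from cyclicity of the local ring at one point to cyclicity over a (possibly smaller) affinoid is precisely what the paper's phrase ``rigid delocalisation'' refers to, and it is a real step: by Noetherianity one first spreads to a Zariski-open $\{f \neq 0\}$ with $f \notin \m_{\lambda_\pi}$, then chooses an affinoid subdomain $\Omega' \subset \Omega$ inside this open, and uses a base-change theorem for the slope-$\leq h$ overconvergent cohomology (so that $\hc{t}(S_K,\sD_{\Omega'})^{\leq h,\pm} \cong \hc{t}(S_K,\sD_\Omega)^{\leq h,\pm} \otimes_{\cO_\Omega} \cO_{\Omega'}$). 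The cited references (\cite[Cor.\ 4.7]{BW18}, \cite[Prop.\ 8.16]{BDW20}) carry out exactly this spreading-out; your proof should at minimum name that mechanism rather than attribute the shrinking to Nakayama.
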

	\begin{proof}
		(i) By Proposition \ref{prop:mult one}, the right-hand side in Lemma \ref{lem:spec isomorphism} is a line. Since $\hc{t}(S_K,\sD_\Omega)_{\tilde\pi}^\pm \cong \hc{t}(S_K,\sD_\Omega)^{\leq h, \pm}_{\tilde\pi}$ is finite over $\Lambda$, we may use Nakayama's lemma, whence $\hc{t}(S_K,\sD_\Omega)^\pm_{\tilde\pi}$ is generated by one element over $\Lambda$; but every cyclic $\Lambda$-module has the form $\Lambda/I_{\tilde\pi}^\pm$ for some $I_{\tilde\pi}^\pm$.
		
		(ii) This follows from rigid delocalisation of (i) (as in \cite[Cor.\ 4.7]{BW18} and \cite[\S2.7]{BW-Iwasawa}).
	\end{proof}
	
	\begin{proposition}\label{prop:etale} Suppose $\lambda_{\pi,n} > \lambda_{\pi,n+1}$. Then, up to shrinking $\Omega$:
		\begin{enumerate}[(i)]\setlength{\itemsep}{0pt}
			\item $T^\pm$ is free of rank one over $\cO_\Omega$.
			\item $\hc{t}(S_K,\sD_\Omega)^{\leq h,\pm} \otimes_{\bT_{\Omega,h}^\pm} T^\pm$ is free of rank one over $T^\pm$.
		\end{enumerate}
	\end{proposition}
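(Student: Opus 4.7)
The plan is to reduce both parts to showing that the ideal $I^\pm_{\tilde\pi}$ in Corollary~\ref{cor:cyclic}(i) vanishes. Once this is granted, the rigid delocalisation argument behind Corollary~\ref{cor:cyclic}(ii) gives $I^\pm_\sC = 0$ after shrinking $\Omega$, so $\hc{t}(S_K,\sD_\Omega)^{\leq h,\pm} \otimes_{\bT^\pm_{\Omega,h}} T^\pm \cong \cO_\Omega$ is free of rank one over $\cO_\Omega$. Since $T^\pm$ is a quotient of $\bT^\pm_{\Omega,h}$ acting faithfully on this cyclic rank-one module, the $T^\pm$-action factors as an injection $T^\pm \hookrightarrow \End_{\cO_\Omega}(\cO_\Omega) = \cO_\Omega$ whose composition with the structure map $\cO_\Omega \to T^\pm$ is the identity. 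This forces $T^\pm = \cO_\Omega$, giving (i); and (ii) is then immediate.

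To show $I^\pm_{\tilde\pi} = 0$, I exploit non-vanishing of the family evaluation map. The weight hypothesis $\lambda_{\pi,n} > \lambda_{\pi,n+1}$ implies $\mathrm{Crit}(\lambda_\pi)$ contains an integer $j$ for which $j+\tfrac{1}{2}$ is non-central in the functional equation of $L(\pi,s)$. By standard non-vanishing results for non-central critical values of cohomological cuspidal $L$-functions (following \cite{Roh89} and its higher-rank generalisations), one finds a finite-order Hecke character $\chi$ of $p$-power conductor $p^\beta$ (with $\beta \geq 1$) and sign $\chi_\infty(-1)(-1)^j = \pm 1$ such that $L(\pi\otimes\chi, j+\tfrac{1}{2}) \neq 0$. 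Corollary~\ref{cor:L-value} then yields $\cE^{j,\mathbf{1}}_{B,\chi}(\phi^\pm_{\tilde\pi}) \neq 0$.

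Let $\Phi$ be a cyclic generator of $\hc{t}(S_K,\sD_\Omega)^\pm_{\tilde\pi} \cong \Lambda/I^\pm_{\tilde\pi}$, which specialises at $\lambda_\pi$ to a non-zero multiple of $\phi^\pm_{\tilde\pi}$ by Lemma~\ref{lem:spec isomorphism}. Via the direct-summand decomposition of $\hc{t}(S_K,\sD_\Omega)^{\leq h,\pm}_{\m_{\lambda_\pi}}$ indexed by the fibre of $\sE^\pm_{\Omega,h}\to\Omega$ above $\lambda_\pi$, the $\cO_\Omega$-linear family evaluation $\mathrm{Ev}^{\Omega,\mathbf{1}}_{B,\beta}$ restricts to a $\Lambda$-linear map on the $\tilde\pi$-summand; let $\mu_\Phi$ denote the image of $\Phi$ in $\cD(\Zp^\times,\cO_\Omega)_{\m_{\lambda_\pi}}$. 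By Proposition~\ref{prop:galois evs commute main diagram}, the specialisation of $\mu_\Phi$ at $\lambda_\pi$, paired with $\chi(z)z^j$, equals $\cE^{j,\mathbf{1}}_{B,\chi}(\phi^\pm_{\tilde\pi})$ up to a non-zero constant, and in particular $\mu_\Phi \neq 0$. After shrinking $\Omega$ to a connected polydisc so that $\cO_\Omega$ is an integral domain, $\cD(\Zp^\times,\cO_\Omega)$ is $\cO_\Omega$-torsion-free, hence $\mathrm{Ann}_\Lambda(\mu_\Phi) = 0$. Since the evaluation is $\cO_\Omega$-linear and $I^\pm_{\tilde\pi}$ annihilates $\Phi$, we conclude $I^\pm_{\tilde\pi} \subset \mathrm{Ann}_\Lambda(\mu_\Phi) = 0$. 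The main technical care lies in aligning the Hecke localisation at $\tilde\pi$ with the weight localisation at $\m_{\lambda_\pi}$ so that this last step goes through; the essential external input is the standard non-vanishing of a non-central critical automorphic $L$-value, without which the whole argument cannot be initiated.
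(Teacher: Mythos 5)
Your proof is essentially the paper's argument: the key step is exactly the same—propagate the non-vanishing of a non-central critical $L$-value (guaranteed by $\lambda_{\pi,n}>\lambda_{\pi,n+1}$) through the evaluation map $\mathrm{Ev}_{B,\beta}^{\Omega,\mathbf{1}}$ and use torsion-freeness of $\cD(\Zp^\times,\cO_\Omega)$ to kill the cyclicity ideal—while the only organizational difference is that you first show $I^\pm_{\tilde\pi}=0$ over $\Lambda$ and then delocalise, whereas the paper applies the evaluation directly to the family generator $\Phi_\sC^\pm$ of Corollary~\ref{cor:cyclic}(ii) and concludes $I^\pm_\sC=0$ in one step. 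One small inaccuracy: the non-vanishing you need is for a \emph{non-central} critical value and follows from elementary analytic input (as in \cite[Lem.\ 7.4]{BDW20}); Rohrlich's theorem \cite{Roh89} addresses the harder central case (relevant to weight-2 forms), which the hypothesis $\lambda_{\pi,n}>\lambda_{\pi,n+1}$ is precisely designed to exclude, so citing it here is superfluous and slightly misleading.
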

	\begin{proof}
		By the weight condition, as in \cite[Lem.\ 7.4]{BDW20}, there exist $\beta \geq 1$, $j \in \mathrm{Crit}(\lambda_\pi)$ and finite order Hecke characters $\chi^\pm$ of conductor $p^\beta$ with $\chi_\infty^\pm(-1)(-1)^j = \pm 1$ such that $L^{(p)}(\pi\otimes\chi^\pm ,j+\tfrac{1}{2}) \neq 0$. 
		
		As the $\sC^\pm$ are connected components, there exist idempotents $e^\pm$ such that $T^\pm = e^\pm \bT_{\Omega,h}^\pm$, and $\hc{t}(S_K,\sD_\Omega)^{\leq h,\pm} \otimes_{\bT_{\Omega,h}^\pm}T^\pm = e^\pm \hc{t}(S_K,\sD_\Omega)^{\leq h,\pm} \subset \hc{t}(S_K,\sD_\Omega)^{\leq h,\pm}$. Restricting $\mathrm{Ev}_{B,\beta}^{\Omega,\mathbf{1}}$ from \eqref{eq:final evaluation}, we get
		\[
		\mathrm{Ev}_{B,\beta}^{\Omega,\mathbf{1}}	: \hc{t}(S_K,\sD_\Omega)^{\leq h,\pm} \otimes_{\bT_{\Omega,h}^\pm} T^\pm \to \cD(\Zp^\times,\cO_\Omega).
		\]
		Using Corollary \ref{cor:cyclic}, let $\Phi_{\sC}^\pm$ be a generator of $\hc{t}(S_K,\sD_\Omega)^{\leq h,\pm} \otimes_{\bT_{\Omega,h}^\pm} T^\pm$ over $\cO_\Omega$. Note that $\mathrm{Ann}_{\cO_\Omega}(\Phi_{\sC}^\pm) = I^\pm_{\sC}$. Then via the interpolation of Theorem \ref{thm:non-ordinary}, we have
		\[
		\int_{\Zp^\times} \chi^\pm(z)z^j \cdot d\left(\mathrm{sp}_{\lambda_\pi}\circ\mathrm{Ev}_{B,\beta}^{\Omega,\mathbf{1}}(\Phi_{\sC}^\pm)\right) = (*) \times L^{(p)}(\pi\otimes\chi^\pm, j+\tfrac{1}{2}) \neq 0,
		\]
		where $(*)$ is non-zero. As $\mathrm{sp}_{\lambda_\pi}\left(\mathrm{Ev}_{B,\beta}^{\Omega,\mathbf{1}}(\Phi_{\sC}^\pm)\right) \neq 0$, we deduce $\mathrm{Ev}_{B,\beta}^{\Omega,\mathbf{1}}(\Phi_{\sC}^\pm) \neq 0$, that is, it is not the zero distribution. Since $\cD(\Zp^\times,\cO_\Omega)$ is a torsion-free $\cO_\Omega$-module, it follows that $\mathrm{Ann}_{\cO_\Omega}(\Phi_{\sC}^\pm) = 0$ (cf.\ \cite[\S7.3]{BDW20}). Thus $I^\pm_{\sC} = 0$, and $\hc{t}(S_K,\sD_\Omega)^{\leq h,\pm}\otimes_{\bT_{\Omega,h}^\pm}T^\pm \cong \cO_\Omega$.
		
		We deduce $T^\pm$ is the image of $\cH^\circ$ in $\mathrm{End}_{\cO}(\Omega)$. Since this image is non-zero we deduce (i). Finally since the actions of $\cO_\Omega$ and $T^\pm$ are compatible on $\hc{t}(S_K,\sD_\Omega)^{\leq h,\pm}\otimes_{\bT_{\Omega,h}^\pm}T^\pm$, and both $T^\pm$ and $\hc{t}(S_K,\sD_\Omega)^{\leq h,\pm}\otimes_{\bT_{\Omega,h}^\pm}T^\pm$ are free rank one $\cO_\Omega$-modules, we deduce (ii).
	\end{proof}
	
	We finally arrive at the main result of this section.
	
	\begin{theorem}\label{thm:shalika family}
		Let $\tilde\pi$ be a $p$-refined RACAR of weight $\lambda_\pi$ satisfying Conditions \ref{cond:running assumptions}. Suppose $\lambda_{\pi,n} > \lambda_{\pi,n+1}$. Then for any $h \geq v_p(\alpha_p^\circ)$, we have:
		\begin{enumerate}[(i)]\setlength{\itemsep}{0pt}
			\item There exists a point $x_{\tilde\pi} \in \cE_{\Omega,h}$ attached to $\tilde\pi$, and $w: \cE_{\Omega,h} \to \Omega$ is \'etale at $x_{\tilde\pi}$.
			\item The connected component $\sC = \mathrm{Sp}(T)$ in $\cE_{\Omega,h}$ through $x_{\tilde\pi}$ contains a very Zariski-dense set $\sC_{\mathrm{nc}}$ of classical points corresponding to $p$-refined RACARs $\tilde\pi_y$.
			\item There exist Hecke eigenclasses $\Phi_{\sC}^\pm \in \hc{t}(S_K,\sD_\Omega)^{\pm}$ such that for every $y \in \sC_{\mathrm{nc}}$, the specialisation $\mathrm{sp}_{\lambda_y}(\Phi_{\sC}^\pm)$ generates $\hc{t}(S_K,\sD_{\lambda_y})^\pm_{\tilde\pi_y}$, where $\lambda_y \defeq w(y)$.
			\item Up to shrinking $\Omega$, for each $y \in \sC_{\mathrm{nc}}$ the $p$-refined RACAR $\tilde\pi_y$ satisfies Conditions \ref{cond:running assumptions}.
		\end{enumerate} 
	\end{theorem}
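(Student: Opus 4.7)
The proof builds on Proposition \ref{prop:etale} and follows the strategy of \cite[Thms.\ 7.6, 8.12]{BDW20}, now extended to the full pure weight space at Iwahori level. For (i), Conditions \ref{cond:running assumptions}(C4) combined with Theorem \ref{thm:control} make $\tilde\pi$ strongly non-critical, so $\phi_{\tilde\pi}^\pm$ lifts uniquely to $\Phi_{\tilde\pi}^\pm \in \hc{t}(S_K,\sD_{\lambda_\pi})^\pm_{\tilde\pi}$, which contributes to $\hc{t}(S_K,\sD_\Omega)^{\leq h,\pm}$ for any $h \geq v_p(\alpha_p^\circ)$ and thus yields a point $x_{\tilde\pi} \in \sE_{\Omega,h}$. \'Etaleness at $x_{\tilde\pi}$ is immediate from Proposition \ref{prop:etale}(i): after shrinking $\Omega$, the ring $T^\pm \cong \cO_\Omega$ is free of rank one, so $\sC^\pm \to \Omega$ is an isomorphism. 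The decomposition $\sE_{\Omega,h} = \sE_{\Omega,h}^+ \sqcup \sE_{\Omega,h}^-$ places $x_{\tilde\pi}$ into both sign components (it corresponds to a single $\cH^\circ$-eigensystem), and the common component $\sC$ inherits \'etaleness. For (iii), take $\Phi_{\sC}^\pm$ to be an $\cO_\Omega$-basis of $\hc{t}(S_K,\sD_\Omega)^{\leq h,\pm}\otimes_{\bT_{\Omega,h}^\pm}T^\pm$ as in Proposition \ref{prop:etale}(ii); specialising the freeness at $\m_{\lambda_y}$ and invoking the analogue of Lemma \ref{lem:spec isomorphism} at $y$ shows that $\mathrm{sp}_{\lambda_y}(\Phi_{\sC}^\pm)$ generates $\hc{t}(S_K,\sD_{\lambda_y})^\pm_{\tilde\pi_y}$.

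For (ii) and (iv), which I would treat together, the plan is to define $\sC_{\mathrm{nc}} \subset \sC$ as the set of classical points $y$ such that $\tilde\pi_y$ is a non-critical slope $p$-refined RACAR satisfying \emph{all} of Conditions \ref{cond:running assumptions}. Condition (C3) is automatic from the level $K$. The spin half of (C2), namely the relation $\alpha_{p,n+s}(y) = p^{-s\sw_{\lambda_y}} \alpha_{p,n-s}(y)$ for $0 \leq s \leq n-1$, is a closed analytic condition on $\sC$ between rigid analytic functions (the $U_{p,r}$-eigenvalues via $\bT_{\Omega,h}$, and $\sw_{\lambda_y}$ via $\chi_\Omega$); it holds at $x_{\tilde\pi}$, and since $\sC \cong \Omega$ is irreducible, it holds everywhere. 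Regularity of the Satake parameter at $p$ in (C2) and the non-critical slope condition (C4) are Zariski-open at $\lambda_\pi$, so after further shrinking $\Omega$ they persist throughout $\sC$ and in particular hold at every classical $y$; by Theorem \ref{thm:control} such $y$ is automatically classical in the strong sense.

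Condition (C1) is the main obstacle. The plan is to exploit the distribution $\mu_{\sC}^\pm \defeq \mu^{\Omega,\mathbf{1}}(\Phi_{\sC}^\pm) \in \cD(\Zp^\times,\cO_\Omega)$, which is non-zero by the same non-vanishing argument used in the proof of Proposition \ref{prop:etale} (interpolation of a non-vanishing Deligne-critical $L$-value at $\lambda_\pi$, possible thanks to the hypothesis $\lambda_{\pi,n} > \lambda_{\pi,n+1}$). Consequently there exist a finite-order Hecke character $\chi$ of $p$-power conductor and an integer $j$ such that $y \mapsto (\mathrm{sp}_{\lambda_y}\mu_{\sC}^\pm)(\chi(z)z^j)$ is a non-zero rigid analytic function on $\sC$, vanishing only on a proper Zariski-closed subset. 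Proposition \ref{prop:galois evs commute main diagram} translates non-vanishing of this quantity into non-vanishing of $\cE_{B,\chi}^{j,\mathbf{1}}(\phi_{\tilde\pi_y}^\pm)$, after which Proposition \ref{prop:shalika non-vanishing} furnishes a global $(|\cdot|^{\sw_{\lambda_y}},\psi)$-Shalika model for $\pi_y$. Restricting to this Zariski-dense set of classical $y$ gives $\sC_{\mathrm{nc}}$ and simultaneously establishes (ii) and (iv). The delicate point is exactly this globalisation: without the $L$-function argument one only controls the local spin condition at $p$, and turning local Shalika data into a global Shalika model for a Zariski-dense set of members of the family is precisely where the weight hypothesis $\lambda_{\pi,n} > \lambda_{\pi,n+1}$ is indispensable.
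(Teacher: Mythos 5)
Parts (i) and (iii) of your proposal are correct and follow the paper's proof. Part (ii) is handled differently: the paper first establishes Zariski-density of a set $\sC_{\mathrm{nc}}$ of non-critical slope cuspidal classical points by invoking \cite[Prop.\ 5.15]{BW20}, then verifies Conditions \ref{cond:running assumptions} for these points in (iv); you instead \emph{define} $\sC_{\mathrm{nc}}$ as the locus satisfying all of Conditions \ref{cond:running assumptions} and argue density afterwards, which entangles (ii) with (iv) in a way that leaves density unestablished until the conditions are verified.

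The serious gap is in your treatment of (C2). You write that the spin relation (in normalised form, $\alpha_{p,n+s}^\circ = \alpha_{p,n-s}^\circ$ once the purity relation is used) ``is a closed analytic condition on $\sC$\dots it holds at $x_{\tilde\pi}$, and since $\sC\cong\Omega$ is irreducible, it holds everywhere.'' This reasoning is incorrect: on an irreducible rigid space, a single rigid analytic function can vanish at one point without vanishing identically (consider $\lambda_1 - \lambda_{\pi,1}$ on $\Omega$). Knowing the relation holds at $x_{\tilde\pi}$ gives you nothing about its behaviour anywhere else. To propagate such a condition you would need it to hold on a Zariski-dense subset; but the only dense set of points you control are the classical points, and establishing the spin relation at those is essentially what you are trying to prove --- the circle does not close without further input (this is precisely Expectation \ref{conj:shalika = spin}, whose full converse direction is not proved in the paper). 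Your proposal also never addresses the requirement in (C2) that $\pi_{y,p}$ be \emph{spherical}. A classical point on the Iwahori-level eigenvariety is not automatically spherical at $p$; the paper has to cite \cite[Prop.\ 5.5]{classical-locus} and shrink $\sC_{\mathrm{nc}}$ to a (still Zariski-dense) subset where sphericality holds.

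The paper takes a different route through (iv): after handling (C3), (C1), regularity and sphericality, it relates the value $W_{y,p}\smallmatrd{w_n}{}{}{1}$ to the non-vanishing $\mathrm{Ev}_{B,\chi,j}^\Omega(\phi_{\tilde\pi_y}^\pm) \neq 0$ (following \cite[Prop.\ 8.20]{BDW20}) and concludes (C2) from the Shalika non-vanishing rather than attempting to verify the spin relation abstractly. Your $L$-function non-vanishing argument for (C1) is correct and matches the paper; what is missing is the bridge from (C1) plus sphericality to the spin/Shalika refinement condition, which cannot be supplied by an ``identity-theorem'' argument from the single point $x_{\tilde\pi}$.
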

	
	\begin{proof}
		(i) Lemma \ref{lem:spec isomorphism} and Proposition \ref{prop:mult one} show $\m_{\tilde\pi}$ appears in $\hc{t}(S_K,\sD_\Omega)^{\leq h,\pm}$, giving points $x_{\tilde\pi}^\pm \in \cE_{\Omega,h}^\pm$. Moreover Proposition \ref{prop:etale} shows $\sE_{\Omega,h}^\pm \to \Omega$ is \'etale at $x_{\tilde\pi}^\pm$.  Let $\sC^\pm \subset \sE_{\Omega,h}^\pm$ be the unique connected component through $x_{\tilde\pi}^\pm$. Using strong non-criticality of $\tilde\pi$, we deduce that $\sC^\pm$ contain very\footnote{In \cite[Prop.\ 5.15]{BW20}, only Zariski-density of classical points is treated; but the same proof shows very Zariski-density, using that the classical weights are very Zariski-dense in $\Omega$.}  Zariski-dense sets $\sC_{\mathrm{nc}}^\pm$ of cuspidal non-critical slope classical points by \cite[Prop.\ 5.15]{BW20}. Now, as in \cite[\S8.3.4]{BDW20}, we can exhibit a bijection between $\sC_{\mathrm{nc}}^+$ and $\sC_{\mathrm{nc}}^-$ and a canonical isomorphism $\sC^+ \isorightarrow \sC^-$, whence $\sC\cong \sC^+ \cong \sC^-$ is independent of sign. Part (i) follows immediately.
		
		(ii) We let $\sC_{\mathrm{nc}} = \sC_{\mathrm{nc}}^+ = \sC_{\mathrm{nc}}^-$ be the set used in (i).
		
		(iii) Let $\Phi_{\sC}^\pm$ be $\cO_\Omega$-module generators of 
		\[
			\hc{t}(S_K,\sD_\Omega)^{\leq h,\pm}\otimes_{\bT_{\Omega,h}^\pm}T^\pm \subset \hc{t}(S_K,\sD_\Omega)^{\leq h,\pm} \subset \hc{t}(S_K,\sD_\Omega)^{\pm},
		\] which are Hecke eigenclasses by Proposition \ref{prop:etale}. For each $y \in \sC_{\mathrm{nc}}$, let $\m_y \subset T^\pm$ be the attached maximal ideal. Reduction modulo $\m_{\lambda_y}$ induces a map
		\[
		\mathrm{sp}_{\lambda_y} : \hc{t}(S_K,\sD_\Omega)^{\leq h,\pm} \otimes_{\bT_{\Omega,h}^\pm}T^\pm \twoheadrightarrow \hc{t}(S_K,\sD_{\lambda_y})^\pm_{\tilde\pi_y},
		\]
		which is surjective by combining \'etaleness of $w$ at $y$ with Lemma \ref{lem:spec isomorphism}. By Proposition \ref{prop:etale}, we deduce $\hc{t}(S_K,\sD_{\lambda_y})^\pm_{\tilde\pi_y}$ is a line, generated by $\mathrm{sp}_{\lambda_y}(\Phi_{\sC}^\pm)$.

		(iv) Every $y \in \sC_{\mathrm{nc}}$ has non-critical slope, hence satisfies (C4). Recall $\pi_y^K \cong \hc{t}(S_K,\sV_\lambda^\vee)^\pm_{\tilde\pi_y}$ by \eqref{eq:pi to cohomology}. As in (iii), the right-hand side is a line (using non-criticality), so $\pi_y^K \neq 0$. For each $\ell \neq p$, this ensures $\pi_\ell^{G(\Z_\ell)} \neq 0$, so $\pi_{y,\ell}$ is spherical, giving (C3). 
		
		Now, let $\beta, j, \chi^\pm$ be as in the proof of Proposition \ref{prop:etale}, and define a map
		\begin{align*}
			\mathrm{Ev}_{B,\chi,j}^\Omega : \hc{t}(S_K,\sD_\Omega)^\pm &\longrightarrow \cO_\Omega\\
			\Phi &\longmapsto \int_{\Zp^\times}\chi(z)z^j \cdot d\mathrm{Ev}_{B,\beta}^{\Omega,\mathbf{1}}(\Phi).
		\end{align*}
		As in the proof of Proposition \ref{prop:etale}, we have $\mathrm{Ev}_{B,\chi,j}^\Omega(\Phi_{\sC}^\pm)(\lambda_\pi) = (*) \times L^{(p)}(\pi\otimes\chi^\pm, j+\tfrac{1}{2}) \neq 0$. Possibly shrinking $\Omega$, we may thus suppose $\mathrm{Ev}_{B,\chi,j}^\Omega(\Phi_{\sC}^\pm)$ is everywhere non-vanishing on $\Omega$. Let 
		\[
		\phi_{\tilde\pi_y}^\pm \defeq r_{\lambda_y} \circ \mathrm{sp}_{\lambda_y}(\Phi_{\sC}^\pm) \in \hc{t}(S_K,\sV_\lambda^\vee)^\pm_{\tilde\pi_y}.
		\]
		Then by Proposition \ref{prop:galois evs commute main diagram}, we have 
		\[
		\cE_{\chi}^{j,\mathbf{1}}(\phi_{\tilde\pi_y}^\pm) = \int_{\Zp^\times} \chi(z)z^j d\left[\mathrm{Ev}_{B,\beta}^{\Omega,\mathbf{1}} \circ \mathrm{sp}_{\lambda_y}\right] = \mathrm{Ev}_{B,\chi,j}^\Omega(\Phi_{\sC}^\pm)(\lambda_y) \neq 0.
		\]
		We deduce $\pi_y$ satisfies (C1) by Proposition \ref{prop:shalika non-vanishing}, i.e. $\pi_y$ admits a $(|\cdot|^{\sw_y},\psi)$-Shalika model. 
		
		It remains to show (C2). By \cite[Prop.\ 5.5]{classical-locus}, up to replacing $\sC_{\mathrm{nc}}$ with a smaller (but still very Zariski-dense) subset, we may assume $\pi_{y,p}$ is spherical for all $y \in \sC_{\mathrm{nc}}$.  For such $y$, let $\alpha_{y,p,r}^\circ = \alpha_y^\circ(U_{p,r}^\circ)$ be the $U_{p,r}$-eigenvalue of $\phi_{\tilde\pi_y}^\pm$. As $\hc{t}(S_K,\sV_\lambda^\vee)^\pm_{\tilde\pi_y}$ is a line, by \eqref{eq:pi to cohomology} again we deduce that $\cS_\psi^{|\cdot|^{\sw_y}}(\pi_{y,p}^{\Iw})[U_{p,r}^\circ - \alpha_{y,p,r}^\circ : r = 1,...,2n-1]$ is a line, so $\tilde\pi_y$ is a regular $p$-refinement. Let $W_{y,p}$ be a generator; then we can relate $W_{y,p}\smallmatrd{w_n}{}{}{1}$ to a non-zero multiple of $\mathrm{Ev}_{B,\chi,j}^\Omega(\phi_{\tilde\pi_y}^\pm)$ exactly as in \cite[\S8.3.5]{BDW20}. In particular, $W_{y,p}\smallmatrd{w_n}{}{}{1} \neq 0$, so $\tilde\pi_y$ is Shalika. It then follows that $\tilde\pi_y$ is spin, by the non-critical slope special case of Expectation \ref{conj:shalika = spin} proved in \cite{classical-locus}: particularly, combining \cite[Thm.\ 8.9, Prop.\ 8.7, Lem.\ 5.1]{classical-locus}.
	\end{proof}

	\subsection{$p$-adic $L$-functions in Shalika families}
	
	We finally give our main construction, of the variation of $p$-adic $L$-functions of RASCARs in pure weight families. Let $\tilde\pi$ of weight $\lambda_{\pi}$ satisfy (C1-4) of Conditions \ref{cond:running assumptions}, and suppose $\lambda_{\pi,n} > \lambda_{\pi,n+1}$. By Theorem \ref{thm:shalika family}(i), the eigenvariety for $G$ is \'etale over weight space at $\tilde\pi$, and by Theorem \ref{thm:shalika family}(iv) its connected component $\sC$ through $\tilde\pi$ contains a very Zariski-dense set $\sC_{\mathrm{nc}}$ of classical points satisfying Conditions \ref{cond:running assumptions}. Let $\Phi_{\sC}^\pm \in \hc{t}(S_K,\sD_\Omega)^\pm$ be the classes of Theorem \ref{thm:shalika family}(iii). Possibly rescaling by $\cO_\Omega^\times$, we may always assume $\mathrm{sp}_{\lambda_\pi}(\Phi_{\sC}^\pm) = \Phi_{\tilde\pi}^\pm$.

	\begin{definition} \label{def:Lp families} Let $\cL_{p}^{\sC, \pm} \defeq [\det(-w_n)]\mu^{\Omega,\mathbf{1}}(\Phi_{\sC}^\pm)$, now considering the Dirac measure as being in $\cD(\Zp^\times,\cO_\Omega)$. Also let $\Phi_{\sC} = \Phi_{\sC}^+ + \Phi_{\sC}^- \in \hc{t}(S_{K(\tilde\pi)},\sD_\Omega)$, a Hecke eigenclass, and define the \emph{$p$-adic $L$-function over $\sC$} to be 
		\[
		\cL_p^{\sC} \defeq [\det(-w_n)] \mu^{\Omega,\mathbf{1}}(\Phi_{\sC}) = \cL_p^{\sC,+} + \cL_p^{\sC,-} \in \cD(\Gal_p,\cO_\Omega).
		\]
		Via the Amice transform  (cf.\ Definition~\ref{def:non-critical slope Lp}), we consider $\cL_p^{\sC}$ as a rigid function $\sC \times \sX \to \overline{\Q}_p$.
	\end{definition}

	\begin{theorem}\label{thm:family p-adic L-functions}
		Let $y \in \sC_{\mathrm{nc}}$ be a classical cuspidal point attached $p$-refined RACAR $\tilde\pi_y$ satisfying Conditions \ref{cond:running assumptions}. There exist $p$-adic periods $c_y^\pm \in L^\times$ such that
		\[
		\cL_p^{\sC,\pm}(y, -) = c_y^\pm \cdot \cL_p^\pm(\tilde\pi_y, -)
		\]
		as functions $\sX \to \overline{\Q}_p$. In particular, $\cL_p^{\sC}$ satisfies the following interpolation: for any $j \in \mathrm{Crit}(w(y))$, and for any finite order Hecke character $\chi$ of conductor $p^\beta > 1$, we have
		\begin{equation}\label{eq:spec in families}
			\cL_p^{\sC}(y,\ \chi(z)z^j) = c_y^\pm \cdot \gamma_{(pm)} \cdot e_p(\tilde\pi_y,\chi,j) \cdot e_\infty(\pi_y,\chi,j) \cdot L^{(p)}(\pi_y\times\chi, j+\tfrac{1}{2})/\Omega_{\pi_y}^\pm,
		\end{equation}
		where $\chi_\infty(-1)(-1)^j = \pm 1$ and with notation as in Theorem~\ref{thm:non-ordinary}. Finally we have $c_{x_{\tilde\pi}}^\pm = 1$.
	\end{theorem}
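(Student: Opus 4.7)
The proof is essentially formal once the machinery of families and the compatibility of evaluation maps with specialization (Proposition \ref{prop:evs main commutative diagram}) are in hand. The plan is to specialize the family class $\Phi_{\sC}^\pm$ at each classical point $y$ and use multiplicity-one to identify it, up to a scalar, with the canonical class $\Phi_{\tilde\pi_y}^\pm$ that defines $\cL_p^\pm(\tilde\pi_y)$.

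\textbf{Step 1: Defining the periods.} Fix $y \in \sC_{\mathrm{nc}}$ with weight $\lambda_y = w(y)$. By Theorem \ref{thm:shalika family}(iii), the class $\mathrm{sp}_{\lambda_y}(\Phi_{\sC}^\pm)$ generates the line $\hc{t}(S_K, \sD_{\lambda_y})^\pm_{\tilde\pi_y}$. On the other hand, since $\tilde\pi_y$ satisfies Conditions \ref{cond:running assumptions} by Theorem \ref{thm:shalika family}(iv), the class $\Phi_{\tilde\pi_y}^\pm$ used in Definition \ref{def:non-critical slope Lp} is a non-zero element of this same line (by non-criticality lifting the generator $\phi_{\tilde\pi_y}^\pm$ of $\hc{t}(S_K, \sV_{\lambda_y}^\vee)^\pm_{\tilde\pi_y}$, which is itself a line by Proposition \ref{prop:mult one}). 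Hence there exists a unique $c_y^\pm \in L^\times$ with
\[
\mathrm{sp}_{\lambda_y}(\Phi_{\sC}^\pm) = c_y^\pm \cdot \Phi_{\tilde\pi_y}^\pm.
\]

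\textbf{Step 2: Commuting specialization with $\mu^{\Omega,\mathbf{1}}$.} Let $\alpha_p^\circ \in \cO_\Omega^\times$ denote the $U_p^\circ$-eigenvalue of $\Phi_{\sC}^\pm$; by construction of the family, $\mathrm{sp}_{\lambda_y}(\alpha_p^\circ) = \alpha_{y,p}^\circ$ is the $U_p^\circ$-eigenvalue of $\Phi_{\tilde\pi_y}^\pm$. For any $\beta \geq 1$, Proposition \ref{prop:evs main commutative diagram} combined with $\cO_\Omega$-linearity yields
\[
\mathrm{sp}_{\lambda_y}\!\left((\alpha_p^\circ)^{-\beta}\,\mathrm{Ev}_{B,\beta}^{\Omega,\mathbf{1}}(\Phi_{\sC}^\pm)\right)
= (\alpha_{y,p}^\circ)^{-\beta} \cdot \mathrm{Ev}_{B,\beta}^{\lambda_y,\mathbf{1}}\!\left(\mathrm{sp}_{\lambda_y}(\Phi_{\sC}^\pm)\right)
= c_y^\pm \cdot (\alpha_{y,p}^\circ)^{-\beta}\,\mathrm{Ev}_{B,\beta}^{\lambda_y,\mathbf{1}}(\Phi_{\tilde\pi_y}^\pm).
\]
Since the Dirac measure $[\det(-w_n)] \in \cD(\Zp^\times,\cO_\Omega)$ specializes to the analogous Dirac measure in $\cD(\Zp^\times,L)$, and multiplication in the distribution algebra commutes with specialization, multiplying by $[\det(-w_n)]$ and summing the $\pm$ pieces gives
\[
\mathrm{sp}_{\lambda_y}\!\left(\cL_p^{\sC,\pm}\right) = c_y^\pm \cdot \cL_p^\pm(\tilde\pi_y), \qquad \cL_p^{\sC}(y,-) = c_y^+\,\cL_p^+(\tilde\pi_y,-) + c_y^-\,\cL_p^-(\tilde\pi_y,-).
\]
Evaluating at $\chi(z)z^j$ for $\chi_\infty(-1)(-1)^j = \pm 1$, only the $\pm$ term contributes, so the interpolation formula \eqref{eq:spec in families} follows by substituting the interpolation of Theorem \ref{thm:non-ordinary} applied to $\tilde\pi_y$.

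\textbf{Step 3: The normalization at $x_{\tilde\pi}$.} By the convention fixed immediately before Definition \ref{def:Lp families}, $\Phi_{\sC}^\pm$ was rescaled by an element of $\cO_\Omega^\times$ to ensure $\mathrm{sp}_{\lambda_\pi}(\Phi_{\sC}^\pm) = \Phi_{\tilde\pi}^\pm$. Uniqueness of $c_{x_{\tilde\pi}}^\pm$ in Step 1 then forces $c_{x_{\tilde\pi}}^\pm = 1$.

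There is essentially no hard step here: the substantive difficulties were absorbed into Theorem \ref{thm:shalika family} (existence of the family class with the multiplicity-one property after specialization) and Proposition \ref{prop:evs main commutative diagram} (compatibility of overconvergent evaluations with specialization and classical evaluations). The only mild point worth checking is that the normalization $\mathrm{sp}_{\lambda_\pi}(\Phi_{\sC}^\pm) = \Phi_{\tilde\pi}^\pm$ can be simultaneously enforced for both signs consistently with the identification $\sC^+ \cong \sC^-$ used in the proof of Theorem \ref{thm:shalika family}, but this is automatic since the rescaling occurs on each sign component independently.
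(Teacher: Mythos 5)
Your proof is correct and follows essentially the same route as the paper's: you define $c_y^\pm$ via multiplicity one at each classical point (the paper constructs the class $\Phi_y^\pm$ from $W_{y,f}$ and periods rather than simply invoking $\Phi_{\tilde\pi_y}^\pm$ from Definition~\ref{def:non-critical slope Lp}, but these are the same class), then use compatibility of evaluation maps with specialization (you cite Proposition~\ref{prop:evs main commutative diagram} plus $\cO_\Omega$-linearity, while the paper cites its packaged form Proposition~\ref{prop:galois evs commute main diagram}), and finally invoke Theorem~\ref{thm:non-ordinary} and the normalization convention. The extra remark about the $\pm$-rescalings being independent is accurate but not a step the paper needs to spell out.
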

	
	The `$p$-adic periods' $c_y^\pm$ $p$-adically align the natural algebraic structures in $\{\tilde\pi_y : y \in \sC_{\mathrm{nc}}\}.$

	\begin{proof}
		Let $y$ be as in the theorem. Using that $y$ satisfies Conditions \ref{cond:running assumptions}, let $W_{y,f} \in \cS_{\psi_f}^{\eta_{y,f}}(\pi_{y,f}^{K})$ be as defined as in \eqref{eq:W_f}, and pick complex periods $\Omega_{\pi_y}^\pm$ as in \S\ref{sec:running assumptions}. Since $y \in \sC$ is defined over $L$, as in \S\ref{sec:running assumptions}, there exists a class 
		\[
		\phi_{y}^\pm\defeq \Theta^\pm(W_{y,f})\big/i_p(\Omega_{\pi_y}^\pm) \in \hc{t}(S_{K}, \sV_{\lambda_y}^\vee(L))_{\tilde\pi_y}^\pm.
		\] 
		As $y$ is non-critical, we can lift $\phi_{y}^\pm$ to a non-zero class $\Phi_{y}^\pm \in \hc{t}(S_{K},\sD_{\lambda_y}(L))^\pm_{\tilde\pi_y}$. By Theorem \ref{thm:shalika family}(iii), this space is  $L\cdot \mathrm{sp}_{\lambda_y}(\Phi_{\sC}^\pm)$, so there exists $c_y^\pm \in L^\times$ such that 
		\[
		\mathrm{sp}_{\lambda_y}(\Phi_{\sC}^\pm) = c_y^\pm \cdot \Phi_{y}^\pm.
		\]
		By definition, $\cL_p^\pm(\tilde\pi_y) = [\det(-w_n)]\mathrm{Ev}^{B,\lambda_y,\mathbf{1}}(\Phi_{y}^\pm).$ As evaluation maps commute with weight specialisation (Proposition \ref{prop:galois evs commute main diagram}), we deduce $\mathrm{sp}_{\lambda_y}(\cL_p^{\sC,\pm}) = c_y^\pm \cdot \cL_p^\pm(\tilde\pi_y)$, which when combined with Theorem \ref{thm:non-ordinary} gives \eqref{eq:spec in families}. Finally, our normalisation of $\Phi_{\sC}^\pm$ ensures $c_{x_{\tilde\pi}}^\pm = 1$.
	\end{proof}

\section{Comparison to existing constructions} \label{sec:parahoric-vs-iwahoric}

We finally show that the $p$-adic $L$-functions we've constructed at Iwahori level agree with previous constructions, and deduce their interpolation property at unramified characters.

 Let  $\tilde\pi$ be a non-critical slope regular (Iwahoric) spin $p$-refinement. Then $\tilde\pi^Q \defeq (\pi, \alpha_{p,n})$ is a non-$Q$-critical slope Shalika $Q$-refinement as in \cite[\S2.7,\S3.5]{BDW20}, where being Shalika follows by combining (the proof of) \cite[Thm.\ 4]{Roc20} with \cite[Lem.\ 3.6]{DJR18}. Now \cite[Thm.\ A]{BDW20} attaches a $p$-adic $L$-function $\cL_p(\tilde\pi^Q) \in \cD(\Zp^\times,L)$ to $\tilde\pi^Q$.
	
	\begin{proposition}\label{cor:BDW}
There exists a constant $\Upsilon \in \Q$, independent of $\pi$, such that 
\[
	\cL_p(\tilde\pi) = \Upsilon \cdot \cL_p(\tilde\pi^Q).
\]
	\end{proposition}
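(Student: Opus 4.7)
The strategy is to compare the two $p$-adic $L$-functions through their interpolation at ramified finite-order characters, where both agree up to an explicit multiplicative constant, and then invoke uniqueness of admissible distributions of finite growth. Both $\cL_p(\tilde\pi)$ and $\cL_p(\tilde\pi^Q)$ are admissible distributions on $\Zp^\times$: the Iwahoric one has growth $v_p(\alpha_p^\circ)$ by Theorem~\ref{thm:non-ordinary}, and the $Q$-parahoric one has growth $v_p(\alpha_{p,n}^\circ)$ by \cite[Thm.~6.23]{BDW20}.

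The key observation is that the interpolation formula for $\cL_p(\tilde\pi)$ proved in Theorem~\ref{thm:non-ordinary} (at ramified $\chi$) depends on the refinement $\tilde\pi$ only through $\alpha_{p,n}$, via the factor $e_p(\tilde\pi,\chi,j) = \bigl(p^{nj+(n^2-n)/2}/\alpha_{p,n}\bigr)^{\beta}\tau(\chi)^n$. But $\alpha_{p,n}$ is precisely the $Q$-refinement eigenvalue used to define $\cL_p(\tilde\pi^Q)$. Thus for any $j \in \mathrm{Crit}(\lambda_\pi)$ and any finite-order Hecke character $\chi$ of conductor $p^\beta > 1$, the interpolation formulas agree up to the ratio of the overall volume constants coming from the evaluation maps associated with $B$ and $Q$ respectively. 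Concretely, letting $\gamma^B_{(pm)}$ and $\gamma^Q_{(pm)}$ denote the constants defined before Corollary~\ref{cor:L-value} (for $P = B$ and $P = Q$), the ratio
\[
\Upsilon \defeq \gamma^B_{(pm)} \big/ \gamma^Q_{(pm)}
\]
satisfies $\cL_p(\tilde\pi,\chi(z)z^j) = \Upsilon \cdot \cL_p(\tilde\pi^Q,\chi(z)z^j)$ for every such $\chi$ and $j$. This ratio is a rational number depending only on $n$, $p$, and the fixed integer $m$, hence independent of $\pi$.

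To conclude, observe that $\cL_p(\tilde\pi) - \Upsilon \cdot \cL_p(\tilde\pi^Q)$ is an admissible distribution on $\Zp^\times$ (of growth at most $\max(v_p(\alpha_p^\circ), v_p(\alpha_{p,n}^\circ))$) that vanishes on $\chi(z)z^j$ for all ramified finite-order $\chi$ of $p$-power conductor and all $j \in \mathrm{Crit}(\lambda_\pi)$. By the hypothesis $\lambda_{\pi,n} > \lambda_{\pi,n+1}$ (which holds implicitly through (C1-4): if not, the argument still applies since $\mathrm{Crit}(\lambda_\pi)$ is nonempty), this amounts to vanishing on a set of characters dense enough for the Amice--V\'elu--Vishik uniqueness theorem to apply, exactly as used e.g.\ in the proof of \cite[Prop.~6.20]{BDW20}. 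Hence $\cL_p(\tilde\pi) = \Upsilon \cdot \cL_p(\tilde\pi^Q)$ as distributions.

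The main obstacle is bookkeeping: one must carefully verify that every factor in the two interpolation formulas (the modified Euler factor $e_p$, the archimedean factor $e_\infty$, the $L$-value $L^{(p)}(\pi\otimes\chi,j+1/2)$, the complex period $\Omega_\pi^\pm$, and the Dirac measure $[\det(-w_n)]$) indeed matches between the Iwahori setting of this paper and the parahoric setting of \cite{BDW20}, so that the residual ratio truly is the single rational constant $\Upsilon$ independent of $\pi$. Once this is confirmed, the uniqueness argument is standard. An immediate corollary is that, combined with Proposition~\ref{p:local-zeta} (which establishes interpolation of $\cL_p(\tilde\pi^Q)$ at the unramified character), we obtain the missing interpolation formula for $\cL_p(\tilde\pi)$ at $\chi = 1$, completing the proof of Theorem~\ref{thm:non-ordinary}.
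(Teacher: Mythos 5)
Your approach is essentially the same as the paper's: compare the two distributions through their interpolation at ramified characters, observe that the local factor $e_p$ depends only on $\alpha_{p,n}$ so the formulas agree up to a volume ratio $\Upsilon$, and invoke uniqueness (Amice--V\'elu--Vishik) of admissible distributions of small growth. You also correctly flag the bookkeeping that must be checked, which the paper handles by simply observing that the BDW20 interpolation formula is identical except at the volume terms.

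The one place your argument is imprecise --- and where the paper's proof is more careful --- is the growth bound used for the uniqueness step. You claim the difference $\cL_p(\tilde\pi) - \Upsilon\cL_p(\tilde\pi^Q)$ has growth at most $\max(v_p(\alpha_p^\circ),\, v_p(\alpha_{p,n}^\circ)) = v_p(\alpha_p^\circ)$. But (C4) only gives $v_p(\alpha_{p,r}^\circ) < \lambda_r - \lambda_{r+1}+1$ for each $r$, so $v_p(\alpha_p^\circ) = \sum_r v_p(\alpha_{p,r}^\circ)$ can easily exceed $\#\mathrm{Crit}(\lambda_\pi) = \lambda_n - \lambda_{n+1}+1$, in which case the AVV uniqueness theorem would not apply to a distribution of that growth. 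The paper instead isolates the inequality
\[
	v_p(\alpha_{p,n}^\circ) < \#\mathrm{Crit}(\lambda_\pi),
\]
which is exactly (C4) with $r=n$ and is equivalent to $\tilde\pi^Q$ being non-$Q$-critical slope. This is the operative bound: from Corollary~\ref{cor:L-value} the interpolation values of $\cL_p(\tilde\pi)$ carry a factor $(\alpha_p^\circ/\alpha_{p,n})^\beta$, which cancels the $(\alpha_p^\circ)^{-\beta}$ normalisation in the definition of $\mu^{\Omega,\eta_0}$, so the effective normalisation is by $\alpha_{p,n}^{-\beta}$ and the relevant growth for the uniqueness argument is $v_p(\alpha_{p,n}^\circ)$ --- not the coarse bound $v_p(\alpha_p^\circ)$ stated in Theorem~\ref{thm:non-ordinary}. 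Without this observation, your uniqueness step does not close. To fix your argument, replace the $\max$ bound with this sharper growth estimate coming directly from the interpolation formula, and then the rest goes through as you describe.
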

	\begin{proof}
Since $\tilde\pi^Q$ is non-$Q$-critical slope, we have
\begin{equation}\label{eq:very small slope}
v_p(\alpha_{p,n}^\circ) < \#\mathrm{Crit}(\lambda_\pi),
\end{equation}
 where $\lambda_\pi$ is the weight of $\pi$. 	In this case, both distributions have sufficiently small growth that they are uniquely determined by their interpolation properties at ramified characters (see e.g.\ \cite[Lem.\ 1.2.5]{DucNam}). Their respective interpolation properties agree exactly except at the volume terms at $p$ (denoted $\gamma$ in \cite{DJR18,BDW20} and $\gamma_{(pm)}$ in the present paper). In particular at every character, the interpolation formulas differ by a rational number $\Upsilon$ independent of $\pi$. The result follows.
\end{proof}

\begin{remark}
That the $p$-adic $L$-function depends only on the $Q$-refinement, not the full Iwahori refinement, should be expected; it is predicted by the Panchiskin condition from \cite{Pan94}.
\end{remark}

Now let $\tilde\pi^Q$ be any non-$Q$-critical $Q$-refinement of $\pi$, and let $\cL_p(\tilde\pi^Q)$ be the $p$-adic $L$-function attached by \cite[Thm.\ A]{BDW20}. The following strengthens the results of \cite{DJR18,BDW20}.

\begin{proposition} \label{prop:unramified interpolation}
The $p$-adic $L$-function $\cL_p(\tilde\pi^Q)$ satisfies the interpolation
		\begin{align*}
			\cL_p(\tilde\pi, z^j)
			= \Upsilon^{-1} \cdot \gamma_{(pm)} \cdot e_p(\tilde\pi,\mathbf{1},j) \cdot e_\infty(\pi,\mathbf{1},j)\cdot \frac{L^{(p)}\big(\pi, j+\tfrac{1}{2}\big)}{\Omega_\pi^{\pm}},
		\end{align*}
		for all $j \in \mathrm{Crit}(\lambda_\pi)$. Here $(-1)^j = \pm 1$ and all other notation is as in Theorem \ref{thm:non-ordinary}.
		
		In particular, $\cL_p(\tilde\pi)$ satisfies the interpolation of Theorem \ref{thm:non-ordinary} at $\chi = \mathbf{1}$.
\end{proposition}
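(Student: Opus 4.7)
The plan is to derive the unramified interpolation formula directly for $\cL_p(\tilde\pi^Q)$ using the parahoric evaluation maps of \cite{BDW20}, and then transfer to $\cL_p(\tilde\pi)$ via Proposition \ref{cor:BDW}. The key new input is the unramified local zeta integral computation at parahoric level, Proposition \ref{p:local-zeta}, which fills in the case omitted from \cite{DJR18,BDW20}.

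First I would recall that by \cite[Thm.\ 6.23]{BDW20}, the distribution $\cL_p(\tilde\pi^Q)$ equals (up to the Dirac measure $[\det(-w_n)]$) the distribution $(\alpha_{p,n}^\circ)^{-\beta}\mathrm{Ev}_{Q,\beta}^{\lambda_\pi,\mathbf{1}}$ applied to the overconvergent parahoric lift of the classical class $\phi_{\tilde\pi}^{Q,\pm}$, and is independent of $\beta \geq 1$. Here $\phi_{\tilde\pi}^{Q,\pm}$ is built as in \eqref{eq:phi} but with $W_p = W_0$ the parahoric test vector from \S\ref{sec:local zeta unramified}. Specialising at $z^j$ for $j \in \mathrm{Crit}(\lambda_\pi)$ and the trivial Hecke character $\chi = 1$ (so $\beta'=0$ and we take $\beta=1$), the $Q$-parahoric analogue of Proposition \ref{prop:galois evs commute main diagram} collapses this to $(\alpha_{p,n}^\circ)^{-1}\cE_{Q,1}^{j,\mathbf{1}}(\phi_{\tilde\pi}^{Q,\pm})$.

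Next, I would apply Theorem \ref{thm:critical value} with $P = Q$, $\beta = 1$, $\chi = 1$, to express $\cE_{Q,1}^{j,\mathbf{1}}(\phi_{\tilde\pi}^{Q,\pm})$ as the product of volume constants $\delta_B(t_Q^{-1})\Upsilon_Q\lambda(t_Q)$, the archimedean factor $\zeta_j(W_\infty^\pm)$, the finite $L$-value $L^{(p)}(\pi,j+\tfrac{1}{2})/\Omega_\pi^\pm$, and the local zeta integral $\zeta_p(j+\tfrac{1}{2},(u^{-1}t_Q)\cdot W_0,1)$. The archimedean factor contributes $\det(-w_n)^j \cdot e_\infty(\pi,1,j)$ by the same Jiang--Sun--Tian computation used in the ramified case of Theorem \ref{thm:non-ordinary}; the $\det(-w_n)^j$ is absorbed by the Dirac measure. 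The local zeta integral at $p$ is then computed by the unramified case of Proposition \ref{p:local-zeta} (with $F = \Qp$, $\delta = 0$, $\beta = 1$), producing an explicit Euler-type product in $\UPS_i(p)$ for $i = n+1,\dots,2n$ times a rational power of $p$.

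Third, I would combine the explicit formula $\alpha_{p,n}^\circ = \lambda(t_{p,n})\cdot p^{n^2/2}\UPS_{n+1}(p)\cdots\UPS_{2n}(p)$ from Proposition \ref{prop:p-refinement} with the Shalika relation $\UPS_i\UPS_{n+i} = \eta$ from Proposition \ref{prop:AG 1} to rearrange the resulting product at $p$ into the Coates--Perrin-Riou factor $e_p(\tilde\pi,1,j) = \prod_{i=n+1}^{2n}(1-p^{-1}\alpha_{i,j}^{-1})/(1-\alpha_{i,j})$ of Definition \ref{def:modified euler factors}. The remaining nonzero rational, coming from the discrepancy between $\delta_B(t_Q^{-1})\Upsilon_Q\lambda(t_Q)$ and $\gamma_{(pm)}$, must equal $\Upsilon^{-1}$ from Proposition \ref{cor:BDW}, as otherwise the ramified interpolation formulas of the two constructions would disagree. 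This yields the main formula for $\cL_p(\tilde\pi^Q)$. The ``in particular'' statement for $\cL_p(\tilde\pi)$ then follows by multiplying through by $\Upsilon$ using $\cL_p(\tilde\pi) = \Upsilon \cdot \cL_p(\tilde\pi^Q)$, completing the proof of Theorem \ref{thm:non-ordinary} at $\chi = 1$.

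The main obstacle will be the careful constant bookkeeping at $p$: converting the raw product returned by Proposition \ref{p:local-zeta} into the Coates--Perrin-Riou normalisation with $\alpha_{i,j} = \UPS_i(p)/p^{j+1/2}$ requires tracking how the spin $Q$-refinement picks out the ``correct half'' of the Satake parameters, and correctly combining all shifts from $\delta_B$, $\lambda(t_Q)$, the integral normalisation $\alpha_{p,n}^\circ$ versus $\alpha_{p,n}$, and the Shalika relation $\UPS_i(p)\UPS_{n+i}(p) = p^{-\sw}$. This is essentially mechanical but delicate.
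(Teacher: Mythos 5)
Your proposal follows the same route as the paper's proof: specialise $\cL_p(\tilde\pi^Q)$ at $\chi = 1$, $\beta = 1$ via the parahoric evaluation-map formalism (which the paper cites from \cite[Thm.\ 6.23]{BDW20} rather than re-deriving via a $Q$-analogue of Proposition~\ref{prop:galois evs commute main diagram}, but this is the same content), apply Theorem~\ref{thm:critical value} with $P = Q$, compute the unramified local zeta integral with Proposition~\ref{p:local-zeta}, match constants, and transfer to $\cL_p(\tilde\pi)$ using Proposition~\ref{cor:BDW}. The paper glosses over the same bookkeeping you flag as ``mechanical but delicate,'' so you correctly identify the proof skeleton; the only nit is that your mention of a Dirac measure and the $\det(-w_n)^j$ factor at parahoric level is a slight over-complication (the BDW branching laws give $\zeta_j(W_\infty^\pm) = e_\infty(\pi,\chi,j)$ directly, with no $\det(-w_n)^j$ correction needed at that level), but this does not affect the argument.
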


\begin{proof}
Let $\phi_{\tilde\pi^Q}^\pm$ be the class defined in \cite[\S6.6]{BDW20}. Then, taking $\beta=1$, we have
\begin{align*}
	\cL_p(\tilde\pi, z^j) &= (\alpha_{p,n}^\circ)^{-1} \cdot \cE_{Q,1}^{j,\eta_0}(\phi_{\tilde\pi^Q}^\pm)\\
	&= \Upsilon_Q \cdot \frac{p^{n^2} }{\alpha_{p,n}} \cdot e_\infty(\pi,\mathbf{1},j) \cdot \frac{L^{(p)}(\pi,j+\tfrac{1}{2})}{\Omega_\pi^\pm} \cdot \zeta_p\Big(j+\tfrac{1}{2}, (u^{-1}t_{p,n}^\beta)\cdot W_p, \mathbf{1}\Big).
\end{align*}
Here the first equality is shown in \cite[Thm.\ A]{BDW20}. The second is Theorem \ref{thm:critical value}, noting that $\delta_B(t_Q^{-1}) = p^{n^2}$ and $\lambda(t_Q)/\alpha_{p,n}^\circ = 1/\alpha_{p,n}$. This local zeta integral was computed in Proposition \ref{p:local-zeta}. We find that this exactly agrees with the claimed formula (as by definition, $\Upsilon$ tracks the difference between the volume factor $\gamma_{(pm)}$ at Iwahori level and $\gamma$, from \cite{DJR18}, at parahoric level).

The final statement follows immediately from the first part and Proposition \ref{cor:BDW}.
\end{proof}

\begin{remark}
	Exactly the same proof shows more generally that the $p$-adic $L$-functions of \cite{DJR18, BDW20}, for $\GL_{2n}$ over a general totally real field and at arbitrary tame level, satisfy the interpolation formula at unramified characters predicted by Coates--Perrin-Riou and Panchishkin.
\end{remark}

\newpage
\section*{Glossary of key notation/terminology}
\footnotesize 
\vspace{-10pt}
\addcontentsline{toc}{section}{Glossary of notation}

\begin{multicols}{2}
\noindent $(-)^\vee$ \dotfill dual \S\ref{sec:notation}\\
$\langle -,-\rangle_G, \langle -,-\rangle_{\cG}$ \dotfill \S\ref{sec:structure gspin}\\
$\cA_\Omega$ \dotfill Defn.\ \ref{def:A_Omega}\\
$\alpha$ \dotfill system of eigenvalues \S\ref{sec:hecke p}\\
$\alpha_p^\circ$ \dotfill $U_p^\circ$-eigenvalue\\
$\alpha^{\cG}$ \dotfill system of e'values for $\cG$\\
$\alpha_0, \alpha_1,...,\alpha_n$ \dotfill basis of weights \eqref{eq:alpha_i}\\
$\alpha_{p,r}$ \dotfill $\alpha(U_{p,r})$\\
$B \subset G$ \dotfill upper-triangular Borel\\
$\beta$ \dotfill integer $\geq 1$\\
$\overline{B} \subset G$ \dotfill opposite Borel\\
$\sC$ \dotfill Shalika family, Thm.\ \ref{thm:shalika family}\\
$\mathrm{Crit}(\lambda)$ \dotfill \S\ref{sec:algebraic weights}\\
$\cl(I)$ \dotfill \eqref{eq:ray class group}\\
$\cD_\Omega$ \dotfill Defn.\ \ref{def:D_Omega}\\
$\sD_\Omega$ \dotfill local system for $\cD_\Omega$\\
$\cD_\Omega^\beta$ \dotfill Defn.\ \ref{def:distribution support}\\
$\Delta_P$ \dotfill semigroup, \S\ref{sec:abstract evaluations}\\
$\Delta_{\UPS}$ \dotfill function, \eqref{eq:delta ups}\\
$\delta_B$ \dotfill \S\ref{sec:notation}\\
$\mathrm{Ev}_{M,-}^{P,-}$ \dotfill evaluation maps \S\ref{sec:abstract evaluations}\\
$\mathrm{Ev}^\Omega_{B,-}$ \dotfill distribution-valued evaluation maps \S\ref{sec:distribution evaluations}\\
$\cE_{P,\chi}^{j,\eta_0}$ \dotfill classical evaluation map \S\ref{sec:classical evaluations}\\
$\sE_{\Omega,\leq h}, \sE_{\Omega,\leq h }^\pm$ \dotfill local pieces of eigenvariety, Defn.\ \ref{def:local piece S}\\
$e_i, e_i^*$ \dotfill root data \S\ref{sec:structure gspin}\\
$e_p(\tilde\pi, \chi,j)$ \dotfill Coates--Perrin-Riou factor, Thm.\ \ref{thm:non-ordinary}\\
$\eta, \eta_0$ \dotfill Shalika characters \S\ref{sec:shalika models}\\
$F_\delta, F_\delta^\sigma$ \dotfill Defn.\ \ref{def:F_w}\\
$f_i, f_i^*$ \dotfill root data \S\ref{sec:structure gspin}\\
$f^\sigma \in \Ind_B^G \UPS^\sigma$ \dotfill \S\ref{ss:intertwining} \\
$f_w^\sigma \in \Ind_B^G \UPS^\sigma$ \dotfill Defn.\ \ref{def:F_w}\\
$G$ \dotfill $\GL_{2n}$\\
$\cG$ \dotfill $\mathrm{GSpin}_{2n+1}$\\
$\mathrm{Gal}_p$ \dotfill \S\ref{sec:notation}\\
$\gamma_{(pm)}$ \dotfill constant \S\ref{sec:running assumptions}\\
$H$ \dotfill $\GL_n \times \GL_n$\\
$\hc{\bullet}(S_K,-)_\pi$ \dotfill \S\ref{sec:localisation}\\
$\cH'$ \dotfill spherical Hecke algebra \S\ref{sec:hecke algebra}\\
$\cH$ \dotfill Hecke algebra \S\ref{sec:hecke algebra}\\
$\cH^\circ, \cH_p^\circ$ \dotfill Defn.\ \ref{def:local piece S}\\
$\cH_p^{\cG}$ \dotfill Hecke algebra \S\ref{sec:spin via gspin}\\
$\iota : H \to G$ \dotfill $(h_1,h_2) \mapsto \mathrm{diag}(h_1,h_2)$\\
$\iota_\beta^P$ \dotfill \eqref{eq:iota beta}\\
$\mathrm{Iw}_G \subset \GL_{2n}(\Qp)$ \dotfill Iwahori subgroup\\
$\mathrm{Iw}_G^\beta$ \dotfill \eqref{eq:Iw_H^1}\\
$\mathrm{Iw}_{G_n} \subset \GL_n(\Qp)$ \dotfill Iwahori subgroup\\
$i_p$ \dotfill fixed isomorphism $\C \isorightarrow \overline{\Q}_p$\\
$J_P$ \dotfill parahoric \S\ref{sec:abstract evaluations}\\
$\jmath, \jmath^\vee$ \dotfill \S\ref{sec:j}\\
$K \subset G(\A_f)$ \dotfill level group\\
$\kappa_{\lambda,j}$ \dotfill \eqref{eq:kappa lambda j}\\
$\kappa_\Omega$ \dotfill \eqref{eq:kappa omega}\\
$L^{(p)}$ \dotfill $L$-function without factor at $p$\\
$L_\beta^P, L_p^{P,\beta}, L^p$ \dotfill Defn.\ \ref{def:auto cycle}\\
$\cL_p(\tilde\pi), \cL_p^\pm(\tilde\pi)$ \dotfill $p$-adic $L$-functions, Defn.\ \ref{def:non-critical slope Lp}\\
$\cL_p^{\sC}$ \dotfill $p$-adic $L$-function in family, Defn. \ref{def:Lp families}\\
$\lambda = (\lambda_1,...,\lambda_{2n})$ \dotfill weight\\
$M_{w_n}, M_\rho$ \dotfill intertwinings \eqref{eq:M_wn}\\
$m$ \dotfill auxiliary integer, Defn.\ \ref{def:auto cycle}\\
$\m_\pi, \m_{\tilde\pi}$ \dotfill maximal ideals \S\ref{sec:hecke algebra}\\
$N^\beta(\Zp)$ \dotfill \S\ref{sec:support conditions}\\
non-critical slope \dotfill Defn.\ \ref{def:non-critical slope}\\
$\Omega$ \dotfill affinoid in weight space \\
$\Omega_\pi^\pm \in \C^\times$ \dotfill complex period\\
$P \subset G$ \dotfill standard parabolic\\
$\mathrm{pr}_\beta$ \dotfill \eqref{eq:pr_beta}\\
$\pi$ \dotfill RASCAR of $G(\A)$, Conditions \ref{cond:running assumptions}\\
$\phi_{\tilde\pi}^\pm$ \dotfill cohomology class \eqref{eq:phi}\\
$\varphi_p \in \tilde\pi_p$ \dotfill \S\ref{sec:hecke p}\\
$\psi_{\tilde\pi}$ \dotfill \S\ref{sec:hecke p}\\
$\sigma \in \cW_G$ \dotfill Weyl element\\
$\tilde\pi$ \dotfill $p$-refinement \S\ref{sec:hecke p}\\
$Q \subset G$ \dotfill parabolic with Levi $H$\\
$Q'(\pi,\chi,j)$ \dotfill factor at $p$, Cor.\ \ref{cor:L-value}\\
$\rho_G, \rho_{\cG}$\dotfill half sum positive roots \eqref{eq:rho G}\\
RASCAR \dotfill p.\pageref{RASCAR}\\
$r_\lambda$ \dotfill \eqref{eqn:specialisation}\\
$\cS_{\psi}^\eta$ \dotfill Shalika intertwining \S\ref{sec:shalika models}\\
$S_K$ \dotfill locally symm.\ space \S\ref{sec:automorphic cohomology}\\
Shalika refinement \dotfill Defn.\ \ref{def:shalika refinement}\\
spin refinement \dotfill Defn.\ \ref{def:spin-refinement}\\
$T \subset G$ \dotfill diagonal torus\\
$\bT_{\Omega,\leq h}, \bT_{\Omega,\leq h}$ \dotfill Hecke algebras, Defn.\ \ref{def:local piece S}\\
$t_{p,r}$ \dotfill \eqref{eq:t_p}\\
$t_P$ \dotfill Defn.\ \ref{def:parabolic}\\
$\tau$ \dotfill $\smallmatrd{1}{}{}{w_n} \in \GL_{2n}(\Qp)$\\
$\tau(\chi)$ \dotfill Gauss sum \ref{eq:gauss sum}\\
$\Theta^\pm$ \dotfill \S\ref{sec:cohomology for RACARs}\\
$\UPS$ \dotfill $\pi_p = \mathrm{Ind}_B^G \UPS$\\
$U_{p,r}$ \dotfill Hecke operator \S\ref{sec:hecke p 1}\\
$U_{p,r}^\circ$ \dotfill \S\ref{sec:hecke p}\\
$u$ \dotfill $\smallmatrd{1}{w_n}{0}{1}$ (eqn.\ \eqref{eq:u})\\
$\Upsilon_P$ \dotfill constant, Thm.\ \ref{thm:critical value}\\
$\Upsilon''$ \dotfill Prop.\ \ref{prop:spin refinements are shalika}\\
$V_\lambda$ \dotfill algebraic representation \S\ref{sec:algebraic weights}\\
$V_{(j,-j-\sw)}^H$ \dotfill \S\ref{sec:classical evaluations}\\
$\nu_{p,r}$\dotfill cocharacter \S\ref{sec:spin via gspin}\\
$\nu_\rho = \smallmatrd{1}{}{}{\rho} \in \cW_G$ \dotfill $\rho \in \cW_{n}$\\
$\cV, \sV$ \dotfill local systems \S\ref{sec:local systems}\\
$v_{(i)}, v_{(n),j}$ \dotfill Notation \ref{not:v_i}\\
$v_{\lambda,j}$ \dotfill Prop.\ \ref{prop:product for v lambda j}\\
$v_\Omega$ \dotfill Definition \ref{def:v_Omega}\\
$W$, $W_\ell, W_f$ \dotfill Shalika vectors \S\ref{sec:shalika models}, \eqref{eq:W_f}\\
$W_0, W_\delta \in \cS_{\psi}^\eta(\pi_p)$ \dotfill Prop.\ \ref{prop:spin eigenvector}\\ 
$W_\ell^\circ$ \dotfill spherical vector \S\ref{sec:shalika models}\\
$\sW^G, \sW^G_0$ \dotfill weight spaces, Defn.\ \ref{def:weight space}\\
$\cW_G$ \dotfill Weyl group of $G$ ($\mathrm{S}_{2n}$) \\
$\cW_{\cG}$ \dotfill Weyl group of $\cG$\\
$\cW_G^0$ \dotfill \S\ref{sec:structure gspin}\\
$\cW_n$ \dotfill Weyl group of $\GL_n$ ($\mathrm{S}_n$)\\
$\sw$ \dotfill purity weight \S\ref{sec:algebraic weights}\\
$\sw_\Omega$ \dotfill purity weight in family \eqref{eq:sw_omega}\\
$w_{2n} \in \cW_G$ \dotfill longest Weyl element\\
$w_n \in \cW_n$ \dotfill longest Weyl element\\
$w_{\chi}$ \dotfill character \eqref{eq:v circ}\\
$X_\beta^P$ \dotfill Defn.\ \ref{def:auto cycle}\\
$X_\beta^P[\delta]$ \dotfill \S\ref{sec:auto cycles subsec}\\
$\cX, \cX^\vee$ \dotfill Prop.\ \ref{prop:spin root system}\\
$x_{\tilde\pi}$ \dotfill point of eigenvariety, Thm.\ \ref{thm:shalika family} \\
$\Xi$ \dotfill averaging map \eqref{rem:classical diagram}\\
$\chi$ (in \S2-4, \S8, \S12-14) \dotfill finite order Hecke character\\
$\chi$ (in \S5-7) \dotfill local character $\Q_p^\times \to C^\times$\\
$\chi$ (in \S9) \dotfill local character $F^\times \to \C^\times$\\
$\chi$ (in \S10) \dotfill character $T(\Zp) \to R^\times$\\
$\chi_{\cyc}$ \dotfill cyclotomic character\\
$y$ \dotfill point of eigenvariety\\
$z$ \dotfill $\mathrm{diag}(p^{n-1},p^{n-2},\dots, p,1)$\\
$\zeta(s,W,\chi)$ \dotfill Friedberg--Jacquet integral \S\ref{sec:friedberg-jacquet}\\
$\zeta_j(W_\infty^\pm)$ \dotfill zeta integral at infinity, Thm.\ \ref{thm:critical value}

\end{multicols}

	\footnotesize
	\renewcommand{\refname}{\normalsize References} 
	\bibliography{master_references}{}
	\bibliographystyle{alpha}

\end{document}